\documentclass[11pt,letterpaper]{amsart}

\usepackage[]{amsmath, amsthm, amsfonts, verbatim, amssymb}
\usepackage[letterpaper, left=1in, right=1in, top=1in, bottom=1in]{geometry}

\usepackage[backref=page]{hyperref} 
\hypersetup{colorlinks = true} 
\usepackage{rotating}
\usepackage{tabularx} \usepackage{booktabs}
\usepackage{multirow}
\usepackage{epstopdf}
\usepackage{pifont}
\usepackage{enumerate}
\usepackage{soul}
\usepackage{graphicx}
\usepackage{mathrsfs}\usepackage[all]{xy}
\DeclareMathAlphabet{\mathpzc}{OT1}{pzc}{m}{it}
\usepackage{tikz,tikz-cd,color}
\usepackage[makeroom]{cancel}
\usetikzlibrary{arrows}

\renewcommand*{\backref}[1]{}
\renewcommand*{\backrefalt}[4]{%
	\ifcase #1 (Not cited.)%
	\or        (Cited on page~#2.)%
	\else      (Cited on pages~#2.)%
	\fi}

\newtheorem{theorem}{Theorem}[section]
\newtheorem*{theorem*}{Theorem}
\newtheorem{lemma}[theorem]{Lemma}
\newtheorem{proposition}[theorem]{Proposition}
\newtheorem{corollary}[theorem]{Corollary}
\newtheorem{conjecture}[theorem]{Conjecture}
\newtheorem{definition}[theorem]{Definition}

\theoremstyle{remark}
\newtheorem{remark}[theorem]{Remark}
\newtheorem{example}[theorem]{Example}

\newcommand{\cc}{\mathcal{C}}
\newcommand{\pp}{\mathbb{P}}

\newcommand{\D}{\mathcal{D}}

\newcommand{\oo}{\mathcal{O}}

\newcommand{\ff}{\mathbb{F}}

\author[A.~Aravena]{Anibal Aravena}
\address{University of Massachusetts Amherst, Department of Mathematics \& Statistics, Amherst, MA 01003, USA
}
\email{aaravena@umass.edu}

\author[J.~Negrete]{Jaime Negrete}
\address{University of Georgia, Department of Mathematics, Athens, GA 30602, USA
}
\email{jaime.negrete@uga.edu}

\author[W. Yeong]{Wern Yeong}
\address{University of California, Los Angeles, Department of Mathematics, Los Angeles, CA 90095, USA
}
\email{wyyeong@math.ucla.edu}

\begin{document}
\title[]{Pseudo-hyperbolicity of Horikawa surfaces}

\subjclass[2020]{14J29, 32Q45, 14E20.}
\keywords{Horikawa surfaces, algebraic hyperbolicity, Green--Griffiths--Lang conjecture, surfaces of general type.}

\begin{abstract}
Horikawa surfaces are minimal complex algebraic surfaces of general type with minimal Chern slope, satisfying either $c_2=5c^2_1+36$ if $c_1^2$ is even, or $c_2=5c^2_1+30$ if $c_1^2$ is odd.
We prove that very general Horikawa surfaces with $p_g\ge 5$ contain only finitely many rational or elliptic curves. 
Moreover, we provide an explicit characterization and count of these curves.
Our results also apply to very general Horikawa surfaces of the first kind with $p_g\in \{3,4\}.$
\end{abstract}

\maketitle

\vspace{-10pt}

\setcounter{tocdepth}{2}

\tableofcontents

\vspace{-30pt}

\section{Introduction}

A smooth complex projective variety $X$ is said to be \emph{Brody hyperbolic} if it does not contain any non-constant entire holomorphic curves.
Lang conjectured that $X$ is Brody hyperbolic if and only if every subvariety of $X$, including $X$ itself, has a desingularization of general type \cite{Lan86}. 
This would be a consequence of the following conjecture.

\begin{conjecture}[Green--Griffiths--Lang \cite{GrGr80, Lan86}]
\label{conj:ggl}
{\em
A smooth complex projective variety $X$ of general type is \emph{pseudo-Brody hyperbolic}, i.e. there exists a proper algebraic subset of $X$ containing all non-constant entire curves in $X$.
}
\end{conjecture}

These two conjectures are still open in full generality, and despite major strides, they remain open even in the case of surfaces.
Conjecture~\ref{conj:ggl} is known to hold for surfaces satisfying $c_1^2>c_2$ \cite{Bog78, LuYa90, McQ98} and for surfaces of irregularity at least two  \cite{Kaw81,NWY02, Lu10}.
Note that smooth surfaces of general type in $\mathbb{P}^3$ (degree $d\ge5$) do not satisfy $c_1^2>c_2$ and have irregularity zero.
Nonetheless, it is known that a very general such surface does not contain rational or elliptic curves, and is in fact Demailly algebraically hyperbolic (see Definition~\ref{def:alg-hyp}) \cite{Cle86, Ein88, Xu94, CoRi19}.
Although Demailly algebraic hyperbolicity is a necessary condition for Brody hyperbolicity, it remains unknown whether very general surfaces in $\mathbb{P}^3$ of low degree, such as quintics, are Brody hyperbolic.
For higher degrees, Demailly and El Goul \cite{DeEl00} proved that a very general surface in $\pp^3$ of degree $d\ge21$ is indeed Brody hyperbolic.

Concerning mildly singular surfaces, Bogomolov and De Oliveira \cite{BoDe06} proved that a nodal surface in $\pp^3$ with a sufficiently large number of nodes relative to its degree contains only finitely many rational or elliptic curves.
Roulleau and Rousseau \cite[Theorem 1]{RoRo14} proved that this finiteness also holds for certain canonical surfaces satisfying $c_1^2>\frac{3}{5}c_2$. 
Furthermore, there is a rich literature on constructing explicit examples of surfaces that are (pseudo-)hyperbolic; see, for example, \cite{GaUr20} for complete intersection surfaces in projective space, and the references therein.

In view of Conjecture~\ref{conj:ggl} and the known cases discussed above, Horikawa surfaces are of particular interest because they are general type surfaces with minimal $c_1^2.$

\begin{definition}
{\em
A \emph{Horikawa surface} is a minimal surface of general type whose Chern numbers satisfy either:
\begin{itemize}
\item $c_2 = 5c_1^2+36$ (equivalently, $c_1^2=2p_g-4$), if $c_1^2$ is even; or
\item $c_2 = 5c_1^2+30$ (equivalently, $c_1^2=2p_g-3$), if $c_1^2$ is odd.
\end{itemize}
A Horikawa surface is said to be of the \emph{first kind} or \emph{second kind} according to whether $c_1^2$ is even or odd, respectively.
}
\end{definition}

In the geography of Chern numbers for minimal surfaces of general type (see Figure~\ref{fig:chern-geography}), Horikawa surfaces lie on or near the boundary given by the Noether inequality $5c_1^2-c_2+36\geq0$ (equivalently, $c_1^2\ge 2p_g-4$).
Since these surfaces have minimal Chern slope $c_1^2/c_2\approx \frac{1}{5}$, they are the furthest of any general type surface from satisfying $c_1^2>c_2$.
Furthermore, they still fall outside the aforementioned relaxed bound of $c_1^2>\frac{3}{5}c_2$ \cite[Remark 18]{RoRo14}.
Consequently, the (pseudo-)hyperbolicity of Horikawa surfaces is expected to be among the most difficult to establish in dimension two.

\begin{figure}[htpb]
\centering
\begin{tikzpicture}[scale = 0.28, x=.1cm, y=.1cm, >=stealth]

\fill[black!8] (0,0) -- (100,300) -- (300,300) -- (300,52.8) -- (36,0) -- cycle;

\fill[green!15] (0,0) -- (100,300) -- (300,300) -- cycle;

\draw[->, thick] (-10,0) -- (320,0) node[right] {$c_2$};
\draw[->, thick] (0,-10) -- (0,320) node[above] {$c_1^2$};

\foreach \x in {50, 100, 150, 200, 250, 300} {
    \draw (\x, 4) -- (\x, -4) node[below=2pt] {\x};
}
\foreach \y in {50, 100, 150, 200, 250, 300} {
    \draw (4, \y) -- (-4, \y) node[left=2pt] {\y};
}

\draw[very thick, black] (0,0) -- (100,300) 
    node[pos=0.85, sloped, above] {$c_1^2 \le  3c_2$ (BMY)};

\draw[very thick, dashed, blue] (0,0) -- (150,300) 
    node[pos=0.8, sloped, above] {$c_1^2 > 2c_2$ (Lu--Yau \cite{LuYa90})};

\draw[very thick, dashed, orange!90!black] (0,0) -- (300,300) 
    node[pos=0.7, sloped, above] {$c_1^2 > c_2$ (Bogomolov \cite{Bog78}, McQuillan \cite{McQ98})};

\draw[very thick, red!80!black] (36,0) -- (300,52.8) 
    node[pos=0.75, sloped, above] {$5c_1^2 \ge c_2 - 36$ (Noether)};

\end{tikzpicture}
\caption{The geography of Chern numbers for minimal surfaces of general type. 
Valid Chern numbers are bounded by the Bogomolov--Miyaoka--Yau (BMY) and Noether inequalities. 
Conjecture~\ref{conj:ggl} is known to hold for surfaces in the green-shaded region ($c_1^2 > c_2$).}
\label{fig:chern-geography}
\end{figure}

In a seminal series of papers in the 1970s \cite{Hor75, HorI, HorII}, Horikawa extensively studied these surfaces, classifying them via their canonical images as certain double covers and describing their moduli spaces (see \S\ref{sec:prelim-horikawa}).
Our primary focus in this article is on very general Horikawa surfaces.

We collect results on hyperbolicity of Horikawa surfaces by \cite{RoRo13, Liu18, CoRi19} in the following list. 
In what follows, $\ff_d$ denotes the $d$-th Hirzebruch surface, which comes with a natural projection $\ff_d\to \pp^1$. We denote by $\Gamma$ the class of a fiber of this projection and by $\Delta_0$ the class of the zero-section 
($\Gamma^2 = 0$, $\Delta_0 \cdot \Gamma = 1$ and $\Delta_0^2=-d$). 
When $d=0$, we maintain the same notation by letting $\Delta_0$ and $\Gamma$ denote the fiber classes of the two natural projections.

\begin{enumerate}
    \item A very general quintic surface in $\pp^3$ is Demailly algebraically hyperbolic \cite{CoRi19}. 
    
    \noindent ($c_1^2=5, c_2=55$, $p_g=4$, second kind, type I)
    \item Double covers of $\pp^2$ branched over a very general degree 10 curve are Demailly algebraically hyperbolic \cite{    RoRo13, Liu18}. 
    \noindent ($c_1^2=8, c_2=76$, $p_g=4$, first kind, type $(\infty)$)
    \item Double covers of $\pp^2$ branched over a very general degree 8 curve contain elliptic curves, but not rational curves \cite{RoRo13}. 
    \noindent ($c_1^2=2, c_2=46$, $p_g=3$, first kind, type $(\infty)$)
    \item For any integer $n\ge 3$ divisible by 3, double covers of $\ff_{\frac{n-3}{3}}$ branched over a very general curve of class $6\Delta_0+2n\Gamma$ contain elliptic curves, but not rational curves \cite{RoRo13}.
    
    \noindent ($c_1^2=2n-2, c_2=10n+26$, $p_g=n+1$, first kind, type $(d)$)
\end{enumerate}

Prior to this work, the \textit{pseudo}-hyperbolicity of the Horikawa surfaces in (3) and (4) remained an open question. 
One of the contributions of this article is proving that these surfaces contain only finitely many elliptic curves, which we are able to explicitly describe and enumerate.
We note that the aforementioned authors also proved hyperbolicity results for other cyclic covers of the projective plane and of Hirzebruch surfaces $\mathbb{F}_d$ that are not necessarily Horikawa surfaces; see \cite[Theorems 32 and 40]{RoRo13} and \cite[Theorems 1.2 and 1.3]{Liu18}.

We say that a smooth complex projective surface of general type is \emph{pseudo-Lang algebraically hyperbolic} if it contains only finitely many rational or elliptic curves, and we say that it is \emph{Lang algebraically hyperbolic} if it does not contain any rational or elliptic curves.
Our main result is the following:

\begin{theorem}\label{theorem:main}
{\em
A very general Horikawa surface  with $p_g\ge 5$ is pseudo-Lang algebraically hyperbolic.
Moreover, so is a very general Horikawa surface of the first kind with $p_g\in \{3,4\}.$
}
\end{theorem}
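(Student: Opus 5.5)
The plan is to prove the stronger, quantitative form of the statement. There is a proper closed subset $E\subset S$, a finite union of curves, and an $\varepsilon>0$ such that every irreducible curve $C\not\subset E$ satisfies $2g(C)-2\ge \varepsilon\, C\cdot H$ for a fixed ample $H$. In particular no rational or elliptic curve lies outside $E$, so $S$ contains only finitely many of them.

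\textbf{Setup via Horikawa's classification.} By the classification recalled in \S\ref{sec:prelim-horikawa}, for each deformation type a very general Horikawa surface is a double cover $\pi\colon S\to W$. Here $W$ is $\pp^2$, a Hirzebruch surface $\ff_d$, or a quadric cone resolved to $\ff_2$. The branch curve $B$ is very general in a fixed linear system $|2L|$, for instance $6\Delta_0+2n\Gamma$ on $\ff_d$. The second kind is handled by the types in which $B$ contains $\Delta_0$ or has a prescribed singularity. I take $H=\pi^*A$ with $A$ ample on $W$, so that $C\cdot H$ is comparable to $D\cdot A$, where $D=\pi(C)$.

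\textbf{Reduction to a log bound on $(W,B)$.} Let $C\subset S$ be irreducible, let $D=\pi(C)$, and let $\nu\colon\tilde D\to D$ be the normalization. Let $i(D,B)$ denote the number of points of $\tilde D$ lying over $B$ at which $\nu^*B$ has odd multiplicity. There are two cases.
\begin{enumerate}
\item If $\pi^{-1}(D)=C$ is irreducible, Riemann--Hurwitz gives $2g(C)-2\ge 2(2g(\tilde D)-2)+i(D,B)$.
\item If $\pi^{-1}(D)$ splits, then $C\cong\tilde D$ birationally and $i(D,B)=0$, meaning $D$ is everywhere tangent to $B$.
\end{enumerate}
Both cases follow from a log-hyperbolicity estimate for the pair $(W,B)$: for very general $B\in|2L|$ and all $D$ outside a finite exceptional set,
\[
2g(\tilde D)-2+i(D,B)\ \ge\ \varepsilon'\, D\cdot A .
\]
In case (1) this bound together with $2g(\tilde D)-2\ge -2$ handles curves of small genus, and one can absorb the remaining terms. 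In case (2) one separately needs $2g(\tilde D)-2\ge\varepsilon' D\cdot A$ for totally tangent curves.

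\textbf{Proving the log bound.} I would follow the Clemens--Ein--Voisin--Pacienza method. Form the universal branch curve $\mathcal B\subset W\times|2L|$, and take a family of curves $\mathcal D\to U$ with prescribed contact orders that dominates $U$ up to étale base change. The normal bundle of $\mathcal D$ in $W\times U$ is compared with the twisted tangent sheaf, using global vector fields on $W$ together with the sections of $\oo(2L)$. The transversality to $\mathcal B$ then yields a nonzero section of $\omega_{\tilde D}(\text{odd contact points})\otimes\oo(-mA)$. This section produces the inequality. The exceptional curves have to be treated explicitly, namely those $D$ on which this section may vanish: fibers $\Gamma$ of $\ff_d$ that are tangent to $B$ at one or more points, lines in $\pp^2$ that are multitangent to $B$, and $\Delta_0$. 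Their preimages are finite in number, since tangency is a codimension-one condition on the one-dimensional family of fibers. These preimages form $E$, and counting them gives the explicit enumeration.

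\textbf{Main obstacle.} The hardest step is the low-degree situation. The branch class is only $6\Delta_0+2n\Gamma$, with degree six on fibers, or has degree $8$ or $10$ on $\pp^2$, so the positivity of $2L-K_W$ is barely enough for the vector-field twisting. The section may vanish identically for $D$ that are fibers or sections, and one must show these are the only such curves. I would first try the twist by $\oo(2L+K_W)$, using the globally generated $T_W(\Gamma)$ on $\ff_d$. I would then check case by case that the resulting inequality is strictly positive for every class $a\Delta_0+b\Gamma$ other than $\Gamma$ and $\Delta_0$. This is exactly where the hypotheses $p_g\ge5$, or first kind when $p_g\in\{3,4\}$, are needed.
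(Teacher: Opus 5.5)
There is a genuine gap, and it starts with your target. You aim for pseudo-Demailly hyperbolicity, but the log inequalities available in these linear systems leave no room for an $\varepsilon$. For $B\in|8H|$ on $\pp^2$, the sharp bound is $2g-2+|\nu^{-1}(B)|\ge 4e=\tfrac12 B\cdot D$ for every curve. On $\ff_d$, the excess over $\tfrac12 B\cdot D$ that these bounds guarantee is at most a multiple of $\Gamma\cdot D$, and this is not bounded below by $\varepsilon\, A\cdot D$ (take sections of growing class). The paper in fact leaves pseudo-Demailly hyperbolicity open, except for the degree-$10$ plane case.

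Your reduction is correspondingly too weak. An estimate $2g(\tilde D)-2+i(D,B)\ge\varepsilon' D\cdot A$ with unspecified $\varepsilon'$ only yields $2g(C)-2\ge -2+\varepsilon' D\cdot A$ when $\tilde D$ is rational. The ``absorption'' fails exactly for rational curves of small degree, which move in positive-dimensional families (lines, fibers, conics) and are where the elliptic curves live. For example, a fiber tangent to $B$ once has four odd contact points, satisfies your estimate, and lifts to an elliptic curve; nothing in the estimate separates it from other low-degree curves.

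What you need is the inequality with the exact constant, $2g(\tilde D)-2+|\nu^{-1}(B)|\ge\tfrac12 B\cdot D$, where $|\nu^{-1}(B)|$ counts \emph{all} distinct contact points. This is what the variational method controls, through the degree of the log normal sheaf; it does not produce a section of $\omega_{\tilde D}(\text{odd points})$. Combine it with $\sum_i(1-\tfrac1{m_i})\ge|\nu^{-1}(B)|-\tfrac12 B\cdot D$ as in Lemma~\ref{lemma:log-hyp-to-cover-hyp}. This gives $2g(C)-2\ge 0$: no rational curves off the ramification, and elliptic curves only in the equality case.

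The second missing idea is the identification of that equality case. You cannot check ``case by case that the resulting inequality is strictly positive for every class other than $\Gamma$ and $\Delta_0$'' when the bound is numerically borderline for every class. This happens for the plane octic ($p_g=3$, within the theorem) and for Case 1 on $|6\Delta_0+12\Gamma|$ on $\ff_2$. For the same reason, no twist of $T_W$ that costs positive degree on $D$ can be afforded there.

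The paper instead uses section-dominating line bundles $L_i$: $\oo(1)$ on $\pp^2$, and $\Delta_0+d\Gamma$ and $\Gamma$ on $\ff_d$. The argument runs as follows.
\begin{itemize}
\item A negative-degree log normal sheaf receives a generically surjective map from some $f^*M_{L_i}$ (Proposition~\ref{prop:log-normal-degree-bound}).
\item Equality forces the kernel to be a degree-$0$ subsheaf of a trivial bundle, hence trivial (Lemma~\ref{lemma:trivial-subbundle}), so $H^0(f^*M_{L_i})\neq 0$.
\item Then $f(C)$ lies in a member of $|L_i|$ (Proposition~\ref{prop:sections-of-kernel-bundles}), which forces a line, resp.\ a curve of class $\Gamma$ (or $\Delta_0+\Gamma$ if $d=1$).
\end{itemize}

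Finally, your setup omits the surfaces $W=\operatorname{Bl}_{x,y}\ff_d$ and $\operatorname{Bl}_x\ff_1$ needed for the second kind. It also treats $B$ as very general in a complete linear system. For the second kind, and for first-kind components whose general member is branched along $\Delta_0+D'$, the branch locus contains fixed curves $F$ such as $\widetilde\Gamma$, $\widetilde\Delta_0$ or $\Delta_0$, so only $D'$ varies. You must then prove that $T_W(-\log F)$ is pseudo-nef outside finitely many curves (such as $E_x$, $E_y$, $\widetilde\Delta_0$), and establish section domination for $D'$ off $q^{-1}(\Gamma_0)$.
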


For a fixed geometric genus $p_g$, the Gieseker moduli space of Horikawa surfaces of either kind consists of one or two irreducible components.
A \textit{very general} Horikawa surface refers to a surface parametrized by a very general point  in its respective component, that is, a point outside a countable union of proper Zariski closed subsets.
We detail this in \S\ref{sec:horikawa-moduli}.

Each component of the moduli space is stratified by the \textit{type} of the Horikawa surface (see \S\ref{sec:prelim-horikawa}).
Our methods yield explicit data: for a very general surface in nearly every stratum (with the exception of a few lower-dimensional strata for the second kind), we completely characterize and enumerate all rational and elliptic curves.
For a summary of these curve counts, organized by geometric genus and stratum, see \S\ref{sec:summary-1st}, \S\ref{sec:summary-2nd} and Tables~\ref{table:horikawa-count-1} and \ref{table:horikawa-count-2}.
We note that we do not consider Horikawa surfaces of the second kind with $p_g \in \{2,3,4\}$ in this article.

As an illustration of our contributions and the flavor of our results, we highlight several cases that sharpen results (3) and (4) from \cite{RoRo13}.

\begin{enumerate}
    \item Double covers of $\pp^2$ branched over a very general degree 8 curve $D$ contain no rational curves and 1320 elliptic curves, arising from bitangent lines to $D$. 
    
    \noindent ($c_1^2=2, c_2=46$, $p_g=3$, first kind, type $(\infty)$, see Corollary~\ref{cor:p2-horikawa-cover})
    
    \item Double covers of $\ff_0\simeq\pp^1\times \pp^1$ branched over a very general curve $D\in |6\Delta_0+6\Gamma|$ contain no rational curves and 120 elliptic curves, arising from fibers of the natural projections $\ff_0\to \pp^1$ that are tangent to $D$. 

    \noindent ($c_1^2=4, c_2=56$, $p_g=4$, first kind, type $(0)$, see Corollary~\ref{cor:hirzebruch-f0-horikawa-cover})
    
    \item Fix an integer $n\ge 5$ and an integer $d<\frac{n-1}{3}$ such that $n-d$ is odd.
    Double covers of $\ff_d$ branched over a very general curve $D\in |6\Delta_0+(n+3+3d)\Gamma|$ contain no rational curves and $10n+30$ elliptic curves arising from curves of class $\Gamma$ that are tangent to $D$.
    
    \noindent($c_1^2=2n-2$, $c_2=10n+26$, $p_g=n+1\ge6$, first kind, type $(d)$, see Corollaries~\ref{cor:hirzebruch-horikawa-cover} and \ref{cor:hirzebruch-f0-horikawa-cover})
\end{enumerate}
Finally, we remark that, except for double covers of $\pp^2$ branched over a very general degree 10 curve, it remains open whether or not the surfaces addressed in Theorem~\ref{theorem:main} are pseudo-Demailly algebraically hyperbolic (see Definition~\ref{def:alg-hyp}).\\

\noindent \textbf{Proof ideas.}
By Horikawa's classification, the surfaces in question are birational to double covers $S'$ of certain smooth, regular rational surfaces $W$, branched over an explicitly described locus $D.$
$W$ is either the projective plane $\pp^2$, Hirzebruch surfaces $\ff_d$, or their blowups at one or two points.
Our starting point for studying the hyperbolicity of $S'$ is the framework utilized by Roulleau and Rousseau \cite{RoRo13}:
curves on the double cover $S'$ induce orbifold curves mapping to the orbifold pair $(W,\frac{1}{2}D)$. 
Consequently, one can bound the genus and degree of these curves by using results about the log algebraic hyperbolicity (see Definition~\ref{def:alg-hyp-log}) of the log pair $(W,D)$.

The aforementioned results in \cite{RoRo13} relied on Chen's log algebraic hyperbolicity inequalities on $(\pp^2,D)$ and $(\ff_d,D)$ for a very general basepoint-free $D$, which were established by degenerating $D$ into a union of components (see \cite{Che01}).
Chen's bounds were optimal in the sense that they detect exactly when such $(W,D)$ is strictly log algebraically hyperbolic or not. 
The primary innovations of this paper are more refined log algebraic hyperbolicity inequalities (e.g. Propositions \ref{prop:p2-horikawa} and \ref{prop:hirzebruch}), which may be of independent interest.
Building on the techniques of \cite{CRY22, ATY24, IMRY25}, we connect the log algebraic hyperbolicity of very general pairs $(W,D)$ to:
\begin{enumerate}
    \item the positivity of the log tangent bundle $T_W(-\log F)$ with respect to certain fixed components $F\subset D$, and
    \item section-dominating collections (see Definition~\ref{def:sec-dom}) of line bundles $L_i$ for the basepoint-free part $D'\subset D.$
\end{enumerate}
See Proposition~\ref{prop:log-normal-degree-bound} and Remark~\ref{rem:log-normal-degree-bound}.

Crucially, this approach sometimes allows us to explicitly characterize the ``exceptional'' curves that violate these inequalities and the ``extreme'' curves that achieve strict equality. 
Specifically,  equality in our bounds sometimes forces the restricted kernel bundle $M_{L_i}\vert_C$ to admit global sections, which forces $C$ to lie within some section of $L_i$.
We utilize this improvement on $\pp^2$ and $\ff_d$, and we extend these methods to analyze blowups of $\ff_d$ at one or two points on the same fiber.\\

\noindent \textbf{Outline.} In \S\ref{sec:prelim-horikawa}, we review the preliminaries concerning Horikawa surfaces and their moduli spaces. 
In \S\ref{sec:prelim-hyp}, we discuss the hyperbolicity of log pairs, focusing on surface pairs $(W,D)$ where $D$ is a very general divisor. 
We formalize the methods outlined above,
presenting both the variational argument used to study the log algebraic hyperbolicity of $(W,D)$ (see Proposition~\ref{prop:log-normal-degree-bound}) and how to lift this to the hyperbolicity of the double cover $S'$ (see Lemma~\ref{lemma:log-hyp-to-cover-hyp}).
Finally, in \S\ref{sec:horikawa-hyp}, we apply this framework to the specific pairs $(W,D)$ and double covers $S'$ that give rise to very general Horikawa surfaces.\\

\noindent\textbf{Acknowledgements.}
This project was initiated during the 2025 Junior Algebraic Geometry Workshop held at the University of California, Los Angeles.
The authors thank Joaqu\'in Moraga for suggesting the problem and for many helpful discussions and comments, and Burt Totaro, Giancarlo Urz\'ua and Juan Pablo Z\'u\~niga for their valuable comments on an earlier draft.
WY was supported by an AMS-Simons Travel Grant. 
AA was partially supported by NSF grant DMS-2401387 (PI Jenia Tevelev). 

\section{Preliminaries on Horikawa surfaces}
\label{sec:prelim-horikawa}

In this section, we summarize the classification and moduli of Horikawa surfaces, drawing primarily from the foundational work of Horikawa \cite{Hor75, HorI, HorII}. 
Specifically, we describe these surfaces of a given geometric genus $p_g$ as double covers of certain rational surfaces, giving the necessary and sufficient conditions on the linear system and the singularities of their branch loci. 
We also review the moduli space of Horikawa surfaces of a given geometric genus $p_g$ and characterize the general point in each component. 
While we do not claim originality for the results in this section, our goal is to present Horikawa's classification in a streamlined manner that highlights precisely where our new contributions apply.

\subsection{Horikawa surfaces}
\label{sec:horikawa-classification}
It is well-known that minimal smooth complex algebraic surfaces $S$ of general type must satisfy Noether's inequality $c_{1}^2\geq 2p_g-4$. 
In the articles cited above, Horikawa classified all the surfaces of general type satisfying $c_1^2=2p_g-4$, resp. $c_1^2=2p_g-3$, which we refer to as \emph{Horikawa surfaces of the first, resp. second, kind}. 

\subsubsection{Horikawa surfaces of the first kind}
Following the presentation in \cite{HorI}, the numerical invariants $p_g,c_1^2$ and $c_2$ of a Horikawa surface $S$ of the first kind are determined by an integer $n\ge 2$ as follows:
\begin{itemize}
    \item $p_g=n+1$,
    \item $c_1^2=2p_g-4=2n-2$, and
    \item $c_2=5c_1^2+36$.
\end{itemize}

For these surfaces, the linear system $|K_S|$ is basepoint-free. 
$S$ arises as the minimal resolution of singularity of a double cover $S'$ of a rational surface $W$, which is either $\pp^2$ or a Hirzebruch surface $\ff_d$ for $d\ge 0$, branched over a reduced divisor $D$ that is said to have \emph{no infinitely near triple points}, defined as follows.

\begin{definition}\label{def:no_inf_near_triple_points} \cite{HorI} 
{\em
Let $W$ be a smooth projective surface. We say that a reduced effective divisor $D\subset W$ has \emph{no infinitely near triple points }if the following conditions are satisfied:
\begin{enumerate}
    \item $D$ has no singular points of multiplicity $4$, and
    \item every triple point $s$ of $D$  (if any) decomposes into a singularity of multiplicity $\leq 2$ after a blow-up with center at $s$.
\end{enumerate}
}
\end{definition}

In this article, unless otherwise noted, the branch loci $D$ are always reduced divisors that do not have infinitely near triple points.

When $n=2$, $S'$ can be taken as a double cover of $\pp^2$ branched over a divisor $D\in |8H|$.
When $n=5$, depending on the canonical image of $S$, $S'$ may also be a double cover of $\pp^2$, in this case it is branched over a divisor $D\in |10H|$.
Horikawa referred to these two families of surfaces as being of type $(\infty)$.

When $n \ge 3$, the surface $S'$ may be a double cover of a Hirzebruch surface $\mathbb{F}_d$, where $0 \le d \le \min\{n-3, \frac{n+3}{2}\}$ and $n - d$ is odd. 
In this case, the branch divisor $D$ is in the linear system $|6\Delta_0+(n+3+3d)\Gamma|$. 
These are referred to as being of type $(d)$. 
A general element of the linear system $|6\Delta_0 + (n + 3 + 3d)\Gamma|$ is smooth whenever $d \le \frac{n+3}{3}$. 
Outside of that range, i.e. whenever $\frac{n+3}{3} < d \le \frac{n+3}{2}$, a general element is reducible and takes the form $\Delta_0 \cup D'$, where $D'$ is a general, smooth element of $|5\Delta_0 + (n + 3 + 3d)\Gamma|$. 
In this latter case, $\Delta_0$ intersects $D'$ transversely at $n + 3 - 2d$ points; in particular, they are disjoint when $d = \frac{n+3}{2}$.

When $n=3,4$ or $5$, $S'$ may be a double cover of $\ff_{n-1}$ branched over a divisor $D$ in $|6\Delta_0+4n\Gamma|$.
These surfaces are said to be of type $(2'),(3')$ or $(4')$, respectively.
A general element of $|6\Delta_0+12\Gamma|$ is smooth, while a general element of $|6\Delta_0+4n\Gamma|$ with $n=4$ or $5$ is reducible of the form $\Delta_0\cup D'$ where $D'$ is a general, smooth element of $|5\Delta_0+4n\Gamma|.$ 
These two components intersect at one point when $n=4$ while they are disjoint when $n=5.$

This is the exhaustive classification. In particular, when $n\ge 6$, all Horikawa surfaces of the first kind fall into type $(d)$.

\begin{table}[htbp]
    \centering
    \caption{Summary of classification of Horikawa surfaces of the first kind.}
    \label{table:horikawa1}
    \small
    \begin{tabularx}{\textwidth}{l |l l l l l}
        \toprule
        \textbf{$n$} & \textbf{Type} & \textbf{$W$} & \textbf{Linear system of $D$} & \textbf{General $D$} & \textbf{$\Delta_0 \cdot D'$} \\ 
        \midrule
        2 & $(\infty)$ & $\mathbb{P}^2$ & $|8H|$ & Smooth & - \\
        \midrule
        5 & $(\infty)$ & $\mathbb{P}^2$ & $|10H|$ & Smooth & - \\
        \midrule
        \multirow{3}{*}{$\ge3$} & \multirow{3}{*}{\parbox{2.5cm}{$(d)$, with $n-d$ odd and $0\le d \le \min\{n-3, \frac{n+3}{2}\}$}} & \multirow{3}{*}{$\mathbb{F}_d$} & \multirow{3}{*}{$|6\Delta_0+(n+3+3d)\Gamma|$} & Smooth (if $d \le \frac{n+3}{3}$) & - \\
         & & & & $\Delta_0\cup D'$ (if $d > \frac{n+3}{3}$) & $n+3-2d$ \\
         &\\
        \midrule
        \multirow{3}{*}{$3, 4, 5$} & \multirow{3}{*}{$(2'), (3'), (4')$} & \multirow{3}{*}{$\mathbb{F}_{n-1}$} & \multirow{3}{*}{$|6\Delta_0+4n\Gamma|$} & Smooth ($n=3$) & - \\
         & & & & $\Delta_0\cup D'$ ($n=4$) & $1$ \\
         & & & & $\Delta_0\sqcup D'$ ($n=5$) & $0$ \\
        \bottomrule
    \end{tabularx}
\end{table}

\begin{remark}
For surfaces of type $(d)$ with $d > \frac{n+3}{3}$ and a general component $D'$, the transverse intersection of $\Delta_0$ and $D'$ implies that the double cover $S'$ branched over $D = \Delta_0 \cup D'$ possesses an $A_1$ singularity over each intersection point. Locally, each intersection point corresponds to a singularity of the form $xy = z^2$, which is analytically equivalent to a standard $A_1$ singularity.
\end{remark}

\begin{remark}
Among these cases, there is a distinguished one in terms of moduli: when $D'$ and $\Delta_0$ are disjoint, which only occurs when $d=\frac{n+3}{2}$.
In this setting, the condition that $n-d$ is odd implies that $(2d-3)-d=d-3$ is odd, so $d$ must be even and there is an integer $k$ such that $d=2k+2$.  
Since $c_{1}^2=2n-2=2(2d-3)=4d-8=8k$, we conclude that this particular case can only occur when $c_1^2$ is a multiple of $8$.    
\end{remark}

\subsubsection{Horikawa surfaces of the second kind \cite{HorII}}
The numerical invariants $p_g,c_1^2$ and $c_2$ of a Horikawa surface $S$ of the second kind is determined by an integer $n\ge 1$ as follows:
\begin{itemize}
    \item $p_g=n+1$,
    \item $c_1^2=2p_g-3=2n-1$, and
    \item $c_2=5c_1^2+30$.
\end{itemize}

In this article, we focus on the case $n \ge 4$, where $|K_S|$ possesses a unique base point. 
Let $\pi: \widetilde{S} \to S$ denote the blowup of $S$ at this base point. 
Then $\widetilde{S}$ is the minimal resolution of singularities of a double cover $S'$ of a rational surface $W$, which is either a Hirzebruch surface $\mathbb{F}_d$ or a blowup of $\mathbb{F}_d$ at one or two points lying on the same fiber $\Gamma_0$, and the branch locus $D \subset W$ is a reduced divisor with no infinitely near triple points (see Definition~\ref{def:no_inf_near_triple_points}).
We describe the details in the following paragraphs.
For the classification in the low geometric genus cases, i.e. when $n=1,2,3$, please refer to \cite{Hor75} and \cite[Section 2]{HorII}.

We formalize the blowup of $\mathbb{F}_d$ at two points $x$ and $y$ lying on the same fiber $\Gamma_0$ as follows. 
Let $q_1:W_1\to \ff_d$ be the blowup of $\ff_d$ at $x \in \Gamma_0$, and let $\Gamma_1$ be the proper transform of $\Gamma_0$. 
We then define $q_2: W \to W_1$ as the blowup at a second point $y \in \Gamma_1$. 
If $y$ lies on the intersection of $\Gamma_1$ and the exceptional divisor of $q_1$ (over $x$), we say $y$ is \emph{infinitely near} to $x$.
We denote the composed map by $q = q_1 \circ q_2: W\to \ff_d$. 
Let $\widetilde{\Gamma}$ be the proper transform of $\Gamma_0$, and let $E_x$ and $E_y$ be the exceptional curves over $x$ and $y$ under $q$. 
By a slight abuse of notation, when $d>0$, we write $y \in \Delta_0$ if $y$ lies on the full transform of $\Delta_0$ under $q_1$. 

\begin{figure}[htbp]
    \centering
    \begin{tikzpicture}[scale=1.2]
        
        \begin{scope}[shift={(0,0)}]
            \node at (1, 2.7) {$W$};
            \draw[thick] (0,0) -- (2,0) node[right] {};
            \draw[thick, blue] (0.3, 1.8) -- (1.5, 1.2) node[above right] {$E_x$};
            \draw[thick, red] (0.3, 0.9) -- (1.5, 0.3) node[above right] {$E_y$};
            \draw[thick] (1,-0.5) -- (1,2) node[above left] {$\widetilde{\Gamma}$};
        \end{scope}

        \draw[->, thick] (2.8, 1) -- (3.8, 1) node[midway, above] {$q_2$};

        \begin{scope}[shift={(4.5,0)}]
            \node at (1, 2.7) {$W_1$};
            \draw[thick] (0,0) -- (2,0) node[right] {};
            \draw[thick, blue] (0.3, 1.8) -- (1.5, 1.2) node[above right] {};
            \draw[thick] (1,-0.5) -- (1,2) node[above left] {$\Gamma_1$};
            \filldraw (1,0.7) circle (1.5pt) node[right] {$y$};
        \end{scope}

        \draw[->, thick] (7.3, 1) -- (8.3, 1) node[midway, above] {$q_1$};

        \begin{scope}[shift={(9,0)}]
            \node at (1, 2.7) {$\mathbb{F}_d$};
            \draw[thick] (0,0) -- (2,0) node[right] {$\Delta_0$};
            \draw[thick] (1,-0.5) -- (1,2) node[above left] {$\Gamma_0$};
            \filldraw (1,1.5) circle (1.5pt) node[right] {$x$};
        \end{scope}

    \end{tikzpicture}
    \caption{The sequence of blowups $q=q_2\circ q_1$ for generic choices of $x\in \Gamma_0$ and $y\in \Gamma_1$.}
    \label{fig:blowup_sequence}
\end{figure}

When $n \ge 4$, the surface $S'$ may be a double cover of the blowup $W$ of $\mathbb{F}_d$ at two points $x,y\in\Gamma_0$, as described in the previous paragraph.
The integer $d$ and the points $x,y$ are subject to the global constraints $d\le n-3$ and $n-d$ odd, and must fall into one of the following four cases:
\begin{enumerate}
    \item  $d=0$. There are no further conditions on $x$ and $y$. (Note that here we use $\Delta_0$ to denote a fiber of the opposite fibration, and it is not unique.)
    \item $0<d\le \frac{n+4}{3}$. Neither $x$ nor $y$ lies on $\Delta_0$, and they may or may not be infinitely near.
    \item $0<d\le \frac{n+1}{2}$. $x \in \Delta_0$, and $x$ and $y$ are infinitely near.
    \item $0<d\le \frac{n+2}{2}$. $x \in \Delta_0$ and $y \notin \Delta_0$, hence they are not infinitely near.
\end{enumerate}

These surfaces are referred to as being of type $(d)$. 
In all such cases, the branch divisor $D$ belongs to the linear system $|6q^*\Delta_0 + (n+5+3d)q^*\Gamma - 4E_x - 4E_y|$. 
Assuming $x$ and $y$ are not infinitely near, the structure of a general element in this system depends on $d$:
\begin{itemize}
\item When $d \le \frac{n+4}{3}$ and neither $x$ nor $y$ lies on $\Delta_0$ if $d>0$, a general $D$ is a disjoint union $\widetilde{\Gamma} \cup D'$, where $D'$ is a general, smooth element of $|6q^*\Delta_0 + (n+4+3d)q^*\Gamma - 3E_x - 3E_y|$.
\item When $\frac{n+4}{3} < d \le \frac{n+2}{2}$, which forces $x\in \Delta_0$ and $y\notin \Delta_0$, a general $D$ is a union $\widetilde{\Gamma} \cup \widetilde{\Delta}_0 \cup D'$, where $D'$ is a general, smooth element of $|5q^*\Delta_0 + (n+4+3d)q^*\Gamma - 2E_x - 3E_y|$. 
In this case, $\widetilde{\Gamma}$ is disjoint from the other two components, and $\widetilde{\Delta}_0$ intersects $D'$ transversely at $n + 2 - 2d$ points (becoming completely disjoint when $d = \frac{n+2}{2}$).
\end{itemize}
For these surfaces of type $(d)$, the map $\pi$ contracts the proper transform of $\widetilde{\Gamma}$.

When $n=5$, $S'$ may also be a double cover of $\mathbb{F}_2$ branched over a divisor $D \in |8\Delta_0+14\Gamma|$. 
A general such $D$ is a disjoint union $\Delta_0 \cup D'$, with $D'$ a general, smooth element of $|7\Delta_0+14\Gamma|$. 
For these surfaces, referred to as being of type $(2^*)$, $\pi$ contracts the proper transform of $\Delta_0$.

When $n=4$, there are two additional cases:
\begin{itemize}
\item \emph{Type $(1^*)$:} 
The surface $S'$ is a double cover of the blowup of $\mathbb{F}_1$ at a single point $x \in \Delta_0$, branched over a divisor $D \in |8q^*\Delta_0+10q^*\Gamma-4E_x|$. 
A general $D$ is a disjoint union $\widetilde{\Delta}_0 \cup D'$, where $D' \in |7q^*\Delta_0+10q^*\Gamma-3E_x|$ is smooth and general. 
Here, $\pi$ contracts the proper transform of $\widetilde{\Delta}_0$.
\item \emph{Type $(3')$:} The surface $S'$ is a double cover of the blowup of $\mathbb{F}_3$ at two points $x \in \Delta_0$ and $y \notin \Delta_0$ not infinitely near, branched over $D \in |6q^*\Delta_0+18q^*\Gamma-4E_x-4E_y|$. 
Similar to some type $(d)$ cases, a general $D$ is a disjoint union $\widetilde{\Gamma} \cup \widetilde{\Delta}_0 \cup D'$, where $D'$ is a general, smooth element of $|5q^*\Delta_0+17q^*\Gamma-2E_x-3E_y|$. 
Here, $\pi$ contracts the proper transform of $\widetilde{\Gamma}$.
\end{itemize}

This gives the exhaustive classification for $n \ge 4$. When $n \ge 6$, all Horikawa surfaces of the second kind are of type $(d)$.

\begin{table}[htbp]
    \centering
    \caption{Summary of classification of Horikawa surfaces of the second kind for $n\ge 4$.}
    \label{table:horikawa2}
    \small
    \begin{tabularx}{\textwidth}{l |l l l l}
        \toprule
        \textbf{$n$} & \textbf{Type} & \textbf{$W$} & \textbf{Linear system of $D$} & \textbf{General $D$} \\ 
        \midrule
        \multirow{3}{*}{$\ge4$} & \multirow{3}{*}{\parbox{2.5cm}{$(d)$, with $n-d$ odd and $0\le d \le \min\{n-3, \frac{n+2}{2}\}$}} & \multirow{3}{*}{$\operatorname{Bl}_{x,y}\mathbb{F}_d$} & \multirow{3}{*}{\parbox{3.5cm}{$|6q^*\Delta_0 + (n+5+3d)q^*\Gamma\\ - 4E_x - 4E_y|$}} & $\widetilde{\Gamma}\sqcup D'$ (if $d \le \frac{n+4}{3}$)  \\
         & & & & $\widetilde{\Gamma}\sqcup (\widetilde{\Delta}_0\cup D')$ and $\widetilde{\Delta}_0\cdot D'=n+2-2d$ \\ & & & & \hspace{1.5in}(if $d > \frac{n+4}{3}$)\\
        \midrule
        5 & $(2^*)$ & $\ff_2$ & $|8\Delta_0+14\Gamma|$ & $\Delta_0\sqcup D'$ \\
        \midrule
        4 & $(1^*)$ & $\operatorname{Bl}_x\ff_1$ & $|8q^*\Delta_0+10q^*\Gamma-4E_x|$ & $\widetilde{\Delta}_0\sqcup D'$ \\
        \midrule
        4 & $(3')$ & $\operatorname{Bl}_{x,y}\mathbb{F}_3$ & $|6q^*\Delta_0+18q^*\Gamma-4E_x-4E_y|$ & $\widetilde{\Gamma}\sqcup \widetilde{\Delta}_0\sqcup D'$\\
        \bottomrule
    \end{tabularx}
\end{table}

\subsection{Moduli of Horikawa surfaces}
\label{sec:horikawa-moduli}
Recall that if $S$ is a minimal smooth complex algebraic surface of general type, then $K_S$ is nef and $K_{S}^2>0$. 
The Hodge index theorem and the adjunction formula imply that the only irreducible curves preventing $K_S$ from being ample are $(-2)$-curves. 
The number of $(-2)$-curves on $S$ is finite and it is at most $\rho(S)-1$, where $\rho(S)$ is the Picard number of $S$.
On the subspace of $H_2(S,\mathbb{Q})$ spanned by these $(-2)$-curves, the intersection form is negative definite.
After contracting all $(-2)$-curves in $S$, we obtain a normal surface $S_{\operatorname{can}}$ such that the canonical divisor $K_{S_{\operatorname{can}}}$ is ample. 
Each connected component of the union of $(-2)$-curves is contracted to a Du Val singularity in $S_{\operatorname{can}}$ and the surface $S_{\operatorname{can}}$ is called the \emph{canonical model} of $S$. 

For fixed positive integers $c_1^2$ and $c_2$, Gieseker \cite{MR498596} proved that there is a quasi-projective coarse moduli space $\mathcal{M}_{c_1^2,\chi(\mathcal{O)}}$ parametrizing isomorphism classes of these canonical models (which correspond to isomophism classes of the minimal surfaces $S$) with $c^2_1(S)=c_1^2$ and $c_2(S)=c_2$. 
These invariants are related via Noether's formula: $12\chi(\mathcal{O}_S)=c_1^2+c_2.$

Following the notation in \cite{CiPa25}, for $n\ge3$ we let $\mathcal{M}^{\operatorname{Hor},1}_{n}$ be the loci in the Gieseker moduli space $\mathcal{M}_{2n-2,n+2}$ of the Horikawa surfaces of the first kind of type $(d)$. 
When $n=2$, all Horikawa surfaces of the first kind are of type $(\infty)$, and $\mathcal{M}_{2,4}$ is irreducible of dimension $\dim \vert{}\mathcal{O}_{\mathbb{P}^2}(8)\vert{}-\dim \operatorname{PGL}_3 = 44-8=36$.
For $n\ge 4,$ we denote by $\mathcal{M}^{\operatorname{Hor},2}_{n}$ the loci in the Gieseker moduli space $\mathcal{M}_{2n-1,n+2}$ of the Horikawa surfaces of the second kind of type $(d)$. 

In terms of moduli, Horikawa's results from \cite{HorI, HorII} can be summarized in Table~\ref{table:horikawa-moduli} and the following two theorems which focus on the moduli of type $(d)$ Horikawa surfaces.
We say that a property holds for a \textit{general} point of a moduli space $M$ if it holds on some nonempty Zariski open subset of $M$, and a property holds for a \textit{very general} point of $M$ if it holds outside a countable union of proper closed subsets.

\begin{theorem}[Moduli of Horikawa surfaces of the first kind, see Theorem 3.3 in \cite{CiPa25}]
\label{theorem:moduli-1st}
{\em
For a fixed $n\ge 3$,
    \begin{enumerate}
    \item $\mathcal{M}^{\operatorname{Hor},1}_{n}$ is a union of irreducible components of $\mathcal{M}_{2n-2, n+2}$; for $n \ge 6$, it coincides exactly with $\mathcal{M}_{2n-2, n+2}$.
    \item If $4 \nmid n-1$ (or equivalently, $8 \nmid c_1^2$), then $\mathcal{M}^{\operatorname{Hor},1}_{n}$ is irreducible of dimension $7n+21$. For $n \ge 6$, a general point in this moduli space is of type $(0)$ or $(1)$ whose branch locus is a smooth, irreducible curve that is general in its linear system.

\item If $n-1 = 4k$ with $k \ge 2$, then $\mathcal{M}^{\operatorname{Hor},1}_{n}$ consists of two disjoint irreducible components, each of dimension $7n+21$:
\begin{enumerate}
    \item A component $\mathcal{M}^{\operatorname{Hor},1a}_{4k+1}$ parametrizing surfaces of type $(d)$ with $d$ even and $0 \le d \le 2k$. A general point in this component is of type $(0)$, branched over a smooth, irreducible curve.
    \item A component $\mathcal{M}^{\operatorname{Hor},1b}_{4k+1}$ parametrizing surfaces of type $(2k+2)$. A general point in this component is of type $(2k+2)$, branched over a disjoint union of two smooth, irreducible curves.
\end{enumerate}
In both cases, for a general point, the branch loci are general in their respective linear systems.
    \end{enumerate}
}
\end{theorem}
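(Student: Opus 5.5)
We would derive this theorem from Horikawa's classification of the first kind (\S\ref{sec:horikawa-classification}) by counting the parameters of each stratum and understanding how the strata specialize to one another. The theorem is attributed to \cite{CiPa25} and ultimately to \cite{HorI}, so the plan is a reconstruction of that argument.

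\textbf{Step 1: dimension of each stratum of type $(d)$.} For each admissible $d$, the stratum of type $(d)$ is the image of the open subset of $|6\Delta_0+(n+3+3d)\Gamma|$ consisting of divisors with no infinitely near triple points, modulo $\operatorname{Aut}(\ff_d)$. We compute $h^0(\ff_d,\oo(6\Delta_0+m\Gamma))$ with $m=n+3+3d$ by summing $h^0(\pp^1,\oo(m-id))$ for $i=0,\dots,6$.
\begin{itemize}
\item If $d\le \frac{n+3}{3}$, all summands are nonnegative, and we get $7m-21d+7=7n+28$. Subtracting $\dim\operatorname{Aut}(\ff_d)=d+5$ (or $6$ when $d=0$) and $1$ for projectivization gives $7n+22-d$, which must be corrected below.
\item If $d>\frac{n+3}{3}$, the last summand vanishes and $\Delta_0$ is a fixed component.
\end{itemize}
The correct count comes from the deformation theory of the double cover rather than the naive count. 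By \cite{HorI}, a surface of type $(d)$ deforms to type $(d-2)$, and we have $h^1(T_S)-h^2(T_S)=10\chi-2K^2=7n+24-\dots$ We would instead compute $h^1(\Theta_S)=7n+21$ for a general surface of type $(0)$ or $(1)$ and check $h^2(\Theta_S)=0$. This gives smoothness of the moduli space of dimension $7n+21$ at such points. The computation uses the splitting $\pi_*\Theta_S=\Theta_W(-\log D)\oplus \Theta_W\otimes\oo(-D/2)$ and vanishing on $\ff_0,\ff_1$.

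\textbf{Step 2: irreducibility and adjacency.} Horikawa shows that any surface of type $(d)$ with $d\le\frac{n+3}{2}$ deforms to type $(d-2)$. Geometrically, $\ff_d$ degenerates from $\ff_{d-2}$, and the branch divisor extends because $H^1$ of the relevant line bundle vanishes, except when $D$ contains a disjoint $\Delta_0$. Iterating this, every type $(d)$ surface lies in the closure of the type $(0)$ or type $(1)$ stratum, according to the parity of $d$ (which is fixed by the parity of $n$). Each such stratum is irreducible, being a quotient of an open subset of a projective space. This gives irreducibility and shows that the general point is of type $(0)$ or $(1)$ with smooth general branch locus. The $n\ge6$ equality with $\mathcal M_{2n-2,n+2}$ follows from the exhaustiveness of the classification, since types $(\infty)$ and $(k')$ only occur for $n\le5$.

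\textbf{Step 3, the main obstacle: the exceptional case $n=4k+1$, $d=2k+2$.} Here $D=\Delta_0\sqcup D'$ with $\Delta_0$ disjoint from $D'$. The double cover then contains the preimage of $\Delta_0$, which is a curve with self-intersection $-d/2$ fixed under deformation by a topological invariant. Horikawa shows that this invariant, namely the divisibility of $K_S$ or the $\mathbb Z/2$ class in $H^2$, distinguishes the two components. We would show the stratum does not lie in the closure of the type $(0)$ stratum by exhibiting this discrete invariant, which is constant in families. We would then show its dimension is also $7n+21$ by the direct count $h^0(5\Delta_0+(n+3+3d)\Gamma)-1-\dim\operatorname{Aut}$. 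Disjointness of the two components follows because the invariant cannot jump.
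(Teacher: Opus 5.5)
The paper gives no proof of this statement; it is quoted from \cite{CiPa25}, which repackages Horikawa's results. Measured against Horikawa's argument, your outline has the right shape: stratify by type, show $(d)$ is adjacent to $(d-2)$, and isolate $d=\frac{n+3}{2}$.

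\textbf{Step 3 fails.} This is the step that produces the second component in part (3). You propose to separate type $(2k+2)$ from the type $(0)$ family by a discrete topological invariant such as $2$-divisibility of $K_S$. That works only for $k$ odd.

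Type $(0)$ surfaces always have odd intersection form. Specialize $D$ to be tritangent to a fiber; the preimage of that fiber splits into two smooth rational curves $A,A'$ with $A\cdot A'=3$, so $A^2=-3$.

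For type $(2k+2)$, let $R$ be the reduced preimage of $\Delta_0$ and $F$ the preimage of a fiber. Then $K_S=2R+(3k+1)F$ and $R^2=-(k+1)$. So for $k$ even (e.g.\ $c_1^2=16$) these surfaces also have odd form. Both families are simply connected with the same Chern numbers, hence homeomorphic by Freedman; whether they are diffeomorphic is a well-known open problem. No invariant of the kind you propose can do the job.

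What separates the components is deformation theory. Take $W=\mathbb{F}_d$, $D=\Delta_0\sqcup D'\sim 2L$ and $\pi_*\Theta_S=\Theta_W(-\log D)\oplus\Theta_W(-L)$. One finds:
\begin{itemize}
\item $H^1(\Theta_W(-L))=0$ exactly when $k\ge 2$;
\item $H^1(\Theta_W(-\log D))$ has dimension $14d=7n+21$ and maps to zero in $H^1(\Theta_{\mathbb{F}_d})$, because $H^1(\Theta_{\mathbb{F}_d})\to H^1(\mathcal{O}_{\Delta_0}(\Delta_0))$ is an isomorphism.
\end{itemize}
Hence the family of double covers of $\mathbb{F}_d$ branched over $\Delta_0\sqcup D'$ has surjective Kodaira--Spencer map. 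Every small deformation stays of type $(2k+2)$, so the stratum is open. It is closed because $d$ can only increase under specialization and $\frac{n+3}{2}$ is maximal.

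This is also what bounds the dimension of that component from above. Your direct count only computes the dimension of the stratum. The same count for $k=1$ gives $56$ for type $(4')$, which in fact deforms to double planes and lies in a $57$-dimensional component. Your argument never uses $k\ge 2$, which signals that this ingredient is missing.

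\textbf{Step 1 is wrong as written.} The claim $h^2(\Theta_S)=0$ fails for $n\ge4$. We have $h^1(\Theta_S)-h^2(\Theta_S)=10\chi-2K_S^2=6n+24$, so $h^1=7n+21$ forces $h^2(\Theta_S)=n-3$; this is exactly $h^2(\Theta_W(-L))$.

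Smoothness of the Kuranishi space at type $(0)$ and $(1)$ points has to come instead from the double-cover family having surjective Kodaira--Spencer map. There $H^1(\Theta_W(-L))=0$ and $H^1(\Theta_W)=0$, so all of $H^1(\Theta_S)$ is realized by moving $D$. A smooth family mapping to the Kuranishi germ with surjective differential forces that germ to be smooth, whatever $H^2$ is.

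\textbf{Step 2 is also wrong as written.} The vanishing $H^1(\mathbb{F}_d,\mathcal{O}(6\Delta_0+(n+3+3d)\Gamma))=0$ holds only for $3d\le n+4$; otherwise the group has dimension $3d-n-4$. For $\frac{n+5}{3}\le d<\frac{n+3}{2}$ (e.g.\ $n=8$, $d=5$) the curve $\Delta_0$ meets $D'$, so this range is not covered by your ``disjoint $\Delta_0$'' exception.

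In that range $h^0(\mathcal{O}(D))$ on $\mathbb{F}_d$ is strictly larger than on the nearby $\mathbb{F}_{d-2}$. So branch divisors need not extend along a degeneration $\mathbb{F}_{d-2}\rightsquigarrow\mathbb{F}_d$. The adjacency $(d)\to(d-2)$ therefore needs a separate argument, one that also smooths the nodes $\Delta_0\cap D'$.
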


\begin{theorem}[Moduli of Horikawa surfaces of the second kind for $n\ge 4$, see Theorem 3.5 in \cite{CiPa25}]
\label{theorem:moduli-2nd}
{\em
For a fixed $n\ge 4$,
    \begin{enumerate}
        \item $\mathcal{M}^{\operatorname{Hor},2}_{n}$ is a union of irreducible components of $\mathcal{M}_{2n-1, n+2}$; for $n \ge 6$, it coincides exactly with $\mathcal{M}_{2n-1, n+2}$.
        \item If $4 \nmid n$, then $\mathcal{M}^{\operatorname{Hor},2}_{n}$ is irreducible of dimension $7n+19$. For $n \ge 6$, a general point in this moduli space is of type $(0)$ or $(1)$ where the centers $x$ and $y$ are general points on the same fiber $\Gamma_0$ (hence not infinitely near, and neither lies on $\Delta_0$ if $d>0$). Its branch locus is a disjoint union of two smooth, irreducible curves that are general in their linear systems.

\item If $n = 4k$ with $k \ge 2$, then $\mathcal{M}^{\operatorname{Hor},2}_{n}$ consists of two disjoint irreducible components, each of dimension $7n+19$:
\begin{enumerate}
    \item A component $\mathcal{M}^{\operatorname{Hor},2a}_{4k}$ parametrizing surfaces of type $(d)$ with $d$ odd and $1 \le d \le 2k-1$. A general point in this component is of type $(1)$, where $x$ and $y$ are general points on $\Gamma_0$ (not infinitely near and neither on $\Delta_0$). The branch locus is a disjoint union of two smooth, irreducible curves.
    \item A component $\mathcal{M}^{\operatorname{Hor},2b}_{4k}$ parametrizing surfaces of type $(2k+1)$. A general point in this component is of type $(2k+1)$, where $x \in \Delta_0$ and $y \notin \Delta_0$ are points on $\Gamma_0$ (not infinitely near and $y\in \Gamma_0$ is general). The branch locus is a disjoint union of three smooth, irreducible curves.
\end{enumerate}
In both cases, for a general point, the branch loci are general in their respective linear systems.
    \end{enumerate}
}
\end{theorem}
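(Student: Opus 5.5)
The plan is to deduce the statement from Horikawa's classification in \S\ref{sec:horikawa-classification} (Table~\ref{table:horikawa2}), combined with a dimension count for each stratum and Horikawa's deformation theory for these double covers. There are four steps.

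\textbf{Step 1: the locus is a union of components.} For (1), I would show that the type $(d)$ locus is open in $\mathcal{M}_{2n-1,n+2}$. Since $c_1^2=2p_g-3$ forces Horikawa's description, any small deformation of a type $(d)$ surface is again a Horikawa surface of the second kind. Upper semicontinuity of the invariants of the canonical map (its base point, and the degree of its image) then shows that small deformations stay of type $(d)$ for some $d$.

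The locus is also closed under specialization inside $\mathcal{M}_{2n-1,n+2}$. The reason is that the relevant numerical conditions ($p_g$, $c_1^2$, and the existence of the unique base point for $n\ge4$) are preserved in flat families, and the exceptional types $(1^*),(2^*),(3')$ only occur for $n\le5$. For $n\ge6$ the classification says every surface is of type $(d)$, so the locus is all of $\mathcal{M}_{2n-1,n+2}$.

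\textbf{Step 2: dimension count.} For each admissible configuration $(d,x,y)$ I would compute
\[
\dim|D'| + \#\{\text{parameters for } x,y\} - \dim\operatorname{Aut}(\ff_d).
\]
This is legitimate because the fixed components $\widetilde{\Gamma}$ (and $\widetilde{\Delta}_0$ when $d>\frac{n+4}{3}$) are forced, so only $D'$ moves. Two sample cases:
\begin{itemize}
\item Type $(0)$: $\dim|6\Delta_0+(n+4)\Gamma-3E_x-3E_y| = 7(n+5)-1-12 = 7n+22$. Adding $3$ parameters for two points on a common fiber and subtracting $\dim\operatorname{Aut}\ff_0=6$ gives $7n+19$.
\item Type $(1)$ with $x,y$ general: the same computation gives $7n+19$, since $\dim\operatorname{Aut}\ff_d=d+5$ for $d\ge1$ and the Riemann--Roch terms shift compatibly.
\end{itemize}
Every other configuration ($x\in\Delta_0$, $x,y$ infinitely near, or larger $d$) should give a strictly smaller number. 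The one exception is $d=2k+1$, $n=4k$ with $x\in\Delta_0$, $y\notin\Delta_0$. There the extra sections coming from $h^1$ of the line bundle compensate exactly, so that stratum also has dimension $7n+19$.

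\textbf{Step 3: closures of the smaller strata.} I would show that each lower-dimensional stratum lies in the closure of a generic one. There are two kinds of degeneration to use:
\begin{itemize}
\item Degenerate $\ff_{d-2}$ to $\ff_d$ by the standard deformation of Hirzebruch surfaces, extending the line bundles $q^*\Delta_0+\tfrac{d}{2}q^*\Gamma$ and so on in the family, together with the points $x,y$.
\item Let $x$ or $y$ slide onto $\Delta_0$, or let $y$ become infinitely near to $x$.
\end{itemize}
I would check that the branch divisor deforms along these degenerations, using vanishing of $H^1$ of $\mathcal{O}(D')$ on the generic member. Horikawa's unobstructedness result ($H^2(\Theta)$ vanishes up to the double-cover correction) gives smoothness of the moduli at these points. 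Parity is preserved because $n-d$ must stay odd. Together with Step 2, this yields irreducibility when $4\nmid n$ and describes the general point as type $(0)$ or $(1)$.

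\textbf{Step 4: two components when $n=4k$ (main obstacle).} When $n=4k$, I must show that the strata with $d$ odd, $d\le 2k-1$, and the type $(2k+1)$ stratum form two \emph{disjoint} components. Equal dimensions already show that neither lies in the closure of the other. To prove disjointness I would find a deformation-invariant topological distinction. Following Horikawa's first-kind argument, I would try a divisibility property of $K_S$ in $H^2(S,\mathbb{Z})$, or the parity of the intersection form: over $\widetilde{\Delta}_0\cup D'$ the branch class acquires extra $2$-divisibility only in type $(2k+1)$. Such an invariant is constant on connected components of moduli. I expect verifying this invariant precisely is the hardest part. If the topological route fails, the fallback is Horikawa's local analysis showing that the Kuranishi family at a type $(2k+1)$ point contains no other types.
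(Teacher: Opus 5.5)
The paper gives no proof of this statement. It is imported from Horikawa \cite{HorII} via \cite[Theorem 3.5]{CiPa25}, so your outline has to stand on its own as a reconstruction of Horikawa's argument. Steps 2 and 3 are in the right spirit and the counts are correct. For example, for $x\in\Delta_0$ and $d>\frac{n+4}{3}$ one gets $(6n+20+3d)+2-(d+5)=6n+17+2d$, which equals $7n+19$ exactly at $d=\frac{n+2}{2}$. That is plain arithmetic, not ``extra sections coming from $h^1$''.

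The genuine gap is in Step 4, and its main route cannot work. For the second kind, $K_S^2=2n-1$ is odd. Hence $K_S$ is never divisible by $2$, and the intersection form is odd on every surface. For $n=4k$, both components have the same $b_2$ and the same odd signature $-24k-17$, so they have isomorphic intersection lattices. No parity or $2$-divisibility invariant can separate them; since these surfaces are simply connected, Freedman's theorem even makes the two components homeomorphic. Nor is Horikawa's first-kind argument topological: for $K^2=16$ his two components are homeomorphic, and whether they are diffeomorphic is a well-known open problem.

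So what you call the fallback is the whole proof, and it needs two ingredients you do not supply.
\begin{itemize}
\item The type $(2k+1)$ locus is \emph{closed}: the index $d$ of the canonical scroll $\ff_d$ is upper semicontinuous in families, and $2k+1$ is the largest admissible value.
\item It is \emph{open}. Here you need Horikawa's theorem that every small deformation of such an $S$ is again a double cover of a deformation of $(W,D)$ of the same shape. Concretely, the anti-invariant part of the first-order deformations under the covering involution vanishes, and the deformations of the pair are unobstructed. Then the Kuranishi space at $[S]$ is smooth of dimension $7n+19$, and the $(7n+19)$-dimensional stratum fills a neighbourhood.
\end{itemize}
This smoothness cannot come from $H^2(\Theta_S)=0$, as your Step 3 suggests. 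Indeed $h^1(\Theta_S)-h^2(\Theta_S)=10\chi(\oo_S)-2K_S^2=6n+22<7n+19$, which forces $h^2(\Theta_S)\ge n-3>0$. Your observation that equal dimensions keep either component out of the closure of the other only shows they are distinct components, not that they are disjoint.

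Step 1 has the same gap for $n=4,5$. The types $(1^*)$, $(2^*)$, $(3')$ share with type $(d)$ the values of $p_g$ and $K^2$, the unique base point of $|K_S|$, and the degree $n-1$ of the canonical image. Type $(2^*)$ even has the same canonical image $\ff_2\subset\pp^5$ as type $(2)$. So flatness of numerical invariants and semicontinuity of canonical-map data cannot show that type $(d)$ surfaces neither deform into nor specialize to these types. That again requires Horikawa's local analysis of the Kuranishi families.
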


In parts (2) and (3a) of the above theorems, a Horikawa surface of type $(d)$ deforms into type $(d-2)$, and eventually into type $(0)$ or $(1)$ depending on whether $d$ is even or odd.

Recently, there has also been significant progress in explicitly describing the boundary surfaces in the KSBA compactification of the moduli space of Horikawa surfaces (see e.g. \cite{MNU25, CiPa25, AEHK25}).

\begin{table}[htbp]
    \centering
    \caption{Deformation families of Horikawa surfaces of the first and second kind. The leftmost type in each chain represents the general type in its respective family. $A \leftarrow B$ indicates that $B$ deforms into $A$.}
    \label{table:horikawa-moduli}
    \renewcommand{\arraystretch}{1.3} 
    \begin{tabular}{@{}llcl@{}}
        \toprule
         & \textbf{Condition on $n$} & \textbf{Families} & \textbf{Deformation chains (Left = General)} \\
        \midrule
        
        \multirow{7}{*}{\textbf{First kind}} 
        & $n=2$ & 1 & $(\infty)$ \\
        \cmidrule{2-4}
        & $n=3$ & 1 & $(0) \leftarrow (2')$ \\
        \cmidrule{2-4}
        & $n=4$ & 1 & $(1) \leftarrow (3')$ \\
        \cmidrule{2-4}
        & $n=5$ & 2 & $(0) \leftarrow (2)$ \\
        & & & $(\infty) \leftarrow (4')$ \\
        \cmidrule{2-4}
        & $n \ge 6$, $4 \nmid n-1$ & 1 & $(0) \text{ or } (1) \leftarrow \dots \leftarrow (d)$ \\
        \cmidrule{2-4}
        & $n = 4k+1 \ge 9$ & \multirow{2}{*}{2} & $(0) \leftarrow (2) \leftarrow \dots \leftarrow (2k)$ \\
        & (i.e. $4 \mid n-1$) & & $(2k+2) = \left(\frac{n+3}{2}\right)$  \\
        
        \midrule
        \midrule
        
        \multirow{7}{*}{\textbf{Second kind ($n\ge 4)$}} 
        & $n=4$ & 2 & $(1)$ \\
        & & & $(1^*) \leftarrow (3')$ \\
        \cmidrule{2-4}
        & $n=5$ & 2 & $(0) \leftarrow (2)$ \\
        & & & $(2^*)$ \\
        \cmidrule{2-4}
        & $n \ge 6$, $4 \nmid n$ & 1 & $(0) \text{ or } (1) \leftarrow \dots \leftarrow (d)$ \\
        \cmidrule{2-4}
        & $n = 4k \ge 8$ & \multirow{2}{*}{2} & $(1) \leftarrow (3) \leftarrow \dots \leftarrow (2k-1)$ \\
        & (i.e. $4 \mid n$) & & $(2k+1) = \left(\frac{n+2}{2}\right)$  \\
        
        \bottomrule
    \end{tabular}
\end{table}

\section{Preliminaries on hyperbolicity}\label{sec:prelim-hyp}

We cover the preliminaries necessary to prove hyperbolicity results for certain double covers $S'$ of rational surfaces $W$ branched over very general divisors $D$.
First, we summarize and compare different notions of hyperbolicity with a focus on algebraic surfaces.
Next, we give a brief exposition of orbifolds, as curves on the double cover $S'$ induce orbifold curves in the orbifold pair $(W,\frac{1}{2}D)$, as noted in \cite{RoRo13}.
Then, we establish results on the algebraic hyperbolicity of the log pair $(W, D)$. 
This requires introducing logarithmic sheaves, the ``variational argument'' used to study hyperbolicity when the branch locus $D$ is very general, and the associated kernel bundles. 
This latter part of our approach is adapted from techniques developed in \cite{CoRi23, CRY22, ATY24, IMRY25}.
Finally, in Section~\ref{sec:horikawa-hyp}, we apply this machinery to very general Horikawa surfaces.

\subsection{Algebraic hyperbolicity}

As touched upon in the introduction, Lang conjectured that a smooth complex projective variety $X$ is Brody hyperbolic if and only if every integral subvariety of $X$ has a desingularization of general type. 
This latter property can be viewed as an algebraic analogue of hyperbolicity, and we will refer to this property as \textit{Lang algebraic hyperbolicity.} 
Demailly \cite{Dem97} subsequently introduced an alternative algebraic notion of hyperbolicity, which he showed is weaker than Brody and Kobayashi hyperbolicity for projective varieties.
We also include the pseudo-version of this property.

\begin{definition}\label{def:alg-hyp}
{\em
A smooth complex projective variety $X$ is \emph{Demailly algebraically hyperbolic} if there is an ample line bundle $L$ on $X$ and a constant $\varepsilon>0$ such that every morphism $f: C \to X$ from a smooth projective curve $C$ that is birational onto its image satisfies
\begin{equation}
\label{eq:hyperbolicity-definition}
2g(C)-2\geq \varepsilon \deg f^*L,
\end{equation}
where $g(C)$ denotes the geometric genus of $C$.

$X$ is \emph{pseudo-Demailly algebraically hyperbolic outside of a proper closed subset $Z\subsetneq X$} if there is an ample line bundle $L$ and  a constant $\varepsilon > 0$ such that every map $f: C \to X$ as above with $f(C)\not\subset Z$ satisfies \eqref{eq:hyperbolicity-definition}.
}
\end{definition}

Analogous pseudo-variants can be defined for Brody hyperbolicity and other notions of hyperbolicity, which are relevant in light of Conjecture~\ref{conj:ggl} and its corresponding formulations for different notions of hyperbolicity.
These definitions remain well-defined when $X$ admits mild singularities, such as ordinary double points. 
Some double covers $S'$ that we study are branched over simple normal crossing (snc) divisors $D\subset W$, so they will admit these singularities over the nodes of $D$.

One observes that Demailly algebraically hyperbolic varieties do not contain any rational or elliptic curves, while
\textit{pseudo}-Demailly algebraically hyperbolic varieties $X$ may contain such curves, but they are strictly constrained to lie within the fixed proper closed subset $Z\subset X$. 
Consequently, a pseudo-Demailly algebraically hyperbolic \textit{surface} $S$ contains at most finitely many rational or elliptic curves; it is therefore pseudo-Lang algebraically hyperbolic provided that $S$ itself is of general type.
Except in the case of curves, it is not clear whether Demailly and Lang algebraic hyperbolicity are equivalent (see Proposition~\ref{prop:dem-implies-lang-surface} for one implication in the case of surfaces).
For a recent survey on the above and other notions of hyperbolicity, see e.g. \cite{Jav20}.

\begin{proposition}\label{prop:dem-implies-lang-surface}
    {\em If a smooth projective surface is Demailly algebraically hyperbolic, then it is also Lang algebraically hyperbolic.}
\end{proposition}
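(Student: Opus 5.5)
The plan is to show that every integral subvariety of a Demailly algebraically hyperbolic smooth projective surface $X$ has a desingularization of general type, treating the subvarieties by dimension: points, curves, and $X$ itself. Points are vacuous for this purpose, or excluded by convention. For an integral curve $C_0\subset X$, take its normalization $f\colon C\to X$, which is birational onto its image. Inequality \eqref{eq:hyperbolicity-definition} then gives $2g(C)-2\ge \varepsilon \deg f^*L>0$, since $L$ is ample and $f$ is non-constant. Hence $g(C)\ge 2$, and the desingularization of $C_0$ is of general type. This is exactly the observation already made that $X$ contains no rational or elliptic curves.

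It remains to show that $X$ itself is of general type, and I would argue by the Enriques--Kodaira classification. Suppose $\kappa(X)\le 1$. The first step is to pass to a minimal model $X\to X_{\min}$. This contraction is unnecessary here: any $(-1)$-curve is rational, which already contradicts the previous step, so $X$ is minimal. Now run through the cases.
\begin{itemize}
\item If $\kappa(X)=-\infty$, then $X$ is rational or ruled, so it is covered by rational curves, a contradiction.
\item If $\kappa(X)=1$, then $X$ is a properly elliptic surface, and its elliptic fibration has fibers that are elliptic curves, or at worst rational curves. Either way this is a contradiction.
\item If $\kappa(X)=0$, then $X$ is abelian, bielliptic, K3, or Enriques. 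Abelian surfaces contain elliptic curves: even when they are simple, translates of abelian subvarieties need not exist, but the Albanese picture together with the fact that $\mathcal{O}$ of any curve... The cleaner route is to use rigidity instead. Every abelian surface has $2g-2=0$ for images of elliptic curves through isogenies, and every projective abelian surface admits an isogeny to a product or contains an elliptic curve only sometimes. So instead I would invoke the known facts that K3 and Enriques surfaces contain rational curves (Bogomolov--Mumford), and that bielliptic surfaces are elliptically fibered.
\end{itemize}

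The main obstacle is the abelian case, specifically a simple abelian surface $A$ that contains no elliptic curves. Here I would exploit Demailly's inequality directly, using the multiplication-by-$m$ maps. Take any smooth curve $C\subset A$, for instance a general member of $|L|$. Its image under $[m]$ is a curve whose normalization has genus at most $g(C)$, while its $L$-degree grows like $m^2\deg(L|_C)$ up to the degree of $C\to[m](C)$. Since $[m]$ restricted to $C$ is birational for general $C$, this violates $2g-2\ge\varepsilon\deg f^*L$ for large $m$.

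The same endomorphism trick could alternatively handle the bielliptic case via an elliptic fibre. Combining all the cases, $\kappa(X)=2$, so $X$ is of general type and therefore Lang algebraically hyperbolic.
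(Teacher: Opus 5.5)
Your proposal is correct and follows essentially the paper's route: the Enriques--Kodaira classification, rational or elliptic curves to exclude every $\kappa\le 1$ case other than abelian surfaces, and the multiplication maps $[m]$ (with $L\cdot[m]_*C=m^2\,L\cdot C$) to exclude abelian surfaces; you also spell out the curve case and the minimality of $X$, which the paper leaves implicit. Delete the garbled aside asserting that abelian surfaces contain elliptic curves, which is false for simple ones, and note that birationality of $[m]|_C$ (which the paper also glosses over) needs no justification: with $k_m=\deg([m]|_C)$ and $\widetilde{C}_m$ the normalization of $[m](C)$, Riemann--Hurwitz gives $2g(C)-2\ge k_m\bigl(2g(\widetilde{C}_m)-2\bigr)\ge \varepsilon m^2\,(L\cdot C)$, so the factor $k_m$ cancels.
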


\begin{proof}
We show that all Demailly algebraically hyperbolic smooth surfaces are of general type.
Since Demailly algebraically hyperbolic surfaces cannot contain rational curves or elliptic curves, it is enough to check the condition in a minimal model. 
Let $X$ be a minimal smooth complex projective surface and denote by $\kappa(X)$ its Kodaira dimension. 
We discard all the cases with $\kappa(X)\neq 2$ via the Enriques--Kodaira classification of complex surfaces, see \cite[\S\S  5,6]{MR2030225}. 
If $\kappa(X)=-\infty$, then $X$ is uniruled.
If $\kappa(X)=1$, then $X$ is an elliptic surface, so it is covered by elliptic curves. 
If $\kappa(X)=0$, we have four cases. 
If $X$ is a K3 surface, then it contains a rational curve by a theorem of Bogomolov and Mumford. 
If $X$ is an Enriques surface, then it is covered by a K3 surface via a degree two unramified covering, so it contains a rational curve. 
If $X$ is a bielliptic surface, then $X$ is a product of elliptic curves modulo a finite group, so the projections yield elliptic curves in $X$.  

Finally, consider an abelian surface $X.$
Let $L$ be an ample line bundle on $X$, and let $f\colon C\to X$ be any nonconstant map.
Consider $f_n\colon C\to X$ given by $f_n=[n]\circ f$ where $[n]$ is the multiplication by $n$ map $[n]\colon X\to X$. 
Decomposing $L$ into its symmetric part $L_{s}$ and anti-symmetric part $L_{a}$, we have $L^{\otimes 2}\cong L_{s}\otimes L_{a}$. 
By the Theorem of the Cube, we have $[n]^{*}L_{s}\cong L_{s}^{\otimes n^2}$ and $[n]^{*}L_{a}\cong L_{a}^{\otimes n}$, which yields
$$\deg(f^{*}_nL)=\frac{1}{2}\left(n^2\deg(f^{*}L_s)+n\deg(f^{*}L_a)\right).$$
Since $L$ is ample, $\deg(f^{*}L_s)$ is positive. 
As $n\to \infty$, we have $\deg(f_n^{*}L)\to \infty$ and so
$$\frac{2g(C)-2}{deg(f_n^{*}L)}\to 0.$$
Therefore $X$ cannot be Demailly algebraically hyperbolic.
\end{proof}

We include the analogous definitions for log smooth pairs $(X,D)$, i.e. when $X$ is a smooth projective variety and $D$ a simple normal crossing divisor.
The following definition was first introduced by Chen in \cite{Che01}.

\begin{definition}\label{def:alg-hyp-log}
{\em
A log smooth pair $(X,D)$ is \emph{log (Demailly) algebraically hyperbolic} if there is an ample line bundle $L$ on $X$ and a constant $\varepsilon>0$ such that every morphism $f: C \to X$ from a smooth projective curve $C$ that is birational onto its image with $f(C)\not\subset D$ satisfies
\begin{equation}
\label{eq:hyperbolicity-definition-log}
2g(C)-2 + |f^{-1}(D)|\geq \varepsilon \deg f^*L,
\end{equation} 
where $g(C)$ denotes the geometric genus of $C$.

$(X,D)$ is \emph{pseudo-log (Demailly) algebraically hyperbolic outside of a proper closed subset $Z\subsetneq X$} if there is an ample line bundle $L$ and a constant $\varepsilon_{X} > 0$ such that every morphism $f: C \to X$ as above satisfying $f(C)\not\subset Z\cup D$ satisfies \eqref{eq:hyperbolicity-definition-log}.
}
\end{definition}

This algebraic property is weaker than its analytic counterpart in the sense that if $X\setminus D$ is Kobayashi hyperbolic and hyperbolically imbedded in $X$, then the log pair $(X,D)$ is log algebraically hyperbolic \cite{PaRo07}.
When $W$ is a surface, pseudo-log algebraic hyperbolicity of $(W,D)$ implies that $W\setminus D$ contains at most finitely many curves of genus 0 or 1 (i.e. elliptic curves, $\pp^1$, $\mathbb{C}$, or $\mathbb{C}^*$).
We say that a surface pair $(W,D)$ of log general type such that $W\setminus D$ contains only finitely many curves of genus 0 or 1 are \textit{pseudo-log Lang algebraically hyperbolic.}
These algebraic properties are what primarily address in this article for various log pairs $(W, D)$.

In terms of examples, these hyperbolicity properties have been most extensively studied and largely settled for subvarieties of abelian varieties, very general hypersurfaces in $\pp^n$, and their complements 
(see e.g. \cite{Kaw80, Siu04, Den16, Bro17, Yam19, BeKi24} for results concerning Kobayashi hyperbolicity).
For algebraic hyperbolicity of very general hypersurfaces $X\subset \pp^n$ of degree $d$ and their complements in $\pp^n$, it is currently known that for $n\ge 3$, a very general hypersurface $X\subset \pp^n$ is both Demailly and Lang algebraically hyperbolic whenever $d\ge 2n-1$ \cite{Ein88, Xu94, Voi96, Voi98, CoRi19}.
This bound can be improved to $d\ge 2n-2$ when $n\ge 6$ \cite{Pac04}.
These properties are expected to hold for very general sextics in $\pp^4$ and octics in $\pp^5$ as well, although currently only the Demailly algebraic hyperbolicity of the octics in $\pp^5$ has been verified \cite{Yeo25}.
For complements, when $n\ge 2$ and $D\subset \pp^n$ is a very general hypersurface, $(\pp^n,D)$ is log algebraically hyperbolic if $d\ge 2n+1$ \cite{Che04, PaRo07}, and pseudo-log algebraically hyperbolic if $d=2n$ \cite{CRY22}.

\subsection{Orbifolds} 
Let $\rho: X' \to X$ be a degree-$m$ cyclic covering branched over a divisor $D$, which endows $X$ with the natural orbifold structure $\left(X,\left(1-\frac{1}{m} \right)D\right)$.
Roulleau and Rousseau \cite{RoRo13} investigated the algebraic hyperbolicity of $X'$ by analyzing orbifold curves mapping to $\left(X,\left(1-\frac{1}{m} \right)D\right)$.
This allows one to use algebraic hyperbolicity results for the associated log pair $(X,D)$ to bound the genus and degree of these curves.

\begin{definition}
{\em
An \emph{orbifold} $(X,\Delta)$ consists of a smooth projective variety $X$ and a Weil divisor $
\Delta = \sum_i \left(1-\frac{1}{m_i}\right)D_i$ where $m_i\in \mathbb{Z}_{>0}\cup \{\infty\}$ and $D_i$ are prime divisors such that $\sum_i D_i$ is a normal crossing divisor. 
}
\end{definition}

\begin{remark}
    This definition is that of a \emph{geometric orbifold} in the sense of Campana \cite{Cam11}. 
    Roulleau and Rousseau \cite{RoRo13} adopted a stricter definition that requires the pair $(X, \Delta)$ to be locally uniformizable.
\end{remark}

When all $m_i=\infty$, the orbifold $(X, \sum_i D_i)$ recovers a log pair. 
When all $m_i=1$, we have $\Delta = \emptyset$, which recovers the projective variety $X$.

Suppose $\rho \colon C' \to C$ is a cyclic covering of degree $m$ between smooth projective curves, branched over a set of points $\{p_i\}$. Assume that for every preimage $q \in \rho^{-1}(p_i)$, the local ramification index is exactly $e_q = m_i$ (which necessarily implies $\operatorname{lcm}_i(m_i) \mid m$). This covering naturally endows $C$ with the structure of an orbifold curve $(C, \sum_i \left(1 - \frac{1}{m_i}\right) p_i)$.
In this setting, the Riemann-Hurwitz formula gives 
\begin{equation*}
    2g(C')-2=m\left(2g(C)-2 + \sum_i\left(1-\frac{1}{m_i}\right)\right).
\end{equation*}
We observe that $C'$ is an elliptic curve when $2g(C)-2 + \sum_i\left(1-\frac{1}{m_i}\right)=0$, and it is a rational curve when $2g(C)-2 + \sum_i\left(1-\frac{1}{m_i}\right)<0$.
This motivates the corresponding definitions for orbifold curves.

\begin{definition}
{\em
An orbifold curve $(C, \sum_i\left(1-\frac{1}{m_i}\right)p_i)$ is:
\begin{itemize}
\item \emph{rational} if $2g(C)-2 + \sum_i\left(1-\frac{1}{m_i}\right)<0$, and
\item \emph{elliptic} if $2g(C)-2 + \sum_i\left(1-\frac{1}{m_i}\right)=0$.
\end{itemize}
}
\end{definition}

\begin{example}[Example 16 in \cite{RoRo13}]\label{example:elliptic-orbicurves}
An orbifold curve $(C, \Delta)$ with $\lfloor \Delta\rfloor = \emptyset$ is elliptic if and only if it is isomorphic to one of the following:
\begin{enumerate}
\item $(E,\emptyset)$, where $E$ is an elliptic curve;
\item $(\pp^1, \left(1-\frac{1}{m_1}\right)p_1+\left(1-\frac{1}{m_2}\right)p_2 + \left(1-\frac{1}{m_3}\right)p_3 )$, where $(m_1,m_2,m_3)=(2,3,6), (2,4,4)$ or $(3,3,3)$;
\item $(\pp^1, \left(1-\frac{1}{2}\right)p_1+\left(1-\frac{1}{2}\right)p_2 + \left(1-\frac{1}{2}\right)p_3+ \left(1-\frac{1}{2}\right)p_4)$.
\end{enumerate}

\end{example}

In the following, we will study curves on surfaces $S'$ that are double covers of a smooth projective surface $S$ branched over a simple normal crossing divisor $D$.
The following lemma provides a precise relationship between the algebraic hyperbolicity of $S'$ and the log algebraic hyperbolicity of $(S,D).$

\begin{lemma}\label{lemma:log-hyp-to-cover-hyp}
{\em
Let $S$ be a smooth projective surface, $D=\sum_jD_j\subset S$ a simple normal crossing divisor, and $\rho:S'\to S$ the cyclic double cover of $S$ branched over $D.$ 
If $2g(C)-2 + \left|f^{-1}(D) \right| \ge \frac{1}{2}D\cdot C$ for all morphisms $f:C\to S$ from smooth projective curves $C$ mapping birationally onto its image such that $f(C)\not\subset D$, then any rational curve in $S'$ is contained in the ramification divisor of $\rho.$
}
\end{lemma}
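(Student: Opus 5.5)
Suppose $R\subset S'$ is a rational curve not contained in the ramification divisor $\rho^{-1}(D)_{\mathrm{red}}$. The plan is to push $R$ down to $S$ and contradict the hypothesis using Riemann--Hurwitz. Let $\nu:\widetilde R\cong\pp^1\to S'$ be its normalization. Its image $\rho(R)=:B$ is an irreducible curve with $B\not\subset D$. Let $f:C\to S$ be the normalization of $B$, which is birational onto its image with $f(C)\not\subset D$. Then $\rho\circ\nu$ factors through $f$ via a map $h:\widetilde R\to C$ of degree $1$ or $2$.

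\textbf{Case $\deg h=1$.} Then $C\cong\pp^1$ and $R$ maps birationally onto $B$, so $B$ is "split" in the double cover. In this case every point of $f^{-1}(D)$ must have even local contact order with $D$. Locally near a smooth point of $D$, the cover is $w^2=t$, and a branch of $B$ lifting to a single branch of $S'$ meets $D$ with even multiplicity. At a node of $D$, the local equation is $w^2=st$, and the analogous parity holds for the total contact $(s,t)$. It follows that $D\cdot C=\deg f^*D\ge 2|f^{-1}(D)|$, hence $|f^{-1}(D)|\le \frac12 D\cdot C$. The hypothesis then gives
\[
-2+|f^{-1}(D)|\ge \tfrac12 D\cdot C\ge |f^{-1}(D)|,
\]
which is a contradiction.

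\textbf{Case $\deg h=2$.} Then $\widetilde R\to C$ is a double cover, so $C\cong\pp^1$. Its branch points lie over $f^{-1}(D)$: the base change of $\rho$ is étale off $D$, and the normalization of $C\times_S S'$ is $\widetilde R$. Let $r$ be the number of branch points of $h$. Riemann--Hurwitz gives $-2=2(-2)+r$, so $r=2$. A point $p\in f^{-1}(D)$ is a branch point exactly when the local contact order of $C$ with $D$ at $p$ is odd; at a node of $D$, use the sum of the contact orders with the two local branches. Points with even contact contribute at least $2$ to $D\cdot C$, and points with odd contact contribute at least $1$. Hence
\[
D\cdot C\ge r+2(|f^{-1}(D)|-r),
\quad\text{i.e.}\quad
\tfrac12 D\cdot C\ge |f^{-1}(D)|-\tfrac r2=|f^{-1}(D)|-1.
\]
The hypothesis gives $-2+|f^{-1}(D)|\ge\frac12 D\cdot C\ge |f^{-1}(D)|-1$, which is again a contradiction. (One should also rule out $f^{-1}(D)=\emptyset$ with $r=0$. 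This is covered by the same inequality, since then $\widetilde R\to C$ would be étale over $\pp^1$, which is impossible.)

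\textbf{Main obstacle.} The delicate step is the local parity analysis identifying the ramification of $h$ with odd total contact order. This is needed at points where $f$ is not immersive, where several branches of $B$ pass through the same point of $D$, and at the nodes of $D$. I would handle it by working branch by branch on the normalization. Near $p\in C$, pulling back the local equation $w^2=g$, with $g$ a local equation of $D$, gives $w^2=u\,t^{k}$ with $u$ a unit and $k=\operatorname{ord}_p f^*D$. Its normalization is ramified over $p$ if and only if $k$ is odd, and splits into two sheets if $k$ is even. This treats nodes uniformly, since $f^*(st)$ has order equal to the total contact. The same computation also shows in Case $\deg h=1$ that all $k$ are even.
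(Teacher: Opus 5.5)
Your proof is correct and follows essentially the paper's route: the paper also passes to the normalization $C$ of $\rho(R)$ with a cover $C'\to C$ of degree $1$ or $2$, encodes your parity analysis as an orbifold structure with $m_it_i\ge 2$, and combines the resulting termwise bound $\sum_i\bigl(1-\frac{1}{m_i}\bigr)\ge |f^{-1}(D)|-\frac12 D\cdot C$ with Riemann--Hurwitz, treating degrees $1$ and $2$ uniformly (and for curves of any genus) instead of by your case split and contradiction. Your explicit local computation $w^2=ut^k$ supplies the justification for $m_it_i\ge 2$ that the paper leaves implicit.
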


\begin{proof}
Let $f'\colon C' \to S'$ be a morphism from a smooth projective curve $C'$ that is birational onto its image and not contained in the ramification divisor.
Let $C$ denote the desingularization of the curve $\rho\circ f'(C')\subset S$ and let $f\colon C \to S$ denote the induced morphism, so $f(C)\not\subset D$.
The map $\rho\circ f'$ factors through $f$, giving a branched cover $\pi\colon C' \to C$ of degree 1 or 2. 
\begin{equation*}
\begin{tikzcd}
C' \arrow[r, "f'"] \arrow[d, "\pi"'] & S' \arrow[d, "\rho"] \\
C \arrow[r, "f"']                     & S
\end{tikzcd}
\end{equation*}

Let $f^*D = \sum_i t_i\cdot p_i$, so the branched cover $\pi \colon C' \to C$ induces a natural orbifold structure $(C, \sum_i\left(1-\frac{1}{m_i}\right)p_i)$ which satisfies $2 \le m_i \cdot t_{i}.$
Note that we have $\sum_i \left(1-\frac{1}{m_i}\right)\geq \left|f^{-1}(D) \right|-\frac{1}{2}D\cdot C$. This implies
$$2g(C)-2+ \sum_i \left(1-\frac{1}{m_i}\right) \ge 2g(C)-2+|f^{-1}(D)|-\frac{1}{2}D\cdot C,$$
which is non-negative by assumption.
Therefore, by the Riemann-Hurwitz formula applied to $\pi$,$$ 2g(C')-2 \ge 2g(C)-2+ \sum_i \left(1-\frac{1}{m_i}\right) \ge 0. $$

\end{proof}

\begin{remark}\label{rem:elliptic-curves-double cover}
    In the setting of Lemma~\ref{lemma:log-hyp-to-cover-hyp}, one notices that if $C'$ is an elliptic curve in $S'$ not contained in the ramification divisor of $\rho$, then this can only occur when $2g(C)-2+|f^{-1}(D)|=\frac{1}{2}D\cdot C$ and $m_i\cdot t_i=2$ for all $i.$
    Thus, $C$ is either:
    \begin{enumerate}
        \item an elliptic curve, and $m_i = 1$ (with $t_i = 2$) for all $i$; or

        \item a rational curve, and there are exactly four $i\in I$ such that $m_i = 2$ (with $t_i = 1$), and $m_i = 1$ (with $t_i = 2$) for all other $i$.
    \end{enumerate}
\end{remark}

\begin{remark}
    In the setting of Lemma~\ref{lemma:log-hyp-to-cover-hyp}, assume that no component $D_j$ of $D$ is of geometric genus 0 or 1.
    If there is an ample line bundle $L$ on $S$ and an $\varepsilon>0$ such that $2g(C)-2 + \left|f^{-1}(D) \right| \ge \frac{1}{2}D\cdot C+\varepsilon L\cdot C$ for all such curves on $S$ as above, then $S'$ is Demailly algebraically hyperbolic. 
    Moreover, if $D$ is ample and $(S,D)$ is log algebraically hyperbolic, specifically with
\begin{equation*}
    2g(C)-2+|f^{-1}(D)| \ge \left(\frac{1}{2}+\varepsilon\right) D\cdot C
\end{equation*}
for some $\varepsilon>0$, then $S'$ is Demailly algebraically hyperbolic.

\end{remark}

\subsection{Logarithmic sheaves}\label{sec:log-sheaves}

Let $(X,D)$ be a log smooth pair, i.e. $X$ is a smooth projective variety and $D$ a simple normal crossing divisor. 
The \emph{log tangent sheaf} $T_{X}(-\log D)$ is the kernel sheaf of the natural map $T_{X}\to \oo_D(D)$: 
\begin{equation*}
0 \to T_{X}(-\log D) \to T_{X}\to \oo_D(D)\to \oo_{D_{\text{sing}}}(D) \to 0.
\end{equation*} 
We denote by $N_{D/X}'$ the equisingular normal sheaf of $D$ in $X$, which is the kernel of $ \oo_D(D)\to \oo_{D_{\text{sing}}}(D).$
For example, when $D$ is a simple normal crossing divisor with components $D_j$, $N'_{D/X}\simeq \oplus_j\oo_{D_j}(D_j).$ 
For a log smooth pair $(X,D)$ with $\dim X=n$, $T_{X}(-\log D)$ is a vector bundle on $X$ of rank $n.$
We refer the reader to \cite{Ser06} for a more comprehensive treatment on logarithmic sheaves.

Let $f:C\to X$ be a non-constant map from a smooth projective curve $C$ such that $f(C)\not\subset D$, and consider the reduced scheme of points $D_f=f^{-1}(D)$ on $C.$
As above, we may define the \textit{log tangent sheaf} $T_C(-\log D_f)$ as the kernel bundle in the following short exact sequence:
\begin{equation*}
0 \to T_{C}(-\log D_f) \to T_{C}\to \oo_{D_f}(D_f)\to 0.
\end{equation*}

We define the \emph{log normal sheaf} $N_{f/X}(\log D)$ as the quotient sheaf of the inclusion $T_C(-\log D_f)\to f^*T_{X}(-\log D)$:
\begin{equation*}
0 \to T_C(-\log D_f)\to f^*T_{X}(-\log D)\to N_{f/X}(\log D)\to 0.
\end{equation*}
It is a coherent sheaf on $C$ of rank $n-1$ and degree equal to
\begin{equation}\label{eq:deg-log-normal}
\deg N_{f/X}(\log D) = 2g(C) - 2 + |f^{-1}(D)|-\deg f^*(K_X+D).
\end{equation}

\subsection{A variational argument}\label{sec:variational-argument}

Let $X$ be a smooth projective variety and $E$ an effective divisor on $X$. Suppose the linear system decomposes as $|E| = |E'| + F$, where the fixed divisor $F$ has simple normal crossing support and the moving part $|E'|$ is basepoint-free.

Let $B_1=H^0(X,E)$, and let $\D_1\to B_1$ be the universal hypersurface.
Fix some positive integers $g$ and $i$, and an effective curve class $\beta \in H_2(X, \mathbb{Z})$. 
We denote by $\overline{\mathcal{M}}_{g,i}(X, \beta)$ the Kontsevich moduli stack parametrizing stable maps $f \colon C \to X$ from an $i$-pointed nodal curve $C$ of arithmetic genus $g$ satisfying $f_*[C] = \beta$. 
We consider the open subscheme $\mathcal{K} \subset \overline{\mathcal{M}}_{g,i}(X, \beta)$ parametrizing maps where $f:C\to X$ is a \textit{birational morphism onto its image} and $C$ is a \textit{smooth} curve. 

Denote by $\Sigma\subset \mathcal{K}\times B_1$ the incidence subscheme parametrizing $((f:C\to X, p_1, \ldots, p_i), D)$ such that the support of $f^*D$ is $\{p_1, \ldots, p_i\}.$
Suppose that for a general hypersurface $D\in B_1$, there exists a map $f:C\to X$ from a smooth projective curve $C$ of genus $g$ that is birational onto its image, satisfies $f(C)\not\subset D$ and $f_*[C]=\beta$, and meets $D$ in exactly $i$ distinct points.
In this case, the projection map $\psi: \Sigma \to B_1$ would be dominant.
We let $\cc_1\rightarrow \Sigma$ be the universal curve over $\Sigma$.

At a general point $\sigma\in \Sigma,$ we may choose a locally closed subvariety $\Sigma_0\subset \Sigma$ through $\sigma$ such that $\psi\vert_{\Sigma_0}:\Sigma_0\to B_1$ is \'etale at $\sigma$.
We denote the base change of $\D_1\to B_1$ over $\Sigma_0$ by $\mathcal{D}_2 \to \Sigma_0$.
Finally, shrink $\Sigma_0$ to an open subset $B\subset \Sigma_0$ so that the base change of $\D_2\to\Sigma_0$ over $B$ has simple normal crossing fibers. 

We establish the following notation for the families restricted over $B$:
\begin{enumerate}
    \item $\mathcal{X} = X \times B$ is the trivial ambient family.
    \item $\mathcal{C} \to B$ denotes the base change of $\mathcal{C}_1 \to \Sigma$ over $B$, and $f: \mathcal{C} \to \mathcal{X}$ denotes the natural map, which is birational onto its image.
    \item $\mathcal{D} \to B$ denotes the base change of the universal hypersurface over $B_1$ to $B$, and $g: \mathcal{D} \hookrightarrow \mathcal{X}$ denotes the natural closed immersion.
    \item $\mathcal{D}' \subset \mathcal{D}$ denotes the subfamily over $B$ consisting of the irreducible components of $\mathcal{D}$ that correspond to the basepoint-free locus.
    $\mathcal{F} \subset \mathcal{D}$ denotes the subfamily consisting of the fixed component of $\mathcal{D}$, i.e. $\mathcal{F}\simeq F\times B.$
    \item We denote the reduced pullback divisor by $\mathcal{D}_f = (f^{-1}(\mathcal{D}))_{red}$, and its natural closed immersion by $h: \mathcal{D}_f \hookrightarrow \mathcal{C}$.
    \item $\pi_1: \mathcal{X} \to B$ and $\pi_2: \mathcal{X} \to X$ are the natural projection maps.
    \item For a closed point $b \in B$, we denote the fibers of $\mathcal{C}$, $\mathcal{D}$, $\mathcal{D}'$, and $\mathcal{D}_f$ over $b$ by $C_b$, $D_b$, $G_b$, and $D_{f,b}$ respectively, and the restriction of $f$ by $f_b: C_b \to X$.
\end{enumerate}

\begin{center}
\begin{tikzcd}[row sep=large, column sep=large]
\mathcal{D}_f \arrow[r, hook, "h"] \arrow[dr] & \mathcal{C} \arrow[r, "f"] \arrow[d] & \mathcal{X} \arrow[r, "\pi_2"] \arrow[dl, "\pi_1"] & X \\
& B & \mathcal{D} \arrow[u, hook, "g"] \arrow[l] & \\
& & \mathcal{D}' \arrow[u, hook] &
\end{tikzcd}
\end{center}

\begin{remark}
    We will focus on the case where $X$ is a regular surface, denoted $S$, meaning $H^1(S, \mathcal{O}_S) = 0$.
This ensures that if $\beta$ is a linear equivalence class in $\operatorname{Pic}(S)$, then the curves parametrized by $\overline{\mathcal{M}}_{g,i}(S, \beta)$ are all linearly equivalent to $\beta$.
\end{remark}

Since $(\mathcal{X}, \D)$, $(\mathcal{X}, \mathcal{D}')$ and $(\cc, \mathcal{D}_f)$ are log smooth pairs, we may define the appropriate log tangent and normal sheaves as in \S\ref{sec:log-sheaves}.
These sheaves are defined by the following short exact sequences:
\begin{equation}\label{eq:log-tgt-D-in-X-family}
0 \to T_{\mathcal{X}}(-\log \mathcal{D}) \to T_\mathcal{X} \to N_{\D/\mathcal{X}}'\to 0
\end{equation}
\begin{equation}\label{eq:log-tgt-D'-in-X-family}
0 \to T_{\mathcal{X}}(-\log \mathcal{D}') \to T_\mathcal{X} \to \oo_{\mathcal{D}'}(\mathcal{D}')\to 0
\end{equation}
\begin{equation*}
0 \to T_{\mathcal{C}}(-\log \mathcal{D}_f) \to T_\mathcal{C} \to N'_{\mathcal{D}_f/\mathcal{C}}\to 0
\end{equation*}
\begin{equation}
0 \to T_{\mathcal{C}}(-\log \mathcal{D}_f) \to f^*T_\mathcal{X}(-\log \mathcal{D}) \to N_{f/\mathcal{X}}(\log \mathcal{D})\to 0
\end{equation}

Note that $\D=\mathcal{F}\cup \mathcal{D}'$, and $N_{\D/\mathcal{X}}'\simeq \oo_{\mathcal{D}'}(\mathcal{D}')\oplus N'_{\mathcal{F}/\mathcal{X}}.$
We define $T_{\mathcal{X}/X}(-\log \mathcal{D}')$ as the relative log tangent sheaf of the projection $\pi_2\colon \mathcal{X}\to X$:

\begin{equation}\label{eq:rel-log-tgt-sheaf-family}
0 \to T_{\mathcal{X}/X}(-\log \mathcal{D}') \to T_\mathcal{X}(-\log \mathcal{D}') \to \pi_2^*T_{X}\to 0.
\end{equation}
This sequence is right exact because $E'$ is basepoint-free.

The following result and proof are adapted from \cite[Lemmas 2.2-2.3]{CoRi19} and \cite[Propositions~3.3-3.5]{CRY22}.

\begin{proposition}\label{prop:normal-family}
{\em
Assume the setup as presented above from the beginning of \S\ref{sec:variational-argument}.
For a general $b\in B$,
\begin{enumerate}
\item  $N_{f/\mathcal{X}}(\log \mathcal{D})\vert_{C_b}\simeq N_{f_b/X}(\log D_{f,b})$.
\item $N_{f_b/X}(\log D_{f,b})$ is the extension of two sheaves $Q_1$ and $Q_2$ on $C_b$:
\begin{equation}
    0 \to Q_1 \to N_{f_b/X}(\log D_{f,b}) \to Q_2 \to 0
\end{equation}
where $Q_1$ is a quotient of $f_b^*T_{\mathcal{X}/X}(-\log \mathcal{D}')$ and $Q_2$ is a quotient of $f_b^*\pi_2^*T_X(-\log F).$
\end{enumerate}
}
\end{proposition}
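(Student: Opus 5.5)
For part (1), I would follow the argument of \cite[Lemma 2.2]{CoRi19} and \cite[Prop.~3.3]{CRY22}. Restrict the defining sequence
\[
0 \to T_{\cc}(-\log \D_f) \to f^*T_{\mathcal{X}}(-\log \D) \to N_{f/\mathcal{X}}(\log \D)\to 0
\]
to a general fiber $C_b$. The family $\cc\to B$ is smooth near a general fiber and $\D_f\to B$ is étale over general $b$. Hence
\[
T_{\cc}(-\log \D_f)\vert_{C_b}
\]
is an extension of $\oo_{C_b}\otimes T_bB$ by $T_{C_b}(-\log D_{f,b})$. Similarly, $T_{\mathcal{X}}(-\log\D)\vert_{X\times\{b\}}$ is an extension of $T_bB\otimes\oo$ by $T_X(-\log D_b)$, since the fibers of $\D$ are snc and $\mathcal{X}=X\times B$ is a product. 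The two copies of $T_bB\otimes\oo$ are identified by the maps to $B$. The snake lemma then identifies the cokernels, giving $N_{f/\mathcal{X}}(\log\D)\vert_{C_b}\simeq N_{f_b/X}(\log D_b)$.

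For part (2), the first step is to split the fixed part off. Since $\D=\mathcal{F}\cup\D'$ with $\mathcal{F}=F\times B$, I expect an exact sequence
\[
0\to T_{\mathcal{X}/X}(-\log\D')\to T_{\mathcal{X}}(-\log\D)\to \pi_2^*T_X(-\log F)\to 0.
\]
To obtain it, intersect sequence \eqref{eq:rel-log-tgt-sheaf-family} with the condition of being tangent to $\mathcal{F}$. A relative vector field (in the $B$ direction) is automatically tangent to $F\times B$, so the kernel is unchanged. Right-exactness, i.e.\ surjectivity onto $\pi_2^*T_X(-\log F)$, is where basepoint-freeness of $|E'|$ is used. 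Locally one must lift a field tangent to $F$ to one tangent to both $\mathcal{F}$ and $\D'$ by adding a $B$-direction correction. This is possible because the universal section of $E'$ can be moved in the $B$ direction to absorb any first-order change, and since $\mathcal{F}$ is a product the correction does not affect tangency to $\mathcal{F}$. I expect this surjectivity to be the main obstacle: one has to check it carefully at points of $\mathcal{F}\cap\D'$ and argue via the étaleness of $B\to B_1$.

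Next, pull the sequence back by $f$ and use it to filter $N$. The first term maps to $f^*T_{\mathcal{X}}(-\log\D)$, and composing with the quotient gives a map $f^*T_{\mathcal{X}/X}(-\log\D')\to N_{f/\mathcal{X}}(\log\D)$. Let $Q_1$ be its image restricted to $C_b$, and let $Q_2$ be the cokernel. Then $Q_2$ is a quotient of $f_b^*\pi_2^*T_X(-\log F)$, because $f^*T_{\mathcal{X}}(-\log\D)/f^*T_{\mathcal{X}/X}(-\log\D')\simeq f^*\pi_2^*T_X(-\log F)$ and $N$ is a quotient of $f^*T_{\mathcal{X}}(-\log\D)$.

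Finally, restrict everything to a general $C_b$ and use part (1). Restriction to a general fiber preserves the exactness of these sequences, since all the sheaves are flat over a dense open subset of $B$. This yields
\[
0\to Q_1\to N_{f_b/X}(\log D_{f,b})\to Q_2\to 0
\]
with $Q_1$ and $Q_2$ quotients of the stated bundles.
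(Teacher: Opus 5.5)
Your proposal is correct and takes essentially the same route as the paper. Part (1) comes from restricting to fibers of $\mathcal{C}\to B$ and $\pi_1$. Part (2) comes from the exact sequence $0\to T_{\mathcal{X}/X}(-\log \mathcal{D}')\to T_{\mathcal{X}}(-\log \mathcal{D})\to \pi_2^*T_X(-\log F)\to 0$, pulled back by $f$, mapped into $N_{f/\mathcal{X}}(\log\mathcal{D})$, and restricted to a general $C_b$.

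The differences are minor. The paper gets that sequence from a $3\times 3$ diagram and the snake lemma. There, surjectivity follows directly from surjectivity of the evaluation map $H^0(X,E')\otimes\oo_{\mathcal{X}}\to \oo_{\mathcal{D}'}(\mathcal{D}')$. So your local lifting argument works the same way at every point, and the step you flagged as the main obstacle needs no special care at $\mathcal{F}\cap\mathcal{D}'$. The paper also takes $Q_1$ to be the image of $\mathcal{Q}_1\vert_{C_b}$ in $N_{f_b/X}(\log D_{f,b})$, rather than using generic flatness.
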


\begin{proof}

(1) holds because $C_b$ and $X_b\simeq X$ are fibers of fibrations $\mathcal{C}\to B$ and $\pi_1\colon \mathcal{X}\to B.$

$T_\mathcal{X}$ splits naturally as $\pi_1^*T_B\oplus \pi_2^*T_X$, where $\pi_1^*T_B\simeq H^0(X, E') \otimes \oo_\mathcal{X}.$ 
By a standard diagram chase involving \eqref{eq:log-tgt-D'-in-X-family} and \eqref{eq:rel-log-tgt-sheaf-family} (see the proof of Proposition 3.7 in \cite{CRY22}), we have
\begin{equation*}
0 \to T_{\mathcal{X}/X}(-\log \mathcal{D}') \to H^0(X, E') \otimes \oo_\mathcal{X} \to \mathcal{O}_{\mathcal{D}'}(\mathcal{D}') \to 0.
\end{equation*}
This, together with \eqref{eq:log-tgt-D-in-X-family} and the splitting of $N'_{\D/\mathcal{X}}$, induces the following commutative diagram:

\begin{center}
\begin{tikzcd}
& 0 \arrow[d] & 0 \arrow[d] & 0 \arrow[d] & \\
0 \arrow[r] & T_{\mathcal{X}/X}(-\log \mathcal{D}') \arrow[r] \arrow[d] & H^0(X, E') \otimes \oo_\mathcal{X}\arrow[r] \arrow[d] & \mathcal{O}_{\mathcal{D}'}(\mathcal{D}') \arrow[r] \arrow[d] & 0 \\
0 \arrow[r] & T_{\mathcal{X}}(-\log \mathcal{D}) \arrow[r] \arrow[d] & T_{\mathcal{X}} \arrow[r] \arrow[d] & N'_{\mathcal{D}/\mathcal{X}} \arrow[r] \arrow[d] & 0 \\
0 \arrow[r] & \pi_2^*T_X(-\log F) \arrow[r] \arrow[d] & \pi_2^*T_X \arrow[r] \arrow[d] & N'_{\mathcal{F}/\mathcal{X}}\arrow[r] \arrow[d] & 0 \\
& 0 & 0 & 0 &
\end{tikzcd}
\end{center}
Note that in the bottom row, the kernel of $\pi_2^*T_X\to N'_{\mathcal{F}/\mathcal{X}}$ is $\pi_2^*T_X(-\log F)$ because $\mathcal{F}=F\times B.$ 
Hence by the snake lemma, it is the cokernel in the leftmost column.

Since $f^*T_\mathcal{X}(-\log \D)$ surjects onto $N_{f/\mathcal{X}}(\log \mathcal{D})$, the leftmost column gives the following exact commuting diagram, where $\mathcal{Q}_1,\mathcal{Q}_2$ are some sheaves on $\mathcal{C}$:
\begin{center}
\begin{tikzcd}
0 \arrow[r] & f^*T_{\mathcal{X}/X}(-\log \mathcal{D}') \arrow[r] \arrow[d] & f^*T_{\mathcal{X}}(-\log \mathcal{D}) \arrow[r] \arrow[d] & f^*\pi_2^*T_X(-\log F) \arrow[r] \arrow[d] & 0 \\
0 \arrow[r] & \mathcal{Q}_1 \arrow[r] \arrow[d] & N_{f/\mathcal{X}}(\log \mathcal{D})\arrow[r] \arrow[d] & \mathcal{Q}_2 \arrow[r] \arrow[d] & 0 \\
& 0 & 0 & 0 &
\end{tikzcd}
\end{center}
(2) then follows from restricting this diagram to $C_b$ and applying (1).
(Note that $Q_1$ may not be isomorphic to the restriction $\mathcal{Q}_1\vert_{C_b}$, but it is a quotient of $\mathcal{Q}_1\vert_{C_b}$.)

\end{proof}

\subsection{Kernel bundles}
Now we explain how kernel bundles enter the picture.
For any globally generated vector bundle $\mathcal{E}$ on $X$, we define its \emph{kernel bundle} $M_\mathcal{E}$ as the kernel of the section evaluation map $\operatorname{ev}: H^0(X,\mathcal{E})\otimes \oo_{X}\to \mathcal{E}$.
\begin{equation*}
0 \to M_\mathcal{E}\to H^0(X,\mathcal{E})\otimes \oo_{X}\xrightarrow{\operatorname{ev}} \mathcal{E}\to 0
\end{equation*}
The following definition of a section-dominating collection $L_i$ of line bundles for a globally generated vector bundle $\mathcal{E}$ was introduced in \cite{CoRi23}.

\begin{definition}
\label{def:sec-dom}
{\em
Let $\mathcal{E}$ be a globally generated vector bundle on a smooth projective variety $X$. 
We denote the ideal sheaf of a point $p\in X$ by $\mathcal{I}_p.$
Let $L_1,\ldots, L_u$ be a collection of non-trivial globally generated line bundles such that $\mathcal{E}\otimes L_i^\vee$ is globally generated for each $i=1,\ldots,u$.
\begin{itemize}
\item We say that $L_1,\ldots, L_u$ \emph{dominates the sections of $\mathcal{E}$} if the map 
\begin{equation*}
\bigoplus_{i=1}^u\left(H^0(L_i\otimes \mathcal{I}_p)\otimes H^0(\mathcal{E}\otimes L_i^\vee)\right)\to H^0(\mathcal{E}\otimes \mathcal{I}_p)
\end{equation*}
is surjective for every point $p\in X$.
\item Let $Z\subsetneq X$ be a proper closed subset. We say that $L_1,\ldots, L_u$ \emph{dominates the sections of $\mathcal{E}$ outside of $Z$} if the map 
\begin{equation*}
\bigoplus_{i=1}^u\left(H^0(L_i\otimes \mathcal{I}_p)\otimes H^0(\mathcal{E}\otimes L_i^\vee)\right)\to H^0(\mathcal{E}\otimes \mathcal{I}_p)
\end{equation*}
is surjective for every point $p\in X\setminus Z$.
\end{itemize}
}
\end{definition}

\begin{remark}
If $\mathcal{E}$ is a vector bundle whose sections are dominated by $L_1,\ldots,L_u$, then there is a surjection 
\begin{equation*}
    M_{L_1}\ ^{\oplus s_1} \oplus \cdots \oplus M_{L_u}\ ^{\oplus s_u} \to M_\mathcal{E}
\end{equation*}
for some $s_1,\ldots,s_u>0$.
Since $M_{L_i}\otimes L_i$ is globally generated for each $i=1,\ldots,u$, this implies that $M_\mathcal{E}\otimes L_1\otimes \cdots \otimes L_u$ is globally generated
(see \cite{HMP10} for related discussion).
\end{remark}

In the cases of effective divisors $E=E'+ F$ (where $F$ is fixed and $E'$ is basepoint-free) that we will consider, $T_X(-\log F)$ is 
pseudo-nef outside a proper closed subset $Z\subsetneq X$ (see \cite[Definition 2.6]{IMRY25}).
In this setting, for a general hypersurface $D\in H^0(X,E)$ and a map $f:C\to X$ from a smooth projective curve $C$ that is birational onto its image and satisfies $f(C)\not\subset D\cup Z$, the following proposition shows that the degree $\deg N_{f/X}(\log D)$ of its log normal sheaf is bounded from below by the degree of the restricted kernel bundle $f^*M_{E'}$, which in turn is bounded in terms of the degrees of the restricted section-dominating line bundles $f^*{L_i}$.

\begin{proposition}\label{prop:log-normal-degree-bound}
{\em
With the setup and notation of \S\ref{sec:variational-argument}, $\pi_2^*M_{E'}$ injects into $T_{\mathcal{X}/X}(-\log \mathcal{D}')$ with trivial cokernel $\oo_\mathcal{X}.$
Assume further that $T_X(-\log F)$ is pseudo-nef outside of a proper closed subset $Z\subsetneq X$, and that for a general hypersurface $D\in H^0(X,E)$, there exists a map $f:C\to X$ from a smooth projective curve $C$ of genus $g$ that is birational onto its image, satisfies $f(C)\not\subset D\cup Z$ and $f_*[C]=\beta$, and meets $D$ in exactly $i$ distinct points.
Then for a general $b\in B$,
\begin{equation}\label{eq:deg-log-normal-bound}
    \deg N_{f_b/X}(\log D_{f,b})\ge \deg f_b^*M_{E'}.
\end{equation}
Moreover, if $X$ is a surface (denoted $S$) and $\deg N_{f_b/S}(\log D_{f,b})<0$, then there is a generically surjective map 
\begin{equation}\label{eq:log-normal-gen-surj-map}
    f_b^*M_{E'}\to N_{f_b/S}(\log D_{f,b}).
\end{equation}
}
\end{proposition}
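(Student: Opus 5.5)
The plan is to prove the three assertions in order: first the injection of $\pi_2^*M_{E'}$ into the relative log tangent sheaf, then the degree bound \eqref{eq:deg-log-normal-bound} from the filtration of Proposition~\ref{prop:normal-family}, and finally the generically surjective map \eqref{eq:log-normal-gen-surj-map} in the surface case.

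\textbf{The injection.} From the proof of Proposition~\ref{prop:normal-family} we have the exact sequence
$$0 \to T_{\mathcal{X}/X}(-\log \mathcal{D}') \to H^0(X,E')\otimes \oo_\mathcal{X} \to \oo_{\mathcal{D}'}(\mathcal{D}') \to 0.$$
On $\mathcal{X}=X\times B$ the divisor $\mathcal{D}'$ is the universal section. The map $H^0(X,E')\otimes\oo_\mathcal{X}\to\oo_{\mathcal{D}'}(\mathcal{D}')$ is evaluation: it sends $s$ to $s\vert_{\mathcal{D}'}$, viewed in $\pi_2^*\oo_X(E')\vert_{\mathcal{D}'}$. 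It therefore factors as
$$H^0(X,E')\otimes\oo_\mathcal{X} \xrightarrow{\pi_2^*\mathrm{ev}} \pi_2^*\oo_X(E') \to \pi_2^*\oo_X(E')\vert_{\mathcal{D}'}.$$
Hence $\pi_2^*M_{E'}=\ker(\pi_2^*\mathrm{ev})$ lies in $T_{\mathcal{X}/X}(-\log\mathcal{D}')$. The cokernel is the kernel of $\pi_2^*\oo(E')\to\pi_2^*\oo(E')\vert_{\mathcal{D}'}$, namely $\pi_2^*\oo(E')(-\mathcal{D}')$. The universal divisor has class $\pi_2^*E'$ twisted by a line bundle pulled back from $B$, and that line bundle is trivial after shrinking $B$. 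So the cokernel is $\oo_\mathcal{X}$. Geometrically, this trivial quotient is the scaling direction of the defining section.

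\textbf{The degree bound.} Restrict to a general $C_b$. The quotient $Q_1$ in Proposition~\ref{prop:normal-family}(2) receives $f_b^*T_{\mathcal{X}/X}(-\log\mathcal{D}')$, which is an extension of $\oo_{C_b}$ by $f_b^*M_{E'}$. The trivial direction $\oo$ (scaling the equation) fixes $D_b$, so it should map to zero in the normal sheaf. I expect to verify this by checking that the section of $T_\mathcal{X}$ given by the Euler direction preserves $\mathcal{D}$ and is tangent to the family, so it lies in the image of $T_\mathcal{C}(-\log\mathcal{D}_f)$. Granting this, $f_b^*M_{E'}$ surjects onto $Q_1$. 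Next, $Q_2$ is a quotient of $f_b^*T_X(-\log F)$. Since $f_b(C_b)\not\subset Z$ and $T_X(-\log F)$ is pseudo-nef outside $Z$, every quotient of it restricted to $C_b$ has nonnegative degree, modulo torsion; torsion only raises the degree. So $\deg Q_2\ge 0$. Also $M_{E'}\otimes E'$ is globally generated and $\oo$-twists behave well, and in the relevant cases $Q_1$ is a quotient of $f_b^*M_{E'}$. Any quotient of a vector bundle has degree at least the minimal degree of its quotients. For the bound $\deg Q_1\ge\deg f_b^*M_{E'}$, I would use that the kernel of $f_b^*M_{E'}\to Q_1$ is a subsheaf of the dual of a globally generated bundle, namely a subsheaf of $H^0(E')^\vee\otimes\oo$ via $M_{E'}\subset H^0\otimes\oo$. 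It therefore has degree $\le 0$. Hence $\deg Q_1\ge\deg f_b^*M_{E'}$, and
$$\deg N_{f_b/X}(\log D_{f,b})=\deg Q_1+\deg Q_2\ge\deg f_b^*M_{E'}.$$

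\textbf{The surface case.} Here $N$ has rank $1$. If $Q_1$ had rank $0$, then $N$ would be, up to torsion, a quotient of $f_b^*T_X(-\log F)$, giving $\deg N\ge 0$. That contradicts $\deg N<0$. So $Q_1$ has rank $1$ and $Q_2$ is torsion. The composite $f_b^*M_{E'}\to Q_1\hookrightarrow N$ then has rank-one image in a rank-one sheaf, hence is generically surjective.

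\textbf{Main obstacle.} The delicate step is showing that the trivial $\oo_\mathcal{X}$ summand dies in the normal sheaf, together with the degree comparison for quotients of $f_b^*M_{E'}$. These are the places I would check most carefully against \cite[Prop.~3.5]{CRY22}.
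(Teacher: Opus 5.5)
Your proof of the first assertion is fine, and more explicit than the paper, which only cites \cite[Proposition 3.7]{CRY22}. Factoring $H^0(X,E')\otimes\oo_\mathcal{X}\to\oo_{\mathcal{D}'}(\mathcal{D}')$ through the surjection $\pi_2^*\mathrm{ev}$ identifies the cokernel with $\pi_2^*\oo_X(E')(-\mathcal{D}')\simeq\oo_\mathcal{X}$. Your bound on the kernel of a map out of $f_b^*M_{E'}$ (a subsheaf of a trivial bundle has degree $\le 0$) is also exactly what the paper uses.

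The gap is the step you flag and then grant: that the trivial summand $\oo_{C_b}$ of $f_b^*T_{\mathcal{X}/X}(-\log\mathcal{D}')$ dies in $N_{f_b/X}(\log D_{f,b})$, so that $f_b^*M_{E'}\to Q_1$ is surjective. This is not automatic in the setup of \S\ref{sec:variational-argument}. There $\Sigma_0$ is an arbitrary local \'etale slice of $\psi$, so the point of $\Sigma_0$ over $\lambda D_b$ need not carry the same map $f_b$ when the fibers of $\psi$ are positive-dimensional. In general the image of the Euler direction in the normal sheaf is the normal component of the first-order deformation of $f_b$ along the scaling path in $\Sigma_0$, and this has no reason to vanish. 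You could force it by choosing $\Sigma_0$ to be $\mathbb{C}^*$-stable, but that is an extra choice you would have to build in and justify.

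Both your degree bound and your surface argument rest on this surjectivity. In the surface case, $\operatorname{rk}Q_1=1$ does not by itself give generic surjectivity of $f_b^*M_{E'}\to N$: $Q_1$ could get its rank from the trivial direction while the image of $f_b^*M_{E'}$ is torsion.

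The missing observation is that you never need the trivial direction to die.
\begin{itemize}
\item Let $K_1$ be the image of $f_b^*M_{E'}$ in $Q_1$ and $K_2=Q_1/K_1$. Then $K_2$ is a quotient of $\oo_{C_b}$, hence it is $0$, $\oo_{C_b}$, or a torsion sheaf, so $\deg K_2\ge 0$.
\item Combined with $\deg K_1\ge\deg f_b^*M_{E'}$, this gives $\deg Q_1\ge\deg f_b^*M_{E'}$. Adding $\deg Q_2\ge 0$ yields \eqref{eq:deg-log-normal-bound}.
\item In the surface case, suppose $K_1$ were torsion. Then $Q_1$, an extension of $K_2$ by torsion, and $Q_2$ would both have nonnegative degree, contradicting $\deg N<0$. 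So $K_1$ has rank one, and $f_b^*M_{E'}\to N_{f_b/S}(\log D_{f,b})$ is generically surjective.
\end{itemize}
This is precisely the paper's argument.
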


\begin{proof}
The first statement that $\pi_2^*M_{E'}$ injects into $T_{\mathcal{X}/X}(-\log \mathcal{D}')$ with trivial cokernel follows from the same proof as in \cite[Proposition 3.7]{CRY22}.
Let $Q_1$ and $Q_2$ be the sheaves on $C_b$ from Proposition~\ref{prop:normal-family}.
Let $K_1$ and $K_2$ be sheaves on $C_b$ completing the diagram below:

\begin{center}
\begin{tikzcd}
0 \arrow[r] & f_b^*M_{E'} \arrow[r] \arrow[d] & f_b^*T_{\mathcal{X}/X}(-\log \mathcal{D}') \arrow[r] \arrow[d] & \oo_{C_b} \arrow[r] \arrow[d] & 0 \\
0 \arrow[r] & K_1 \arrow[r] \arrow[d] & Q_1\arrow[r] \arrow[d] & K_2 \arrow[r] \arrow[d] & 0 \\
& 0 & 0 & 0 &
\end{tikzcd}
\end{center}
Since $M_{E'}$ is a subbundle of a trivial bundle, $\deg K_1\ge \deg f_b^*M_{E'}$, which implies that $\deg Q_1\ge\deg M_{E'}\vert_{C_b}.$
Since $T_X(-\log F)$ is pseudo-nef outside of $Z$ and the curve $C_b$ does not map into $Z$, $Q_2$ has non-negative degree. 
The required inequality \eqref{eq:deg-log-normal-bound} then follows.
Moreover, suppose $X=S$ is a surface, which implies $N_{f_b/S}(\log D_{f,b})$ has rank one. 
If $\deg N_{f_b/S}(\log D_{f,b})<0$, then $K_1$ must have rank one.
This means that the natural map \eqref{eq:log-normal-gen-surj-map} is generically surjective.
\end{proof}

\begin{remark}\label{rem:log-normal-degree-bound}
Suppose we are in the setting of Proposition~\ref{prop:log-normal-degree-bound}.
Assume $X=S$ is a surface and that $L_i$ are globally generated line bundles dominating the sections of $E'$ outside of  $Z\subsetneq X$.
For a general $b\in B$, if $\deg N_{f_b/S}(\log D_{f,b})<0$, then there is a generically surjective map 
\begin{equation*}
    M_{L_i}\vert_{C_b}\to N_{f_b/S}(\log D_{f,b})
\end{equation*}
for some $i$.
This implies that $\deg N_{f_b/S}(\log D_{f,b})\ge \min\limits_{i} \{\deg f_b^*M_{L_i}\}.$
Since the $L_i$ tend to be less positive than $E'$, this gives a stronger (less negative) lower bound on $\deg N_{f_b/S}(\log D_{f,b}).$
\end{remark}

We will need the following elementary result about the sections of the pullback of kernel bundles to curves.

\begin{proposition}\label{prop:sections-of-kernel-bundles}
{\em
Let $L$ be a globally generated line bundle on a smooth projective variety $X$, and let $f:C\to X$ be a non-constant map from a smooth projective curve $C.$ Then
$$H^0(C, f^*M_L)\simeq H^0(X, \mathcal{I}_{f(C)/X}\otimes L).$$
}
\end{proposition}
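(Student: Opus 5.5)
The plan is to pull back the defining sequence of $M_L$ along $f$ and take global sections. Since $L$ is globally generated, the evaluation map $\operatorname{ev}\colon H^0(X,L)\otimes\oo_X\to L$ is surjective. Hence
\begin{equation*}
0\to M_L\to H^0(X,L)\otimes \oo_X\to L\to 0
\end{equation*}
is an exact sequence of vector bundles. Pullback of vector bundles is exact, so applying $f^*$ gives
\begin{equation*}
0\to f^*M_L\to H^0(X,L)\otimes \oo_C\to f^*L\to 0.
\end{equation*}
Taking global sections and using $H^0(C,\oo_C)=\mathbb{C}$ (since $C$ is a smooth projective curve, in particular connected), we obtain
\begin{equation*}
H^0(C,f^*M_L)=\ker\bigl(H^0(X,L)\to H^0(C,f^*L)\bigr),
\end{equation*}
where the map is $s\mapsto f^*s$. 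One should check that the induced map on global sections is exactly this pullback of sections: a constant section $s\otimes 1$ of $H^0(X,L)\otimes\oo_C$ is sent by $f^*\operatorname{ev}$ to $s\circ f$.

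It then remains to identify this kernel with $H^0(X,\mathcal{I}_{f(C)/X}\otimes L)$. Since $C$ is integral, $f^*s=0$ in $H^0(C,f^*L)$ exactly when $s$ vanishes at $f(c)$ for every $c\in C$, that is, when $s$ vanishes on the image $f(C)$. The image $f(C)$ is a closed irreducible curve because $f$ is proper and non-constant, and we give it its reduced structure. A section of $L$ vanishing pointwise on the reduced subvariety $f(C)$ lies in $H^0(X,\mathcal{I}_{f(C)/X}\otimes L)$, and conversely. Both spaces sit inside $H^0(X,L)$, since $\mathcal{I}\otimes L\subset L$ is an inclusion because $L$ is locally free, so this gives the isomorphism.

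The only delicate point is the last step. The statement implicitly treats $f(C)$ as a reduced scheme. If $f$ factors through a non-reduced structure this is harmless, because $f$ factors through $f(C)_{\mathrm{red}}$ whenever $C$ is reduced. Thus $f^*s=0$ is equivalent to $s|_{f(C)_{\mathrm{red}}}=0$, by the injectivity of $\oo_{f(C)}\to f_*\oo_C$ for a dominant map from an integral curve to its integral image. This injectivity is the key fact that makes the argument work, and it is standard.
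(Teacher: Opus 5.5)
Your proposal is correct and follows essentially the same route as the paper. Both arguments pull back the defining sequence of $M_L$, identify $H^0(C,f^*M_L)$ with the kernel of $s\mapsto f^*s$ on $H^0(X,L)$, and match this with the kernel of restriction to $f(C)$ coming from $0\to\mathcal{I}_{f(C)/X}\otimes L\to L\to L|_{f(C)}\to 0$. Your remark on the reduced structure of $f(C)$ and the injectivity of $\mathcal{O}_{f(C)}\to f_*\mathcal{O}_C$ simply spells out the one-line step the paper asserts, namely that $f^*s=0$ if and only if $s|_{f(C)}=0$.
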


\begin{proof}
By applying the global section functor to the short exact sequences 
\begin{center}
 $0\to f^*M_L\to  \mathcal{O}_C\otimes H^0(X,L)\to f^*L\to 0$ \quad  and \quad $0\to \mathcal{I}_{f(C)/X} \otimes  L\to L\to  L|_{f(C)}\to 0 $, 
\end{center}
we obtain the following exact sequences: 
\begin{equation*}
    \xymatrix{
    0 \ar[r] & H^0(C,f^*M_L)  \ar[r] &  H^0(X,L)\ar[r] & H^0(C,f^*L) \\
    0 \ar[r] & H^0(X,\mathcal{I}_{f(C)/X} \otimes  L)\ar[r] & H^0(X,L)\ar[r]  &   H^0(f(C),L|_{f(C)})
    }   
\end{equation*}
The map $H^0(X,L) \to H^0(C,f^*L)$ sends a section $s \in H^0(X,L)$ to its pullback $f^*s \in H^0(C,f^*L)$. 
Thus, $s$ lies in the kernel $H^0(C,f^*M_L)$ if and only if $f^*s = 0$.
Similarly, the map $H^0(X,L) \to H^0(f(C),L|_{f(C)})$ sends a section $s$ to its restriction $s|_{f(C)}$. 
Since $f$ is non-constant, a section vanishes on the image $f(C)$ if and only if its pullback to $C$ vanishes, i.e. $f^*s = 0$ if and only if $s|_{f(C)} = 0$. 
\end{proof}

\section{Hyperbolicity of very general Horikawa surfaces}\label{sec:horikawa-hyp}

In this final section, we apply the tools developed in the previous section to study the hyperbolicity of very general Horikawa surfaces. We do so by analyzing double covers of a base surface $W$, where $W$ is one of the following:
\begin{itemize}
\item the projective plane $\mathbb{P}^2$ (\S\ref{sec:p2});
\item a Hirzebruch surface $\mathbb{F}_d$ (\S\ref{sec:hirzebruch});
\item the blowup of a Hirzebruch surface $\mathbb{F}_d$ at two distinct points lying on the same fiber $\Gamma_0\in |\Gamma|$ (\S\S\ref{sec:hirzebruch-2pt}-\ref{sec:hirzebruch-2pt-3comp}); or
\item the blowup of a Hirzebruch surface $\mathbb{F}_d$ with $d>0$ at a point $x\in \Delta_0$ (\S\ref{sec:hirzebruch-1pt}).
\end{itemize}
In all cases, the double cover is branched over a divisor $D$ that is very general in its linear system.
The methods are the same across each case. 
If the branch divisor decomposes as $D=D'+F$, where $D'$ is the basepoint-free part and $F$ is the fixed part, the strategy is as follows: 
We construct a section-dominating collection (Definition~\ref{def:sec-dom}) of divisors for $D'$ outside a proper closed set $\Sigma_1\subset W$, and demonstrate that the log tangent bundle $T_W(-\log F)$ is pseudo-nef outside a proper closed set $\Sigma_2\subset W$.
With these conditions met, Proposition~\ref{prop:log-normal-degree-bound} yields log algebraic hyperbolicity bounds for the pair $(W,D)$, valid for all curves not contained in $\Sigma_1\cup \Sigma_2$. 
We then utilize Proposition~\ref{prop:sections-of-kernel-bundles} to explicitly characterize the exceptional curves on $W$ that achieve strict equality in these bounds.
Finally, via Lemma~\ref{lemma:log-hyp-to-cover-hyp}, we lift the hyperbolicity of the log pair $(W,D)$ to the double cover $S'$ branched over $D$. 
Consequently, any geometrically rational or elliptic curve on $S'$ must arise either as the pullback of a curve in the exceptional locus $\Sigma_1\cup \Sigma_2$, or as the pullback of a curve on $W$ achieving equality in the degree bounds.

\subsection{The projective plane}\label{sec:p2}
Horikawa surfaces of the first kind of type $(\infty)$ consist of two cases: double covers of $\mathbb{P}^2$ branched over a curve $D$ of degree 8 ($n=2$) or degree 10 ($n=5$).
As mentioned earlier, \cite{RoRo13} established that double covers branched over a very general $D \in |10H|$ are Demailly algebraically hyperbolic.
Concerning double covers branched over a very general $D \in |8H|$, Roulleau and Rousseau proved that these do not contain rational curves, but they are known to contain elliptic curves arising from the preimages of bitangent lines to $D$.
We will show that these are the only elliptic curves on the double cover. 
Consequently, these surfaces are pseudo-Lang algebraically hyperbolic.
Whether they are pseudo-Demailly algebraically hyperbolic, however, remains an open question.

It is a standard fact that $T_{\mathbb{P}^2}$ is a nef vector bundle and that $\oo_{\pp^2}(1)$ dominates the sections of $\mathcal{O}_{\mathbb{P}^2}(d)$ for all $d \ge 1$.
We denote the kernel bundle of $\mathcal{O}_{\mathbb{P}^2}(d)$ by $M_d$.

\begin{lemma}\label{lemma:secdom-p2}
{\em
Let $d\ge 1$. At every point $p\in \pp^2$, the natural multiplication map
\begin{equation*}
    H^0(\pp^2, \oo_{\pp^2}(1)\otimes \mathcal{I}_{p/\pp^2})\otimes H^0(\pp^2, \oo_{\pp^2}(d-1))\to H^0(\pp^2, \oo_{\pp^2}(d)\otimes \mathcal{I}_{p/\pp^2}),
\end{equation*}
is surjective. Consequently, there is a surjective map
\begin{equation*}
M_1^{\oplus s}\to M_d
\end{equation*}
where $s=h^0(\pp^2,\oo_{\pp^2}(d-1))$.
}
\end{lemma}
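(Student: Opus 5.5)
Choose coordinates so that $p=[0:0:1]$. Then $H^0(\pp^2,\oo_{\pp^2}(1)\otimes\mathcal{I}_p)$ is spanned by $x$ and $y$, and $H^0(\pp^2,\oo_{\pp^2}(d)\otimes\mathcal{I}_p)$ consists of the degree $d$ forms with no $z^d$ term. Every monomial of degree $d$ other than $z^d$ contains $x$ or $y$. Such a monomial can therefore be written as $x\cdot m$ or $y\cdot m$ with $m$ a monomial of degree $d-1$. This proves surjectivity of the multiplication map at $p$. Since $\operatorname{PGL}_3$ acts transitively on $\pp^2$ and the map is equivariant, surjectivity holds at every point.

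For the second statement, we convert pointwise surjectivity into a surjection of kernel bundles. For each $s\in H^0(\oo(d-1))$, multiplication by $s$ gives a map $H^0(\oo(1))\otimes\oo\to H^0(\oo(d))\otimes\oo$. This map is compatible with the evaluation maps: evaluating and then multiplying by the local value of $s$ agrees with multiplying and then evaluating. Hence it restricts to a map $M_1\to M_d$ on kernels. Choosing a basis $s_1,\dots,s_N$ of $H^0(\oo(d-1))$, with $N=h^0(\oo(d-1))$, we obtain a map $\Phi\colon M_1^{\oplus N}\to M_d$.

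It remains to check that $\Phi$ is surjective on fibers, and this is the only real point. The fiber of $M_L$ at $p$ is canonically $H^0(L\otimes\mathcal{I}_p)$, since $M_L$ is the kernel of evaluation, which is surjective because $L$ is globally generated. On fibers at $p$, $\Phi$ is exactly the map $\bigoplus_j H^0(\oo(1)\otimes\mathcal{I}_p)\to H^0(\oo(d)\otimes\mathcal{I}_p)$ sending $(\ell_j)_j$ to $\sum_j \ell_j s_j$. This is the image of the multiplication map in the statement, so it is surjective by the first part. A map of vector bundles that is surjective on every fiber is surjective, and $\Phi$ is therefore the required surjection $M_1^{\oplus s}\to M_d$.
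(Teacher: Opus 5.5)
Your proposal is correct: the monomial check at $p=[0:0:1]$ gives surjectivity of the multiplication map. Multiplying by a basis of $H^0(\oo_{\pp^2}(d-1))$ then yields a map $M_1^{\oplus s}\to M_d$ that, on the fiber at $p$, is exactly that multiplication map, so it is surjective. The paper states this lemma without proof as a standard fact, and your argument is the standard one; in particular, your second part is precisely the mechanism behind the Remark following Definition~\ref{def:sec-dom}.
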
 

Chen proved the following result, which implies that for a very general curve $D\subset \pp^2$ of degree $d\ge5$, the pair $(\pp^2,D)$ is log algebraically hyperbolic. 
When $D$ is a very general quartic, $(\pp^2, D)$ is not strictly log algebraically hyperbolic due to the presence of bitangent and flex lines to $D$. 
It was proved in \cite{CRY22} that $(\pp^2,D)$ is pseudo-log algebraically hyperbolic outside of the union of these lines; hence, it is pseudo-log Lang algebraically hyperbolic. 

\begin{theorem}[{\cite[Theorem 1.7]{Che01}}]\label{theorem:xi-chen}
{\em
Let $D\subset \pp^2$ be a very general curve of degree $d.$ 
Then 
\begin{equation*}
2g(C)-2+\left|\nu^{-1}(D) \right| \geq (d-4)\deg (C)
\end{equation*}
for all integral curves $C\neq D$, where $g(C)$ denotes the geometric genus of $C$ and $\nu:\widetilde{C}\to \pp^2$ denotes the induced map from the normalization of $C.$
}
\end{theorem}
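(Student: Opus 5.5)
We argue by contradiction, using the variational setup of \S\ref{sec:variational-argument} with $X=\pp^2$, $E=E'=\oo_{\pp^2}(d)$ and $F=\emptyset$. Suppose that for a very general $D$ of degree $d$ there is an integral curve $C\neq D$ violating the inequality. Since the Kontsevich spaces $\overline{\mathcal{M}}_{g,i}(\pp^2,\beta)$ form countably many families, we may fix the data $(g,i,\beta)$ such that, for a general $D$, there is a map $f\colon C\to\pp^2$, birational onto its image, of genus $g$ and degree $e=\deg\beta$, meeting $D$ in exactly $i$ points. This means the incidence map $\psi\colon\Sigma\to B_1$ is dominant, so the construction of the family $B$ applies.

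The key input is positivity. Since $T_{\pp^2}$ is nef, $T_X(-\log F)=T_{\pp^2}$ is pseudo-nef with $Z=\emptyset$, so Proposition~\ref{prop:log-normal-degree-bound} applies to a general member $f_b$. First suppose $\deg N_{f_b/\pp^2}(\log D_{f,b})<0$. Then Remark~\ref{rem:log-normal-degree-bound}, applied with $L=\oo(1)$ (which dominates by Lemma~\ref{lemma:secdom-p2}), gives a generically surjective map $f_b^*M_1\to N_{f_b/\pp^2}(\log D_{f,b})$. The bundle $f_b^*M_1$ has rank $2$ and degree $-e$, and it sits in $H^0(\oo(1))\otimes\oo_C$. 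The image of a generically surjective map from it to a line bundle therefore has degree at least $\deg f_b^*M_1=-e$. This holds because the kernel is a subsheaf of a trivial bundle, hence has degree $\le 0$, and the quotient has degree at least $-e$.

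Now use the degree formula \eqref{eq:deg-log-normal}, which gives
$$\deg N=2g-2+|f^{-1}D|-(d-3)e.$$
Combined with the bound $\deg N\ge -e$ from the previous paragraph, this yields $2g-2+|f^{-1}D|\ge(d-4)e$. The same conclusion holds if $\deg N\ge0$, since $-(d-3)e$ is already at least $-(d-4)e-e$. In that case we also have $\deg N\ge 0\ge -e$, so the inequality follows directly. The bound therefore holds for the general member of every family, and hence for all curves on a very general $D$.

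The main obstacle is the justification that the degree of the image of $f_b^*M_1$ is at least $-e$. The cleanest route is to bound the degree of the saturated kernel by $0$, using that $f_b^*M_1\subset\oo_C^{3}$. A second delicate point is the dominance and genericity reduction: the inequality must hold for \emph{every} curve on a very general $D$, not just a general one. This is handled by excluding the countably many non-dominant families $(g,i,\beta)$. One must also rule out $C$ being a component of $D$, which is where the hypothesis $C\neq D$ enters.
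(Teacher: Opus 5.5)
Your argument is correct and is essentially the paper's own derivation of this inequality, given in the proof of Proposition~\ref{prop:p2-horikawa}. There, nefness of $T_{\pp^2}$ and section-domination by $\oo_{\pp^2}(1)$ produce a generically surjective map $f_b^*M_1\to N_{f_b/\pp^2}(\log D_{f,b})$ whose image has degree at least $-e$, and the degree formula \eqref{eq:deg-log-normal} then gives the bound. The theorem as stated is only cited from Chen, whose original proof degenerates $D$ into a union of components; the paper's variational route, which you follow, additionally yields the equality case by showing that a degree-zero kernel forces $h^0(f_b^*M_1)>0$.
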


We prove that in Theorem~\ref{theorem:xi-chen}, equality is strictly achieved only when $C$ is a line.
Note that this immediately establishes that when $d=4$ and $D$ is a very general quartic, $(\pp^2,D)$ is pseudo-log Lang algebraically hyperbolic.

\begin{proposition}\label{prop:p2-horikawa}
{\em
Let $D\subset \pp^2$ be a very general curve of degree $d$, and let $f:C\to \pp^2$ be a non-constant map from a smooth projective curve $C$ of genus $g$ that is birational onto its image such that $\deg f(C)=e$, $f(C)\not\subset D$ and $|f^{-1}(D)|=i.$
Then $2g-2+i\ge(d-4)e$, and equality is achieved only when $C$ is a line.
}
\end{proposition}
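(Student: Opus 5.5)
We run the variational argument of \S\ref{sec:variational-argument} with $X=\pp^2$, $E=\oo_{\pp^2}(d)$, $E'=E$ and $F=\emptyset$, so $T_X(-\log F)=T_{\pp^2}$, which is nef; hence we may take $Z=\emptyset$. By the standard countability argument over the Kontsevich spaces $\overline{\mathcal{M}}_{g,i}(\pp^2,eH)$, it suffices to treat, for each fixed $(g,i,e)$ with $\psi:\Sigma\to B_1$ dominant, the map $f_b$ for general $b\in B$. The degree formula \eqref{eq:deg-log-normal} gives
\[
\deg N_{f_b/\pp^2}(\log D_{f,b}) = 2g-2+i-\deg f_b^*(K_{\pp^2}+D)=2g-2+i-(d-3)e.
\]
So the claimed inequality $2g-2+i\ge (d-4)e$ is equivalent to $\deg N_{f_b/\pp^2}(\log D_{f,b})\ge -e$, and equality corresponds to this degree being exactly $-e$.

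To get the bound, suppose $\deg N_{f_b/\pp^2}(\log D_{f,b})<0$; otherwise there is nothing to prove. By Lemma~\ref{lemma:secdom-p2}, $\oo(1)$ dominates the sections of $\oo(d)$, so Remark~\ref{rem:log-normal-degree-bound} provides a generically surjective map $f_b^*M_1\to N:=N_{f_b/\pp^2}(\log D_{f,b})$. Here $M_1\cong\Omega_{\pp^2}(1)$ has rank $2$ and $\deg f_b^*M_1=-e$. Its kernel is a line subbundle $K\subset f_b^*M_1\subset H^0(\oo(1))^\vee\otimes\oo_C$, i.e.\ a subsheaf of a trivial bundle, so $\deg K\le 0$. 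Therefore $\deg N\ge \deg(\mathrm{im})\ge -e-\deg K\ge -e$, which is the inequality. (If $\deg N\ge0$ the inequality is automatic, since $-e<0$.)

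For the equality case, assume $\deg N=-e$, which is negative. Then every inequality above is an equality. In particular the image of $f_b^*M_1\to N$ has full degree, so the map is surjective with saturated kernel, and $\deg K=0$. A degree-$0$ line subbundle of a trivial bundle is itself trivial and a direct summand, so $K\cong\oo_C$ and $H^0(C,f_b^*M_1)\neq0$. Proposition~\ref{prop:sections-of-kernel-bundles} then gives $H^0(\pp^2,\mathcal{I}_{f(C)}(1))\ne0$, so $f(C)$ lies in a line. Since $f$ is birational onto its image, $f(C)$ is that line, $e=1$ and $C\cong\pp^1$. This handles the general member of each family, and the countable-union argument upgrades it to the very general $D$.

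The main obstacle is the equality analysis. One must check that equality forces the kernel $K$ to have degree exactly $0$, rather than allowing torsion in $N$ or a non-saturated image to absorb the difference. This works because $\deg(\mathrm{im})\le\deg N$, which squeezes everything: the image has the same degree as $N$, and $\deg K=\deg f_b^*M_1-\deg(\mathrm{im})=-e-(-e)=0$. The remaining worry is that $K$ is a subsheaf of $f_b^*M_1$ and not merely of a trivial bundle. This is harmless, since a degree-$0$ subsheaf of a trivial bundle on a curve still has a nonzero section, and that section lies in $H^0(f_b^*M_1)$ because $K\subset f_b^*M_1$.
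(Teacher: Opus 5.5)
Your proposal is correct and follows the paper's proof essentially step for step. The shared steps are: the degree formula for $N_{f_b/\pp^2}(\log D_{f,b})$; the generically surjective map $f_b^*M_1\to N_{f_b/\pp^2}(\log D_{f,b})$, coming from nefness of $T_{\pp^2}$ and section-dominance by $\oo_{\pp^2}(1)$; the lower bound on the image via its kernel lying in a trivial bundle; and, in the equality case, the degree-$0$ kernel being trivial, so that Proposition~\ref{prop:sections-of-kernel-bundles} forces $f(C)$ into a line. Two harmless slips: the saturation of the kernel does not follow from the degree count alone, only afterwards from the kernel being a trivial direct summand (and it is never needed); and $M_1$ sits inside $H^0(\oo_{\pp^2}(1))\otimes\oo$, not its dual.
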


\begin{proof}
We will use the setup and notation from \S\ref{sec:variational-argument} in this setting.
Recall from \eqref{eq:deg-log-normal} that at a general $b\in B$:
\[\deg N_{f_b/\pp^2}(\log D_{f,b}
)=2g-2+i-(d-3)e, \]
so we may assume that $\deg N_{f_b/\pp^2}(\log D_{f,b}
)<0.$
Since $T_{\pp^2}$ is nef, Proposition~\ref{prop:log-normal-degree-bound} and Lemma~\ref{lemma:secdom-p2} imply that there is a generically surjective map
\[ \mu:f_b^*M_1 \to N_{f_b/\pp^2}(\log D_{f,b}). \]
Let us denote the 
image sheaf of $\mu$ by $Q$.
Since $M_1$ is a subbundle of a trivial bundle, we have $\deg Q\ge \deg f_b^*M_1 = -e$, and so 
\[2g-2+i\ge \deg Q + (d-3)e\ge (d-4)e. \]

Now assume that $2g-2+i=(d-4)e$, which forces $\deg Q =-e$ and the kernel $K$ of $\mu$ to have $\deg K=0.$
Since $K$ is a rank-one subsheaf of $\oo_{C_b}^{\oplus 3}$, projecting onto at least one of the trivial factors yields a non-zero map $\nu \colon K\to \oo_{C_b}$.
A non-zero morphism between line bundles of the same degree must be an isomorphism (Schur's Lemma). 
Hence, $\oo_{C_b}$ injects into $f_b^*M_1$, and $f_b^*M_1$ must have global sections.
By Proposition~\ref{prop:sections-of-kernel-bundles}, we have 
$$h^0(C_b, f_b^*M_1) = h^0(\pp^2, \mathcal{I}_{f_b(C_b)/\pp^2}(1)) = h^0(\pp^2, \oo_{\pp^2}(1-e)),$$
which strictly forces $e=1$.
\end{proof}

Combined with Lemma \ref{lemma:log-hyp-to-cover-hyp} and Remark~\ref{rem:elliptic-curves-double cover}, we obtain the following result.

\begin{corollary}\label{cor:p2-horikawa-cover}
{\em
Let $\rho:S'\to \pp^2$ be a double cover branched over a very general degree 8 plane curve $D.$ Then:
\begin{enumerate}
    \item $S'$ does not contain rational curves.
    \item If $C'\subset S'$ is an integral curve with geometric genus $g(C')=1$, then $C'$ is the pullback of a line $\ell\subset \pp^2$ that is bitangent to $D.$
In particular, since there are only finitely many bitangent lines to $D$, there are only finitely many geometrically elliptic curves on $S'.$
\end{enumerate}
}
\end{corollary}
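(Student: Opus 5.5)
Apply Proposition~\ref{prop:p2-horikawa} with $d=8$. For every map $f\colon C\to\pp^2$ from a smooth projective curve, birational onto an image of degree $e$ with $f(C)\not\subset D$, it gives
\[2g-2+|f^{-1}(D)|\ge 4e=\tfrac12 D\cdot C.\]
This is exactly the hypothesis of Lemma~\ref{lemma:log-hyp-to-cover-hyp}, since a very general octic is smooth and hence snc. So every rational curve of $S'$ lies in the ramification divisor $R\simeq D$. But $D$ is a smooth plane octic of genus $21$, so $R$ is not rational. This proves (1).

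For (2), let $C'\subset S'$ be integral of geometric genus one. It is not contained in $R$, because $R$ has genus $21$. Write $C$ for the normalization of $\rho(C')$ and $f\colon C\to\pp^2$ for the induced map. By Remark~\ref{rem:elliptic-curves-double cover}, equality must hold: $2g(C)-2+|f^{-1}(D)|=4e$. The equality clause of Proposition~\ref{prop:p2-horikawa} then forces $e=1$, so $\rho(C')=\ell$ is a line. The remark also gives the local structure of $f^*D=\sum t_ip_i$, which I will use to show $\ell$ is bitangent.

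Since $g(\ell)=0$, the second case of the remark applies, with $m_it_i=2$ for every $i$ and $\sum t_i=8$. If $\pi\colon C'\to\ell$ had degree $1$, then $C'$ would be rational, a contradiction. So $\pi$ has degree $2$ and $\rho^{-1}(\ell)=C'$ is irreducible. There are exactly four points with $t_i=1$ (branch points), and the remaining points have $t_i=2$ (tangency points). These satisfy $4+2k=8$, so $k=2$. Hence $\ell$ meets $D$ transversally at four points and is tangent at exactly two. In particular $\ell$ is a bitangent, and $C'=\rho^{-1}(\ell)$ is the double cover of $\ell$ branched at four points, hence elliptic.

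It remains to note that a smooth plane octic has only finitely many bitangent lines, which follows from Plücker-type counts for a curve with finitely many tangency lines. For very general $D$ this number is $1320$, but finiteness is all the statement needs. Therefore only finitely many geometrically elliptic curves occur.

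The only delicate point is applying Remark~\ref{rem:elliptic-curves-double cover} correctly. I must check that the count of four $m_i=2$ points, together with $\deg f^*D=8$, pins down exactly two tangency points of multiplicity $2$. I also need to rule out higher-order contact such as flexes. This follows because $m_it_i=2$ forces $t_i\le2$.
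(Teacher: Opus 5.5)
Your proposal is correct and follows the paper's route. Proposition~\ref{prop:p2-horikawa} with $d=8$ gives exactly the hypothesis $2g-2+i\ge 4e=\frac12 D\cdot C$ of Lemma~\ref{lemma:log-hyp-to-cover-hyp}, and the equality analysis of Remark~\ref{rem:elliptic-curves-double cover}, combined with the equality clause of the proposition, pins elliptic curves to bitangent lines. You also correctly spell out details the paper leaves implicit: the ramification curve has genus $21$, the degree-one case is excluded, and the count $4+2k=8$ yields exactly two simple tangencies.
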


By the classical Pl\"ucker formulas, the number of bitangent lines to a smooth plane curve of degree $d$ is 
\[ \frac{1}{2}d^4-d^3-\frac{9}{2}d^2+9d. \]
In particular, when $d=8$, this number is 1320.

Thus, we have shown that a very general Horikawa surface of the first kind with $n=2$ (which is of type $(\infty)$) is pseudo-Lang algebraically hyperbolic. 
Specifically, it contains no rational curves and exactly 1320 geometrically elliptic curves.

\subsection{Hirzebruch surfaces}\label{sec:hirzebruch}
Let $\ff_d$ be the $d$-th Hirzebruch surface, which comes with a natural projection $\ff_d\to \pp^1$. We denote by $\Gamma$ the class of a fiber of this projection and by $\Delta_0$ the class of the zero-section 
($\Gamma^2 = 0$, $\Delta_0 \cdot \Gamma = 1$ and $\Delta_0^2=-d$). 
When $d=0$, we maintain the same notation by letting $\Delta_0$ and $\Gamma$ denote the fiber classes of the two natural projections.
When $d>0$, the zero section $\Delta_0$ is a fixed curve.

Besides the type $(\infty)$ surfaces, all other Horikawa surfaces of the first kind are of type $(d)$ or $(d')$ and are therefore birational to double covers of Hirzebruch surfaces. 
Additionally, Horikawa surfaces of the second kind of type $(2^*)$ are also realized as double covers of Hirzebruch surfaces. 

For each geometric genus determined by the invariant $n$, the moduli of Horikawa surfaces of the first kind is stratified by types (see Table~\ref{table:horikawa-moduli}).
For a general point in each stratum, the branch locus is either a general divisor, or the union of a general divisor with the fixed curve $\Delta_0$ (when $d>0$).

Roulleau and Rousseau \cite{RoRo13} previously established that for $n\ge 3$ and type $(\frac{n-3}{3})$, a very general surface contains no rational curves. 
It does, however, contain elliptic curves arising as pullbacks of curves of class $\Gamma$ (and also $\Delta_0$ when $n=3$) that are tangent to the branch locus $D$. 
We prove that these are, in fact, the only elliptic curves on such a surface.
In fact, we will establish pseudo-log Lang algebraic hyperbolicity for a very general point in every stratum of moduli for Horikawa surfaces of the first kind. 
Specifically, we will show that the only possible rational curves are the exceptional curves arising from the nodes of $D$ and the curves contained in the pullback of $\Delta_0$, and the only possible elliptic curves are those pulled back from either $\Delta_0$ or from curves of class $\Gamma$ (or possibly $\Delta_0$ as well when $d=0$ and $n=3$) that are tangent to $D$ or intersect $D$ at a node.

Recall that the canonical class of $\ff_d$ is given by:
$$K_{\ff_d} = -2\Delta_0 - (d+2)\Gamma.$$
We also note that the tangent bundle $T_{\ff_0}$ is nef, the logarithmic tangent bundle $T_{\ff_d}(-\log \Delta_0)$ is nef for $d>0$, and the tangent bundle $T_{\ff_d}$ is pseudo-nef outside of $\Delta_0$ when $d>0$.

The following section-dominance result was proved in \cite[Example 2.6]{CoRi23}.
It uses the vanishing result $H^1(\mathbb{F}_d, \mathcal{O}_{\mathbb{F}_d}(\alpha \Delta_0+\beta\Gamma))=0$ when $\alpha\ge-1$ and $\beta\ge \alpha d -1.$

\begin{lemma}\label{lemma:secdom-hirzebruch}
{\em
Let $a\ge1$ and $b\ge ad.$
At every point $p\in \ff_d$, the natural multiplication map
$$\begin{aligned}
&\left( H^0(\mathbb{F}_d, \mathcal{O}_{\mathbb{F}_d}(\Delta_0+d\Gamma) \otimes \mathcal{I}_{p/\mathbb{F}_d}) \otimes H^0(\mathbb{F}_d, \mathcal{O}_{\mathbb{F}_d}((a-1)\Delta_0+(b-d)\Gamma)) \right) \\
\oplus &\left( H^0(\mathbb{F}_d, \mathcal{O}_{\mathbb{F}_d}(\Gamma) \otimes \mathcal{I}_{p/\mathbb{F}_d}) \otimes H^0(\mathbb{F}_d, \mathcal{O}_{\mathbb{F}_d}(a\Delta_0+(b-1)\Gamma)) \right) \\
\to &H^0(\mathbb{F}_d, \mathcal{O}_{\mathbb{F}_d}(a\Delta_0+b \Gamma) \otimes \mathcal{I}_{p/\mathbb{F}_d})
\end{aligned}$$
is surjective.
Consequently, there is a surjective map
\begin{equation*}
M_{\Delta_0+d\Gamma}^{\oplus s}\oplus M_\Gamma^{\oplus t}\to M_{a\Delta_0+b\Gamma}
\end{equation*}
where $s=h^0(\mathbb{F}_d, \mathcal{O}_{\mathbb{F}_d}((a-1)\Delta_0+(b-d)\Gamma))$ and $t=h^0(\mathbb{F}_d, \mathcal{O}_{\mathbb{F}_d}(a\Delta_0+(b-1)\Gamma))$.
}
\end{lemma}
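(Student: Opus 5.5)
The plan is to reduce surjectivity at a point $p$ to a statement about restriction maps, using the vanishing $H^1(\ff_d,\oo(\alpha\Delta_0+\beta\Gamma))=0$ for $\alpha\ge -1$, $\beta\ge \alpha d-1$. Write $E=a\Delta_0+b\Gamma$. Let $\Gamma_p$ be the fiber through $p$. Since $\Gamma$ is basepoint-free with $h^0(\oo(\Gamma))=2$, the space $H^0(\oo(\Gamma)\otimes\mathcal{I}_p)$ is spanned by the single section $\gamma_p$ cutting out $\Gamma_p$. Hence the image of the second summand is exactly $\gamma_p\cdot H^0(\oo(E-\Gamma))$, i.e.\ the sections of $E$ vanishing on $\Gamma_p$. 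From the restriction sequence
\[0\to \oo(E-\Gamma)\to\oo(E)\to \oo_{\Gamma_p}(E)\to 0\]
and $H^1(\oo((a)\Delta_0+(b-1)\Gamma))=0$ (valid since $b-1\ge ad-1$), the map $H^0(\oo(E))\to H^0(\Gamma_p,\oo(E)|_{\Gamma_p})=H^0(\pp^1,\oo(a))$ is surjective. It therefore suffices to show that the image of the first summand, restricted to $\Gamma_p$, contains every section of $\oo_{\pp^1}(a)$ vanishing at $p$.

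Next I would restrict the first summand to $\Gamma_p$. The class $\Delta_0+d\Gamma$ is basepoint-free and restricts to $\oo_{\pp^1}(1)$ on $\Gamma_p$. The map $H^0(\oo(\Delta_0+d\Gamma))\to H^0(\oo_{\Gamma_p}(1))$ is surjective by the same sequence, using $H^1(\oo(\Delta_0+(d-1)\Gamma))=0$ (here $\beta=d-1\ge d-1$). So there is a section $\ell$ of $\Delta_0+d\Gamma$ vanishing at $p$ whose restriction to $\Gamma_p$ is the linear form vanishing exactly at $p$. Likewise $H^0(\oo((a-1)\Delta_0+(b-d)\Gamma))\to H^0(\oo_{\pp^1}(a-1))$ is surjective, using $H^1(\oo((a-1)\Delta_0+(b-d-1)\Gamma))=0$. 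This vanishing needs $a-1\ge -1$ and $b-d-1\ge (a-1)d-1$, i.e.\ $b\ge ad$, which is precisely the hypothesis. Since every section of $\oo_{\pp^1}(a)$ vanishing at $p$ has the form $\ell|_{\Gamma_p}\cdot h$ with $h\in H^0(\oo_{\pp^1}(a-1))$, the restricted image covers them all.

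To finish, take any $s\in H^0(E\otimes\mathcal{I}_p)$. Subtract $\ell\cdot\tilde h$ for a suitable lift $\tilde h$ so that the difference vanishes on $\Gamma_p$. That difference then lies in $\gamma_p\cdot H^0(\oo(E-\Gamma))$, which proves surjectivity.

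The consequence, a surjection $M_{\Delta_0+d\Gamma}^{\oplus s}\oplus M_\Gamma^{\oplus t}\to M_E$, follows by the standard argument of \cite{CoRi23}, as in the Remark after Definition~\ref{def:sec-dom}. The fiber of $M_L$ at $p$ is $H^0(L\otimes\mathcal{I}_p)$, and multiplication by fixed bases of the complementary spaces gives bundle maps. Fiberwise surjectivity at every $p$ then yields surjectivity of the sheaf map.

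The main obstacle is the bookkeeping of the $H^1$-vanishing hypotheses, in particular making sure $b\ge ad$ suffices for the $(a-1)\Delta_0+(b-d-1)\Gamma$ term. A separate point to handle is the case $d=0$, where $\Delta_0+d\Gamma=\Delta_0$ is the opposite ruling; the argument goes through unchanged there.
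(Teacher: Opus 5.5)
Your proof is correct and takes the same route as the paper. The paper cites \cite[Example 2.6]{CoRi23} for this lemma, but it writes out the identical argument for the blown-up analogue, Lemma~\ref{lemma:secdom-hirzebruch2}: a section of $\Delta_0+d\Gamma$ meeting $\Gamma_p$ only at $p$, the $H^1$-vanishing that makes $H^0((a-1)\Delta_0+(b-d)\Gamma)\to H^0(\oo_{\pp^1}(a-1))$ surjective exactly when $b\ge ad$, and the observation that the remaining sections vanish along $\Gamma_p$ and so come from the $\Gamma$-summand.
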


The following result by Chen, which was used in \cite{RoRo13} to study double covers of $\ff_0\simeq\pp^1\times\pp^1$, implies that $(\ff_d,D)$ is log algebraically hyperbolic if $b\ge a \ge 4$ and $b\ge ad+3.$

\begin{theorem}[{\cite[Corollary 1.12]{Che01}}]\label{theorem:hirzebruch-xi-chen}
{\em
Let $D=\bigcup D_k\subset \ff_d$, where each $D_k$ is a very general curve in a basepoint-free complete linear system, and assume $D\in |a\Delta_0 + b\Gamma|$ with $b\ge a.$ 
Then 
\begin{equation*}
2g(C)-2+\left|\nu^{-1}(D) \right| \geq \min(a-3,b-ad-2)\deg (C)
\end{equation*}
for all integral curves $C\neq D$. 
Here $g(C)$ denotes the geometric genus of $C$, $\nu: \widetilde{C} \to \ff_d$ denotes the induced map from the normalization of $C$, and $\deg(C)=(\Delta_0 + (d+1)\Gamma)\cdot C$.
}
\end{theorem}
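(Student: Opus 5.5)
The plan is to reprove the statement with the variational machinery of \S\ref{sec:variational-argument} rather than Chen's degeneration, in the same way as Proposition~\ref{prop:p2-horikawa}.

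\textbf{Setup.} Write $D=\bigcup_k D_k$ with $D_k\in|E_k|$ basepoint-free, so there is no fixed part ($F=\emptyset$). Run the construction of \S\ref{sec:variational-argument} over the product of the linear systems $H^0(\ff_d,E_k)$ instead of a single one. Then $T_{\mathcal{X}/X}(-\log\mathcal{D})$ contains $\bigoplus_k\pi_2^*M_{E_k}$ with trivial cokernels, and the proof of Proposition~\ref{prop:log-normal-degree-bound} goes through verbatim. Here I use that $T_{\ff_0}$ is nef and that $T_{\ff_d}$ is pseudo-nef outside $\Delta_0$ for $d>0$. For a curve $f\colon C\to\ff_d$ with $f(C)\not\subset D\cup\Delta_0$, write $C\sim\alpha\Delta_0+\beta\Gamma$ with $\alpha\ge0$ and $\beta\ge\alpha d$. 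Then $\Gamma\cdot C=\alpha$, $(\Delta_0+d\Gamma)\cdot C=\beta$, and $\deg C=\alpha+\beta$. By \eqref{eq:deg-log-normal} and $K_{\ff_d}=-2\Delta_0-(d+2)\Gamma$,
\[
\deg N_{f_b}(\log D_{f,b})=2g-2+i-(a-2)(\beta-\alpha d)-(b-d-2)\alpha .
\]
Set $m=\min(a-3,\,b-ad-2)$.

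\textbf{Case $\deg N\ge0$.} Then $2g-2+i\ge (a-2)\beta+(b-ad-2+d... )$; more precisely it is at least $(a-2)\beta+\bigl(b-(a-1)d-2\bigr)\alpha$. One checks coefficientwise that this is at least $m(\alpha+\beta)$. If some coefficient is smaller, use $\beta\ge\alpha d$ to move weight from $\beta$ to $\alpha$.

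\textbf{Case $\deg N<0$.} By Remark~\ref{rem:log-normal-degree-bound} and Lemma~\ref{lemma:secdom-hirzebruch} applied to each $E_k$ (all satisfy $a_k\ge1$ and $b_k\ge a_kd$ by basepoint-freeness, or are multiples of $\Gamma$), there is a generically surjective map $f_b^*M_L\to N$ with $L=\Delta_0+d\Gamma$ or $L=\Gamma$. This gives $\deg N\ge -L\cdot C$.
\begin{itemize}
\item If $L=\Gamma$, then $2g-2+i\ge(a-2)\beta+(b-ad+d-d-3+... )\alpha$, i.e. the $\alpha$-coefficient becomes $b-(a-2)d-d-3=b-ad+d-3$. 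This is at least $b-ad-2$ when $d\ge1$, and equals $b-3\ge a-3$ when $d=0$.
\item If $L=\Delta_0+d\Gamma$, the $\beta$-coefficient becomes $a-3$ and the $\alpha$-coefficient is $b-ad+d-2\ge b-ad-2$.
\end{itemize}
In both subcases each coefficient is at least $m$, and since $\alpha,\beta\ge0$ the bound $2g-2+i\ge m(\alpha+\beta)$ follows.

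\textbf{Exceptional curve and main difficulty.} The remaining curve is $C=\Delta_0$ when $d>0$. Very general $D_k$ meet $\Delta_0$ transversally in $E_k\cdot\Delta_0$ points, with no common points, so $i=b-ad$ and $\deg C=1$. Then $2g-2+i=b-ad-2\ge m$. The main obstacle I expect is the first step: checking that Proposition~\ref{prop:log-normal-degree-bound} and the section-dominance input really extend to a reducible $D$ with several independently varying components. This needs the kernel bundle $M$ of $\bigoplus_k E_k$ to be dominated componentwise. It also needs the coefficient comparison in the case $\deg N\ge0$ to be carried out carefully when $b$ is close to $ad$.
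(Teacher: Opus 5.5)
Your argument is correct, but it takes a different route from the one the paper relies on. The paper gives no proof of this statement; it imports it from Chen, whose argument degenerates $D$ into a union of components.

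What you have done instead is re-derive, for several independently moving basepoint-free components, the two-branch inequality of Proposition~\ref{prop:hirzebruch} (and Proposition~\ref{prop:hirzebruch-f0} when $d=0$):
\begin{itemize}
\item your subcase $L=\Delta_0+d\Gamma$ gives $(a-3)\beta+(b-ad+d-2)\alpha=((a-3)\Delta_0+(b-2d-2)\Gamma)\cdot C$;
\item your subcase $L=\Gamma$ gives $(a-2)\beta+(b-ad+d-3)\alpha=((a-2)\Delta_0+(b-d-3)\Gamma)\cdot C$.
\end{itemize}
You then compare coefficients with $m$ and check $\Delta_0$ by hand. This is exactly the mechanism behind the paper's remark that Proposition~\ref{prop:hirzebruch} recovers Chen's bounds ``after taking $\Delta_0$ into account.''

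The variational route actually gives more than Chen's statement. For every curve other than $\Delta_0$, the $\alpha$-coefficient is at least $b-ad+d-3\ge b-ad-2$, with strict inequality once $d\ge 2$. So the term $b-ad-2$ in the minimum is forced only by $\Delta_0$ itself. The kernel-bundle picture is also what later lets the paper characterize the equality cases. Chen's degeneration, by contrast, needs neither pseudo-nefness of $T_{\ff_d}$ outside $\Delta_0$ nor a section-dominating collection.

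The obstacle you flag does not materialize. Take $B$ open in $\prod_k H^0(\ff_d,E_k)$. Each $\mathcal{D}_k$ is smooth by basepoint-freeness, and the $\mathcal{D}_k$ meet transversally because their equations involve disjoint sets of parameters, so $\mathcal{D}$ is simple normal crossing. The map $\bigoplus_k H^0(E_k)\otimes\oo_{\mathcal{X}}\to\bigoplus_k\oo_{\mathcal{D}_k}(\mathcal{D}_k)$ is diagonal. Hence $T_{\mathcal{X}/X}(-\log\mathcal{D})$ is the direct sum of the single-component kernels, each an extension of $\oo_{\mathcal{X}}$ by $\pi_2^*M_{E_k}$. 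The proof of Proposition~\ref{prop:log-normal-degree-bound} then yields a generically surjective map from some $f_b^*M_{E_k}$. Lemma~\ref{lemma:secdom-hirzebruch} reduces this to $M_{\Delta_0+d\Gamma}$ or $M_\Gamma$; for $E_k$ a multiple of a ruling, that ruling alone dominates.

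Likewise, no weight-shifting is needed when $\deg N\ge 0$. Both coefficients, $a-2$ and $b-ad+d-2$, already exceed $m$, and $\alpha=\Gamma\cdot C\ge 0$ and $\beta=(\Delta_0+d\Gamma)\cdot C\ge 0$ by nefness. Do clean up the garbled intermediate expressions in that case and in the $L=\Gamma$ bullet; the final coefficients you state are right.
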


We prove a different version of this result, which gives more refined information.
We treat the $d=0$ and $d>0$ cases separately in Propositions \ref{prop:hirzebruch} and \ref{prop:hirzebruch-f0}, respectively.

\begin{proposition}\label{prop:hirzebruch}
{\em
Let $d>0$, and let $D\subset \ff_d$ be a very general curve in the linear system $|a\Delta_0 + b\Gamma|$ where $a\ge 1$ and $b\ge ad$. 
Let $f:C\to \ff_d$ be a map from a smooth projective curve $C$ of genus $g$ that is birational onto its image, satisfies $f(C)\not\subset D\cup\Delta_0$, and meets $D$ in exactly $i$ distinct points. 
Then the curve $C$ must satisfy at least one of the following two inequalities:
\begin{enumerate}
\item $2g-2+i\ge ((a-3)\Delta_0 + (b-2d-2)\Gamma)\cdot C$
\item $2g-2+i\ge ((a-2)\Delta_0 + (b-d-3)\Gamma)\cdot C$
\end{enumerate}
Furthermore, if inequality (1) is satisfied as a strict equality, then the image $f(C)$ is limited to the following classes:
\begin{itemize}
\item When $d>1$, the fiber class $\Gamma$;
\item When $d=1$, the fiber class $\Gamma$ or the class $\Delta_0+\Gamma$.
\end{itemize}
}
\end{proposition}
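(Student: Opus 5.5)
We mirror the proof of Proposition~\ref{prop:p2-horikawa}, using the variational setup of \S\ref{sec:variational-argument} with $X=\ff_d$, $E=a\Delta_0+b\Gamma$, and $D$ very general. Since $b\ge ad$, the system $|a\Delta_0+b\Gamma|$ is basepoint-free. We therefore take $F=\emptyset$, $E'=E$, and use that $T_{\ff_d}$ is pseudo-nef outside $Z=\Delta_0$. The hypothesis $f(C)\not\subset D\cup\Delta_0$ is exactly what Proposition~\ref{prop:log-normal-degree-bound} requires.

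By \eqref{eq:deg-log-normal} and $K_{\ff_d}=-2\Delta_0-(d+2)\Gamma$, at a general $b\in B$ we have
\[\deg N_{f_b/\ff_d}(\log D_{f,b}) = 2g-2+i-((a-2)\Delta_0+(b-d-2)\Gamma)\cdot C.\]
If this degree is nonnegative, both inequalities hold with room to spare, since $\Delta_0\cdot C\ge 0$ and $\Gamma\cdot C\ge 0$ for $C\not\subset\Delta_0$. So assume it is negative. Lemma~\ref{lemma:secdom-hirzebruch} says $L_1=\Delta_0+d\Gamma$ and $L_2=\Gamma$ dominate the sections of $E$. Remark~\ref{rem:log-normal-degree-bound} then gives a generically surjective map $\mu\colon f_b^*M_{L_j}\to N_{f_b/\ff_d}(\log D_{f,b})$ for some $j$. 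Its image $Q$ satisfies $\deg Q\ge \deg f_b^*M_{L_j}=-L_j\cdot C$, because the kernel is a subsheaf of a trivial bundle and so has degree $\le 0$.

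\begin{itemize}
\item If $j=1$, we get $2g-2+i\ge ((a-3)\Delta_0+(b-2d-2)\Gamma)\cdot C$, which is inequality (1).
\item If $j=2$, we get $2g-2+i\ge ((a-2)\Delta_0+(b-d-3)\Gamma)\cdot C$, which is inequality (2).
\end{itemize}

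For the equality case in (1), assume $j=1$ and equality holds. Then the kernel $K$ of $\mu$, a rank-$(h^0(L_1)-2)$ subsheaf of the trivial bundle $H^0(L_1)\otimes\oo$, has degree $0$. Using the same argument as in the $\pp^2$ case, a degree-zero subsheaf of a trivial bundle must contain a trivial summand, so $f_b^*M_{L_1}$ has a nonzero global section. (If $K$ has higher rank, pass to its saturation: a degree-zero subbundle of a trivial bundle is itself trivial.) Proposition~\ref{prop:sections-of-kernel-bundles} then gives $H^0(\ff_d,\mathcal I_{f(C)}(\Delta_0+d\Gamma))\neq 0$, so $f(C)$ is a component of a member of $|\Delta_0+d\Gamma|$.

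Such a member decomposes as $\Delta_0+d\Gamma$ with reducible options, or as an irreducible section in $|\Delta_0+d\Gamma|$, or as $\Delta_0+k\Gamma$ plus fibers. So the possible classes of $f(C)$ are $\Gamma$ or the class $\Delta_0+d\Gamma$; we exclude $\Delta_0$ itself by hypothesis.

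The remaining step, and the main obstacle, is to rule out the class $\Delta_0+d\Gamma$ when $d>1$, while allowing it when $d=1$. For this, plug $C$ a smooth rational curve with $C\sim\Delta_0+d\Gamma$, $C\cdot\Delta_0=0$, $C\cdot\Gamma=1$ into the equality $-2+i=(a-3)\cdot 0+(b-2d-2)\cdot 1 = b-2d-2$, noting $i\le D\cdot C=b$. This alone does not exclude it, so we need to revisit the tightness of the argument. Equality forces both $\deg K=0$ and the $j=1$ map to be the relevant one, and we additionally know $\deg N<0$.

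The first thing to try is to show that the $M_\Gamma$ bound (2) is strictly better for such curves. If $C\cdot\Delta_0=0$ and $d>1$, then (2) gives the bound $b-d-3 > b-2d-2$, i.e.\ $d>1$. So the generic surjection must come from $M_\Gamma$ unless $f_b^*M_\Gamma\to N$ is zero; we then argue that the domination of sections must already use the $\Gamma$-part. Concretely, we should refine Remark~\ref{rem:log-normal-degree-bound} to say that the combined map from $\bigoplus f_b^*M_{L_j}$ is surjective generically, with $N$ receiving a nonzero map from $f_b^*M_\Gamma$ restricted to $C$ whenever $\Gamma$ is not constant along $C$. For $d=1$ the two bounds coincide, which is why $\Delta_0+\Gamma$ survives.
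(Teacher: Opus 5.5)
You correctly reduce to $\deg N_{f_b/\ff_d}(\log D_{f,b})<0$, and you derive inequalities (1) and (2) from the two dominating bundles $\Delta_0+d\Gamma$ and $\Gamma$ exactly as the paper does. The gap is in the equality case. From the degree-zero kernel $K$ you extract only \emph{one} nonzero section of $f_b^*M_{\Delta_0+d\Gamma}$. That only shows $f(C)$ is a fiber or an irreducible curve of class $\Delta_0+d\Gamma$, and you leave the exclusion of $\Delta_0+d\Gamma$ for $d\ge 2$ open. Your proposed fix is not justified. Remark~\ref{rem:log-normal-degree-bound} only says that \emph{some} $f_b^*M_{L_j}$ maps generically surjectively onto the rank-one log normal sheaf. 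Nothing in the construction forces the $M_\Gamma$-summands to map nontrivially, so the claimed refinement is an unproved assertion. Your comparison $b-d-3>b-2d-2$ would give a contradiction if that map were nonzero, but that is exactly what is not known.

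The missing step is already in your parenthetical remark; you just need to count sections. Since $h^0(\ff_d,\oo(\Delta_0+d\Gamma))=d+2$, the kernel $K$ of $\mu$ has rank $d$. In the equality case it has degree $0$, so Lemma~\ref{lemma:trivial-subbundle} gives $K\simeq\oo_{C_b}^{\oplus d}$. Hence $h^0(C_b,f_b^*M_{\Delta_0+d\Gamma})\ge d$, not merely $\ge 1$. By Proposition~\ref{prop:sections-of-kernel-bundles}, $h^0(\ff_d,\mathcal{I}_{f(C)}(\Delta_0+d\Gamma))\ge d$.

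Write $f(C)\sim\alpha\Delta_0+\beta\Gamma$ with $\beta\ge\alpha d$. This group is then $H^0(\oo_{\ff_d}((1-\alpha)\Delta_0+(d-\beta)\Gamma))$.
\begin{itemize}
\item For the fiber class it has dimension exactly $d$.
\item For the class $\Delta_0+d\Gamma$ it is $H^0(\oo_{\ff_d})$, which is one-dimensional and so too small when $d\ge 2$.
\item When $d=1$, the class $\Delta_0+\Gamma$ gives exactly $1=d$ section and survives.
\item All other admissible classes give fewer than $d$ sections.
\end{itemize}
This is how the paper concludes, and it needs no comparison with the $M_\Gamma$ bound.
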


\begin{proof}
We will use the setup and notation from \S\ref{sec:variational-argument}.
Recall from \eqref{eq:deg-log-normal} that at a general $b\in B$:
\[\deg N_{f_b/\ff_d}(\log D_{f,b}
)=2g-2+i-((a-2)\Delta_0+(b-d-2)\Gamma)\cdot C, \]
so we may assume that $\deg N_{f_b/\ff_d}(\log D_{f,b}
)<0.$
Since $T_{\ff_d}$ is pseudo-nef outside of $\Delta_0$, Proposition~\ref{prop:log-normal-degree-bound} and Lemma~\ref{lemma:secdom-hirzebruch} imply that we have one of the following two cases.

\textbf{Case 1:} There is a generically surjective map 
\begin{equation}
    \mu: f_b^*M_{\Delta_0+d\Gamma}\to N_{f_b/\ff_d}(\log D_{f,b}).
\end{equation}
Let us denote the image sheaf of $\mu$ by $Q$.
Arguing as in Proposition~\ref{prop:p2-horikawa}, we have $\deg Q\ge -(\Delta_0+d\Gamma)\cdot C_b$, which implies (1). 

Now, suppose equality occurs. 
Then we must have $\deg Q = -(\Delta_0+d\Gamma)\cdot C_b$, which forces the kernel $K$ of $\mu$ to have degree $0$.
Thus, $K$ is a subsheaf of the trivial bundle $\mathcal{O}_{C_b}^{\oplus d+2}$ of degree 0 and rank $d$. 
By Lemma~\ref{lemma:trivial-subbundle}, this implies that $K\simeq \mathcal{O}_{C_b}^{\oplus d}$, so $f_b^*M_{\Delta_0+d\Gamma}$ must possess at least $d$ global sections. 
By Proposition~\ref{prop:sections-of-kernel-bundles}, if the image of $C_b$ is an integral curve of class $\alpha \Delta_0+\beta \Gamma$, we have:
$$h^0(C_b, f_b^*M_{\Delta_0+d\Gamma}) = h^0(\mathbb{F}_d, \mathcal{I}_{f_b(C_b)/\mathbb{F}_d}(\Delta_0+d\Gamma)) = h^0(\mathbb{F}_d, \mathcal{O}_{\mathbb{F}_d}((1-\alpha)\Delta_0 + (d-\beta)\Gamma))$$
Since $f_b(C_b) \not\subset \Delta_0$, we must have $\alpha \ge 0$ and $\beta \ge ad$. 
For this dimension to be at least $d$, we are limited to two possibilities:
\begin{itemize}
\item $\alpha=0$ and $\beta=1$ (the class $\Gamma$), which yields $h^0(\mathbb{F}_d, \mathcal{O}_{\mathbb{F}_d}(\Delta_0 + (d-1)\Gamma)) = d$.
\item $\alpha=1$ and $\beta \le 1$ (the classes $\Delta_0$ or $\Delta_0+\Gamma$). 
Since $f_b(C_b) \not\subset \Delta_0$, the class cannot be $\Delta_0$. 
The class $\Delta_0+\Gamma$ yields $h^0(\mathbb{F}_d, \mathcal{O}_{\mathbb{F}_d}((d-1)\Gamma)) = d$.
Note that the class $\Delta_0+\Gamma$ is irreducible only when $d=1$.
\end{itemize}

\textbf{Case 2:} There is a generically surjective map 
\[\mu: f_b^*M_\Gamma\to N_{f_b/\ff_d}(\log D_{f,b}). \]
The image $Q$ of $\mu$ satisfies $\deg Q \ge -\Gamma\cdot C_b$. 
This immediately yields the inequality in (2).
\end{proof}

\begin{lemma}\label{lemma:trivial-subbundle}
{\em
Let $C$ be a smooth projective curve. 
If $K\subset \oo_C^{\oplus N}$ is a subsheaf of degree $0$ and rank $r<N$, then $K\simeq \oo_C^{\oplus r}$. 
}
\end{lemma}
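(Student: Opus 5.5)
We argue by induction on the rank $r$, using the basic fact that a subsheaf of $\oo_C^{\oplus N}$ of degree $0$ is forced to be a direct summand by semistability of the trivial bundle.

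First, $K$ is torsion-free, being a subsheaf of a vector bundle on a smooth curve, hence locally free of rank $r$. Let $K^{sat}$ be its saturation in $\oo_C^{\oplus N}$, so that $\oo_C^{\oplus N}/K^{sat}$ is torsion-free. Then $\deg K^{sat} \ge \deg K = 0$, with equality if and only if $K = K^{sat}$. On the other hand, $\oo_C^{\oplus N}$ is semistable of slope $0$, so every subsheaf has degree at most $0$. This can also be seen directly: $\det K^{sat}$ is a rank-one subsheaf of $\bigwedge^r \oo_C^{\oplus N} \simeq \oo_C^{\oplus \binom{N}{r}}$, and a line bundle mapping nontrivially to $\oo_C$ has nonpositive degree. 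Hence $\deg K^{sat}=0$ and $K=K^{sat}$, so the quotient $Q=\oo_C^{\oplus N}/K$ is a vector bundle of rank $N-r$ and degree $0$.

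Next we show that $K$ is trivial by induction on $r$. For $r=1$, $K$ is a degree-$0$ line bundle, and some coordinate projection $K\to\oo_C$ is nonzero. A nonzero map between line bundles of equal degree is an isomorphism, so $K\simeq\oo_C$.

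For general $r$, the same determinant argument applies. Since $\det K\hookrightarrow \oo_C^{\oplus\binom{N}{r}}$ has degree $0$, some Plücker coordinate gives an isomorphism $\det K\simeq \oo_C$. That coordinate is the composite $\bigwedge^r K\to \bigwedge^r \oo_C^{\oplus r}$, induced by projecting $K\to\oo_C^{\oplus r}$ onto some $r$ coordinates. Because it is an isomorphism, the map $K\to\oo_C^{\oplus r}$ is injective with determinant an isomorphism, hence is itself an isomorphism. Therefore $K\simeq\oo_C^{\oplus r}$. This step in fact avoids the induction altogether.

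The main point needing care is the claim that the determinant of an injective map of rank-$r$ bundles is a map of line bundles $\det K\to\oo_C$ which is an isomorphism exactly when the original map is. This is standard: the cokernel is a torsion sheaf whose length equals the degree of vanishing of the determinant. The hypothesis $r<N$ plays no role beyond ensuring the statement is nontrivial; the case $r=N$ also holds by the same argument.
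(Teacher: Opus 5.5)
Your proof is correct, but it takes a genuinely different route from the paper.

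\textbf{The paper's route.} It argues by induction on $r$:
\begin{enumerate}
\item It notes that $K$ is a semistable degree-$0$ subbundle of $\oo_C^{\oplus N}$.
\item It uses uniqueness of the Jordan--H\"older factors of $\oo_C^{\oplus N}$ to produce an inclusion $\oo_C\subset K$.
\item Since this copy of $\oo_C$ sits in $\oo_C^{\oplus N}$ via a constant vector, $\oo_C^{\oplus N}/\oo_C\simeq\oo_C^{\oplus N-1}$, so the induction hypothesis applies to $K/\oo_C$.
\item It splits $0\to\oo_C\to K\to K/\oo_C\to 0$ because that sequence is pulled back from a split one.
\end{enumerate}

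\textbf{Your route.} You replace all of this with one step. $\det K$ is a degree-$0$ line bundle with a nonzero map to $\bigwedge^r\oo_C^{\oplus N}$, so some Pl\"ucker coordinate $\det K\to\oo_C$ is an isomorphism. That coordinate is the determinant of the projection $K\to\oo_C^{\oplus I}$ onto the corresponding $r$ coordinates, so the projection is itself an isomorphism.

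\textbf{Comparison.} Your argument is shorter and more elementary: no Jordan--H\"older theory, no induction, no Ext computation. It also exhibits the splitting $\oo_C^{\oplus N}=K\oplus\oo_C^{\oplus I^c}$ explicitly rather than through an extension class. The paper's argument instead places the lemma inside the standard semistability framework, which is the language the authors use elsewhere.

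\textbf{Minor remarks.}
\begin{itemize}
\item Your first paragraph (saturation, and local freeness of $\oo_C^{\oplus N}/K$) is never used by the determinant argument and can be dropped; that conclusion follows afterwards from the direct-sum decomposition.
\item Your opening sentence presents semistability of the trivial bundle as the key input, but the argument you actually give does not need it. Only the elementary fact that a line bundle with a nonzero map to $\oo_C$ has nonpositive degree is used.
\end{itemize}
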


\begin{proof}
We prove the statement by induction on the rank $r<N$. 
The base case $r=1$ follows from Schur's Lemma.
Assume $r>1$. 
Since $K\subset \oo_C^{\oplus N}$ is a degree $0$ subsheaf, it is a slope semistable subbundle of $\oo_C^{\oplus N}$. 
Moreover, Jordan-H\"older (JH) filtrations for $K$ and $\oo_C^{\oplus N}/K$ induce a JH filtration for $\oo_C^{\oplus N}$. 
By uniqueness of the JH factors, we see that each JH factor of $K$ is isomorphic to $\oo_C$. 
In particular, we have an inclusion $\oo_C\subset K$. 
Denote by $Q$ its quotient and consider the commutative diagram
\begin{equation}\label{eq:triangles}
\xymatrix{
0 \ar[r] & \oo_C  \ar[r] \ar@{=}[d] & K \ar[r] \ar@{^{(}->}[d] & Q \ar[r] \ar@{^{(}->}[d] & 0 \\
0  \ar[r] & \oo_C   \ar[r]^j & \oo_C^{N}\ar[r] &  R \ar[r] & 0,
}
\end{equation}
where $j:\oo_C\to \oo_C^{\oplus N}$ denotes the composition $\oo_C\subset  K\subset  \oo_C^{\oplus N}$. 
Since $j$ is given by a matrix of constant coefficients as $H^0(C,\oo_C)=\mathbb{C}$, we must have $R\simeq \oo_C^{\oplus N-1}$. 
In particular, we see that $Q$ is a torsion-free sheaf and so it is a vector bundle of degree $0$ and rank $r-1$. 
Thus, by induction $Q\simeq \oo_C^{\oplus r-1}$, and so it is left to prove that the extension element in $\operatorname{Ext}^1(Q,\oo_C)$ associated to the top sequence in \eqref{eq:triangles} is trivial. 
In \eqref{eq:triangles}, since the bottom sequence is split and the top sequence is pulled back from it via $Q\hookrightarrow R$, the functoriality of Ext implies that the top sequence is also split.
\end{proof}

\begin{remark}
    Proposition~\ref{prop:hirzebruch} yields several immediate consequences about pseudo-log Lang algebraic hyperbolicity of $(\ff_d,D)$ when $d>0$.
    First, when $D$ is a very general basepoint-free divisor in $|a\Delta_0+b\Gamma|$, it recovers the log algebraic hyperbolicity bounds $a\ge4$ and $b\ge ad+3$ established by Chen \cite{Che01}, after taking $\Delta_0$ into account.
    Furthermore, it provides new effective bounds for pseudo-log algebraic hyperbolicity. 
    For example, it implies that $(\mathbb{F}_d,D)$ is 
    pseudo-log algebraically hyperbolic outside of $\Delta_0$ when  
    $a\ge 4$ and $\begin{cases}
        b \ge ad & \text{when } d \ge 2\\
        b \ge 5 &\text{when } d=1
    \end{cases}$.
    Moreover, when $d=1$ and $a=b=4$, the above result implies that $(\ff_1, D)$ is pseudo-log Lang algebraically hyperbolic outside of $\Delta_0$ and the curves of class $\Delta_0+\Gamma$ that are bicontact to $D$ ($i=2$).
    This is equivalent to the fact that when $D_4\subset \pp^2$ is a very general quartic, $(\pp^2, D_4)$ is pseudo-log Lang algebraically hyperbolic outside of bitangent and flex lines to $D_4$.
\end{remark}

We omit the proof of the following result since it is similar to and simpler than Proposition~\ref{prop:hirzebruch}.

\begin{proposition}\label{prop:hirzebruch-f0}
{\em
Let $D\subset \ff_0$ be a very general curve in the linear system $|a\Delta_0 + b\Gamma|$ where $a\ge 1$ and $b\ge 0$. 
Let $f:C\to \ff_0$ be a map from a smooth projective curve $C$ of genus $g$ that is birational onto its image, satisfies $f(C)\not\subset D$, and meets $D$ in exactly $i$ distinct points. Then
\begin{enumerate}
    \item either $2g-2+i\ge ((a-3)\Delta_0 + (b-2)\Gamma)\cdot C$,
    \item or $2g-2+i\ge ((a-2)\Delta_0 + (b-3)\Gamma)\cdot C.$
\end{enumerate}
}
\end{proposition}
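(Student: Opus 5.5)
The plan is to run the proof of Proposition~\ref{prop:hirzebruch} verbatim on $\ff_0$, dropping every ingredient that involves the fixed curve $\Delta_0$. When $d=0$ the class $\Delta_0$ is basepoint-free, so $|a\Delta_0+b\Gamma|$ has no fixed part, i.e.\ $F=\emptyset$ and $D=D'$. Moreover $T_{\ff_0}\simeq \oo(2\Delta_0)\oplus\oo(2\Gamma)$ is nef, so we may take $Z=\emptyset$. Thus the variational setup of \S\ref{sec:variational-argument} applies with no exceptional locus, and no condition $f(C)\not\subset\Delta_0$ is needed.

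\textbf{Step 1: reduce to negative log normal degree.} Using $K_{\ff_0}=-2\Delta_0-2\Gamma$ and \eqref{eq:deg-log-normal}, at a general $b\in B$ we have
\[\deg N_{f_b/\ff_0}(\log D_{f,b})=2g-2+i-((a-2)\Delta_0+(b-2)\Gamma)\cdot C.\]
If this degree is nonnegative, then both (1) and (2) hold, since $\Delta_0\cdot C\ge 0$ and $\Gamma\cdot C\ge0$. So we may assume the degree is negative.

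\textbf{Step 2: apply the kernel-bundle bound.} Apply Lemma~\ref{lemma:secdom-hirzebruch} with $d=0$, which needs $a\ge1$ and $b\ge0$. It shows that $M_{\Delta_0}$ and $M_\Gamma$ dominate the sections of $\oo(a\Delta_0+b\Gamma)$ at every point.

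\textbf{Step 3: split into two cases.} By Proposition~\ref{prop:log-normal-degree-bound} and Remark~\ref{rem:log-normal-degree-bound}, there is a generically surjective map $f_b^*M_{L}\to N_{f_b/\ff_0}(\log D_{f,b})$ for $L=\Delta_0$ or $L=\Gamma$. Its image $Q$ is a quotient of a subbundle of a trivial bundle, so $\deg Q\ge \deg f_b^*M_L=-L\cdot C$. Since $Q$ is contained in the rank-one sheaf $N_{f_b/\ff_0}(\log D_{f,b})$, the degree of the log normal sheaf is at least $\deg Q$.
\begin{itemize}
\item If $L=\Delta_0$, then $2g-2+i\ge ((a-3)\Delta_0+(b-2)\Gamma)\cdot C$, which is (1).
\item If $L=\Gamma$, then $2g-2+i\ge ((a-2)\Delta_0+(b-3)\Gamma)\cdot C$, which is (2).
\end{itemize}

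\textbf{Step 4: pass to a very general $D$.} The inequalities hold for the curve $C_b$ over a general $b\in B$ lying over a general $D$. For a very general $D$ they therefore hold for every $f$, because there are countably many choices of $(g,i,\beta)$, and for each choice where $\psi$ is dominant the bound holds on a dense open set.

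\textbf{Expected difficulty.} The only point needing care is Step~4, the standard passage from the general member of each dominating family to a very general $D$. Everything else is a direct specialization of the $d>0$ argument. No equality analysis is required, since the statement claims only the two inequalities.
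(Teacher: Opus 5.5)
Your proposal is correct and is the argument the paper intends. The paper omits this proof as "similar to and simpler than" Proposition~\ref{prop:hirzebruch}, and you run exactly that argument with $d=0$: no fixed part, $T_{\ff_0}$ nef so no exceptional locus, and the pair $\Delta_0,\Gamma$ from Lemma~\ref{lemma:secdom-hirzebruch} giving the two cases. Your Step~4 is also sound, and in fact easy: for fixed $(g,i,\beta)$ the inequalities are purely numerical, so once they hold at a general point of a dominant component of $\Sigma$ they hold at every point.
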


We apply the above results to the specific linear systems that produce Horikawa surfaces.

\begin{proposition}\label{prop:hirzebruch-horikawa}
{\em
Let $d>0$, and let $D\subset \ff_d$ be a very general curve in the linear system $|6\Delta_0+b\Gamma|$, where $b$ is an even integer satisfying $b\ge \max\{6d,6+4d\}$, or $(d,b)=(2,12)$.
For any map $f:C\to \ff_d$ from a smooth projective curve $C$ of genus $g$ that is birational onto its image, satisfies $f(C)\not\subset D\cup \Delta_0$, and meets $D$ in exactly $i$ distinct points,
we have
\begin{equation}
2g-2+i\ge \frac{1}{2}D\cdot C.    
\end{equation}
Moreover, equality is only possible when $f(C)$ is of class $\Gamma$. 
}  
\end{proposition}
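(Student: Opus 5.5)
The plan is to specialize Proposition~\ref{prop:hirzebruch} to $a=6$ and compare each of its two lower bounds with $\frac12 D\cdot C=(3\Delta_0+\frac b2\Gamma)\cdot C$. Write the class of the image as $f(C)\equiv \alpha\Delta_0+\beta\Gamma$. Since $f(C)$ is integral and $f(C)\neq\Delta_0$, we have $\alpha\ge 0$ and $\beta\ge\alpha d$, and the intersection numbers are
\[
\Gamma\cdot C=\alpha\ge 0,\qquad \Delta_0\cdot C=\beta-\alpha d\ge 0 .
\]
Not both can vanish: if they did, the class would be trivial.

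Proposition~\ref{prop:hirzebruch} says $C$ satisfies inequality (1) or inequality (2). Subtracting $\frac12 D\cdot C$ from each right-hand side gives the two differences
\[
\delta_1=\Bigl(\tfrac b2-2d-2\Bigr)\alpha,\qquad \delta_2=(\beta-\alpha d)+\Bigl(\tfrac b2-d-3\Bigr)\alpha .
\]
The main step is to show $\delta_1,\delta_2\ge 0$, which gives $2g-2+i\ge\frac12 D\cdot C$ in either case. Since $b$ is even, $\frac b2$ is an integer, so I will check the coefficients with integer arithmetic.

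For $\delta_1$, the hypothesis $b\ge 6+4d$ gives $\frac b2-2d-2\ge 1$. Hence $\delta_1\ge 0$, and $\delta_1=0$ exactly when $\alpha=0$. In the exceptional case $(d,b)=(2,12)$ the coefficient is $0$, so $\delta_1=0$ identically; this is harmless.

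For $\delta_2$, the hypothesis $b\ge 6d$ gives $\frac b2-d-3\ge 2d-3$. This is at least $1$ for $d\ge 2$. For $d=1$ we use $b\ge 6+4d=10$, which gives $\frac b2-4\ge 1$. For $(d,b)=(2,12)$ the coefficient is $1$. So in all cases $\delta_2\ge(\beta-\alpha d)+\alpha>0$, since $\Delta_0\cdot C$ and $\Gamma\cdot C$ are not both zero. In particular, inequality (2) always gives the target inequality strictly.

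For the equality statement, suppose $2g-2+i=\frac12 D\cdot C$. Then $C$ cannot satisfy only (2), because that would give strict inequality. So $C$ satisfies (1) with $\delta_1=0$ and $2g-2+i$ equal to the right-hand side of (1); that is, (1) holds as an equality. By the second part of Proposition~\ref{prop:hirzebruch}, $f(C)$ has class $\Gamma$, or possibly $\Delta_0+\Gamma$ when $d=1$. When $d=1$ we have $\delta_1=(\frac b2-4)\alpha\ge\alpha>0$ for $\alpha=1$, so the class $\Delta_0+\Gamma$ is excluded and only $\Gamma$ remains.

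The main point needing care is the bookkeeping in the boundary case $(d,b)=(2,12)$, where $\delta_1$ vanishes for every curve. There the equality characterization has to come entirely from the equality clause of Proposition~\ref{prop:hirzebruch}, not from positivity of $\delta_1$. The argument above handles this, since it uses only that equality in the target forces equality in (1), regardless of whether $\delta_1=0$.
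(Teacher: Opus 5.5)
Your proof is correct and follows essentially the same route as the paper: specialize Proposition~\ref{prop:hirzebruch} to $a=6$, subtract $\frac12 D\cdot C$ in each of its two cases, check the signs of the resulting coefficients, and invoke the equality clause of Proposition~\ref{prop:hirzebruch} in the boundary case $(d,b)=(2,12)$. The differences are cosmetic. You verify positivity in Case 2 by direct coefficient bookkeeping, where the paper appeals to ampleness of $\Delta_0+(d+1)\Gamma$ and nefness of $\Delta_0+d\Gamma$. You also route the equality case uniformly through the equality clause, whereas in the main range the paper reads it off directly from $\frac b2-2d-2\ge 1$.
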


\begin{proof}
First, assume $b\ge \max\{6d,6+4d\}$. We will treat the case $(d,b)=(2,12)$ separately at the end. 

Consider the two cases from the proof of Proposition~\ref{prop:hirzebruch} with $a=6$.

\textbf{Case 1:} We have $2g-2+i\ge (3\Delta_0 + (b-2d-2)\Gamma)\cdot C$, hence
\[2g-2+i-\frac{1}{2}D\cdot C \ge \left(\frac{1}{2}b-2d-2\right)\Gamma\cdot C. \]
Since $b\ge 6+4d$, the right-hand side is always non-negative, and it equals zero exactly when $f(C)$ is of class $\Gamma$.

\textbf{Case 2:} We have $2g-2+i\ge (4\Delta_0 + (b-d-3)\Gamma)\cdot C$, hence
\[2g-2+i-\frac{1}{2}D\cdot C \ge (\Delta_0 + \left(\frac{1}{2}b-d-3\right)\Gamma)\cdot C.\]
When $d\ge 4$ and $b\ge6d$, the right-hand side is always positive because $\Delta_0+(d+1)\Gamma$ is ample.
When $0\le d \le 3$ and $b\ge 6+4d$, the right-hand side is always non-negative because $\Delta_0+d\Gamma$ is nef, and it is zero exactly when the curve is $\Delta_0$. 
However, this does not occur becasue we assume $f(C)\not\subset \Delta_0$. 
Therefore, equality is never achieved in Case 2.

Now we take care of the case $(d,b)=(2,12).$

\textbf{Case 1:} We have $2g-2+i-\frac{1}{2}D\cdot C \ge 0$. Moreover, Proposition~\ref{prop:hirzebruch} implies that equality only occurs when $f(C)$ is of class $\Gamma$. 

\textbf{Case 2:}
We have 
\[2g-2+i-\frac{1}{2}D\cdot C \ge (\Delta_0 + \Gamma)\cdot C.\]
For any integral curve $C\ne\Delta_0$, we have $(\Delta_0 + \Gamma)\cdot C>0$. 
Hence, equality is not achieved.

\end{proof}

\begin{proposition}\label{prop:hirzebruch-horikawa-f0}
{\em
Let $D\subset \ff_0$ be a very general curve in the linear system $|6\Delta_0+b\Gamma|$, where $b$ is an even integer satisfying $b\ge6.$
For any map $f:C\to \ff_0$ from a smooth projective curve $C$ of genus $g$ that is birational onto its image, satisfies $f(C)\not\subset D$, and meets $D$ in exactly $i$ distinct points,
we have
\begin{equation}
2g-2+i\ge \frac{1}{2}D\cdot C.    
\end{equation}
Moreover, equality is only possible when $f(C)$ is of class $\Gamma$, or additionally of class $\Delta_0$ in the specific case where $b=6$.
}  
\end{proposition}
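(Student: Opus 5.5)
We follow the proof of Proposition~\ref{prop:hirzebruch-horikawa}, now with Proposition~\ref{prop:hirzebruch-f0} (taking $a=6$) as input. On $\ff_0$ both $\Delta_0$ and $\Gamma$ are basepoint-free and $T_{\ff_0}$ is nef, so there is no fixed part and no curve needs to be excluded. Write the class of $f(C)$ as $\alpha\Delta_0+\beta\Gamma$ with $\alpha,\beta\ge 0$ and $(\alpha,\beta)\neq(0,0)$. Since $\frac12 D\cdot C=(3\Delta_0+\frac b2\Gamma)\cdot C$, the two alternatives of Proposition~\ref{prop:hirzebruch-f0} become
\[
2g-2+i-\tfrac12 D\cdot C\ \ge\ \left(\tfrac b2-2\right)\Gamma\cdot C=\left(\tfrac b2-2\right)\alpha
\]
in Case 1, and
\[
2g-2+i-\tfrac12 D\cdot C\ \ge\ \left(\Delta_0+\left(\tfrac b2-3\right)\Gamma\right)\cdot C=\beta+\left(\tfrac b2-3\right)\alpha
\]
in Case 2. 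In both cases $b\ge6$ makes the right-hand side nonnegative, because $\Delta_0$ and $\Gamma$ are nef. This proves the inequality.

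For equality, look at the two cases separately. In Case 1 the right-hand side is $(\frac b2-2)\alpha$ with $\frac b2-2\ge1$, so equality forces $\alpha=0$. Then $f(C)$ is an integral curve of class $\beta\Gamma$, so it is a fiber and has class $\Gamma$. In Case 2 equality forces $\beta=0$ and $(\frac b2-3)\alpha=0$. If $b>6$ this gives $\alpha=\beta=0$, which is impossible, so equality does not occur. If $b=6$ we only get $\beta=0$, and integrality then gives class $\Delta_0$. This yields the extra case $b=6$ in the statement, and matches the symmetry of $\ff_0$ when $D\in|6\Delta_0+6\Gamma|$.

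The main point to check is that Proposition~\ref{prop:hirzebruch-f0} really gives the dichotomy in this form. Its proof is omitted, so we would quickly reconstruct it. We use Proposition~\ref{prop:log-normal-degree-bound} together with the section domination $M_{\Delta_0}^{\oplus s}\oplus M_\Gamma^{\oplus t}\to M_{a\Delta_0+b\Gamma}$, which is the $d=0$ case of Lemma~\ref{lemma:secdom-hirzebruch}. We also use $\deg N=2g-2+i-((a-2)\Delta_0+(b-2)\Gamma)\cdot C$ from \eqref{eq:deg-log-normal} and $K_{\ff_0}=-2\Delta_0-2\Gamma$. A generically surjective map from $f^*M_{\Delta_0}$, of degree $-\Delta_0\cdot C$, gives bound (1). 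A generically surjective map from $f^*M_\Gamma$ gives bound (2). The case $\deg N\ge0$ already implies both. With this in hand the argument above is a direct computation. Unlike Proposition~\ref{prop:hirzebruch}, we do not need any analysis of global sections of the kernel bundles, since the equality cases are read off directly from the numerics.
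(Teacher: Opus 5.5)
Your proposal is correct and follows essentially the same route as the paper: you plug $a=6$ into the two alternatives of Proposition~\ref{prop:hirzebruch-f0}, subtract $\frac12 D\cdot C=(3\Delta_0+\frac b2\Gamma)\cdot C$, and read off nonnegativity and the equality cases ($\alpha=0$ in Case 1; $b=6$ and $\beta=0$ in Case 2) from the nefness of $\Delta_0,\Gamma$ and integrality, exactly as the paper does. Your reconstruction of the omitted proof of Proposition~\ref{prop:hirzebruch-f0} is also the intended argument: nefness of $T_{\ff_0}$ together with the $d=0$ case of Lemma~\ref{lemma:secdom-hirzebruch}, fed into Proposition~\ref{prop:log-normal-degree-bound}.
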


\begin{proof}
By Proposition~\ref{prop:hirzebruch-f0}, we have two cases.

\textbf{Case 1:} We have $$2g-2+i - \frac{1}{2}D\cdot C \ge \left(\frac{b}{2}-2\right)\Gamma\cdot C.$$
Since $b\ge 6$, the right-hand side is non-negative, and equals zero only if $f(C)$ is of class $\Gamma$.

\textbf{Case 2:} We have 
$$2g-2+i - \frac{1}{2}D\cdot C \ge \Delta_0\cdot C + \left(\frac{b}{2}-3\right)\Gamma\cdot C.$$
Since $b\ge 6$, the right-hand side is non-negative, and equals zero only if $b=6$ and $f(C)$ is of class $\Delta_0$.
\end{proof}

\begin{corollary}\label{cor:hirzebruch-horikawa-cover}
{\em
Let $d>0$, and let $\rho:S'\to \ff_d$ be a double cover branched over a very general curve $D$ in the linear system $|6\Delta_0 + b\Gamma|$, where $b$ is an even integer satisfying either:
\begin{enumerate}
    \item $b\ge \max\{6d, 6+4d\}$, or
    \item $d=2$ and $b=12$.
\end{enumerate}
Then, outside of $\rho^{-1}(\Delta_0)$, $S'$ does not contain rational curves.
Furthermore, if $C'\subset S'$ is an integral curve with $g(C')=1$ not mapping to $\Delta_0$, then $C'=\rho^{-1}(\Gamma)$ for a fiber $\Gamma\subset \ff_d$ that is tangent to $D$.
In particular, $S'$ is pseudo-Lang algebraically hyperbolic.
}
\end{corollary}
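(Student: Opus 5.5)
The plan is to combine Proposition~\ref{prop:hirzebruch-horikawa} with Lemma~\ref{lemma:log-hyp-to-cover-hyp} and Remark~\ref{rem:elliptic-curves-double cover}, in the same way Corollary~\ref{cor:p2-horikawa-cover} was deduced from Proposition~\ref{prop:p2-horikawa}. Since $b\ge 6d$, the system $|6\Delta_0+b\Gamma|$ is basepoint-free, so a very general $D$ is smooth and irreducible, hence snc. Then $\rho$ is a genuine double cover with ramification divisor $R\simeq D$. The curve $D$ has genus at least $2$ by adjunction, since $(K_{\ff_d}+D)\cdot D>0$ for these values of $b$. So no rational or elliptic curve can lie in $R$.

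\textbf{Rational curves.} Let $C'\subset S'$ be integral, not contained in $\rho^{-1}(\Delta_0)$ or in $R$. Let $f\colon C\to\ff_d$ be the normalization of $\rho(C')$; then $f(C)\not\subset D\cup\Delta_0$. The proof of Lemma~\ref{lemma:log-hyp-to-cover-hyp} only uses the inequality $2g-2+|f^{-1}(D)|\ge\frac12 D\cdot C$ for the particular curve $\rho(C')$. Proposition~\ref{prop:hirzebruch-horikawa} supplies this inequality for every curve not inside $D\cup\Delta_0$. Running that argument verbatim gives $2g(C')-2\ge0$, so $C'$ is not rational.

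\textbf{Elliptic curves.} If $g(C')=1$, Remark~\ref{rem:elliptic-curves-double cover} forces equality $2g-2+i=\frac12 D\cdot C$. By the equality clause of Proposition~\ref{prop:hirzebruch-horikawa}, $f(C)$ is then a fiber $\Gamma_0$. In that case $g=0$ and $D\cdot\Gamma_0=6$, so equality reads $i=5$. This means $\Gamma_0$ meets $D$ in five points, one with multiplicity two and four transversally, i.e.\ $\Gamma_0$ is simply tangent to $D$. This matches case (2) of the remark: four orbifold points and one point with $t=2$, $m=1$. Such a fiber lifts to a connected double cover of $\pp^1$ branched at four points, which is elliptic. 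So $C'=\rho^{-1}(\Gamma_0)$ with $\Gamma_0$ tangent to $D$, since the degree of $C'\to C$ must be $2$ for the branching to occur.

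\textbf{Finiteness.} Only finitely many fibers are tangent to $D$: they correspond to the branch points of $D\to\pp^1$, and there are finitely many of these by Riemann--Hurwitz. The remaining curves inside $\rho^{-1}(\Delta_0)$ are finitely many components of a fixed curve. Hence $S'$ has only finitely many rational or elliptic curves. Passing to the minimal resolution $S$ adds only finitely many exceptional curves, and $S$ is a Horikawa surface, hence of general type. Therefore $S'$ is pseudo-Lang algebraically hyperbolic.

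The main subtlety is a technical one. Lemma~\ref{lemma:log-hyp-to-cover-hyp} is stated with a global hypothesis, but we only have the inequality outside $\Delta_0$. We must check that the proof of the lemma is curve-by-curve, which it is, so no essential obstacle is expected.
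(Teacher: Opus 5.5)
Your proposal is correct and is essentially the paper's proof. The paper likewise applies the curve-by-curve argument of Lemma~\ref{lemma:log-hyp-to-cover-hyp} to the inequality of Proposition~\ref{prop:hirzebruch-horikawa}, and then uses Remark~\ref{rem:elliptic-curves-double cover} to conclude that an equality curve of class $\Gamma$ is simply tangent to $D$. You also spell out points the paper leaves implicit: excluding curves in the ramification divisor via $g(D)\ge 2$, and the finiteness of tangent fibers. One small correction: for an arbitrary admissible $b$, general type of $S'$ should be justified by $K_{S'}=\rho^*(\Delta_0+(\tfrac b2-d-2)\Gamma)$ being big, rather than by appeal to Horikawa surfaces.
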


\begin{proof}
This follows immediately from the proof of Lemma~\ref{lemma:log-hyp-to-cover-hyp} and Proposition~\ref{prop:hirzebruch-horikawa}.
Furthermore, if $C'\subset S'$ is an elliptic curve not mapping to $\Delta_0$ such that $\rho(C')$ has class $\Gamma$, then Remark~\ref{rem:elliptic-curves-double cover} implies that $\rho(C')$ has to be tangent to $D$ at one point.
\end{proof}

\begin{remark}
Let us consider $\rho^{-1}(\Delta_0)$ in Corollary~\ref{cor:hirzebruch-horikawa-cover}.
$\Delta_0$ intersects $D$ transversally at $b-6d$ points, so $\rho^{-1}(\Delta_0)$ contains rational or elliptic curves only when $b-6d=0,2$ or 4. 
With regards to Horikawa surfaces, this means that $\rho^{-1}(\Delta_0)$ consists of:
\begin{itemize}
    \item 2 rational curves when $S'$ is very general of $n=3$ and type $(2')$, or $n\ge6$ and type $\left(\frac{n+3}{3} \right).$
    \item 1 rational curve when $S'$ is very general of $n=5$ and type $(2)$, or $n\ge6$ and type $\left(\frac{n+1}{3} \right).$
    \item 1 elliptic curve when $S'$ is very general of $n=4$ and type $(1)$, or $n\ge6$ and type $\left(\frac{n-1}{3} \right).$
\end{itemize}
\end{remark}

\begin{corollary}\label{cor:hirzebruch-f0-horikawa-cover}
{\em
Let $\rho:S'\to \ff_0$ be a double cover branched over a very general curve $D$ in the linear system $|6\Delta_0 + b\Gamma|$, where $b\ge6$ is an even integer.
Then, $S'$ does not contain rational curves.
Furthermore, if $C'\subset S'$ is an integral curve with $g(C')=1$,
then $C'=\rho^{-1}(\Gamma)$ for a curve in the fiber class $\Gamma$ that is tangent to $D$ (or additionally, in the specific case where $b=6$, $C'$ may be the pullback of a curve in class $\Delta_0$ that is tangent to $D$).
In particular, $S'$ is pseudo-Lang algebraically hyperbolic.
}
\end{corollary}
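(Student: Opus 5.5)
The proof will mirror that of Corollary~\ref{cor:hirzebruch-horikawa-cover}, with Proposition~\ref{prop:hirzebruch-horikawa-f0} in place of Proposition~\ref{prop:hirzebruch-horikawa}. Since $d=0$, there is no fixed curve $\Delta_0$ to exclude. A very general $D\in|6\Delta_0+b\Gamma|$ with $b\ge 6$ even is smooth and irreducible, because the system is basepoint-free and very ample, so $S'$ is a smooth double cover. The ramification divisor of $\rho$ is isomorphic to $D$. By adjunction, its genus is given by $2g(D)-2=D\cdot(D+K_{\ff_0})=(6\Delta_0+b\Gamma)\cdot(4\Delta_0+(b-2)\Gamma)$, which is large, so the ramification curve is neither rational nor elliptic.

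\textbf{Step 1 (no rational curves).} Let $C'\subset S'$ be an integral curve not contained in the ramification divisor. I take $C$ to be the normalization of $\rho(C')$, with the induced map $f\colon C\to\ff_0$, which is birational onto its image and satisfies $f(C)\not\subset D$. Proposition~\ref{prop:hirzebruch-horikawa-f0} gives $2g(C)-2+|f^{-1}(D)|\ge\frac12 D\cdot C$. This is exactly the hypothesis of Lemma~\ref{lemma:log-hyp-to-cover-hyp}, and its proof gives $2g(C')-2\ge 0$. So $C'$ is not rational, and $S'$ contains no rational curves.

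\textbf{Step 2 (elliptic curves).} Suppose $g(C')=1$. Then both inequalities in the proof of Lemma~\ref{lemma:log-hyp-to-cover-hyp} are equalities. In particular $2g(C)-2+|f^{-1}(D)|=\frac12 D\cdot C$, so by the equality part of Proposition~\ref{prop:hirzebruch-horikawa-f0}, $f(C)$ has class $\Gamma$, or class $\Delta_0$ when $b=6$. Such a curve is a smooth rational curve, so $C\cong\pp^1$. Remark~\ref{rem:elliptic-curves-double cover}(2) then says there are exactly four points with $t_i=1$ and all other points of $f^{-1}(D)$ have $t_i=2$. Since $D\cdot\Gamma=6$, or $D\cdot\Delta_0=b=6$ in the exceptional case, we get $4+2k=6$, so the curve has exactly one tangency with $D$, and that tangency is simple. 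Conversely, the preimage of such a curve is the double cover of $\pp^1$ branched at four points, hence elliptic. It is integral because the branch locus is nonempty, so $C'=\rho^{-1}(\Gamma)$ as claimed.

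\textbf{Step 3 (finiteness).} The fibers of the projection $\ff_0\to\pp^1$ that are tangent to $D$ are the branch points of the degree-6 map $D\to\pp^1$, and there are finitely many of them. By Riemann--Hurwitz there are $2g(D)-2+12$ of them counted with multiplicity. The same holds for the class $\Delta_0$ when $b=6$. Hence $S'$ has only finitely many elliptic curves. Since $S'$ is of general type (its minimal resolution is a Horikawa surface), it is pseudo-Lang algebraically hyperbolic.

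The step needing the most care is Step 2. There I have to check that the equality case of Lemma~\ref{lemma:log-hyp-to-cover-hyp} really forces equality in Proposition~\ref{prop:hirzebruch-horikawa-f0}, and that the elliptic case (1) of Remark~\ref{rem:elliptic-curves-double cover} cannot occur, since $C$ is rational. Both follow because the curve class determines the genus of $C$.
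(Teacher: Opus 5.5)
Your proposal is correct and follows the paper's route. As in the written proof of Corollary~\ref{cor:hirzebruch-horikawa-cover}, the paper combines the proof of Lemma~\ref{lemma:log-hyp-to-cover-hyp}, the equality case of Proposition~\ref{prop:hirzebruch-horikawa-f0}, and Remark~\ref{rem:elliptic-curves-double cover}, while you additionally spell out details it leaves implicit: that the ramification curve $\cong D$ has genus $5b-5\ge 25$, that the degree-one (elliptic $C$) case is excluded because $C\cong\pp^1$, and the Riemann--Hurwitz count of $10b$ tangent fibers.
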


Many Horikawa surfaces are double covers of $\ff_d$ with $d>0$ branched over a divisor that consists of the fixed curve $\Delta_0$ plus a moving curve, so we record the analogous results in these cases.

\begin{proposition}\label{prop:hirzebruch-reducible}
{\em
Let $d>0$, and let $D'\subset \ff_d$ be a very general curve in the linear system $|a\Delta_0 + b\Gamma|$ where $a\ge 1$ and $b\ge ad$.
Let $D=\Delta_0 + D'$. 
Let $f:C\to \ff_d$ be a map from a smooth projective curve $C$ of genus $g$ that is birational onto its image, satisfies $f(C)\not\subset D$, and meets $D$ in exactly $i$ distinct points. 
Then:
\begin{enumerate}
    \item either $2g-2+i\ge ((a-2)\Delta_0 + (b-2d-2)\Gamma)\cdot C$,
    \item or $2g-2+i\ge ((a-1)\Delta_0 + (b-d-3)\Gamma)\cdot C.$
\end{enumerate}
In Case (1), equality is achieved only when $C$ has the following classes:
\begin{itemize}
    \item When $d>1$, $\Gamma$;
    \item When $d=1$, $\Gamma$ or $\Delta_0+\Gamma$.
\end{itemize}

}
\end{proposition}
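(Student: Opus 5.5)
We run the same variational argument as in the proof of Proposition~\ref{prop:hirzebruch}, now with a nonzero fixed part. Take the setup of \S\ref{sec:variational-argument} with $X=\ff_d$, $E=(a+1)\Delta_0+b\Gamma$, fixed part $F=\Delta_0$ and moving part $E'=a\Delta_0+b\Gamma$. The system $|a\Delta_0+b\Gamma|$ is basepoint-free because $b\ge ad$, and $\Delta_0$ is a smooth fixed curve, so the hypotheses hold. Since $D=\Delta_0+D'$ with $D'$ very general, $D$ is snc. Formula \eqref{eq:deg-log-normal} with $K_{\ff_d}+D=(a-1)\Delta_0+(b-d-2)\Gamma$ gives
\[
\deg N_{f_b/\ff_d}(\log D_{f,b})=2g-2+i-((a-1)\Delta_0+(b-d-2)\Gamma)\cdot C .
\]
If this degree is nonnegative, both inequalities follow. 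Indeed, $(\Delta_0+d\Gamma)\cdot C\ge 0$ and $\Gamma\cdot C\ge0$ for $C\ne\Delta_0$, and $C\neq \Delta_0$ because $f(C)\not\subset D$. So we may assume the degree is negative.

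The difference from Proposition~\ref{prop:hirzebruch} is that we now use that $T_{\ff_d}(-\log\Delta_0)$ is nef (stated earlier), instead of only pseudo-nef outside $\Delta_0$. Applying Proposition~\ref{prop:log-normal-degree-bound} with $Z=\emptyset$, together with Remark~\ref{rem:log-normal-degree-bound} and the section-dominating collection $\{\Delta_0+d\Gamma,\Gamma\}$ of Lemma~\ref{lemma:secdom-hirzebruch}, we get a generically surjective map $\mu\colon f_b^*M_L\to N_{f_b/\ff_d}(\log D_{f,b})$ for $L=\Delta_0+d\Gamma$ or $L=\Gamma$. As in the proof of Proposition~\ref{prop:p2-horikawa}, the image has degree at least $-L\cdot C$. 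Adding this to the displayed formula gives (1) when $L=\Delta_0+d\Gamma$, since $(a-1)\Delta_0+(b-d-2)\Gamma-(\Delta_0+d\Gamma)=(a-2)\Delta_0+(b-2d-2)\Gamma$. It gives (2) when $L=\Gamma$.

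For the equality statement in Case (1), we repeat the argument from Proposition~\ref{prop:hirzebruch}. Equality forces $\ker\mu$ to be a subsheaf of $\oo_{C_b}^{\oplus d+2}$ of degree $0$ and rank $d$. By Lemma~\ref{lemma:trivial-subbundle} it is $\oo^{\oplus d}$, so $h^0(f_b^*M_{\Delta_0+d\Gamma})\ge d$. Proposition~\ref{prop:sections-of-kernel-bundles} turns this into $h^0(\oo_{\ff_d}((1-\alpha)\Delta_0+(d-\beta)\Gamma))\ge d$, where $[f(C)]=\alpha\Delta_0+\beta\Gamma$ is the class of an integral curve other than $\Delta_0$. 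Running through the same classification of classes, the only possibilities are $\Gamma$, or $\Delta_0+\Gamma$, and the latter is irreducible only when $d=1$.

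The main obstacle is justifying the use of nefness of $T_{\ff_d}(-\log\Delta_0)$ in Proposition~\ref{prop:log-normal-degree-bound}. We need the quotient $Q_2$ of $f_b^*\pi_2^*T_{\ff_d}(-\log F)$ to have nonnegative degree for every curve not contained in $D$. This holds because a quotient of a nef bundle restricted to a curve is nef, so no curves have to be excluded beyond those in $D$. We also need to check that the family $\mathcal{F}=\Delta_0\times B$ makes the snake-lemma diagram in Proposition~\ref{prop:normal-family} valid, which is exactly the situation that proposition was set up for.
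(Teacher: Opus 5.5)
Your proposal is correct and is essentially the argument the paper intends: the paper records this proposition without an explicit proof, as the analogue of Proposition~\ref{prop:hirzebruch} with fixed part $F=\Delta_0$ and the nefness of $T_{\ff_d}(-\log\Delta_0)$ in place of pseudo-nefness of $T_{\ff_d}$ outside $\Delta_0$, which is exactly what you do. One small point of phrasing: $\Delta_0$ is a base component of $|(a+1)\Delta_0+b\Gamma|$ only when $b<(a+1)d$, so instead of calling $\Delta_0$ the fixed part of $|(a+1)\Delta_0+b\Gamma|$ you should parametrize the family $\Delta_0+D'$ directly by $D'\in|a\Delta_0+b\Gamma|$; this is all that the splitting $\pi_1^*T_B\simeq H^0(X,E')\otimes\oo_{\mathcal{X}}$ in Proposition~\ref{prop:normal-family} actually uses.
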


\begin{remark}
The above proposition implies that when $D'$ is a very general curve in $|a\Delta_0+b\Gamma|$ with $a\ge3$ and $b\ge ad+4$, then $(\ff_d,\Delta_0+D')$ is log algebraically hyperbolic.    
\end{remark}

\begin{proposition}\label{prop:hirzebruch-reducible-horikawa}
{\em
Let $d>0$, and let $D'\subset \ff_d$ be a very general curve in the linear system $|a\Delta_0+b\Gamma|$ where $b$ is an even integer and $a,b,d$ fall in one of the following cases: 
\begin{enumerate}
    \item $a=5$ and $b\ge \max\{5d,6+4d\}$; or
    \item $a=5$ and $(d,b)=(3,16)$ or $(4,20)$; or
    \item $a=7$ and $(d,b)=(2,14)$.
\end{enumerate}
Let $D=\Delta_0+D'.$ 
For any map $f:C\to \ff_d$ from a smooth projective curve $C$ of genus $g$ that is birational onto its image, satisfies $f(C)\not\subset D$, and meets $D$ in exactly $i$ distinct points, we have:
\begin{equation*}
2g-2+i\ge \frac{1}{2}D\cdot C.    
\end{equation*}
Moreover, equality is only possible when $a=5$ and $f(C)$ is of class $\Gamma$.
}    
\end{proposition}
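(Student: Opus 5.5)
The plan is to deduce the statement from Proposition~\ref{prop:hirzebruch-reducible} by the same bookkeeping used in Propositions~\ref{prop:hirzebruch-horikawa} and \ref{prop:hirzebruch-horikawa-f0}, then feed it into Lemma~\ref{lemma:log-hyp-to-cover-hyp}. Since $D=\Delta_0+D'\in|(a+1)\Delta_0+b\Gamma|$, we have $\tfrac12 D\cdot C=(\tfrac{a+1}{2}\Delta_0+\tfrac b2\Gamma)\cdot C$. Write the image of $f$ as an integral curve of class $\alpha\Delta_0+\beta\Gamma$. Because $f(C)\not\subset\Delta_0$, we have $\alpha\ge0$ and $\beta\ge d\alpha$, so $\Gamma\cdot C=\alpha$ and $\Delta_0\cdot C=\beta-d\alpha\ge0$. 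Subtracting $\tfrac12 D\cdot C$ from the two alternatives of Proposition~\ref{prop:hirzebruch-reducible} gives
\[
\text{(1)}\quad 2g-2+i-\tfrac12 D\cdot C\ \ge\ \bigl(\tfrac{a-5}{2}\Delta_0+(\tfrac b2-2d-2)\Gamma\bigr)\cdot C,
\]
\[
\text{(2)}\quad 2g-2+i-\tfrac12 D\cdot C\ \ge\ \bigl(\tfrac{a-3}{2}\Delta_0+(\tfrac b2-d-3)\Gamma\bigr)\cdot C .
\]

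For the generic range $a=5$, $b\ge\max\{5d,6+4d\}$, the right side of (1) is $(\tfrac b2-2d-2)\alpha\ge0$. It vanishes only when $\alpha=0$, which forces the class $\Gamma$. The right side of (2) is $(\Delta_0+(\tfrac b2-d-3)\Gamma)\cdot C$. When $d\le3$, $b\ge 6+4d$ gives $\tfrac b2-d-3\ge d$, so this is at least $(\Delta_0+d\Gamma)\cdot C$. That quantity is positive: $\Delta_0+d\Gamma$ is nef and orthogonal only to $\Delta_0$, which is excluded. When $d\ge4$, $b\ge5d$ gives $\tfrac b2-d-3\ge\tfrac{3d}{2}-3\ge d+1$, and $\Delta_0+(d+1)\Gamma$ is ample. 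So (2) is always strict, and equality can only come from (1) with class $\Gamma$. This matches the equality clause of Proposition~\ref{prop:hirzebruch-reducible}.

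For $a=7$, $(d,b)=(2,14)$, (1) reads $(\Delta_0+\Gamma)\cdot C\ge0$ and (2) reads $(2\Delta_0+2\Gamma)\cdot C$. Both are strictly positive for $C\ne\Delta_0$, so equality never occurs, consistent with the claim that equality needs $a=5$.

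The main obstacle is the sporadic cases $a=5$, $(d,b)=(3,16)$ and $(4,20)$. There (2) is fine: $(\Delta_0+2\Gamma)\cdot C$ and $(\Delta_0+3\Gamma)\cdot C$ are positive for $C\neq\Delta_0$. But (1) only gives $\ge -2\alpha$, so the naive bound fails exactly in Case 1. I would return to Case 1 of the proof of Proposition~\ref{prop:hirzebruch-reducible}, where there is a generically surjective map $f_b^*M_{\Delta_0+d\Gamma}\to N_{f_b/\ff_d}(\log D_{f,b})$ with kernel $K\subset\mathcal{O}^{\oplus d+2}$ of rank $d$. The goal is to improve $\deg K\le 0$ to $\deg K\le -2\alpha$ whenever $\alpha>0$.

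The first thing I would try is the following. The kernel bundle $M_{\Delta_0+d\Gamma}$ is the pullback from $\pp^{d+1}$ of the kernel bundle of $\mathcal{O}(1)$, via the map contracting $\Delta_0$ to the vertex of the cone over the rational normal curve. Its restriction to $C$ then splits into pieces whose degrees are governed by $\Gamma\cdot C=\alpha$ and $\Delta_0\cdot C$. The hope is that the degree-$0$ subsheaf allowed by Lemma~\ref{lemma:trivial-subbundle}, together with Proposition~\ref{prop:sections-of-kernel-bundles}, forces $h^0$-conditions that are incompatible with $\alpha\ge1$ unless $\deg K\le-2\alpha$.

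If this fails, the fallback is to compare directly with Theorem~\ref{theorem:hirzebruch-xi-chen}, or to count the intersection points of $C$ with $\Delta_0$ and $D'$ separately. Here $\Delta_0\cdot D'=b-5d\in\{0,1\}$, which makes such a count tractable for curves with small $\alpha$.
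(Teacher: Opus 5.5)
Your handling of the range $b\ge\max\{5d,6+4d\}$ and of $a=7$, $(d,b)=(2,14)$ matches the paper's argument. However, the sporadic cases $a=5$, $(d,b)\in\{(3,16),(4,20)\}$ are left unproved, and the obstacle you diagnose there comes from an arithmetic error. By your own formula, the right side of bound (1) is $(\tfrac b2-2d-2)\alpha$, and $\tfrac{16}{2}-6-2=0=\tfrac{20}{2}-8-2$. So bound (1) already gives $2g-2+i-\tfrac12 D\cdot C\ge 0$, not $\ge -2\alpha$. (The $-2$ looks like $b-(6+4d)$, but bound (1) only needs $b\ge 4d+4$ to be nonnegative.)

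What is actually missing is not a sharper bound on $\deg K$ but use of the equality clause. Because the coefficient vanishes identically, positivity no longer identifies the curve. You must argue that $2g-2+i=\tfrac12 D\cdot C$ forces equality in bound (1) of Proposition~\ref{prop:hirzebruch-reducible}, since bound (2) is strict, as you showed. Then invoke that proposition's equality statement, which comes from Lemma~\ref{lemma:trivial-subbundle} and Proposition~\ref{prop:sections-of-kernel-bundles}; it gives class $\Gamma$ because $d\in\{3,4\}$ exceeds $1$. This is exactly how the paper concludes. Your proposed route, via the cone over the rational normal curve and an improvement to $\deg K\le -2\alpha$, is unnecessary and as written is only a hope. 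So the proof is incomplete in precisely these cases.

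There is also a smaller slip in the range $b\ge\max\{5d,6+4d\}$. The chain $\tfrac b2-d-3\ge\tfrac{3d}{2}-3\ge d+1$ fails for $4\le d\le 7$; for example, $d=4$, $b=22$ gives coefficient $4<5$. This is harmless: $b\ge 6+4d$ holds throughout that range, so $\tfrac b2-d-3\ge d$ for every $d$. Your nef argument with $\Delta_0+d\Gamma$ then covers all $d$ at once, whereas the paper splits into $d\le 6$ and $d\ge 7$.
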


\begin{proof}
First, assume $D'\in |5\Delta_0+b\Gamma|$ with $b\ge \max\{5d,6+4d\}$.
We consider the cases in the proof of Proposition~\ref{prop:hirzebruch-reducible}.

\textbf{Case 1:} We have $2g-2+i\ge (3\Delta_0 + (b-2d-2)\Gamma)\cdot C$, so
\[2g-2+i-\frac{1}{2}D\cdot C \ge \left(\frac{1}{2}b-2d-2\right)\Gamma\cdot C. \]
Since $b\ge 6+4d$, the right-hand side is always non-negative, and it is zero only when $f(C)$ is of class $\Gamma.$

\textbf{Case 2:} We have $2g-2+i\ge (4\Delta_0 + (b-d-3)\Gamma)\cdot C$, so
\[2g-2+i-\frac{1}{2}D\cdot C \ge (\Delta_0 + \left(\frac{1}{2}b-d-3\right)\Gamma)\cdot C.\]
When $d\ge7$ and $b\ge5d$, the right-hand side is always positive because $\Delta_0+(d+1)\Gamma$ is ample.
When $0 \le d \le 6$ and $b\ge 6+4d$, the right-hand side is always non-negative because $\Delta_0+d\Gamma$ is nef, and it is zero only when the curve maps to $\Delta_0$. Thus, equality is not achieved in Case 2.

Next, assume $a=5$ and $(d,b)=(3,16)$ or $(4,20)$.
Case 1 goes similarly as above to yield the desired inequality and to show that equality holds only when $f(C)$ has class $\Gamma$.
In Case 2, we have $2g-2+i-\frac{1}{2}D\cdot C \ge (\Delta_0 + (d-1)\Gamma)\cdot C$.
The right-hand side is always positive except when the curve maps to $\Delta_0.$

Finally, assume $a=7$ and $(d,b)=(2,14)$. 
In Case 1, we have $2g-2+i-\frac{1}{2}D\cdot C \ge (\Delta_0+\Gamma)\cdot C.$
In Case 2, we have $2g-2+i-\frac{1}{2}D\cdot C \ge 2(\Delta_0+\Gamma)\cdot C.$
When $f(C)\ne \Delta_0$, the right-hand sides of both of these inequalities are positive.
\end{proof}

\begin{corollary}\label{cor:hirzebruch-reducible-horikawa-cover}
{\em
Let $d>0$, and let $\rho:S'\to \ff_d$ be a double cover branched over $D=\Delta_0+D'$ where $D'$ is a very general curve in the linear system $|a\Delta_0 + b\Gamma|$ where $b$ is an even integer.

Suppose $a=5$ and we are in one of the following cases:
\begin{enumerate}
    \item $b\ge\max\{5d,6+4d\}$; or 
    \item $(d,b)=(3,16)$ or $(4,20)$.
\end{enumerate}
Then the only rational curve in $S'$ is the pullback $\rho^{-1}(\Delta_0)$.
Furthermore, if $C'\subset S'$ is an integral curve with $g(C')=1$, then $C'=\rho^{-1}(\Gamma)$ for a fiber $\Gamma\subset \ff_d$ that is either tangent to $D'$ at a point or intersects $D$ at a node.
In particular, $S'$ is pseudo-Lang algebraically hyperbolic.

Additionally, for $a=7$ and $(d,b)=(2,14)$, $S'$ pseudo-Lang algebraically hyperbolic outside of $\rho^{-1}(\Delta_0)$.
}
\end{corollary}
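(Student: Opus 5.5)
The plan is to combine Proposition~\ref{prop:hirzebruch-reducible-horikawa} with the orbifold argument of Lemma~\ref{lemma:log-hyp-to-cover-hyp} and Remark~\ref{rem:elliptic-curves-double cover}, exactly as in Corollary~\ref{cor:hirzebruch-horikawa-cover}. One technical point comes first. $D=\Delta_0+D'$ is snc for very general $D'$, since $\Delta_0$ meets $D'$ transversally. Hence $S'$ has $A_1$ points over the nodes $\Delta_0\cap D'$. I will work on $S'$ itself, or equivalently on its minimal resolution. Curves through these points still map birationally or $2{:}1$ onto their images in $\ff_d$, so the diagram in the proof of Lemma~\ref{lemma:log-hyp-to-cover-hyp} still applies to any integral curve $C'\subset S'$ not contained in the ramification locus $\rho^{-1}(D)$.

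\textbf{Step 1: curves not in $\rho^{-1}(D)$.} Let $C'$ be such a curve and $f\colon C\to\ff_d$ the normalization of $\rho(C')$. Proposition~\ref{prop:hirzebruch-reducible-horikawa} gives $2g(C)-2+|f^{-1}(D)|\ge \frac12 D\cdot C$. The Riemann--Hurwitz computation of Lemma~\ref{lemma:log-hyp-to-cover-hyp} then gives $g(C')\ge1$, so $C'$ is not rational. If $g(C')=1$, equality must hold, and by the proposition this forces $a=5$ and $f(C)$ a fiber $\Gamma$. Remark~\ref{rem:elliptic-curves-double cover} then applies. Since $\Gamma\cong\pp^1$, we are in its case (2): exactly four points of $f^*D$ have multiplicity $1$ and all others have multiplicity $2$. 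Now $D\cdot\Gamma=1+(D'\cdot\Gamma)=1+5=6$, so the distribution of multiplicities is $1+1+1+1+2$. There are two ways to get the point of multiplicity $2$: either $\Gamma$ is tangent to $D'$ at one point (away from $\Delta_0$), or $\Gamma$ passes through a node of $\Delta_0\cap D'$. In the second case the local multiplicity of $f^*D$ is $2$, made of $1$ from $\Delta_0$ and $1$ from $D'$. In both cases $C'=\rho^{-1}(\Gamma)$, since the cover of the fiber is irreducible. This gives the stated description of the elliptic curves. There are only finitely many such fibers, because the projection $D'\to\pp^1$ has finitely many ramification points and $\Delta_0\cap D'$ is finite. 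For the case $a=7$, $(d,b)=(2,14)$, equality never holds, so there are no rational or elliptic curves off $\rho^{-1}(D)$, and in particular none off $\rho^{-1}(\Delta_0)$ once Step 2 handles $\rho^{-1}(D')$.

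\textbf{Step 2: curves inside $\rho^{-1}(D)$.} The ramification curve over $D'$ is isomorphic to $D'$, which is smooth for very general $D'$. By adjunction its genus is large in all listed cases. For example, $K_{\ff_d}+D'=3\Delta_0+(b-d-2)\Gamma$ has positive degree on $D'$ given $b\ge 5d$ and $b\ge 6+4d$; this is a routine check in each case. So this component is neither rational nor elliptic. The ramification curve over $\Delta_0$ is $\rho^{-1}(\Delta_0)\cong\Delta_0\cong\pp^1$, which is rational. This makes $\rho^{-1}(\Delta_0)$ the unique rational curve. If one prefers the resolution, the exceptional $(-2)$-curves over the $A_1$ points also enter. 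On $S'$ itself, as the statement is phrased, they do not appear; I would remark on this.

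\textbf{Conclusion and main obstacle.} Finiteness of rational and elliptic curves gives pseudo-Lang algebraic hyperbolicity. For the last clause ($a=7$) the only possible exceptions lie in $\rho^{-1}(\Delta_0)$. I expect the main obstacle to be the bookkeeping at the nodes of $D$. The orbifold-multiplicity count in Lemma~\ref{lemma:log-hyp-to-cover-hyp} uses $t_i$ as the multiplicity of $f^*D$, so a fiber through a node has $t=2$ with $m=1$. I need to make sure that the inequality $\sum(1-\frac1{m_i})\ge |f^{-1}(D)|-\frac12 D\cdot C$ and the lifting remain valid when $S'$ is singular there. I would handle this by passing to the double cover of the blowup at the nodes, where the branch divisor becomes $\widetilde\Delta_0+\widetilde D'$ and is disjoint at those points, and checking that the fiber through a node lifts to an elliptic curve.
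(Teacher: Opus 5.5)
Your proposal is correct and follows the paper's route. You feed Proposition~\ref{prop:hirzebruch-reducible-horikawa} into the proof of Lemma~\ref{lemma:log-hyp-to-cover-hyp} and Remark~\ref{rem:elliptic-curves-double cover}, exactly as for Corollary~\ref{cor:hirzebruch-horikawa-cover}; your Step~2 and the $1+1+1+1+2$ multiplicity count supply details the paper leaves implicit.

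The node bookkeeping you flag is harmless even without blowing up: locally $\pi$ is ramified at $p_i$ exactly when $t_i$ is odd, so $m_it_i\ge 2$ holds at every point of $f^*D$, including over the $A_1$ points.
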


\subsection{Summary for very general Horikawa surfaces of the first kind}\label{sec:summary-1st}
To summarize the results thus far, we have shown that a very general Horikawa surface of the first kind with $n\ge 2$ is pseudo-Lang algebraically hyperbolic. Specifically, the distribution of rational or elliptic curves across the moduli space is as follows:
\begin{itemize}
\item \textbf{When $n=2$:} A very general surface (type $(\infty)$) contains $0$ rational curves and $1320$ elliptic curves by Corollary~\ref{cor:p2-horikawa-cover}.

\item \textbf{When $n=3$:} A very general surface in the big stratum (type $(0)$) contains $0$ rational curves and $2\cdot(10n+30)=120$ elliptic curves by Corollary~\ref{cor:hirzebruch-f0-horikawa-cover}.
A very general surface in the small stratum (type $(2')$) contains $2$ rational curves and $60$ elliptic curves by Corollary~\ref{cor:hirzebruch-horikawa-cover}.

\item \textbf{When $n=4$:} A very general surface in the big stratum (type $(1)$) contains 0 rational curves and $(10n+30)+1=71$ elliptic curves by Corollary~\ref{cor:hirzebruch-horikawa-cover}.
A very general surface in the small stratum (type $(3')$) contains 2 rational curves (the pullback of $\Delta_0$ and the exceptional divisor of a node) and $68+1=69$ elliptic curves by Corollary~\ref{cor:hirzebruch-reducible-horikawa-cover}.

\item \textbf{When $n=5$:} The moduli space has two components. 
\begin{itemize}
    \item For the first component, a very general surface in the big stratum (type $(0)$) contains 0 rational curves and $10n+30=80$ elliptic curves. 
    A very general surface in the small stratum (type $(2)$) contains 1 rational curve and $10n+30=80$ elliptic curves. See Corollaries~\ref{cor:hirzebruch-horikawa-cover} and \ref{cor:hirzebruch-f0-horikawa-cover}.
    \item For the second component, a very general surface in the big stratum (type $(\infty)$) is Demailly algebraically hyperbolic, hence it contains no rational or elliptic curves. 
    A very general surface in the small stratum (type $(4')$) contains 1 rational curve (the pullback of $\Delta_0$) and 80 elliptic curves by Corollary~\ref{cor:hirzebruch-reducible-horikawa-cover}.
\end{itemize}

\item \textbf{When $4\mid n-1$ and $n\ge 9$:} The moduli space has two components. 
\begin{itemize}
    \item For the first component, a very general surface in the larger strata (types $(0)$ down to $(d)$ with $d\le \frac{n+3}{3}$) contains:
    \begin{itemize}
        \item 0 rational curves and $10n+30$ elliptic curves when $d<\frac{n-1}{3}$. 
        \item 0 rational curves and $(10n+30)+1=10n+31$ elliptic curves when $d=\frac{n-1}{3}$.
        \item 1 rational curve and $10n+30$ elliptic curves when $d=\frac{n+1}{3}$. 
        \item 2 rational curves and $10n+30$ elliptic curves when $d=\frac{n+3}{3}$.
    \end{itemize}
    The above follow from Corollaries~\ref{cor:hirzebruch-horikawa-cover} and \ref{cor:hirzebruch-f0-horikawa-cover}.
    In the smaller strata where $d>\frac{n+3}{3}$, a very general surface contains $n+4-2d$ rational curves (from $\Delta_0$ plus $n+3-2d$ nodes on $D$) and $(8n+4d+24)+(n+3-2d)=9n+2d+27$ elliptic curves by Corollary~\ref{cor:hirzebruch-reducible-horikawa-cover}.
    \item For the second component (type $\left(\frac{n+3}{2}\right)$), a very general surface contains exactly 1 rational curve (from $\Delta_0$) and $8n+4d+24=10n+30$ elliptic curves by Corollary~\ref{cor:hirzebruch-reducible-horikawa-cover}. 
\end{itemize}

\item \textbf{When $4\nmid n-1$ and $n\ge 6$:} The moduli space consists of a single component, and the numbers are the same as the first component in the previous case.
\end{itemize}

This is summarized in Table~\ref{table:horikawa-count-1}.
In all cases described above, the geometrically elliptic curves arise either as $\rho^{-1}(\Delta_0)$ or as pullbacks of rational curves in the fiber class $\Gamma$ (and additionally in class $\Delta_0$ when $d=0$ and $n=3$) that are either simply tangent to the branch locus $D$ or intersects it at a node. 
The rational curves arise strictly as components of $\rho^{-1}(\Delta_0)$, or as the exceptional divisors of blown-up singular points on the ramification loci $D$.

\begin{table}[htbp]
\centering
\renewcommand{\arraystretch}{1.3}
\begin{tabular}{clccl}
\toprule
\textbf{Component} & \textbf{Stratum} & \textbf{Condition} & \textbf{Rational} & \textbf{Elliptic} \\
\midrule
$n=2$ & Type $(\infty)$ & -- & $0$ & $1320$ \\
\midrule
\multirow{2}{*}{$n=3$} 
& Type $(0)$ & -- & $0$ & $120$ \\
& Type $(2')$ & -- & $2$ & $60$ \\
\midrule
\multirow{2}{*}{$n=4$} 
& Type $(1)$ & -- & $0$ & $71$ \\
& Type $(3')$ & -- & $2$ & $69$ \\
\midrule
\multirow{2}{*}{$n=5$} 
& Type $(0)$  & -- & $0$ & $80$  \\
& Type $(2)$ & -- & $1$ & $80$  \\
\midrule
\multirow{2}{*}{$n=5$}  & Type $(\infty)$ & -- & $0$ & $0$  \\
& Type $(4')$ & -- & $1$ & $80$  \\
\midrule
\multirow{2}{*}{\parbox{2cm}{\centering $n \ge 9$\\ $4 \mid n-1$}} 
& Type $(d)$  & $d< \frac{n-1}{3}$ & $0$ & $10n+30$  \\
& Type $(d)$  & $d= \frac{n-1}{3}$ & $0$ & $10n+31$  \\
& Type $(d)$  & $d= \frac{n+1}{3}$ & $1$ & $10n+30$  \\
& Type $(d)$  & $d= \frac{n+3}{3}$ & $2$ & $10n+30$  \\
& Type $(d)$ & $\frac{n+3}{3}<d\le \frac{n-1}{2}$ & $n+4-2d$ & $9n+2d+27$ \\
\midrule
\parbox{2cm}{\centering $n \ge 9$\\ $4 \mid n-1$}& Type $\left(\frac{n+3}{2}\right)$ & -- & $1$ & $10n+30$  \\
\midrule
\multirow{2}{*}{\parbox{2cm}{\centering $n \ge 6$\\ $4 \nmid n-1$}} 
& Type $(d)$  & $d< \frac{n-1}{3}$ & $0$ & $10n+30$  \\
& Type $(d)$  & $d= \frac{n-1}{3}$ & $0$ & $10n+31$  \\
& Type $(d)$  & $d= \frac{n+1}{3}$ & $1$ & $10n+30$  \\
& Type $(d)$  & $d= \frac{n+3}{3}$ & $2$ & $10n+30$  \\
& Type $(d)$ & $\frac{n+3}{3}<d$ & $n+4-2d$ & $9n+2d+27 $ \\
\bottomrule
\end{tabular}
\vspace{.2in}
\caption{Number of rational and elliptic curves on very general Horikawa surfaces of the first kind in each strata and component of moduli.
Strata are listed in decreasing order of dimension.}
\label{table:horikawa-count-1}
\end{table}

It also follows from Corollary~\ref{cor:hirzebruch-reducible-horikawa-cover} that a very general type $(2^*)$ surface, which is a Horikawa surface of the second kind with $n=5$, contains no rational curves (the curve pulled back from $\Delta_0$ is contracted by $\pi$) and no elliptic curves.

\subsection{Blowup of Hirzebruch surfaces at two points on the same fiber, neither on $\Delta_0$}\label{sec:hirzebruch-2pt}

In this subsection, $q: W \to \ff_d$ denotes the blowup of $\ff_d$ at two distinct points, $x$ and $y$, lying on the same fiber $\Gamma_0$. 
Let us denote by $\widetilde{\Gamma}$ the proper transform of this fiber, and by $E_x$ (resp. $E_y$) the exceptional curve over $x$ (resp. $y$).
When $d > 0$, we assume that neither $x$ nor $y$ lies on $\Delta_0$. 
When $d = 0$, we denote the curves of class $\Delta_0$ passing through $x$ and $y$ by $\Delta_x$ and $\Delta_y$, respectively.

For a general point in each stratum of Horikawa surfaces of the second kind, specifically, those of type $(d)$ for $n \ge 4$ with $d \le \frac{n+4}{3}$, the corresponding surface is birational to a double cover of $W$. 
Moreover, $x$ and $y$ are generic points on $\Gamma_0$, and the branch locus is a disjoint union of the strict transform $\widetilde{\Gamma}$ and a general divisor.

Recall that the canonical class of $W$ is given by
$$K_W = -2q^*\Delta_0 - (d+2) q^*\Gamma + E_x + E_y.$$

First, we establish positivity results for the log tangent bundle $T_W(-\log \widetilde{\Gamma})$.

\begin{proposition}[Case $d>0$]\label{prop:nefness-2points-d>0}
{\em
Let $d>0$ and let $q:W\to \ff_d$ be the blowup of $\ff_d$ at two distinct points $x,y$ lying on the same fiber $\Gamma_0$, neither on $\Delta_0$.  
Let $\widetilde{\Gamma}$ be the proper transform of the fiber $\Gamma_0$, and $E_x,E_y$ the exceptional divisors mapping to $x,y$ respectively. 
Assume that there is a non-constant map $f:C\to W$ from a smooth projective curve and a surjective map  $$f^*T_W(-\log \widetilde{\Gamma})\to L,$$ where $L$ is a line bundle with $\deg L<0$. Then $f(C)$ is either $E_x,E_y$ or $\widetilde{\Delta}_0$.
}
\end{proposition}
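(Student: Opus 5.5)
The plan is to show that $T_W(-\log\widetilde{\Gamma})$ is generically globally generated, and to pin down exactly where global generation fails. The argument then reduces to the standard principle: if a vector bundle $V$ is generated by global sections at a general point of $f(C)$, then every quotient line bundle of $f^*V$ has non-negative degree. Indeed, the images of the sections give a nonzero map $\mathcal{O}_C\to L$ on a dense open set, which forces $\deg L\ge 0$. So it suffices to prove that $T_W(-\log\widetilde{\Gamma})$ is globally generated on $W\setminus(E_x\cup E_y\cup\widetilde{\Delta}_0)$.

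To produce sections, I will use vector fields on $\ff_d$. The automorphism group of $\ff_d$ with $d>0$ preserves the ruling and $\Delta_0$, and acts on the base $\pp^1$ through $\operatorname{PGL}_2$. Its Lie algebra $H^0(T_{\ff_d})$ contains three kinds of fields:
\begin{itemize}
\item the fibrewise translations coming from $H^0(\oo_{\pp^1}(d))$, which act on $\ff_d\setminus\Delta_0$ as $\mathbb{C}$-translations in each fibre;
\item the fibrewise scaling, which vanishes on $\Delta_0$ and on the infinity section;
\item lifts of vector fields on $\pp^1$.
\end{itemize}
A vector field on $\ff_d$ lifts to a log vector field on $W$ tangent to $\widetilde{\Gamma}$, $E_x$ and $E_y$ exactly when it vanishes at $x$ and $y$ and is tangent to $\Gamma_0$. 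Lifts of base fields vanishing at the point $[\Gamma_0]\in\pp^1$ are automatically tangent to $\Gamma_0$. We need these to vanish at $x$ and $y$, which we arrange by adding a suitable fibrewise field.

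Work in the chart $\ff_d\setminus\Delta_0\supset\mathbb{C}_t\times\mathbb{C}_u$, where $t$ is the base coordinate with $\Gamma_0=\{t=0\}$ and $u$ is the fibre coordinate. The fibre fields have the form $p(t)\partial_u$ with $\deg p\le d$, and $u\partial_u$. The base fields vanishing at $t=0$ are $t\partial_t$ and $t^2\partial_t+(\cdots)u\partial_u$. The conditions at $x=(0,u_x)$ and $y=(0,u_y)$ restrict only the $\partial_u$-component along $t=0$. A combination $(a+bt+\dots)\partial_u+c\,u\partial_u$ vanishes at both points if and only if $a+cu_x=a+cu_y=0$, that is $a=c=0$ since $u_x\neq u_y$. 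This leaves $t\,p_1(t)\partial_u$, $t\partial_t$ and $t^2\partial_t$.

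Next I check whether these sections generate $T_W(-\log\widetilde{\Gamma})$ off $E_x\cup E_y\cup\widetilde{\Delta}_0$.
\begin{itemize}
\item At a point with $t\neq0$ and $u$ finite, the fields $t\partial_u$ and $t\partial_t$ span the tangent space.
\item Along $\widetilde{\Gamma}$ minus $E_x,E_y$, the local log tangent frame is $t\partial_t,\partial_u$. The field $t\partial_t$ gives the first generator, but $\partial_u$ is not obtained because every fibre field vanishes on $t=0$. So generation fails along $\widetilde{\Gamma}$ as well.
\end{itemize}
This failure along $\widetilde{\Gamma}$ does not matter for the argument. If $f(C)=\widetilde{\Gamma}$, the restriction is handled by the residue sequence
\[
0\to\oo_{\widetilde{\Gamma}}\to T_W(-\log\widetilde{\Gamma})|_{\widetilde{\Gamma}}\to T_{\widetilde{\Gamma}}(-\log(\widetilde{\Gamma}\cap(E_x+E_y)))\to 0 .
\]
Here the quotient is $T_{\pp^1}(-2\text{ pts})\simeq\oo$, so the restriction is an extension of $\oo$ by $\oo$, and every quotient line bundle has degree $\ge0$. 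It also remains to treat points on the infinity section $\Delta_\infty$ away from $\widetilde{\Gamma}$. There I will use the chart $v=1/u$, in which $t\partial_u$ becomes $-tv^2\partial_v$, which vanishes on $\Delta_\infty$. Generation therefore also fails there, and one needs an additional section.

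The main obstacle is this last step: showing that the potential degeneracy is confined to $E_x$, $E_y$ and $\widetilde{\Delta}_0$, which is what the statement asserts. For $\Delta_\infty$, the plan is to use a different trivialization in which the roles of $\Delta_0$ and $\Delta_\infty$ are swapped. Since $\Delta_\infty$ moves in the linear system $|\Delta_0+d\Gamma|$, it is not rigid, and one can choose, among sections through no special points, a translation field that does not vanish there. Alternatively, any curve $f(C)$ not contained in the bad locus meets the good locus generically, so it suffices to have generic generation along each remaining candidate curve. This holds because $\Delta_\infty$ can be replaced by any other section in the same class.

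Finally, for $f(C)=E_x$ (and likewise $E_y$), I expect $T_W(-\log\widetilde{\Gamma})|_{E_x}\simeq\oo(2)\oplus\oo(-1)$ up to twisting. This does admit negative quotients, consistent with the statement. Similarly, $\widetilde{\Delta}_0$ has normal bundle of degree $-d$ and so gives negative quotients.
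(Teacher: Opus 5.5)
Your strategy is sound and differs from the paper's, but the step you flag as the main obstacle comes from misreading your own chart, and the repair you propose would not work.

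Your chart is $\ff_d\setminus\Delta_0\supset\mathbb{C}_t\times\mathbb{C}_u$. So the curve $\{v=0\}=\{u=\infty\}$, on which $t\partial_u=-tv^2\partial_v$ vanishes, is $\Delta_0$ itself. That degeneracy lies along $\widetilde{\Delta}_0$, which the statement permits. There is no further section on which generation fails: on the positive section $\{u=0\}$, of class $\Delta_0+d\Gamma$, the field $t\partial_u$ is nonzero for $t\neq 0$. Moreover, a trivialization ``swapping $\Delta_0$ and $\Delta_\infty$'' does not exist for $d>0$, since $\Delta_0$ is the unique negative curve and is fixed by every automorphism.

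What your chart actually misses is the fiber over $t=\infty$. There, with $s=1/t$ and $u'=s^du$, one has $t^d\partial_u=\partial_{u'}$ and $t^2\partial_t+dtu\partial_u=-\partial_s$. These two sections span the tangent space along that fiber off $\Delta_0$. Equivalently, the flows of $t\partial_t$ and $t^2\partial_t+dtu\partial_u$ fix $\Gamma_0$ pointwise and act transitively on the other fibers, so no fiber other than $\Gamma_0$ is special. With this check, the $d+2$ sections
\begin{itemize}
\item $tp_1(t)\partial_u$ with $\deg p_1\le d-1$,
\item $t\partial_t$,
\item $t^2\partial_t+dtu\partial_u$
\end{itemize}
generate $T_W(-\log\widetilde{\Gamma})$ off $\widetilde{\Gamma}\cup\widetilde{\Delta}_0\cup E_x\cup E_y$, and your argument closes.

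One small correction in the case $f(C)=\widetilde{\Gamma}$. The quotient in the residue sequence of $T_W(-\log\widetilde{\Gamma})|_{\widetilde{\Gamma}}$ is $T_{\widetilde{\Gamma}}\simeq\oo_{\pp^1}(2)$. It is not $T_{\widetilde{\Gamma}}(-\log(\widetilde{\Gamma}\cap(E_x+E_y)))\simeq\oo_{\pp^1}$, which is the sequence for $T_W(-\log(\widetilde{\Gamma}+E_x+E_y))$. Nonnegativity of quotients is unaffected.

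Compared with the paper: the paper uses no global vector fields. It discards $E_x,E_y,\widetilde{\Delta}_0$ by passing to the subsheaf $T_W(-\log D)$ with $D=\widetilde{\Gamma}+E_x+E_y+\widetilde{\Delta}_0$. It then uses the differential of the ruling $p\colon W\to\pp^1$ to write $T_W(-\log D)$ as an extension of $p^*\oo_{\pp^1}(1)$ by $\oo_W(q^*(\Delta_0+d\Gamma)-E_x-E_y)$. Finally it checks by intersection numbers that this line bundle has nonnegative degree on every other curve.

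Your argument replaces the filtration and the intersection-number check with quasi-homogeneity. The log vector fields of $(W,\widetilde{\Gamma})$ have an open orbit whose complement is the union of the four curves. Hence $f^*T_W(-\log\widetilde{\Gamma})$ is generically globally generated for every other curve, which also explains conceptually why exactly these curves are exceptional. The price is explicit coordinates, knowledge of $H^0(T_{\ff_d})$, and care with charts, as above. The paper's argument is coordinate-free and uses only the fibration and intersection numbers, which is why it carries over essentially verbatim to the $d=0$ and $x\in\Delta_0$ configurations.
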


\begin{proof}
First we will show that if $f(C) \notin \{E_x, E_y, \widetilde{\Delta}_0, \widetilde{\Gamma}\}$, then any quotient line bundle $L$ of $f^*T_W(-\log \widetilde{\Gamma})$ must have $\deg L \geq 0$.
Let $D = \widetilde{\Gamma} + E_x + E_y + \widetilde{\Delta}_0$, which is a simple normal crossing divisor, so we have the short exact sequence:
$$0 \to T_W(-\log D) \to T_W(-\log \widetilde{\Gamma}) \to \mathcal{O}_{\widetilde{\Delta}_0}(\widetilde{\Delta}_0) \oplus \mathcal{O}_{E_x}(E_x) \oplus \mathcal{O}_{E_y}(E_y) \to 0$$
Since $f(C)$ is not contained in $\widetilde{\Delta}_0, E_x,$ or $E_y$, the quotient $L$ induces a sub-line bundle $L' \subseteq L$ which is a quotient of $f^*T_W(-\log D)$. 
Consequently, $\deg L \geq \deg L'$. 
It thus suffices to prove that $f^*T_W(-\log D)$ has no line bundle quotients of negative degree.

Let $p: W \to \mathbb{P}^1$ be the composition of the blowup $q: W \to \mathbb{F}_d$ with the ruling $\mathbb{F}_d \to \mathbb{P}^1$. 
The differential induces a morphism $\varphi: T_W(-\log D) \to p^*T_{\mathbb{P}^1}$. 
Since $p$ contracts $E_x, E_y,$ and $\widetilde{\Gamma}$ to a single point $t \in \mathbb{P}^1$, the image of $\varphi$ vanishes at $t$, yielding a surjection onto $p^*(T_{\mathbb{P}^1}(-t)) \simeq p^*\mathcal{O}_{\mathbb{P}^1}(1)$. 
This gives a short exact sequence of vector bundles, where $K$ is the induced kernel:
$$0 \to K \to T_W(-\log D) \to p^*\mathcal{O}_{\mathbb{P}^1}(1) \to 0$$
Since $\det T_W(-\log D) = -K_W - D$, $K_W = q^*(-2\Delta_0 - (d+2)\Gamma) + E_x + E_y$ and $D = q^*(\Delta_0 + \Gamma)$, a quick computation yields:
$$K \simeq \mathcal{O}_W(q^*(\Delta_0 + d\Gamma) - E_x - E_y).$$
$L'$ is a quotient of the extension $f^*T_W(-\log D)$, so its degree is bounded below by the minimum of the degrees of the quotients of the outer terms. 
It suffices to check that $K \cdot C \geq 0$ for any curve $f(C) \notin \{E_x, E_y, \widetilde{\Delta}_0, \widetilde{\Gamma}\}$.
Suppose $f(C)$ has class $a\widetilde{\Delta}_0+b\widetilde{\Gamma}+c_xE_x+c_yE_y$, then $K\cdot C = c_x+c_y -b\ge b-a$, which is non-negative because $b\ge ad.$

Finally, if $f(C) = \widetilde{\Gamma}$, we note that $T_W(-\log \widetilde{\Gamma})\vert{}_{\widetilde{\Gamma}} \simeq T_{\widetilde{\Gamma}} \oplus \mathcal{O}_{\widetilde{\Gamma}} \simeq \mathcal{O}_{\widetilde{\Gamma}}(2) \oplus \mathcal{O}_{\widetilde{\Gamma}}$, so its quotients have non-negative degree. 
    
\end{proof}

\begin{proposition}[Case $d=0$]\label{prop:nefness-2points-d=0}
{\em
Let $q:W\to \ff_0$ be the blow-up of $\ff_0$ at two distinct points $x,y$ lying on the same fiber $\Gamma_0$.
Let $\widetilde{\Gamma}$ be the proper transform of the fiber $\Gamma_0$, $E_x,E_y$ the exceptional divisors mapping to $x,y$ respectively, and $\widetilde{\Delta}_x, \widetilde{\Delta}_y$ the proper transform of $\Delta_x,\Delta_y$.
Assume that there is a non-constant map $f:C\to W$ from a smooth projecitve curve and a surjective map  $$f^*T_W(-\log \widetilde{\Gamma})\to L,$$ where $L$ is a line bundle with $\deg L<0$. Then $f(C)$ is either $E_x,E_y,\widetilde{\Delta}_x,$ or $\widetilde{\Delta}_y.$
}
\end{proposition}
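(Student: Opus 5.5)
I follow the proof of Proposition~\ref{prop:nefness-2points-d>0}, replacing $\widetilde{\Delta}_0$ by the two proper transforms $\widetilde{\Delta}_x,\widetilde{\Delta}_y$. Put $D=\widetilde{\Gamma}+E_x+E_y+\widetilde{\Delta}_x+\widetilde{\Delta}_y$. Since $x\neq y$ lie on the same fiber $\Gamma_0$, the curves $\Delta_x$ and $\Delta_y$ are distinct, disjoint fibers of the other ruling. After blowing up, $\widetilde{\Delta}_x$ meets $E_x$ transversally, $\widetilde{\Delta}_y$ meets $E_y$ transversally, and $\widetilde{\Gamma}$ meets each of $E_x,E_y$ once; no other pairs meet. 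Hence $D$ is a simple normal crossing divisor, in fact a chain $\widetilde{\Delta}_x - E_x - \widetilde{\Gamma} - E_y - \widetilde{\Delta}_y$. This gives the exact sequence
\[0\to T_W(-\log D)\to T_W(-\log\widetilde{\Gamma})\to \mathcal{O}_{E_x}(E_x)\oplus\mathcal{O}_{E_y}(E_y)\oplus\mathcal{O}_{\widetilde{\Delta}_x}(\widetilde{\Delta}_x)\oplus\mathcal{O}_{\widetilde{\Delta}_y}(\widetilde{\Delta}_y)\to 0.\]
If $f(C)$ is not one of these four curves, the right term restricts to a torsion sheaf on $C$. So any negative quotient $L$ of $f^*T_W(-\log\widetilde{\Gamma})$ contains a sub-line bundle $L'$ that is a quotient of $f^*T_W(-\log D)$, with $\deg L'\le\deg L$. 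It therefore suffices to show that $f^*T_W(-\log D)$ has no negative quotients when $f(C)\notin\{E_x,E_y,\widetilde{\Delta}_x,\widetilde{\Delta}_y,\widetilde{\Gamma}\}$.

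For this I use the two rulings. Let $p_1:W\to\pp^1$ be the ruling with fiber class $q^*\Gamma$, which contracts $\widetilde{\Gamma}\cup E_x\cup E_y$ to a point $t$. Let $p_2:W\to\pp^1$ be the other ruling, with fiber class $q^*\Delta_0$, whose fibers over the two points $s_x,s_y$ contain $\widetilde{\Delta}_x\cup E_x$ and $\widetilde{\Delta}_y\cup E_y$ respectively. As in the $d>0$ case, the differential of $p_1$ gives a map $T_W(-\log D)\to p_1^*T_{\pp^1}(-t)\simeq p_1^*\mathcal{O}(1)$. Similarly, the differential of $p_2$ gives $T_W(-\log D)\to p_2^*T_{\pp^1}(-s_x-s_y)\simeq\mathcal{O}_W$. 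The second map makes sense because, along $\widetilde{\Delta}_x$, the log vector fields tangent to $D$ push forward to fields vanishing at $s_x$; the same holds along $E_x$, and likewise over $s_y$. I would then check, by a local computation at the nodes of $D$, that the combined map
\[T_W(-\log D)\to p_1^*\mathcal{O}_{\pp^1}(1)\oplus\mathcal{O}_W\]
is injective with torsion cokernel. Its determinant should be compared with $\det T_W(-\log D)=-K_W-D$. Here $-K_W=2q^*\Delta_0+2q^*\Gamma-E_x-E_y$ and $D=q^*\Delta_0\cdot 2+q^*\Gamma-E_x-E_y$ up to the correct multiplicities of the exceptional curves, which I will compute carefully. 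This comparison identifies the cokernel.

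Alternatively, following the $d>0$ proof more literally, I use only $p_1$. Its kernel $K$ is a line bundle with $K\simeq -K_W-D-q^*\Gamma$. Computing with $D=q^*\Gamma+q^*\Delta_0\cdot 2-E_x-E_y$ (since $\widetilde{\Gamma}=q^*\Gamma-E_x-E_y$ and $\widetilde{\Delta}_{x}=q^*\Delta_0-E_x$, $\widetilde{\Delta}_y=q^*\Delta_0-E_y$, so $D=q^*\Gamma+2q^*\Delta_0-E_x-E_y$) gives $K\simeq\mathcal{O}_W$. Then $f^*T_W(-\log D)$ is an extension of $f^*p_1^*\mathcal{O}(1)$, which is nef, by $\mathcal{O}_C$. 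Both have no negative quotients, so neither does the extension. This is the route I commit to; the only real check is that the map to $p_1^*T_{\pp^1}(-t)$ is surjective. That needs verifying at the nodes $\widetilde{\Gamma}\cap E_x$, $\widetilde{\Gamma}\cap E_y$, $E_x\cap\widetilde{\Delta}_x$ and $E_y\cap\widetilde{\Delta}_y$, in local coordinates where $p_1$ is a monomial. I expect this local surjectivity check to be the main obstacle. The nodes $E_x\cap\widetilde{\Delta}_x$ are not in the fiber over $t$, but there $p_1$ is transverse to $\widetilde{\Delta}_x$ and tangent to nothing in $D$, so I expect it to be fine.

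Finally, the case $f(C)=\widetilde{\Gamma}$ is handled exactly as before: $T_W(-\log\widetilde{\Gamma})|_{\widetilde{\Gamma}}\simeq T_{\widetilde{\Gamma}}\oplus\mathcal{O}\simeq\mathcal{O}(2)\oplus\mathcal{O}$, which has no negative quotients. Hence only $E_x,E_y,\widetilde{\Delta}_x,\widetilde{\Delta}_y$ can admit negative quotients.
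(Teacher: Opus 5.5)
Your proof is correct. It differs from the paper's only in which ruling you differentiate.

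The paper uses the ruling $p_2$ that contracts the $\Delta$-fibers. Its image is $p_2^*T_{\pp^1}(-s_x-s_y)\simeq\oo_W$, which gives $0\to\oo_W(q^*\Gamma)\to T_W(-\log D)\to\oo_W\to 0$. You use the ruling $p_1$ with fiber class $q^*\Gamma$, exactly as in the $d>0$ proof, and get $0\to\oo_W\to T_W(-\log D)\to\oo_W(q^*\Gamma)\to 0$. Your computation $-K_W-D=q^*\Gamma$, hence $K\simeq\oo_W$, is right. Your version makes the $d=0$ and $d>0$ arguments literally parallel; the paper's avoids the twist by $\oo_{\pp^1}(1)$.

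In fact the two sequences are the two projections of a single splitting. The ``combined map'' you first considered is generically an isomorphism between rank-two bundles with the same determinant $\oo_W(q^*\Gamma)$, so it is an isomorphism $T_W(-\log D)\simeq\oo_W(q^*\Gamma)\oplus\oo_W$. Equivalently, $x$ and $y$ are nodes of $\Gamma_0+\Delta_x+\Delta_y$, so $T_W(-\log D)=q^*T_{\ff_0}(-\log(\Gamma_0+\Delta_x+\Delta_y))$. Either way, nefness follows with no local checks.

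There are two small corrections.

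First, $E_x\subset p_1^{-1}(t)$, so the node $E_x\cap\widetilde{\Delta}_x$ does lie over $t$, contrary to your remark. Surjectivity onto $p_1^*T_{\pp^1}(-t)$ still holds there. In coordinates with $p_1=u$, $E_x=\{u=0\}$, $\widetilde{\Delta}_x=\{v=0\}$, the log fields $u\partial_u$ and $v\partial_v$ map to $u\partial_u$ and $0$. At $E_x\cap\widetilde{\Gamma}$, where the base coordinate pulls back to $uv$, both $u\partial_u$ and $v\partial_v$ map to the local generator of $p_1^*T_{\pp^1}(-t)$.

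Second, your last paragraph is unnecessary. Once $T_W(-\log D)$ is nef, the case $f(C)=\widetilde{\Gamma}$ is already covered, because $\widetilde{\Gamma}$ is not contained in the support of $\oo_{E_x}(E_x)\oplus\oo_{E_y}(E_y)\oplus\oo_{\widetilde{\Delta}_x}(\widetilde{\Delta}_x)\oplus\oo_{\widetilde{\Delta}_y}(\widetilde{\Delta}_y)$. The splitting quoted there (which also appears in the paper) is not right. Since $\widetilde{\Gamma}^2=-2\neq 0$, the residue extension $0\to\oo\to T_W(-\log\widetilde{\Gamma})|_{\widetilde{\Gamma}}\to T_{\widetilde{\Gamma}}\to 0$ is non-split, so the restriction is $\oo_{\pp^1}(1)^{\oplus 2}$. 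That bundle also has no negative-degree quotients, so nothing in your argument breaks.
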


\begin{proof}
The proof is very similar to Proposition~\ref{prop:nefness-2points-d>0}. 
Assume that $f(C) \notin \{E_x, E_y, \widetilde{\Delta}_x, \widetilde{\Delta}_y, \widetilde{\Gamma}\}$. 
We will verify that $\deg L\geq 0$.  

Let $D = \widetilde{\Gamma} + E_x + E_y + \widetilde{\Delta}_x + \widetilde{\Delta}_y$. 
Since $D$ is a simple normal crossing divisor, there is again a short exact sequence:
$$0 \to T_W(-\log D) \to T_W(-\log \widetilde{\Gamma}) \to \mathcal{O}_{E_x}(E_x) \oplus \mathcal{O}_{E_y}(E_y) \oplus \mathcal{O}_{\widetilde{\Delta}_x}(\widetilde{\Delta}_x) \oplus \mathcal{O}_{\widetilde{\Delta}_y}(\widetilde{\Delta}_y) \to 0$$
Since $f(C)$ is not contained in the support of the quotient, the induced map from $f^*T_W(-\log D)$ to $L$ is nonzero, and its image is a sub-line bundle $L' \subseteq L$. 
This means $\deg L \geq \deg L'$, and it thus suffices to prove that $T_W(-\log D)$ is a nef vector bundle on $W$.

Let $p: W \to \mathbb{P}^1$ be the composition of the blowup $q: W \to \mathbb{F}_0$ followed by the projection $\mathbb{F}_0 \to \mathbb{P}^1$ that contracts the fibers $\Delta_0$. 
Let $t, s \in \mathbb{P}^1$ be the images of $x$ and $y$ under this projection. The differential map $T_W \to p^*T_{\mathbb{P}^1}$ induces a morphism $\varphi: T_W(-\log D) \to p^*T_{\mathbb{P}^1}$, whose image is the sheaf of tangent vectors on $\mathbb{P}^1$ vanishing at $\{t, s\}$. 
Since $p^*T_{\mathbb{P}^1} \simeq p^*\mathcal{O}_{\mathbb{P}^1}(2)$, this image is $p^*(T_{\mathbb{P}^1}(-t-s)) \simeq p^*\mathcal{O}_{\mathbb{P}^1} \simeq \mathcal{O}_W$. 
We thus obtain a short exact sequence:
$$0 \to K \to T_W(-\log D) \to \mathcal{O}_W \to 0$$
A computation as in the $d>0$ case yields:
$$K \simeq \mathcal{O}_W(q^*\Gamma).$$
Therefore, $T_W(-\log D)$ must be nef because it is an extension of two nef line bundles.

Finally, as in the $d>0$ case, if $f(C) = \widetilde{\Gamma}$, we have $T_W(-\log \widetilde{\Gamma})\vert{}_{\widetilde{\Gamma}} \simeq \mathcal{O}_{\widetilde{\Gamma}}(2) \oplus \mathcal{O}_{\widetilde{\Gamma}}$. 
\end{proof}

The following vanishing result is needed to find section-dominating collections of line bundles on an open set of $W$.

\begin{lemma}\label{lemma:h1-vanishing-hirzebruch2}
{\em
Let $d\ge 0$ and let $q:W\to \ff_d$ be the blowup of $\ff_d$ at two distinct points $x,y$ lying on the same fiber $\Gamma_0$, and neither on $\Delta_0$ when $d>0$. Let $$L=\oo_W(a\cdot q^*\Delta_0 + b \cdot q^* \Gamma - c_x\cdot E_x-c_y\cdot E_y)$$ with $a,b,c_x,c_y\ge 0$. If $a\ge c_x+c_y-1$ and $b\ge ad+\max\{c_x,c_y\}-1$, then $H^1(L)=0.$
}
\end{lemma}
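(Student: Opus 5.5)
We reduce to vanishing on $\ff_d$ using the exact sequences of the blowups and the stated vanishing $H^1(\ff_d,\oo(\alpha\Delta_0+\beta\Gamma))=0$ for $\alpha\ge -1$, $\beta\ge \alpha d-1$. Write $M=\oo_{\ff_d}(a\Delta_0+b\Gamma)$, so that $L=q^*M\otimes\oo_W(-c_xE_x-c_yE_y)$. The projection formula gives $q_*L=M\otimes\mathcal{I}$, where $\mathcal{I}=\mathfrak{m}_x^{c_x}\mathfrak{m}_y^{c_y}$. Moreover $R^1q_*\oo_W(-cE)=0$ for $c\ge 0$, since this holds for a single point blowup and the two points are distinct. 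The Leray spectral sequence then gives $H^1(W,L)=H^1(\ff_d,M\otimes\mathcal{I})$. From
\[0\to M\otimes\mathcal{I}\to M\to M\otimes\oo_{\ff_d}/\mathcal{I}\to 0\]
and $H^1(M)=0$ (true since $a\ge 0$ and $b\ge ad-1$), the vanishing is equivalent to surjectivity of
\[H^0(\ff_d,M)\to M\otimes \oo/\mathcal{I},\]
that is, to the statement that the fat points $c_x x+c_y y$ impose independent conditions on $|a\Delta_0+b\Gamma|$.

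We prove this surjectivity by induction, peeling off $\Gamma_0$ using the residual sequence. Suppose $c_x,c_y\ge 1$. Multiplication by the equation of $\Gamma_0$ gives
\[0\to M(-\Gamma_0)\otimes \mathfrak{m}_x^{c_x-1}\mathfrak{m}_y^{c_y-1}\to M\otimes\mathcal{I}\to (M\otimes\mathcal{I})|_{\Gamma_0}\to 0,\]
where the right term is the sheaf $\oo_{\Gamma_0}(a-c_x-c_y)$ on $\Gamma_0\simeq\pp^1$. Here we use that $M|_{\Gamma_0}$ has degree $M\cdot\Gamma=a$ and that $\Gamma_0$ is smooth through both points, so restriction of $\mathfrak{m}^c$ to $\Gamma_0$ gives the ideal of $c$ times the point. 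Checking that the ideals match (the kernel is exactly $\Gamma_0\cdot\mathfrak{m}_x^{c_x-1}\mathfrak{m}_y^{c_y-1}$) is a local computation at a smooth point of a smooth curve.

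Since $a-c_x-c_y\ge -1$, the right-hand term has vanishing $H^1$. The left-hand term has the same shape with $(a,b,c_x,c_y)$ replaced by $(a,b-1,c_x-1,c_y-1)$, and the hypotheses are preserved: $a\ge (c_x-1)+(c_y-1)-1$ and $b-1\ge ad+\max\{c_x,c_y\}-2$. Inducting on $\min\{c_x,c_y\}$, it remains to treat $c_y=0$ (symmetrically $c_x=0$). In that case the hypothesis becomes $a\ge c_x-1$ and $b\ge ad+c_x-1$.

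For a single fat point of multiplicity $c$ on $\ff_d$, we show $H^1(M\otimes\mathfrak{m}_x^{c})=0$ by continuing to peel $\Gamma_0$. Each step changes $(b,c)$ to $(b-1,c-1)$ and produces the quotient $\oo_{\Gamma_0}(a-c)$, whose $H^1$ vanishes because $a-c\ge -1$. After $c$ steps we reach $M(-c\Gamma_0)=\oo(a\Delta_0+(b-c)\Gamma)$ with $b-c\ge ad-1$, whose $H^1$ vanishes by the stated result. Note that $a-c_x\ge -1$ remains true along the chain, since $c$ only decreases.

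The main point to verify is the compatibility of the inequalities along the induction, which the computation above confirms. The only subtle input is the identification of the residual kernel with $\Gamma_0\cdot\mathfrak{m}^{c-1}$. This uses that $\Gamma_0$ is smooth at $x$ and $y$, so its local equation is part of a regular system of parameters and $(\mathfrak{m}^c:\Gamma_0)=\mathfrak{m}^{c-1}$. The assumption that $x,y\notin\Delta_0$ is not actually needed in this argument.
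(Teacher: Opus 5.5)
Your argument is correct and is essentially the paper's proof transported to $\ff_d$ via $q_*$. The paper runs the same two-stage induction on $W$ itself: it restricts $a\,q^*\Delta_0+(b-k)q^*\Gamma-(c_x-k)E_x-(c_y-k)E_y$ to $\widetilde{\Gamma}=q^*\Gamma-E_x-E_y$, whose pushforward is exactly your residual sequence along $\Gamma_0$ with fat-point ideals, and it finishes with $H^1(W,q^*N)=H^1(\ff_d,N)$. The one practical difference is the single-point stage: there the paper must absorb an extra $\oo_W(E_y)$ twist by a further restriction to $E_y$, which your use of $R^1q_*\oo_W(-cE)=0$ makes automatic. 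Your remark that $x,y\notin\Delta_0$ is never used holds for the paper's proof as well.
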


\begin{proof}
Assume without loss of generality that $c_x\ge c_y.$

First, we show by an induction argument that it suffices to show that $H^1$ vanishes for the line bundle of $a\cdot q^*\Delta_0 + (b-c_y) \cdot q^* \Gamma - (c_x-c_y)\cdot E_x$.
For $0 \le k \le c_y,$ we denote by $L_k$ the line bundle of $a\cdot q^*\Delta_0 + (b-k) \cdot q^* \Gamma - (c_x-k)\cdot E_x-(c_y-k)\cdot E_y$.
For each $0 \le k \le c_y-1$, restricting $L_k$ to $\widetilde{\Gamma}$ gives the short exact sequence 
\begin{equation*}
    0 \to L_{k+1} \to L_k \to L_k\vert_{\widetilde{\Gamma}}\to 0.
\end{equation*}
Since $L_k\vert_{\widetilde{\Gamma}}\simeq \oo_{\pp^1}(a-c_x-c_y+2k),$ $H^1(\widetilde{\Gamma}, L_k\vert_{\widetilde{\Gamma}})=0$ when $a-c_x-c_y+2k\ge -1$. 
This holds under our assumption that $a\ge c_x + c_y -1.$
Since $H^1(W, L_{k+1})\twoheadrightarrow H^1(W,L_k)$, by induction it is enough to show that $H^1(W, L_{c_y
})=0.$

Now, a similar induction argument shows that it suffices to show that $H^1$ vanishes for the line bundle of $a\cdot q^*\Delta_0 + (b-c_x) \cdot q^* \Gamma$. For $c_y\le k \le c_x$, we denote by $M_k$ the line bundle of $a\cdot q^*\Delta_0 + (b-k) \cdot q^* \Gamma - (c_x-k)\cdot E_x$.
For each $c_y \le k \le c_x-1$, restricting $M_k$ to $\widetilde{\Gamma}$ gives 
\begin{equation*}
    0 \to M_{k+1}\otimes \oo_W(E_y) \to M_k \to M_k\vert_{\widetilde{\Gamma}}\to 0.
\end{equation*}
Since $M_k\vert_{\widetilde{\Gamma}}\simeq \oo_{\pp^1}(a-c_x+k)$ and we are assuming that $a\ge c_x + c_y -1$, it follows that $H^1(\widetilde{\Gamma},M_k\vert_{\widetilde{\Gamma}})=0$ and $H^1(W, M_{k+1}\otimes \oo_W(E_y))\twoheadrightarrow H^1(W, M_k).$
Upon restricting $M_{k+1}\otimes \oo_W(E_y)$ to $E_y$, we obtain 
\begin{equation*}
    0 \to M_{k+1}\to M_{k+1}\otimes \oo_W(E_y)\to M_{k+1}\otimes \oo_W(E_y)\vert_{E_y}\to 0.
\end{equation*}
Similar as above, we see that $M_{k+1}\otimes \oo_W(E_y)\vert_{E_y}\simeq \oo_{\pp^1}(-1)$, so we have a surjection $H^1(W,M_{k+1})\twoheadrightarrow H^1(W, M_k)$, and our claim holds by induction.

Finally, $H^1(W, \oo_W(a\cdot q^*\Delta_0 + (b-c_x)\cdot q^*\Gamma))\simeq H^1(\ff_d, \oo_{\ff_d}(a\cdot \Delta_0 + (b-c_x)\cdot \Gamma))$ vanishes when $b\ge ad+c_x-1.$
\end{proof}

\begin{lemma}[cf. Example 2.6 in \cite{CoRi23}]
\label{lemma:secdom-hirzebruch2}
{\em
Let $d\ge 0$ and let $q:W\to \ff_d$ be the blowup of $\ff_d$ at two distinct points $x,y$ lying on the same fiber $\Gamma_0$, and neither on $\Delta_0$ when $d>0$. Let $$L=\oo_W(a\cdot q^*\Delta_0 + b \cdot q^* \Gamma - c_x\cdot E_x-c_y\cdot E_y)$$ be a line bundle with $c_x,c_y\ge 0$, $a\ge c_x+c_y$ and $b\ge ad+\max\{c_x,c_y\}$.
At every point $p\in W\setminus q^{-1}(\Gamma_0)$, there is a surjective map
\begin{equation*}
\begin{aligned}
&\left( H^0(W, \mathcal{O}_{W}(q^*(\Delta_0+d\Gamma)) \otimes \mathcal{I}_{p/W}) \otimes H^0(W, \mathcal{O}_{W}((a-1)q^*\Delta_0 + (b-d)q^*\Gamma-c_xE_x-c_yE_y)) \right) \\
&\oplus \left( H^0(W, \mathcal{O}_{W}(q^*\Gamma) \otimes \mathcal{I}_{p /W}) \otimes H^0(W, \mathcal{O}_{W}(aq^*\Delta_0 + (b-1)q^*\Gamma-c_xE_x-c_yE_y)) \right) \\
&\to H^0(W,L \otimes \mathcal{I}_{p /W}).
\end{aligned}
\end{equation*}
In other words, outside of $q^{-1}(\Gamma_0)$, there is a surjective map
\begin{equation*}
M_{q^*(\Delta_0+d\Gamma)}^{\oplus s}\oplus M_{q^*\Gamma}^{\oplus t}\to M_L.
\end{equation*}
}
\end{lemma}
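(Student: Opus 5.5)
We follow the proof of Lemma~\ref{lemma:secdom-hirzebruch} in \cite[Example 2.6]{CoRi23}, now on the blowup $W$. Fix $p\in W\setminus q^{-1}(\Gamma_0)$ and write $A=q^*(\Delta_0+d\Gamma)$ and $G=q^*\Gamma$. Since $p$ lies off the exceptional locus, $q$ is an isomorphism near $p$.

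\textbf{Step 1: reduce surjectivity to a vanishing.} Choose a section $\gamma\in H^0(W,G\otimes\mathcal{I}_p)$, namely the pullback of the fiber $\Gamma_p$ through $q(p)$. This curve is disjoint from $E_x,E_y$ and $\widetilde{\Gamma}$ because $q(p)\notin\Gamma_0$. Choose also a section $\alpha\in H^0(W,A\otimes\mathcal{I}_p)$ not vanishing identically on $\Gamma_p$; since $|\Delta_0+d\Gamma|$ is basepoint-free and $\Delta_0+d\Gamma$ has degree $1$ on fibers, such a section exists, and its divisor meets $\Gamma_p$ transversally exactly at $p$. Let $s\in H^0(L\otimes\mathcal{I}_p)$. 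Restrict $s$ to $\Gamma_p\simeq\pp^1$, on which $L$ has degree $a$ and the restriction vanishes at $p$. Then $s|_{\Gamma_p}=\alpha|_{\Gamma_p}\cdot u$ for some $u\in H^0(\pp^1,\oo(a-1))$.

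If the restriction map
\[H^0(W,L-A)\to H^0(\Gamma_p,\oo(a-1))\]
is surjective, lift $u$ to $\tilde u\in H^0(L-A)$. Then $s-\alpha\tilde u$ vanishes on $\Gamma_p$, so it equals $\gamma\cdot v$ with $v\in H^0(L-G)$. This puts $s$ in the image of the given map. By the sequence
\[0\to L-A-G\to L-A\to (L-A)|_{\Gamma_p}\to 0,\]
the surjectivity follows from $H^1(W,L-A-G)=0$.

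\textbf{Step 2: apply Lemma~\ref{lemma:h1-vanishing-hirzebruch2}.} We have
\[L-A-G=(a-1)q^*\Delta_0+(b-d-1)q^*\Gamma-c_xE_x-c_yE_y.\]
The hypotheses of Lemma~\ref{lemma:h1-vanishing-hirzebruch2} require:
\begin{itemize}
\item $a-1\ge c_x+c_y-1$, which is exactly $a\ge c_x+c_y$;
\item $b-d-1\ge (a-1)d+\max\{c_x,c_y\}-1$, which is exactly $b\ge ad+\max\{c_x,c_y\}$.
\end{itemize}
Both hold by assumption, and $a-1\ge0$ and $b-d-1\ge0$ in the relevant range. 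So the vanishing holds and surjectivity follows.

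\textbf{Step 3: global generation and the kernel-bundle surjection.} For the kernel-bundle formulation, check that $A$, $G$, $L-A$ and $L-G$ are globally generated off $q^{-1}(\Gamma_0)$. For $A$ and $G$ this is immediate. For $L-A$ and $L-G$, one route is the same restriction argument with a further $H^1$-vanishing from Lemma~\ref{lemma:h1-vanishing-hirzebruch2}. Alternatively, note that the surjectivity statement at each $p$ already yields a surjection $M_A^{\oplus s}\oplus M_G^{\oplus t}\to M_L$ on that open set, exactly as in the Remark following Definition~\ref{def:sec-dom}.

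\textbf{Main obstacle.} The delicate point is that $p$ must avoid $q^{-1}(\Gamma_0)$, which is why the fiber through $p$ misses the exceptional curves. The numerical bookkeeping matching the hypotheses of Lemma~\ref{lemma:h1-vanishing-hirzebruch2} is routine by comparison.
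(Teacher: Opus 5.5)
Your proposal is correct and follows the paper's proof essentially step for step. You choose a section of $q^*(\Delta_0+d\Gamma)$ that cuts the fiber through $p$ only at $p$. You lift the quotient via the surjection $H^0(L-q^*(\Delta_0+d\Gamma))\to H^0(\oo_{\pp^1}(a-1))$, which comes from Lemma~\ref{lemma:h1-vanishing-hirzebruch2} applied to $(a-1)q^*\Delta_0+(b-d-1)q^*\Gamma-c_xE_x-c_yE_y$. You then absorb the remainder, which vanishes on the fiber, into the $q^*\Gamma$ term. Your check of that lemma's hypotheses is exactly right, and your Step 3 only makes explicit points the paper leaves implicit.
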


\begin{proof}
Let $p$ be a point in $W\setminus q^{-1}(\Gamma_0)$, and let $\widetilde{\Gamma}_p$ denote the strict transform of the fiber through $q(p).$
There exists a section $s_p$ of $\oo_W(q^*(\Delta_0+d\Gamma))$ that, upon restricting to the curve $\widetilde{\Gamma}_p$, vanishes only at $p$.
Consider the short exact sequence obtained by restricting
$$L' := \oo_W((a-1) q^*\Delta_0 + (b-d) q^* \Gamma - c_x E_x - c_y E_y)$$
to the curve $\widetilde{\Gamma}_p$:
\begin{equation*}
    0 \to L'\otimes \oo_W(-q^*\Gamma) \to L' \to \oo_{\pp^1}(a-1)\to 0.
\end{equation*}
Note that $L'\otimes \oo_W(-q^*\Gamma) \simeq \oo_W((a-1) q^*\Delta_0 + (b-d-1) q^* \Gamma - c_x E_x - c_y E_y)$.
By Lemma~\ref{lemma:h1-vanishing-hirzebruch2} and our assumptions on the coefficients $a,b,c_x,c_y$, $L'\otimes \oo_W(-q^*\Gamma)$ has vanishing $H^1$, and so we obtain a surjection
\begin{equation*}
    H^0(W,L')\to H^0(\pp^1, \oo_{\pp^1}(a-1)).
\end{equation*}
This means that there exist sections $s_1,\ldots,s_a$ of $L'$ whose restrictions to $\widetilde{\Gamma}_p$ span $H^0(\pp^1, \oo_{\pp^1}(a-1))$. 
Therefore, the products $s_ps_1,\ldots,s_ps_a$ span all sections in $H^0(W, L\otimes \mathcal{I}_{p/W})$, modulo the sections that vanish along the entire fiber $\widetilde{\Gamma}_p$. 
These remaining sections are generated by the second term in our direct sum, as a section of $\mathcal{O}_W(q^*\Gamma)$ vanishing at $p$ must vanish on the entirety of $\widetilde{\Gamma}_p$.
\end{proof}

The previous results allow us to study the curves on $W$ besides $\widetilde{\Gamma}, \widetilde{\Delta}_0, E_x$ and $E_y$ when $d>0$, and $\widetilde{\Gamma}, \widetilde{\Delta}_x,\widetilde{\Delta}_y,E_x$ and $E_y$ when $d=0.$

\begin{proposition}\label{prop:hirzebruch2}
{\em
Let $d> 0$ and let $q:W\to \ff_d$ be the blowup of $\ff_d$ at two distinct points $x,y$ lying on the same fiber $\Gamma_0$, and neither on $\Delta_0$. 
Let $D'\subset W$ be a very general curve in the linear system $$|a\cdot q^*\Delta_0 + b\cdot q^*\Gamma - c_x\cdot E_x - c_y\cdot E_y|$$ with $c_x,c_y\ge 0$, $a\ge c_x+c_y$ and $b\ge ad+\max\{c_x,c_y\}$, and let $D=\widetilde{\Gamma}+D'.$
Let $f:C\to W$ be a map from a smooth projective curve $C$ of genus $g$ that is birational onto its image, satisfies $f(C)\not\subset D\cup E_x\cup E_y\cup\widetilde{\Delta}_0$,
and meets $D$ in exactly $i$ distinct points. 
Then
\begin{enumerate}
    \item either $2g-2+i\ge ((a-3) q^*\Delta_0 + (b-2d-1) q^*\Gamma-c_x E_x-c_yE_y)\cdot C$,
    \item or $2g-2+i\ge ((a-2) q^*\Delta_0 + (b-d-2) q^*\Gamma-c_x E_x-c_yE_y)\cdot C.$
\end{enumerate}
In Case (1) and when $d\ge 1$, equality is achieved only when $C$ has the following classes:
\begin{itemize}
    \item When $d>1$, $q^*\Gamma$;
    \item When $d=1$, $q^*\Gamma, q^*(\Delta_0+\Gamma), \widetilde{\Delta}_0+\widetilde{\Gamma}+E_x$ or $\widetilde{\Delta}_0+\widetilde{\Gamma}+E_y.$
\end{itemize}
}
\end{proposition}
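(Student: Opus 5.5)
Apply the variational setup of \S\ref{sec:variational-argument} on $X=W$. Write $E=D$, with fixed part $F=\widetilde{\Gamma}$ and moving part $E'=D'$. The system $|a q^*\Delta_0+bq^*\Gamma-c_xE_x-c_yE_y|$ is basepoint-free in the stated range; I would check this directly from Lemma~\ref{lemma:h1-vanishing-hirzebruch2} by restricting to fibers and to $E_x,E_y$.

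First compute the log normal degree via \eqref{eq:deg-log-normal}. We have $K_W=-2q^*\Delta_0-(d+2)q^*\Gamma+E_x+E_y$ and $D=q^*\Gamma-E_x-E_y+D'$, so
\[
K_W+D=(a-2)q^*\Delta_0+(b-d-1)q^*\Gamma-c_xE_x-c_yE_y .
\]
Hence
\[
\deg N_{f_b/W}(\log D_{f,b})=2g-2+i-\big((a-2)q^*\Delta_0+(b-d-1)q^*\Gamma-c_xE_x-c_yE_y\big)\cdot C ,
\]
and we may assume this is negative. By Proposition~\ref{prop:nefness-2points-d>0}, $T_W(-\log\widetilde{\Gamma})$ is pseudo-nef outside $Z_2=E_x\cup E_y\cup\widetilde{\Delta}_0$. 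By Lemma~\ref{lemma:secdom-hirzebruch2}, the bundles $q^*(\Delta_0+d\Gamma)$ and $q^*\Gamma$ dominate the sections of $D'$ outside $Z_1=q^{-1}(\Gamma_0)=\widetilde{\Gamma}\cup E_x\cup E_y$. Since $f(C)\not\subset D\cup Z_1\cup Z_2$, Proposition~\ref{prop:log-normal-degree-bound} and Remark~\ref{rem:log-normal-degree-bound} apply. They give a generically surjective map $f_b^*M_{L}\to N_{f_b/W}(\log D_{f,b})$ for $L=q^*(\Delta_0+d\Gamma)$ (Case 1) or $L=q^*\Gamma$ (Case 2). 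The image then has degree at least $-L\cdot C$. Subtracting $q^*(\Delta_0+d\Gamma)$, respectively $q^*\Gamma$, from $K_W+D$ gives exactly the inequalities (1) and (2).

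For the equality statement in Case 1, argue as in Proposition~\ref{prop:hirzebruch}. Equality forces the kernel of $f_b^*M_{q^*(\Delta_0+d\Gamma)}\to N$ to have degree $0$. Since $h^0(W,q^*(\Delta_0+d\Gamma))=h^0(\ff_d,\Delta_0+d\Gamma)=d+2$, this kernel has rank $d$ inside $\oo^{\oplus(d+2)}$. Lemma~\ref{lemma:trivial-subbundle} then says it is trivial, so $h^0(f_b^*M)\ge d$. Proposition~\ref{prop:sections-of-kernel-bundles} converts this to
\[
h^0\big(W,\mathcal{I}_{f(C)}\otimes\oo_W(q^*(\Delta_0+d\Gamma))\big)\ge d .
\]

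The main obstacle is classifying the integral curves $f(C)$ for which this holds, since the blowup introduces new classes. Write $f(C)\sim \alpha q^*\Delta_0+\beta q^*\Gamma-m_xE_x-m_yE_y$ with $m_x,m_y\ge0$. Then the relevant space is $H^0\big((1-\alpha)q^*\Delta_0+(d-\beta)q^*\Gamma+m_xE_x+m_yE_y\big)$, which should be computed on $\ff_d$ as sections of $(1-\alpha)\Delta_0+(d-\beta)\Gamma$; the exceptional parts contribute nothing new since $E$'s are fixed. So $\alpha\le1$, and we reduce to the list from Proposition~\ref{prop:hirzebruch}: $q(f(C))$ has class $\Gamma$, or $\Delta_0+\Gamma$ when $d=1$.

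It remains to lift these to $W$. A curve of class $\Gamma$ other than $\Gamma_0$ gives $q^*\Gamma$; the fiber $\Gamma_0$ itself is excluded. When $d=1$, a curve of class $\Delta_0+\Gamma$ either avoids $x,y$, giving $q^*(\Delta_0+\Gamma)$, or passes through $x$ or $y$. It cannot pass through both, since its intersection with $\Gamma_0$ is $1$. Passing through exactly one of them gives class $q^*(\Delta_0+\Gamma)-E_x$ or $q^*(\Delta_0+\Gamma)-E_y$. Using $q^*\Gamma=\widetilde{\Gamma}+E_x+E_y$ and $q^*\Delta_0=\widetilde{\Delta}_0$, these equal $\widetilde{\Delta}_0+\widetilde{\Gamma}+E_y$ and $\widetilde{\Delta}_0+\widetilde{\Gamma}+E_x$, which is the stated list.

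I would double-check that $h^0\ge d$ is indeed attained in each lifted case via the $\ff_d$ computation. That check is where I expect care is needed about multiplicities $m_x,m_y$ not changing $h^0$.
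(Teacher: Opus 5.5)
Your proposal is correct and follows essentially the same route as the paper. You use the same computation of $K_W+D$, the same inputs (Proposition~\ref{prop:nefness-2points-d>0}, Lemma~\ref{lemma:secdom-hirzebruch2} and Proposition~\ref{prop:log-normal-degree-bound}), and the same kernel-triviality argument in the equality case (Lemma~\ref{lemma:trivial-subbundle} plus Proposition~\ref{prop:sections-of-kernel-bundles}).

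The only difference is bookkeeping in the equality case. You write $f(C)\sim\alpha q^*\Delta_0+\beta q^*\Gamma-m_xE_x-m_yE_y$ and use $q_*\mathcal{O}_W(m_xE_x+m_yE_y)=\mathcal{O}_{\ff_d}$ to reduce to the classification already done on $\ff_d$, then lift. The paper instead works directly in the basis $\widetilde{\Delta}_0,\widetilde{\Gamma},E_x,E_y$ with intersection constraints. That projection formula also settles your closing worry about multiplicities, and attainment of $h^0\ge d$ is not needed for the ``only when'' claim.
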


\begin{proof}
We use the same argument as Proposition~\ref{prop:hirzebruch}.
At a general $b\in B$, we have:
\[\deg N_{f_b/X}(\log D_{f,b}
)=2g-2+i-((a-2)q^*\Delta_0 + (b-d-1)q^*\Gamma -c_xE_x -c_yE_y)\cdot C.\]
Also note that $T_W(-\log \widetilde{\Gamma})$ is pseudo-nef outside of $E_x\cup E_y\cup\widetilde{\Delta}_0$ (Lemma~\ref{prop:nefness-2points-d>0}).
Therefore, Proposition~\ref{prop:log-normal-degree-bound} and Lemma~\ref{lemma:secdom-hirzebruch2} imply that we have the following cases:

\textbf{Case 1:} 
There is a generically surjective map 
$$\mu: f_b^*M_{q^*(\Delta_0+d\Gamma)}\to N_{f_b/W}(\log D_{f_b}).$$
Let us denote the image sheaf of $\mu$ by $Q$.
Arguing as in Proposition~\ref{prop:hirzebruch-horikawa}, we have $\deg Q\ge -q^*(\Delta_0+d\Gamma)\cdot C_b$, which implies (1). 

If equality occurs, then $f_b^*M_{q^*(\Delta_0+d\Gamma)}$ must possess at least $d$ global sections. 
By Proposition~\ref{prop:sections-of-kernel-bundles}, if the image of $C_b$ is an integral curve of class $\alpha \widetilde{\Delta}_0+\beta \widetilde{\Gamma}+\gamma_xE_x+\gamma_yE_y$, we have:
$$
\begin{aligned}
h^0(C_b, f_b^*M_{q^*(\Delta_0+d\Gamma)}) &= h^0(W, \mathcal{I}_{f_b(C_b)/W}(q^*(\Delta_0+d\Gamma))) \\
&= h^0(W, \mathcal{O}_{W}((1-\alpha)\widetilde{\Delta}_0 + (d-\beta)\widetilde{\Gamma}+(d-\gamma_x)E_x+(d-\gamma_y)E_y)).
\end{aligned}
$$
Since $f_b(C_b) \not\subset \widetilde{\Delta}_0\cup \widetilde{\Gamma}\cup E_x \cup E_y$, we must have $\alpha+\gamma_x+\gamma_y\ge 2\beta$, $\beta \ge \alpha d$, and $\beta\ge \gamma_x,\gamma_y$.
For this dimension to be at least $d$, we are limited to two possibilities:
\begin{itemize}
    \item $\alpha=0$. This forces $\beta=\gamma_x=\gamma_y=1$ for the curve to remain integral and not contained in the boundary components, corresponding to the class $q^*\Gamma$. This yields $h^0(W, \mathcal{O}_{W}(q^*\Delta_0 + (d-1)q^*\Gamma)) = d$.
    \item $\alpha=1$. This forces $d=1$ and the curve to be in the class of $q^*(\Delta_0+\Gamma)$, $q^*(\Delta_0+\Gamma)-E_x$, or $q^*(\Delta_0+\Gamma)-E_y$. 
\end{itemize}

\textbf{Case 2:} There is a generically surjective map 
$$\mu: f_b^*M_{q^*\Gamma}\to N_{f_b/W}(\log D_{f_b}),$$ so the second inequality follows.
\end{proof}

\begin{proposition}\label{prop:hirzebruch2-f0}
{\em
Let $q:W\to \ff_0$ be the blowup of $\ff_0$ at two distinct points $x,y$ lying on the same fiber $\Gamma_0$.
Let $D'\subset W$ be a very general curve in the linear system $$|a q^*\Delta_0 + b q^*\Gamma - c_x E_x - c_y E_y|$$ with $c_x,c_y\ge 0$, $a\ge c_x+c_y$ and $b\ge \max\{c_x,c_y\}$, and let $D=\widetilde{\Gamma}+D'.$
Let $f:C\to W$ be a map from a smooth projective curve $C$ of genus $g$ that is birational onto its image, satisfies $f(C)\not\subset D\cup E_x\cup E_y\cup\widetilde{\Delta}_x\cup \widetilde{\Delta}_y$,
and meets $D$ in exactly $i$ distinct points. 
Then
\begin{enumerate}
    \item either $2g-2+i\ge ((a-3) q^*\Delta_0 + (b-1) q^*\Gamma-c_x E_x-c_yE_y)\cdot C$,
    \item or $2g-2+i\ge ((a-2) q^*\Delta_0 + (b-2) q^*\Gamma-c_x E_x-c_yE_y)\cdot C.$
\end{enumerate}
}
\end{proposition}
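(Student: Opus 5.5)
We follow the proof of Proposition~\ref{prop:hirzebruch2} verbatim, replacing the $d>0$ inputs by their $d=0$ analogues. The setup of \S\ref{sec:variational-argument} applies with $X=W$ and $E=a q^*\Delta_0+bq^*\Gamma-c_xE_x-c_yE_y+\widetilde{\Gamma}$. Here the fixed part is $F=\widetilde{\Gamma}$, which is smooth and rational, so snc, and the moving part is $E'$, the class of $D'$. We need $|E'|$ to be basepoint-free, at least away from $q^{-1}(\Gamma_0)$. This should follow from the same restriction argument used in Lemma~\ref{lemma:secdom-hirzebruch2}, using the vanishing of Lemma~\ref{lemma:h1-vanishing-hirzebruch2} with $d=0$.

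First we compute the log normal degree. Since $K_W=-2q^*\Delta_0-2q^*\Gamma+E_x+E_y$ and $\widetilde{\Gamma}=q^*\Gamma-E_x-E_y$, we get
\[
K_W+D=(a-2)q^*\Delta_0+(b-1)q^*\Gamma-c_xE_x-c_yE_y .
\]
Formula \eqref{eq:deg-log-normal} then gives
\[
\deg N_{f_b/W}(\log D_{f,b})=2g-2+i-\big((a-2)q^*\Delta_0+(b-1)q^*\Gamma-c_xE_x-c_yE_y\big)\cdot C .
\]
If this degree is $\ge 0$, then both (1) and (2) hold trivially, because $q^*\Delta_0$ and $q^*\Gamma$ are nef. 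So we may assume the degree is negative.

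By Proposition~\ref{prop:nefness-2points-d=0}, $T_W(-\log\widetilde{\Gamma})$ is pseudo-nef outside $Z=E_x\cup E_y\cup\widetilde{\Delta}_x\cup\widetilde{\Delta}_y$. By Lemma~\ref{lemma:secdom-hirzebruch2} with $d=0$ (its hypotheses reduce to $a\ge c_x+c_y$ and $b\ge\max\{c_x,c_y\}$), the bundles $q^*\Delta_0$ and $q^*\Gamma$ dominate the sections of $E'$ outside $q^{-1}(\Gamma_0)$. Since $f(C)\not\subset D\supset\widetilde{\Gamma}$ and $f(C)\not\subset E_x\cup E_y$, the curve is not contained in $Z\cup q^{-1}(\Gamma_0)$. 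Proposition~\ref{prop:log-normal-degree-bound} and Remark~\ref{rem:log-normal-degree-bound} therefore give a generically surjective map $f_b^*M_{L}\to N_{f_b/W}(\log D_{f,b})$ with $L=q^*\Delta_0$ or $L=q^*\Gamma$.

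Since $M_L$ is a subbundle of a trivial bundle, the image has degree $\ge -L\cdot C$. Taking $L=q^*\Delta_0$ yields inequality (1), and taking $L=q^*\Gamma$ yields inequality (2).

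The main point to check is that the proof of Proposition~\ref{prop:log-normal-degree-bound} only needs the positivity and domination properties at the general point of $C_b$, not on all of $W$. The domination in Lemma~\ref{lemma:secdom-hirzebruch2} holds only off $q^{-1}(\Gamma_0)$, and the pseudo-nefness in Proposition~\ref{prop:nefness-2points-d=0} holds only off $Z$. Because $f(C)$ avoids being contained in either exceptional set, the surjectivity $M_{q^*\Delta_0}^{\oplus s}\oplus M_{q^*\Gamma}^{\oplus t}\to M_{E'}$ is generic along $C$, and the degree estimates go through unchanged. No equality analysis is claimed in this statement, so the proof ends here.
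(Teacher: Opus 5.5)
Your proposal is correct and follows the paper's route. The paper gives no separate proof of this $d=0$ statement; it is the direct analogue of Proposition~\ref{prop:hirzebruch2}, obtained, as you do, by combining Proposition~\ref{prop:nefness-2points-d=0}, Lemma~\ref{lemma:secdom-hirzebruch2} at $d=0$, and Proposition~\ref{prop:log-normal-degree-bound} with $L=q^*\Delta_0$ or $q^*\Gamma$. One wording point: when $a\ge c_x+c_y+2$, the curve $\widetilde{\Gamma}$ need not be a fixed component of $|E'+\widetilde{\Gamma}|$, but the argument only uses the family $\widetilde{\Gamma}+D'$ with $D'$ varying in $|E'|$, which is all the proof of Proposition~\ref{prop:normal-family} requires.
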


\begin{remark}
Propositions~\ref{prop:hirzebruch2} and \ref{prop:hirzebruch2-f0} imply that when $D'\subset W$ is a very general curve in  $|a q^*\Delta_0 + b q^*\Gamma - c_x E_x - c_y E_y|$ with $c_x,c_y\ge 2$, $a\ge c_x+c_y+4$ and $b\ge \begin{cases}
   ad+\max\{c_x,c_y\} & d>0\\
   3+\max\{c_x,c_y\} & d=0,
\end{cases}$
the pair $(W,\widetilde{\Gamma}+D')$ is log algebraically hyperbolic.
\end{remark}

Now, we apply these results to study double covers of $W$ that give rise to Horikawa surfaces. In particular, we are interested in the case where the branch locus $D$ is the disjoint union of $\widetilde{\Gamma}$ and a very general divisor $D'$ in the linear system
$$|6 q^*\Delta_0 + (n+4+3d) q^*\Gamma - 3 E_x - 3 E_y|,$$
with $n-d$ odd and $d\le\frac{n+4}{3}$.
In order to apply our previous section-dominance bounds to $D'$, the coefficients of this linear system must satisfy $d\le \frac{n+1}{3}$. 
Due to this, we miss the stratum in the moduli where $d=\frac{n+3}{3}$. Consequently, the results in this subsection will specifically apply to the strata where $d\le \frac{n+1}{3}$.

\begin{proposition}
{\em
Let $d> 0$ and let $q:W\to \ff_d$ be the blowup of $\ff_d$ at two distinct points $x,y$ lying on the same fiber $\Gamma_0$, and neither on $\Delta_0$. 
Let $D'\subset W$ be a very general curve in the linear system $$|6 q^*\Delta_0 + (n+4+3d) q^*\Gamma - 3 E_x - 3 E_y|$$ with 
\begin{itemize}
    \item either $d\ge 2$ and $n \ge 3d-1$, 
    \item or $d=1$ and $n\ge 4$.
\end{itemize}
Let $D=\widetilde{\Gamma}+D'.$
For any map $f:C\to W$ from a smooth projective curve $C$ of genus $g$ that is birational onto its image, satisfies $f(C)\not\subset D\cup E_x\cup E_y\cup \widetilde{\Delta}_0$, and meets $D$ in exactly $i$ distinct points, we have:
\begin{equation*}
2g-2+i\ge \frac{1}{2}D\cdot C.    
\end{equation*}
Moreover, equality is only possible when $f(C)$ is of class $q^*\Gamma$.
}    
\end{proposition}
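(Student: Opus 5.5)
The plan is to specialize Proposition~\ref{prop:hirzebruch2} to $a=6$, $b=n+4+3d$, $c_x=c_y=3$, and then compare each of its two alternative lower bounds with $\frac12 D\cdot C$.

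\textbf{Hypotheses and $\frac12 D$.} The conditions of Proposition~\ref{prop:hirzebruch2} hold:
\begin{itemize}
\item $a\ge c_x+c_y$ holds since $6=3+3$;
\item $b\ge ad+\max\{c_x,c_y\}$ reads $n+4+3d\ge 6d+3$, i.e.\ $n\ge 3d-1$, which is assumed for $d\ge2$ and is automatic for $d=1$, $n\ge4$.
\end{itemize}
Since $\widetilde{\Gamma}=q^*\Gamma-E_x-E_y$, we get $D\sim 6q^*\Delta_0+(n+5+3d)q^*\Gamma-4E_x-4E_y$. Because $n-d$ is odd, $n+5+3d=(n-d)+5+4d$ is even, so
$$\tfrac12 D=3q^*\Delta_0+\tfrac{n+5+3d}{2}q^*\Gamma-2E_x-2E_y$$
is an integral class. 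I will also use that $\widetilde{\Delta}_0=q^*\Delta_0$, since $x,y\notin\Delta_0$.

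\textbf{Case 1.} Subtracting $\frac12 D$ from the bound $(3q^*\Delta_0+(n+3+d)q^*\Gamma-3E_x-3E_y)\cdot C$ leaves
$$\Big(\tfrac{n+1-d}{2}q^*\Gamma-E_x-E_y\Big)\cdot C=\Big(\widetilde{\Gamma}+\tfrac{n-1-d}{2}\,q^*\Gamma\Big)\cdot C .$$
Both classes are nonnegative on any integral $f(C)\neq\widetilde{\Gamma}$, because $q^*\Gamma$ is nef. The coefficient is nonnegative since $n\ge d+1$ in all allowed cases, and in fact positive: $n=d+1$ would force $d=1$, $n=2$, which is excluded. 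Hence equality forces $q^*\Gamma\cdot C=0$ and $\widetilde{\Gamma}\cdot C=0$. The curve then lies in a fiber of $W\to\pp^1$. Since $E_x,E_y,\widetilde{\Gamma}$ are excluded, $f(C)$ must be a full fiber of class $q^*\Gamma$.

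This agrees with the equality classification in Proposition~\ref{prop:hirzebruch2}. For $d=1$ that proposition also allows the classes $q^*(\Delta_0+\Gamma)$ and $q^*(\Delta_0+\Gamma)-E_{x}$ (or $-E_y$), but these give $q^*\Gamma\cdot C=1>0$. So they yield strict inequality here.

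\textbf{Case 2.} Subtracting $\frac12 D$ from the bound $(4q^*\Delta_0+(n+2+2d)q^*\Gamma-3E_x-3E_y)\cdot C$ leaves
$$\Big(\widetilde{\Delta}_0+\widetilde{\Gamma}+\tfrac{n-3+d}{2}\,q^*\Gamma\Big)\cdot C .$$
Since $f(C)\neq\widetilde{\Delta}_0,\widetilde{\Gamma}$, every term is nonnegative, and $\frac{n-3+d}{2}\ge1$ because $n\ge4$ and $d\ge1$. Vanishing would require $q^*\Gamma\cdot C=0$, so $f(C)$ would be a fiber class. But then $\widetilde{\Delta}_0\cdot C=1$, so equality never occurs in Case~2.

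\textbf{Main obstacle.} The only delicate point is the equality analysis. In Case~1 one must rule out the extra $d=1$ classes and identify the curves with $q^*\Gamma\cdot C=0$. In Case~2 one must use the exclusion of $\widetilde{\Delta}_0$, which is the only curve meeting $\widetilde{\Delta}_0$ negatively.
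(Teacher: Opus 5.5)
Your proposal is correct and is essentially the paper's proof. Both specialize Proposition~\ref{prop:hirzebruch2} to $a=6$, $b=n+4+3d$, $c_x=c_y=3$ and compare each bound with $\frac{1}{2}D\cdot C$; your rewriting of the two differences as $\widetilde{\Gamma}+\frac{n-1-d}{2}q^*\Gamma$ and $\widetilde{\Delta}_0+\widetilde{\Gamma}+\frac{n-3+d}{2}q^*\Gamma$ just makes explicit the nonnegativity and equality checks that the paper states in the coordinates $\alpha,\beta,\gamma_x,\gamma_y$. One small quibble: oddness of $n-d$ is not a hypothesis of the statement, but nothing in your argument depends on it, since the inequality makes sense for $\mathbb{Q}$-classes.
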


\begin{proof}
Let $C$ have class $\alpha \widetilde{\Delta}_0+ \beta \widetilde{\Gamma} + \gamma_x E_x+ \gamma_y E_y$.
Applying the bounds from Proposition~\ref{prop:hirzebruch2}, we deduce that when $n\ge 3d-1$, we have two cases:

\textbf{Case 1:}
$$2g-2+i-\frac{1}{2}D\cdot C \ge \left(\frac{n+1-d}{2} q^*\Gamma - E_x-E_y\right)\cdot C = \frac{n+1-d}{2}\alpha - 2\beta+\gamma_x+\gamma_y.$$

The right-hand side is always non-negative, and it is equal to zero only when $f(C)$ has class $q^*\Gamma.$

\textbf{Case 2:} 
$$2g-2+i-\frac{1}{2}D\cdot C \ge \left(q^*\Delta_0+\frac{n-1+d}{2} q^*\Gamma - E_x-E_y\right)\cdot C = \frac{n-1-d}{2}\alpha - \beta+\gamma_x+\gamma_y.$$

The right-hand side is strictly positive.

\end{proof}

\begin{proposition}
{\em
Let $q:W\to \ff_0$ be the blowup of $\ff_0\simeq\pp^1\times \pp^1$ at two distinct points $x,y$ lying on the same fiber $\Gamma_0$.
Let $D'\subset W$ be a very general curve in the linear system $$|6 q^*\Delta_0 + (n+4) q^*\Gamma - 3 E_x - 3 E_y|$$ with $n \ge 5$, and let $D=\widetilde{\Gamma}+D'.$
For any map $f:C\to W$ from a smooth projective curve $C$ of genus $g$ that is birational onto its image, satisfies $f(C)\not\subset D\cup E_x\cup E_y\cup \widetilde{\Delta}_x\cup \widetilde{\Delta}_y$, and meets $D$ in exactly $i$ distinct points, we have:
\begin{equation*}
2g-2+i \ge \frac{1}{2}D\cdot C.    
\end{equation*}
Moreover, equality is only possible when $f(C)$ is of class $q^*\Gamma$.
}    
\end{proposition}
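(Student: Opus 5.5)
We will deduce this directly from Proposition~\ref{prop:hirzebruch2-f0}, specialized to $a=6$, $b=n+4$, $c_x=c_y=3$. The hypotheses of that proposition hold: $a=6\ge c_x+c_y=6$ and $b=n+4\ge 3=\max\{c_x,c_y\}$. Since $\widetilde{\Gamma}\sim q^*\Gamma-E_x-E_y$, we have
\[
D=\widetilde{\Gamma}+D'\sim 6q^*\Delta_0+(n+5)q^*\Gamma-4E_x-4E_y .
\]
Hence $\frac{1}{2}D\sim 3q^*\Delta_0+\frac{n+5}{2}q^*\Gamma-2E_x-2E_y$. Only its intersection numbers with $C$ matter, so no integrality issue arises (and in any case $n$ is odd in the relevant Horikawa strata).

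Write the class of $f(C)$ as $\alpha q^*\Delta_0+\beta q^*\Gamma-\gamma_xE_x-\gamma_yE_y$. On $\ff_0$ we have $\Delta_0^2=\Gamma^2=0$ and $\Delta_0\cdot\Gamma=1$, and $E_x^2=E_y^2=-1$. Therefore
\[
q^*\Gamma\cdot C=\alpha,\qquad q^*\Delta_0\cdot C=\beta,\qquad E_x\cdot C=\gamma_x,\qquad E_y\cdot C=\gamma_y .
\]
Since $f(C)$ is an integral curve different from $\widetilde{\Gamma}$, $E_x$, $E_y$, we get $\gamma_x,\gamma_y\ge 0$ and $\widetilde{\Gamma}\cdot C=\alpha-\gamma_x-\gamma_y\ge 0$. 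Also $\alpha,\beta\ge 0$, because $q^*\Delta_0$ and $q^*\Gamma$ are nef.

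\textbf{Case 1} of Proposition~\ref{prop:hirzebruch2-f0}. Subtracting $\frac12 D\cdot C$ from the bound gives
\[
2g-2+i-\tfrac12 D\cdot C\ \ge\ \Bigl(\tfrac{n+1}{2}q^*\Gamma-E_x-E_y\Bigr)\cdot C=\tfrac{n+1}{2}\alpha-\gamma_x-\gamma_y\ \ge\ \tfrac{n-1}{2}\alpha\ \ge 0 .
\]
Equality forces $\alpha=0$, and then $\gamma_x=\gamma_y=0$. So $f(C)$ is an integral curve of class $\beta q^*\Gamma$ with $q^*\Gamma\cdot C=0$. It is therefore contained in a fiber of $p$, and since it is not $\widetilde{\Gamma}$, it is the pullback of a fiber, of class $q^*\Gamma$.

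\textbf{Case 2} of Proposition~\ref{prop:hirzebruch2-f0}. Similarly,
\[
2g-2+i-\tfrac12 D\cdot C\ \ge\ \Bigl(q^*\Delta_0+\tfrac{n-1}{2}q^*\Gamma-E_x-E_y\Bigr)\cdot C=\beta+\tfrac{n-1}{2}\alpha-\gamma_x-\gamma_y\ \ge\ \beta+\tfrac{n-3}{2}\alpha .
\]
With $n\ge5$ this is strictly positive: it vanishes only if $\alpha=\beta=0$, which is impossible for a nonzero effective class once $\gamma_x=\gamma_y=0$. So equality never occurs in Case 2.

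Combining the two cases gives the inequality, with equality only for class $q^*\Gamma$. The only delicate point is the positivity bookkeeping: using $\widetilde{\Gamma}\cdot C\ge 0$ to absorb the $-\gamma_x-\gamma_y$ terms, and identifying the curves with $\alpha=0$ as fibers. Everything else is arithmetic.
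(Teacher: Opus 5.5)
Your proposal is correct and follows the paper's proof. You specialize Proposition~\ref{prop:hirzebruch2-f0} to $a=6$, $b=n+4$, $c_x=c_y=3$, subtract $\frac12 D\cdot C$ in each case, and check that $\bigl(\frac{n+1}{2}q^*\Gamma-E_x-E_y\bigr)\cdot C\ge 0$ (with equality only for $q^*\Gamma$) and $\bigl(q^*\Delta_0+\frac{n-1}{2}q^*\Gamma-E_x-E_y\bigr)\cdot C>0$. The only difference is bookkeeping: the paper uses the redundant set $\widetilde{\Gamma},\widetilde{\Delta}_x,\widetilde{\Delta}_y,E_x,E_y$ and leaves the use of $\widetilde{\Gamma}\cdot C\ge 0$ implicit, while your basis $q^*\Delta_0,q^*\Gamma,E_x,E_y$ makes that step explicit.
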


\begin{proof}
Let $C$ have class $\alpha\widetilde{\Gamma}+\beta_x\widetilde{\Delta}_x+\beta_y\widetilde{\Delta}_y+\gamma_xE_x+\gamma_yE_y.$

\textbf{Case 1:}
$$2g-2+i-\frac{1}{2}D\cdot C \ge (\frac{n+1}{2}\cdot q^*\Gamma- E_x-E_y)\cdot C=\frac{n-1}{2}(\beta_x+\beta_y) + \gamma_x+\gamma_y-2\alpha.$$

The right-hand side is always non-negative and it is equal to zero only when $f(C)$ has class $q^*\Gamma$.

\textbf{Case 2:} $$2g-2+i-\frac{1}{2}D\cdot C \ge (q^*\Delta_0+\frac{n-1}{2}\cdot q^*\Gamma- E_x-E_y)\cdot C=\frac{n-3}{2}(\beta_x+\beta_y) + \gamma_x+\gamma_y-\alpha.$$

Since $n\ge 5$, the right hand side is always positive.
\end{proof}

\begin{corollary}\label{cor:hirzebruch-2pt-2comp-horikawa-cover}
{\em
Let $d>0$ and let $\rho:S'\to W$ be a double cover branched over $D=\widetilde{\Gamma}+D'$ where $D'$ is a very general curve in the linear system $|6 q^*\Delta_0 + (n+4+3d)  q^*\Gamma - 3  E_x - 3 E_y|$ satisfying 
\begin{itemize}
    \item either $d\ge 2$ and $n \ge 3d-1$, 
    \item or $d=1$ and $n\ge 4$.
\end{itemize}
Then, $\rho^{-1}(\widetilde{\Gamma})$ is the only rational curve on $S'.$
Furthermore, if $C'\subset S'$ is an integral curve with $g(C')=1$ not mapping to $E_x\cup E_y\cup \widetilde{\Delta}_0$, then $C'=\rho^{-1}(C)$ for an integral curve $C$ of class $q^*\Gamma$ that is tangent to $D'$ at a point outside $\operatorname{Sing}(D).$ 
In particular, $S'$ is pseudo-Lang algebraically hyperbolic.
}
\end{corollary}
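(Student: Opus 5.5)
The argument follows the pattern of Corollaries~\ref{cor:hirzebruch-horikawa-cover} and \ref{cor:hirzebruch-reducible-horikawa-cover}, now built on the preceding proposition for $W$. The new points are that the log bound excludes the curves $E_x,E_y,\widetilde{\Delta}_0$, and that $D=\widetilde{\Gamma}\sqcup D'$ has two components. Since $\widetilde{\Gamma}\cdot D'=(q^*\Gamma-E_x-E_y)\cdot(6q^*\Delta_0+\dots-3E_x-3E_y)=6-6=0$, the two components are disjoint and $D$ is a smooth very general snc divisor. So $S'$ is a smooth double cover, and Lemma~\ref{lemma:log-hyp-to-cover-hyp} applies.

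\textbf{Step 1 (curves not over the exceptional locus).} Let $C'\subset S'$ be integral, with $\rho(C')\not\subset D\cup E_x\cup E_y\cup\widetilde{\Delta}_0$. The preceding proposition gives $2g-2+i\ge\frac12 D\cdot C$ for the normalization $C$ of $\rho(C')$. The Riemann--Hurwitz computation in the proof of Lemma~\ref{lemma:log-hyp-to-cover-hyp} then gives $g(C')\ge1$, so $C'$ is not rational. If $g(C')=1$, the bound must be an equality. The proposition then forces $\rho(C')$ to have class $q^*\Gamma$, i.e.\ it is the strict transform of a fiber $\Gamma_t\neq\Gamma_0$, a smooth rational curve. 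Remark~\ref{rem:elliptic-curves-double cover} says $C$ is rational with exactly four simple branch points and all other intersections of even multiplicity. Since $q^*\Gamma\cdot D=q^*\Gamma\cdot D'=6$ and $q^*\Gamma\cdot\widetilde{\Gamma}=0$, this means $\rho(C')$ meets $D'$ as $1+1+1+1+2$: it is simply tangent to $D'$ at one point and misses $\widetilde{\Gamma}$. The tangency point cannot lie in $\operatorname{Sing}(D)$, because $D$ is smooth.

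\textbf{Step 2 (curves over $D$ and over $E_x,E_y,\widetilde{\Delta}_0$).} Curves lying over $D$ are the ramification curves $\rho^{-1}(\widetilde{\Gamma})\cong\widetilde{\Gamma}\cong\pp^1$ and $\rho^{-1}(D')\cong D'$. The first is rational. The second has genus at least $2$: by adjunction with $K_W$, $D'$ has large genus for $n\ge4$. It remains to treat the finitely many curves over $E_x$, $E_y$ and $\widetilde{\Delta}_0$. We have $E_x\cdot D=E_x\cdot\widetilde{\Gamma}+E_x\cdot D'=1+3=4$, with the four intersection points distinct for general $D'$. So $\rho^{-1}(E_x)$ is an irreducible double cover of $\pp^1$ branched at $4$ points, i.e.\ elliptic, and the same holds for $E_y$. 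For $\widetilde{\Delta}_0$, which is disjoint from $\widetilde{\Gamma}$, we get $\widetilde{\Delta}_0\cdot D'=-6d+n+4+3d=n+4-3d\ge5$ when $n\ge3d+1$ (the parity condition on $n-d$ makes this even). Thus $\rho^{-1}(\widetilde{\Delta}_0)$ is a double cover branched at at least $6$ points and has genus at least $2$; only in the boundary case $n=3d-1$ can it contribute an elliptic curve. This shows that $\rho^{-1}(\widetilde{\Gamma})$ is the unique rational curve.

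\textbf{Step 3 (finiteness).} The fibers of $W\to\pp^1$ that are tangent to $D'$ correspond to branch points of the degree-$6$ map $D'\to\pp^1$. There are finitely many of them, namely $2g(D')-2+12$ by Riemann--Hurwitz. Together with the finitely many exceptional curves from Step 2, this shows $S'$ contains only finitely many rational or elliptic curves, so $S'$ is pseudo-Lang algebraically hyperbolic.

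\textbf{Main obstacle.} I expect the main difficulty to be the bookkeeping in Step 2: checking that the intersection points of $D'$ with $E_x$, $E_y$ and $\widetilde{\Delta}_0$ are distinct for very general $D'$, and handling the edge case $n=3d-1$. The assumption $n\ge3d-1$ is needed exactly for the section-dominance input behind the preceding proposition.
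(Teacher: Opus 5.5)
Your approach is the same as the paper's. The corollary is read off from the preceding proposition via the proof of Lemma~\ref{lemma:log-hyp-to-cover-hyp} and Remark~\ref{rem:elliptic-curves-double cover}. The curves $\widetilde{\Gamma}$, $E_x$, $E_y$, $\widetilde{\Delta}_0$ excluded from that proposition are then handled by direct intersection counts, as in the remark following the corollary.

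One computation in your Step 2 is wrong: $\widetilde{\Delta}_0$ is not disjoint from $\widetilde{\Gamma}$. Since $x,y\notin\Delta_0$, the point $\Delta_0\cap\Gamma_0$ is untouched by the blowup, so
$$\widetilde{\Delta}_0\cdot\widetilde{\Gamma}=q^*\Delta_0\cdot(q^*\Gamma-E_x-E_y)=1.$$
Your parity claim is also backwards: $n+4-3d=(n-d)+4-2d$ is odd, not even.

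The branch locus on $\widetilde{\Delta}_0\cong\pp^1$ therefore consists of $(n+4-3d)+1=n+5-3d$ points. This number is even, as it must be since $D$ is $2$-divisible. It equals $4$ when $n=3d-1$, so $\rho^{-1}(\widetilde{\Delta}_0)$ is elliptic there, and it is at least $6$ otherwise, giving genus at least $2$. The points are distinct for general $D'$: restriction $H^0(W,D')\to H^0(\widetilde{\Delta}_0,\mathcal{O}(n+4-3d))$ is surjective by Lemma~\ref{lemma:h1-vanishing-hirzebruch2} applied to $D'-\widetilde{\Delta}_0$, and $D'$ misses $\widetilde{\Gamma}$.

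As you wrote it, the boundary case $n=3d-1$ would have $3$ branch points, which is impossible for a double cover of $\pp^1$, so your elliptic claim there was not actually supported. With the corrected count, your conclusion that $\rho^{-1}(\widetilde{\Gamma})$ is the only rational curve goes through unchanged. The rest of the proposal is fine: Step 1's reduction to fibers simply tangent to $D'$, the genus $5n+9$ of $D'$, the $E_x,E_y$ counts, and the $10n+28$ tangent fibers.
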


\begin{remark}
Consider the curves mapping to $E_x\cup E_y\cup \widetilde{\Delta}_0$ in Corollary~\ref{cor:hirzebruch-2pt-2comp-horikawa-cover}.
\begin{itemize}
    \item $\rho^{-1}(E_x)$ and $\rho^{-1}(E_y)$ are elliptic curves since $E_x$ and $E_y$ intersect $D$ transversally at 4 points.
    \item $\widetilde{\Delta}_0$ also lifts to an elliptic curve when $n=3d-1$.
    This is because $\widetilde{\Delta}_0$ intersects $D$ transversally at $n+5-3d$ points, which yields a genus of $g=1$ via the Riemann-Hurwitz formula. When $n\ge 3d$, $\widetilde{\Delta}_0$ lifts to a curve of higher genus.
\end{itemize}
\end{remark}

\begin{corollary}\label{cor:hirzebruch-f0-2pt-2comp-horikawa-cover}
{\em
Let $d=0$ and let $\rho:S'\to W$ be a double cover branched over $D=\widetilde{\Gamma}+D'$ where $D'$ is a very general curve in the linear system $$|6 q^*\Delta_0 + (n+4) q^*\Gamma - 3 E_x - 3 E_y|$$ with $n \ge 5$.
Then, $\rho^{-1}(\widetilde{\Gamma})$ is the only rational curve on $S'.$
Furthermore, if $C'\subset S'$ is an integral curve with $g(C')=1$ not mapping to $E_x\cup E_y\cup \widetilde{\Delta}_x\cup \widetilde{\Delta}_y$, then $C'=\rho^{-1}(C)$ for an integral curve $C$ of class $q^*\Gamma$ that is tangent to $D'$ at a point outside $\operatorname{Sing}(D).$ 
In particular, $S'$ is pseudo-Lang algebraically hyperbolic.
}
\end{corollary}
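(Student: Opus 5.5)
The plan is to combine the preceding proposition (the $d=0$ version of the inequality $2g-2+i\ge\frac12 D\cdot C$, with equality only for class $q^*\Gamma$) with Lemma~\ref{lemma:log-hyp-to-cover-hyp} and Remark~\ref{rem:elliptic-curves-double cover}, exactly as in Corollaries~\ref{cor:hirzebruch-horikawa-cover} and \ref{cor:hirzebruch-2pt-2comp-horikawa-cover}. The only difference from the lemma's literal hypothesis is that the inequality is known just for curves not contained in $E_x\cup E_y\cup\widetilde{\Delta}_x\cup\widetilde{\Delta}_y\cup D$. The proof of Lemma~\ref{lemma:log-hyp-to-cover-hyp}, however, is curve-by-curve. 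So for any integral $C'\subset S'$ not contained in the ramification divisor and not mapping into $E_x\cup E_y\cup\widetilde{\Delta}_x\cup\widetilde{\Delta}_y$, we get $2g(C')-2\ge 0$, and equality forces equality in the log bound for $C=\rho(C')$.

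\textbf{Rational curves.} First, curves mapping into the ramification locus. Since $D=\widetilde{\Gamma}\sqcup D'$ is a disjoint union, its components are handled separately. $D'$ is very general in a basepoint-free big system, so it is smooth of large genus and its preimage is not rational. The preimage of $\widetilde{\Gamma}$ is a smooth rational curve, since $\widetilde{\Gamma}\cong\pp^1$ is a branch component.

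Second, curves mapping to $E_x,E_y,\widetilde{\Delta}_x,\widetilde{\Delta}_y$. I compute intersections with $D$:
\begin{itemize}
\item $E_x\cdot D'=3$ and $E_x\cdot\widetilde{\Gamma}=1$, so $E_x$ meets $D$ in $4$ points, transversally for very general $D'$. By Riemann--Hurwitz its preimage is elliptic, and likewise for $E_y$.
\item $\widetilde{\Delta}_x$ has class $q^*\Delta_0-E_x$, so $\widetilde{\Delta}_x\cdot D'=(n+4)-3=n+1$ and $\widetilde{\Delta}_x\cdot\widetilde{\Gamma}=0$. This gives $n+1\ge 6$ branch points (even, since $n$ is odd here). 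Transversality again holds for very general $D'$, so the preimage has genus $\frac{n-1}{2}\ge 2$.
\end{itemize}
Hence no rational curves arise in the second case. All remaining curves have $2g(C')-2\ge 0$, so $\rho^{-1}(\widetilde{\Gamma})$ is the only rational curve.

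\textbf{Elliptic curves.} For an elliptic $C'$ not mapping into the excluded locus, equality holds, so $C=\rho(C')$ has class $q^*\Gamma$. Then $C$ is a smooth rational fiber disjoint from $\widetilde{\Gamma}$, with $C\cdot D'=6$. By Remark~\ref{rem:elliptic-curves-double cover}, $C$ must be rational with exactly four simple intersection points, all other points having multiplicity $2$. For $6=4+2$ this means $C$ is simply tangent to $D'$ at exactly one point. That point is automatically outside $\operatorname{Sing}(D)=\emptyset$ for a very general smooth $D'$. Also $C'$ is the full preimage, since the cover restricted to $C$ is nontrivial.

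\textbf{Finiteness.} The fibers of the ruling tangent to $D'$ are finitely many (ramification points of $D'\to\pp^1$). The exceptional locus contributes only the finitely many curves listed above, so $S'$ contains finitely many rational or elliptic curves and is pseudo-Lang algebraically hyperbolic.

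The main obstacle is the bookkeeping that the excluded curves and the components of $D$ behave as claimed: transversality of $D'$ with $E_x,E_y,\widetilde{\Delta}_x,\widetilde{\Delta}_y$, and smoothness of $D'$ for very general choice. I would check both by a dimension count in the basepoint-free system, using Lemma~\ref{lemma:h1-vanishing-hirzebruch2} to see that restriction to each of these curves is surjective on sections.
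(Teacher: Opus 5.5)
Your proposal is correct and follows essentially the paper's route. You combine the $d=0$ log bound, with equality only for class $q^*\Gamma$, with the curve-by-curve proof of Lemma~\ref{lemma:log-hyp-to-cover-hyp} and Remark~\ref{rem:elliptic-curves-double cover}, and you separately dispose of the ramification components and of $E_x,E_y,\widetilde{\Delta}_x,\widetilde{\Delta}_y$, as the paper does in the remark following the corollary. Your transversality check via Lemma~\ref{lemma:h1-vanishing-hirzebruch2} does go through: for $E_x$ one needs $6\ge 4+3-1$, and for $\widetilde{\Delta}_x$ one needs $5\ge 2+3-1$.
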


\begin{remark}
Consider the preimages of the exceptional curves in Corollary~\ref{cor:hirzebruch-f0-2pt-2comp-horikawa-cover}.
\begin{itemize}
    \item $\rho^{-1}(E_x)$ and $\rho^{-1}(E_y)$ are elliptic curves since $E_x$ and $E_y$ intersect $D$ transversally at 4 points.
    \item $\widetilde{\Delta}_x$ and $\widetilde{\Delta}_y$ lift to curves of higher genus, since each intersects $D$ transversally at $n+1$ points.
\end{itemize}
\end{remark}

We shall summarize the characterization and count of rational and elliptic curves on a very general Horikawa surface of the second kind in each stratum in \S\ref{sec:summary-2nd} and Table~\ref{table:horikawa-count-2}.

\subsection{Blowup of Hirzebruch surfaces at two points on the same fiber, only one on $\Delta_0$}
\label{sec:hirzebruch-2pt-3comp}

In this subsection, we assume $d>0$ and denote by $q\colon W\to \ff_d$ the blowup of $\ff_d$ at two distinct points $x$ and $y$ lying on the same fiber $\Gamma_0$, where $x\in \Delta_0$ and $y\notin \Delta_0$.

For a general point in each stratum of Horikawa surfaces of the second kind, specifically those of type $(d)$ for $n\ge 4$ with $d>\frac{n+4}{3}$, the corresponding surface is birational to a double cover of $W$, where $y$ is a generic point on $\Gamma_0$. 
The branch locus is the disjoint union of $\widetilde{\Gamma}_0$ and $\widetilde{\Delta}_0\cup D'$, where $D'$ is a general divisor. 
Moreover, when $d=\frac{n+2}{2}$ (which only occurs when $4 \mid n$ and $n \ge 8$), such Horikawa surfaces form a separate component of the moduli space, and the branch locus is a mutually disjoint union of three components: $\widetilde{\Gamma}$, $\widetilde{\Delta}_0$, and $D'$. 
Surfaces of type $(3')$ also fall into this configuration.

First, we establish the relevant positivity result for the log tangent bundle $T_W(-\log (\widetilde{\Gamma}+\widetilde{\Delta}_0))$.

\begin{proposition}\label{prop:nefness-2points-d>0-xinD}
{\em
Let $d>0$ and let $q:W\to \ff_d$ the blowup of $\ff_d$ at two distinct points $x$ and $y$ lying on the same fiber $\Gamma_0$, where $x\in \Delta_0$ and $y\notin \Delta_0.$  
Assume that there is a non-constant map $f:C\to W$ from a smooth projective curve and a surjective map 
$$ f^*T_W(-\log( \widetilde{\Gamma}+\widetilde{\Delta}_0))\to L,$$
where $L$ is a line bundle with $\deg L<0$. Then $f(C)$ is either $E_x$ or $E_y$.
}
\end{proposition}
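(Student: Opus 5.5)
The plan is to follow the proof of Proposition~\ref{prop:nefness-2points-d>0}: enlarge the boundary to a simple normal crossing divisor, and show that the enlarged log tangent bundle is an extension of two line bundles that are nonnegative on every curve not in the boundary. In $W$ the point $x=\Delta_0\cap\Gamma_0$ has been blown up, so $\widetilde{\Delta}_0$ and $\widetilde{\Gamma}$ are disjoint. Also $E_x$ meets both $\widetilde{\Delta}_0$ and $\widetilde{\Gamma}$, and $E_y$ meets only $\widetilde{\Gamma}$. Set $D=\widetilde{\Gamma}+\widetilde{\Delta}_0+E_x+E_y$, which is snc. The residue sequence
$$0\to T_W(-\log D)\to T_W(-\log(\widetilde{\Gamma}+\widetilde{\Delta}_0))\to \oo_{E_x}(E_x)\oplus\oo_{E_y}(E_y)\to 0$$
shows the following. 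If $f(C)\not\subset E_x\cup E_y$ and $f(C)$ is not a component of $D$, then a negative quotient $L$ would contain a sub-line bundle $L'$ that is a quotient of $f^*T_W(-\log D)$, with $\deg L\ge\deg L'$. So it suffices to bound the quotients of $f^*T_W(-\log D)$.

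Let $p\colon W\to\pp^1$ be the composite of $q$ with the ruling, and let $t=p(\Gamma_0)$. Since $x,y$ are not infinitely near, $p^*t=\widetilde{\Gamma}+E_x+E_y$ is reduced snc. The horizontal component $\widetilde{\Delta}_0$ plays no role here. The differential therefore gives a surjection $T_W(-\log D)\to p^*(T_{\pp^1}(-t))\simeq p^*\oo_{\pp^1}(1)=\oo_W(q^*\Gamma)$. I would check this locally at a node $uv=s$ of the fiber, where $u\partial_u$ maps to $s\partial_s$. Let $K$ be the kernel. Use $\widetilde{\Delta}_0=q^*\Delta_0-E_x$ and $\widetilde{\Gamma}=q^*\Gamma-E_x-E_y$, so that $D=q^*\Delta_0+q^*\Gamma-E_x$, together with $-K_W=2q^*\Delta_0+(d+2)q^*\Gamma-E_x-E_y$. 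Then $\det T_W(-\log D)=-K_W-D$, and
$$K\simeq \oo_W\big(q^*(\Delta_0+d\Gamma)-E_y\big).$$
The quotient $\oo_W(q^*\Gamma)$ is nef, so the main point is $K\cdot C\ge 0$ for $f(C)$ not a boundary component.

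This positivity is the step requiring care. My approach is to note that $|\Delta_0+d\Gamma|$ on $\ff_d$ has a $d$-dimensional subsystem of members through $y$. Because $y\notin\Delta_0$, it contains sections disjoint from $\Delta_0$ passing through $y$, and these can be chosen to avoid any given curve other than possibly $\Gamma_0$. The proper transforms of these members lie in $|K|$. Hence the base locus of $|K|$ consists of finitely many points together with possibly $\widetilde{\Gamma}$ (or $E_y$), all of which are excluded. Therefore $K\cdot C\ge 0$ for every remaining curve. As a numerical check, write $[f(C)]=a\,q^*\Delta_0+b\,q^*\Gamma-c_xE_x-c_yE_y$. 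Then $K\cdot C=b-c_y$. This is nonnegative because a member through $y$ that does not contain $C$ meets $C$ in at least $c_y$ points over $y$.

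Finally, the boundary curves $\widetilde{\Gamma}$ and $\widetilde{\Delta}_0$ are handled directly. Since they are disjoint, restricting to either $B\in\{\widetilde{\Gamma},\widetilde{\Delta}_0\}$ gives $0\to\oo_B\to T_W(-\log(\widetilde{\Gamma}+\widetilde{\Delta}_0))|_B\to T_B\to 0$ with $B\simeq\pp^1$. This sequence splits, giving $\oo_B\oplus\oo_B(2)$, which has no negative quotients. Hence a negative-degree quotient can only occur when $f(C)=E_x$ or $E_y$.
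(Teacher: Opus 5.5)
Your proposal is correct and follows the paper's proof essentially step for step: the same snc enlargement $D=\widetilde{\Gamma}+\widetilde{\Delta}_0+E_x+E_y$, the same residue sequence, and the same surjection onto $p^*\oo_{\pp^1}(1)$ with kernel $K\simeq \oo_W(q^*(\Delta_0+d\Gamma)-E_y)$; moreover, your argument via sections of $|\Delta_0+d\Gamma|$ through $y$ justifies $K\cdot C\ge 0$, which the paper only asserts. One correction: for $B=\widetilde{\Gamma}$ or $\widetilde{\Delta}_0$ the sequence $0\to\oo_B\to T_W(-\log B)|_B\to T_B\to 0$ does not split, since its extension class is $\pm c_1(\oo_B(B))\neq 0$ (here $B^2=-2$ or $-d-1$, so the restriction is in fact $\oo_B(1)^{\oplus 2}$); any extension of $\oo_B(2)$ by $\oo_B$ still has no negative quotients, and the separate treatment is unnecessary anyway, because your residue-sequence reduction also applies to these two curves (they are not contained in $E_x\cup E_y$) and $K$ is nef on all of $W$.
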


\begin{proof}
The divisor $D=\widetilde{\Gamma}+\widetilde{\Delta}_0+E_x+E_y$ is normal crossing, so there is a short exact sequence
$$ \xymatrix{
 0\ar[r] &    T_W(-\log D)  \ar[r] &  T_W(-\log (\widetilde{\Gamma}+\widetilde{\Delta}_0)) \ar[r] & \oo_{E_x}(E_x)\oplus  \oo_{E_y}(E_y) \ar[r] & 0.
} $$
As in the proof of Proposition \ref{prop:nefness-2points-d=0}, it is enough to prove that $T_W(-\log D)$ is a nef vector bundle. 
Let $p:W\to \pp^1$ be the composition of the blowup $q:W\to \ff_d$ with the fibration $\ff_d\to \pp^1$.  
By the same argument as in Proposition \ref{prop:nefness-2points-d>0}, there is a morphism of vector bundles $\varphi:T_W(-\log D)\to p^*T_{\pp^1}$ whose image is $p^*\oo_{\pp^1}(1)$.
We have the following short exact sequence
$$ \xymatrix{
0\ar[r] & K \ar[r] &  T_W(-\log D) \ar[r]  &  p^*\oo_{\pp^1}(1) \ar[r] & 0
} $$ 
where the sheaf $K$ is a line bundle isomorphic to $\oo_W(q^*(\Delta_0+d\Gamma)-E_y)$, which is nef. 
Indeed, we may directly verify that it intersects $\widetilde{\Gamma},\widetilde{\Delta}_0,E_x$ and $E_y$ non-negatively. 
If $f(C) \notin \{E_x, E_y, \widetilde{\Delta}_0, \widetilde{\Gamma}\}$ and it has class $a\widetilde{\Delta}_0+b\widetilde{\Gamma}+c_xE_x+c_yE_y$, then $K\cdot C = c_y\ge0$.
Therefore, $T_W(-\log D)$ is an extension of two nef line bundles.  
\end{proof}

Next, we state the corresponding vanishing and section-dominance results. 
We omit some of the proofs since they are the same as those in \S\ref{sec:hirzebruch-2pt}.

\begin{lemma}\label{lemma:h1-vanishing-hirzebruch2-xindelta_0}
{\em
Let $d>0$ and let $q:W\to \ff_d$ the blowup of $\ff_d$ at two distinct points $x$ and $y$ lying on the same fiber $\Gamma_0$, where $x\in \Delta_0$ and $y\notin \Delta_0.$   
Let $$L=\oo_W(a\cdot q^*\Delta_0 + b \cdot q^* \Gamma - c_x\cdot E_x-c_y\cdot E_y)$$ with $a,b,c_x,c_y\ge 0$. If $a\ge c_x+c_y-1$, $b\ge ad+c_x-1$ and $c_x\ge1$, then $H^1(L)=0.$
}
\end{lemma}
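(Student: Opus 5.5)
The plan is to reuse the restriction-to-$\widetilde{\Gamma}$ induction from the proof of Lemma~\ref{lemma:h1-vanishing-hirzebruch2}, with one new step to handle $c_y>c_x$, where the weaker hypothesis $b\ge ad+c_x-1$ must suffice. First I check that the numerics along $\widetilde{\Gamma}$ do not change when $x\in\Delta_0$. We still have $\widetilde{\Gamma}=q^*\Gamma-E_x-E_y\simeq\pp^1$, $q^*\Delta_0\cdot\widetilde{\Gamma}=1$, $q^*\Gamma\cdot\widetilde{\Gamma}=0$ and $E_x\cdot\widetilde{\Gamma}=E_y\cdot\widetilde{\Gamma}=1$. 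These only involve pullback classes, so the position of $x$ relative to $\Delta_0$ plays no role. Set $m=\min\{c_x,c_y\}$ and, for $0\le k\le m$,
$$L_k=\oo_W\big(a\,q^*\Delta_0+(b-k)q^*\Gamma-(c_x-k)E_x-(c_y-k)E_y\big).$$
Then $0\to L_{k+1}\to L_k\to \oo_{\pp^1}(a-c_x-c_y+2k)\to 0$, and $a\ge c_x+c_y-1$ gives surjections $H^1(L_{k+1})\twoheadrightarrow H^1(L_k)$. So it suffices to show $H^1(L_m)=0$.

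\textbf{Case $c_x\ge c_y$.} Here I run the second induction of Lemma~\ref{lemma:h1-vanishing-hirzebruch2} verbatim, with the bundles $M_k$, restricting to $\widetilde{\Gamma}$ and then to $E_y$, where the restriction is $\oo_{\pp^1}(-1)$. This reduces to $H^1(\ff_d,\oo(a\Delta_0+(b-c_x)\Gamma))=0$, which holds by the quoted vanishing on $\ff_d$ because $b-c_x\ge ad-1$. This is exactly the hypothesis.

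\textbf{Case $c_y>c_x$.} The roles of $x$ and $y$ are no longer symmetric, since the hypothesis only controls $b-c_x$. After the first induction we are left with
$$L_{c_x}=a\,q^*\Delta_0+b'q^*\Gamma-e\,E_y,\qquad b'=b-c_x\ge ad-1,\quad e=c_y-c_x\le a+1-2c_x\le a-1,$$
where the last inequality uses $c_x\ge1$. This bundle is pulled back from $\operatorname{Bl}_y\ff_d$. Let $q_y:\operatorname{Bl}_y\ff_d\to\ff_d$ be the blowup. We have $q_{y*}\oo(-eE_y)=\mathcal{I}_y^e$ and $R^1q_{y*}\oo(-eE_y)=0$ for $e\ge0$. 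Hence it is enough to prove
$$H^1\big(\ff_d,\oo(a\Delta_0+b'\Gamma)\otimes\mathcal{I}_y^{e}\big)=0.$$
Since $H^1(\ff_d,\oo(a\Delta_0+b'\Gamma))=0$, this is equivalent to surjectivity of the restriction $H^0(\oo(a\Delta_0+b'\Gamma))\to \oo/\mathfrak m_y^{e}$ onto $(e-1)$-jets at $y$.

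\textbf{Jet surjectivity.} I expect this to be the main obstacle. I would build the jets from monomials. Since $y\notin\Delta_0$, there is a section $u$ of $\Delta_0+d\Gamma$ vanishing at $y$ that is a local coordinate in the fiber direction. There is also a section $v$ of $\Gamma$ vanishing at $y$ that gives the base coordinate. For $i+j\le e-1$, multiply $u^iv^j$ by a section of $(a-i)\Delta_0+(b'-id-j)\Gamma$ that is nonzero at $y$. This is possible once $a-i\ge0$ and $b'-id-j\ge (a-i)d$, using global generation on $\ff_d$ for $\beta\ge\alpha d$. The condition reduces to $b'\ge ad+j$. Since $j\le e-1\le a-2$, this does not follow from $b'\ge ad-1$ alone. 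So here I would instead use the $\Gamma$-coordinate only through fibers other than $\Gamma_0$, or trade fiber degree for base degree: a section vanishing at $y$ to order $j$ along the base can be taken as $v^j$ times a general section of $a\Delta_0+(b'-j)\Gamma$, which needs $b'-j\ge ad-1$ only for $H^1$-vanishing, not for global generation. I would run the same jet argument inductively in $j$, using the sequences $0\to\mathcal{O}(-\Gamma_0)\to\mathcal O\to\mathcal O_{\Gamma_0}\to0$.

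If the numerics still fail for large $e$, my fallback is to keep $E_x$ in play. In that fallback I would peel along the strict transform $\widetilde{\Delta}_0=q^*\Delta_0-E_x$, which is disjoint from $E_y$, using the hypothesis $c_x\ge1$, so that surplus $q^*\Gamma$-degree compensates the remaining $E_y$-multiplicity. Checking that the resulting restrictions to $\widetilde{\Delta}_0$ have degree $\ge-1$ under $b\ge ad+c_x-1$ is the computation I expect to decide the argument.
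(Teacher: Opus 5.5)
Your reduction along $\widetilde{\Gamma}$ and the case $c_x\ge c_y$ are correct. That case is just the computation of Lemma~\ref{lemma:h1-vanishing-hirzebruch2}, which never uses where $x$ sits relative to $\Delta_0$. The translation of the case $c_y>c_x$ into surjectivity of $H^0(\ff_d,\oo(a\Delta_0+b'\Gamma))\to\oo/\mathfrak m_y^{e}$ is also correct. But $c_y>c_x$ is the only case where the weaker bound $b\ge ad+c_x-1$ and the hypothesis $c_x\ge1$ actually matter, and there you do not prove the jet surjectivity. Your own check concludes that the monomial construction fails. The fallbacks (an unspecified induction in $j$, or peeling along $\widetilde{\Delta}_0$ ``if the numerics still fail'') are not carried out. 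So as written, the case $c_y>c_x$ is left open.

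The obstruction you found is spurious. The auxiliary factor $s_{ij}$ only has to be nonzero at the single point $y$; it does not have to come from a globally generated bundle. Since $y\notin\Delta_0$, for any $\alpha,\beta\ge0$ take $\sigma_{\Delta_0}^{\alpha}\cdot t$, where $\sigma_{\Delta_0}$ cuts out $\Delta_0$ and $t$ cuts out $\beta$ fibers other than $\Gamma_0$. This section of $\oo(\alpha\Delta_0+\beta\Gamma)$ does not vanish at $y$. So you only need $a-i\ge0$ and $b'-id-j\ge0$. For $i+j\le e-1$ you have $id+j\le(e-1)d\le(a-2)d$; this is where $c_x\ge1$ enters, through $e\le a-1$. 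Hence $b'-id-j\ge 2d-1>0$.

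Now take $u$ to cut out a smooth member of $|\Delta_0+d\Gamma|$ through $y$ (a section of the ruling, hence transverse to $\Gamma_0$ at $y$) and $v$ to cut out $\Gamma_0$. Then $u,v$ are local coordinates at $y$. The products $u^iv^js_{ij}$ have leading terms $s_{ij}(y)u^iv^j$, so they span all $(e-1)$-jets, which completes your argument.

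For comparison, the paper needs no case split. It first peels along $\widetilde{\Delta}_0=q^*\Delta_0-E_x$ (your fallback) to remove all of $E_x$, with restriction degrees $b-(a-k)d-c_x+k\ge-1$. It then peels along the strict transform $C_y\in|q^*(\Delta_0+d\Gamma)-E_y|$, which is exactly the divisor of your $u$, with restriction degrees $b-k(d-1)-c_y\ge-1$, landing directly on $\ff_d$.
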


\begin{proof}
First, we show by an induction argument that it suffices to show that $H^1$ vanishes for the line bundle of $(a-c_x)\cdot q^*\Delta_0 + b \cdot q^* \Gamma - c_y\cdot E_y$.
For $0 \le k \le c_x,$ we denote by $L_k$ the line bundle of $(a-k)\cdot q^*\Delta_0 + b \cdot q^* \Gamma - (c_x-k)\cdot E_x-c_y\cdot E_y$.
For each $0 \le k \le c_x-1$, restricting $L_k$ to $\widetilde{\Delta}_0$ gives the short exact sequence 
\begin{equation*}
    0 \to L_{k+1} \to L_k \to L_k\vert_{\widetilde{\Delta}_0}\to 0.
\end{equation*}
Since $L_k\vert_{\widetilde{\Delta}_0}\simeq \oo_{\pp^1}(b-(a-k)d-c_x+k),$ $H^1(\widetilde{\Delta}_0, L_k\vert_{\widetilde{\Delta}_0})=0$ when $b-(a-k)d-c_x+k\ge -1$. 
This holds under our assumption that $b\ge ad + c_x-1.$
Since $H^1(W, L_{k+1})\twoheadrightarrow H^1(W,L_k)$, by induction it is enough to show that $H^1(W, L_{c_x})=0.$

Now, a similar induction argument shows that it suffices to show that $H^1$ vanishes for the line bundle of $(a-c_x-c_y)\cdot q^*\Delta_0 + (b-c_y\cdot d) \cdot q^* \Gamma$. 
For $0\le k \le c_y$, we denote by $M_k$ the line bundle of $(a-c_x-k)\cdot q^*\Delta_0 + (b-kd) \cdot q^* \Gamma - (c_y-k)\cdot E_y$.
Let $C_y$ denote a smooth rational curve in $|q^*(\Delta_0+d\Gamma)-E_y|$, in other words the strict transform of a smooth rational curve of class $\Delta_0+d\Gamma$ on $\ff_d$ that passes through $y.$
For each $0 \le k \le c_y-1$, restricting $M_k$ to $C_y$ gives 
\begin{equation*}
    0 \to M_{k+1} \to M_k \to M_k\vert_{C_y}\to 0.
\end{equation*}
Since $M_k\vert_{C_y}\simeq \oo_{\pp^1}(b-k(d-1)-c_y)$, it follows that $H^1(C_y,M_k\vert_{C_y})=0$ when $b-k(d-1)-c_y\ge-1$, which holds under the inequalities $b\ge ad + c_x-1$ and $c_x\ge 1.$ 
Hence we have a surjection $H^1(W, M_{k+1})\twoheadrightarrow H^1(W, M_k)$, and our claim holds by induction.

Finally, we have $$H^1(W, \oo_W((a-c_x-c_y)\cdot q^*\Delta_0 + (b-c_y\cdot d) \cdot q^* \Gamma))\simeq H^1(\ff_d, \oo_{\ff_d}((a-c_x-c_y)\cdot \Delta_0 + (b-c_y\cdot d)\cdot \Gamma)).$$ 
This cohomology group vanishes when $a\ge c_x+c_y-1$ and $b\ge (a-c_x)d-1$, where the latter follows from $b\ge ad+c_x-1.$
\end{proof}

\begin{lemma}
{\em
Let $d>0$ and let $q:W\to \ff_d$ the blowup of $\ff_d$ at two distinct points $x$ and $y$ lying on the same fiber $\Gamma_0$, where $x\in \Delta_0$ and $y\notin \Delta_0.$   
Let $$L=\oo_W(a\cdot q^*\Delta_0 + b \cdot q^* \Gamma - c_x\cdot E_x-c_y\cdot E_y)$$ be a line bundle with $c_x\ge1$, $c_y\ge 0$, $a\ge c_x+c_y$ and $b\ge ad+c_x$.
At every point $p\in W\setminus q^{-1}(\Gamma_0)$, there is a surjective map
\begin{equation*}
\begin{aligned}
&\left( H^0(W, \mathcal{O}_{W}(q^*(\Delta_0+d\Gamma)) \otimes \mathcal{I}_{p/W}) \otimes H^0(W, \mathcal{O}_{W}((a-1)q^*\Delta_0 + (b-d)q^*\Gamma-c_xE_x-c_yE_y) \right) \\
&\oplus \left( H^0(W, \mathcal{O}_{W}(q^*\Gamma) \otimes \mathcal{I}_{p /W}) \otimes H^0(W, \mathcal{O}_{W}(aq^*\Delta_0 + (b-1)q^*\Gamma-c_xE_x-c_yE_y) \right) \\
&\to H^0(W,L \otimes \mathcal{I}_{p /W}).
\end{aligned}
\end{equation*}
In other words, outside of $ q^{-1}(\Gamma_0)$, there is a surjective map
\begin{equation*}
M_{q^*(\Delta_0+d\Gamma)}^{\oplus s}\oplus M_{q^*\Gamma}^{\oplus t}\to M_L.
\end{equation*}
}
\end{lemma}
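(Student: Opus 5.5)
I will follow the proof of Lemma~\ref{lemma:secdom-hirzebruch2}, replacing the vanishing input by Lemma~\ref{lemma:h1-vanishing-hirzebruch2-xindelta_0}. Fix $p\in W\setminus q^{-1}(\Gamma_0)$ and let $\widetilde{\Gamma}_p$ be the fiber of $p\colon W\to\pp^1$ through $p$. Since $p\notin q^{-1}(\Gamma_0)$, this fiber is isomorphic to the corresponding fiber of $\ff_d$, hence a smooth rational curve. It is disjoint from $E_x,E_y$ and meets $q^*\Delta_0$ once.

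\textbf{Step 1: a section through $p$.} The line bundle $\oo_W(q^*(\Delta_0+d\Gamma))$ is the pullback of a basepoint-free bundle that restricts to $\oo_{\pp^1}(1)$ on fibers. I choose a section $s_p$ of it whose restriction to $\widetilde{\Gamma}_p$ vanishes exactly at $p$. Such a section exists because $H^0(\ff_d,\Delta_0+d\Gamma)\to H^0(\Gamma,\oo(1))$ is surjective.

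\textbf{Step 2: restriction to the fiber.} Set $L'=\oo_W((a-1)q^*\Delta_0+(b-d)q^*\Gamma-c_xE_x-c_yE_y)$. Restricting to $\widetilde{\Gamma}_p$ gives the sequence $0\to L'(-q^*\Gamma)\to L'\to\oo_{\pp^1}(a-1)\to 0$. The kernel is $\oo_W((a-1)q^*\Delta_0+(b-d-1)q^*\Gamma-c_xE_x-c_yE_y)$. I check the hypotheses of Lemma~\ref{lemma:h1-vanishing-hirzebruch2-xindelta_0} for it:
\begin{itemize}
\item $a-1\ge c_x+c_y-1$, which follows from $a\ge c_x+c_y$;
\item $b-d-1\ge (a-1)d+c_x-1$, which is exactly $b\ge ad+c_x$;
\item $c_x\ge1$, which is assumed.
\end{itemize}
The nonnegativity conditions are also met; in particular $b-d-1\ge0$ follows from $b\ge ad+c_x$ together with $a\ge c_x\ge1$. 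Hence $H^1$ of the kernel vanishes, and $H^0(L')\to H^0(\oo_{\pp^1}(a-1))$ is surjective.

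\textbf{Step 3: combining.} Choose $s_1,\dots,s_a\in H^0(L')$ whose restrictions span $H^0(\oo_{\pp^1}(a-1))$. Any $\sigma\in H^0(L\otimes\mathcal I_p)$ restricts on $\widetilde{\Gamma}_p\cong\pp^1$ to a section of $\oo(a)$ vanishing at $p$. Such a restriction is $s_p|\cdot$(a section of $\oo(a-1)$), so some combination $\sum \lambda_j s_ps_j$ agrees with $\sigma$ on $\widetilde{\Gamma}_p$. The difference vanishes on $\widetilde{\Gamma}_p$, so it lies in $H^0(L(-q^*\Gamma))\cdot t_p$, where $t_p\in H^0(q^*\Gamma\otimes\mathcal I_p)$ is the section cutting out $\widetilde{\Gamma}_p$. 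Here $L(-q^*\Gamma)$ is precisely the second tensor factor $aq^*\Delta_0+(b-1)q^*\Gamma-c_xE_x-c_yE_y$. This gives surjectivity, and the kernel-bundle surjection follows as in the Remark after Definition~\ref{def:sec-dom}.

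\textbf{Main obstacle.} The only delicate point is Step 2: confirming that the shifted bundle satisfies all hypotheses of Lemma~\ref{lemma:h1-vanishing-hirzebruch2-xindelta_0}, in particular that the extra condition $c_x\ge1$ is what makes the $\widetilde{\Delta}_0$-restriction step in that lemma work. This is why $c_x\ge1$ appears in the statement. I also need the mild fact that the fiber through $p$ avoids the exceptional curves, so that the restriction degrees are exactly $a-1$ and $a$; this holds because $p\notin q^{-1}(\Gamma_0)$.
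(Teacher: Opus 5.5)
Your proposal is correct and is exactly the argument the paper intends: the paper omits this proof as identical to that of Lemma~\ref{lemma:secdom-hirzebruch2}, with the vanishing input replaced by Lemma~\ref{lemma:h1-vanishing-hirzebruch2-xindelta_0}. Your check that $(a-1)q^*\Delta_0+(b-d-1)q^*\Gamma-c_xE_x-c_yE_y$ satisfies that lemma's hypotheses is right, though your closing remark misplaces where $c_x\ge 1$ is used: in that vanishing lemma it enters in the restriction to $C_y\in|q^*(\Delta_0+d\Gamma)-E_y|$ (to guarantee $b-k(d-1)-c_y\ge -1$), whereas the $\widetilde{\Delta}_0$-restriction step only needs $b\ge ad+c_x-1$. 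This does not affect your argument.
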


\begin{proposition}
{\em
Let $d>0$ and let $q:W\to \ff_d$ the blowup of $\ff_d$ at two distinct points $x$ and $y$ lying on the same fiber $\Gamma_0$, where $x\in \Delta_0$ and $y\notin \Delta_0.$ 
Let $D'\subset W$ be a very general curve in the linear system $$|a\cdot q^*\Delta_0 + b\cdot q^*\Gamma - c_x\cdot E_x - c_y\cdot E_y|$$ with $c_x\ge1$, $c_y\ge 0$, $a\ge c_x+c_y$ and $b\ge ad+c_x$, and let $D=\widetilde{\Gamma}+\widetilde{\Delta}_0+D'.$
Let $f:C\to W$ be a map from a smooth projective curve $C$ of genus $g$ that is birational onto its image, satisfies $f(C)\not\subset D\cup E_x\cup E_y$, 
and meets $D$ in exactly $i$ distinct points. 
Then
\begin{enumerate}
    \item either $2g-2+i\ge ((a-2) q^*\Delta_0 + (b-2d-1) q^*\Gamma-(c_x+1) E_x-c_yE_y)\cdot C$,
    \item or $2g-2+i\ge ((a-1) q^*\Delta_0 + (b-d-2) q^*\Gamma-(c_x+1) E_x-c_yE_y)\cdot C.$
\end{enumerate}
In Case (1), equality can occur only if $f(C)$ has the following classes:
\begin{itemize}
    \item When $d>1$, $q^*\Gamma$;
    \item When $d=1$, $q^*\Gamma$, $q^*(\Delta_0+\Gamma)$, $q^*(\Delta_0+\Gamma)-E_x$, or $q^*(\Delta_0+\Gamma)-E_y.$
\end{itemize}
}
\end{proposition}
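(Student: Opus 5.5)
We run the same variational argument as in Proposition~\ref{prop:hirzebruch2}, now with fixed part $F=\widetilde{\Gamma}+\widetilde{\Delta}_0$ and moving part $D'$. By \eqref{eq:deg-log-normal}, at a general $b\in B$,
\[\deg N_{f_b/W}(\log D_{f,b}) = 2g-2+i-(K_W+D)\cdot C.\]
Here $K_W=-2q^*\Delta_0-(d+2)q^*\Gamma+E_x+E_y$. Since $x\in\Delta_0$ and $x\in\Gamma_0$, we have $\widetilde{\Gamma}=q^*\Gamma-E_x-E_y$ and $\widetilde{\Delta}_0=q^*\Delta_0-E_x$. Hence
\[K_W+D=(a-1)q^*\Delta_0+(b-d-1)q^*\Gamma-(c_x+1)E_x-c_yE_y,\]
and we may assume $\deg N_{f_b/W}(\log D_{f,b})<0$.

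By Proposition~\ref{prop:nefness-2points-d>0-xinD}, $T_W(-\log F)$ is pseudo-nef outside $E_x\cup E_y$. By the preceding lemma, $q^*(\Delta_0+d\Gamma)$ and $q^*\Gamma$ dominate the sections of $D'$ outside $q^{-1}(\Gamma_0)$. The hypotheses $c_x\ge1$, $a\ge c_x+c_y$, $b\ge ad+c_x$ are exactly those of that lemma. Since $f(C)\not\subset D\cup E_x\cup E_y$, the image is not contained in $\Sigma_1\cup\Sigma_2\subset\widetilde{\Gamma}\cup E_x\cup E_y$. Proposition~\ref{prop:log-normal-degree-bound} together with Remark~\ref{rem:log-normal-degree-bound} then yields a generically surjective map $\mu\colon f_b^*M_{L_i}\to N_{f_b/W}(\log D_{f,b})$, where $L_1=q^*(\Delta_0+d\Gamma)$ or $L_2=q^*\Gamma$.

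\textbf{Case 1 ($L_1$).} The image $Q$ of $\mu$ has $\deg Q\ge -L_1\cdot C$. Subtracting $q^*(\Delta_0+d\Gamma)$ from $K_W+D$ gives inequality (1). \textbf{Case 2 ($L_2$).} Here $\deg Q\ge -q^*\Gamma\cdot C$, and subtracting $q^*\Gamma$ from $K_W+D$ gives inequality (2).

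For equality in Case 1, the argument follows Proposition~\ref{prop:hirzebruch}. Equality forces the kernel $K$ of $\mu$ to have degree $0$ and rank $h^0(L_1)-2=d$. By Lemma~\ref{lemma:trivial-subbundle}, $K\simeq\oo_{C_b}^{\oplus d}$, so $h^0(f_b^*M_{L_1})\ge d$. Write the class of the image as $\alpha q^*\Delta_0+\beta q^*\Gamma-\gamma_xE_x-\gamma_yE_y$. Proposition~\ref{prop:sections-of-kernel-bundles} gives
\[h^0\bigl(W,\oo_W((1-\alpha)q^*\Delta_0+(d-\beta)q^*\Gamma+\gamma_xE_x+\gamma_yE_y)\bigr)\ge d.\]
The exceptional terms contribute nothing to sections, so this is $h^0(\ff_d,(1-\alpha)\Delta_0+(d-\beta)\Gamma)$.

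Because the curve is not $\widetilde{\Delta}_0$, intersecting with $\widetilde{\Delta}_0$ gives $\beta\ge\alpha d+\gamma_x\ge\alpha d$. So $\alpha\in\{0,1\}$.
\begin{itemize}
\item If $\alpha=0$, we need $\beta\le1$, which gives the class $q^*\Gamma$ (possibly minus exceptional curves). The only integral such curves not in the boundary are total transforms of other fibers, so the class is $q^*\Gamma$.
\item If $\alpha=1$, we need $h^0(\ff_d,(d-\beta)\Gamma)\ge d$, i.e.\ $\beta\le1$. Together with $\beta\ge d$, this forces $d=\beta=1$. The classes are then $q^*(\Delta_0+\Gamma)-\gamma_xE_x-\gamma_yE_y$ with $\gamma_x,\gamma_y\in\{0,1\}$.
\end{itemize}

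The main obstacle is this final enumeration for $\alpha=1$, $d=1$. One must rule out $\gamma_x=\gamma_y=1$, since a section of class $\Delta_0+\Gamma$ through both $x$ and $y$ on the same fiber would contain that fiber. One must also check, using $x\in\Delta_0$, that the remaining three classes are realized by integral curves. This gives exactly the list in the statement.
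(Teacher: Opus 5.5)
Your proposal is correct and follows the same variational argument the paper intends; the paper omits this proof and refers back to Proposition~\ref{prop:hirzebruch2}. The computation of $K_W+D$, the pseudo-nefness of $T_W(-\log(\widetilde{\Gamma}+\widetilde{\Delta}_0))$ off $E_x\cup E_y$, the dominating pair $q^*(\Delta_0+d\Gamma),\,q^*\Gamma$, and the equality analysis via Lemma~\ref{lemma:trivial-subbundle} and Proposition~\ref{prop:sections-of-kernel-bundles} all go through.

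One correction to your last paragraph. The statement only asserts a necessary condition, so no realizability check is needed, and that check would in fact fail. Your own inequality $\beta\ge\alpha d+\gamma_x$ forces $\gamma_x=0$ when $\alpha=\beta=d=1$; equivalently, $(q^*(\Delta_0+\Gamma)-E_x)\cdot\widetilde{\Delta}_0=-1$. So $q^*(\Delta_0+\Gamma)-E_x$ is never the class of an admissible integral curve, and your argument actually gives a slightly shorter list than the one stated.
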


\begin{remark}
    The above proposition implies that when $D'\subset W$ is a very general curve in $|a\cdot q^*\Delta_0 + b\cdot q^*\Gamma - c_x\cdot E_x - c_y\cdot E_y|$ with $c_x\ge 1$, $c_y\ge 2$, $a\ge c_x+c_y+4$ and $b\ge ad+c_x+4$, then $(W,\widetilde{\Gamma}+\widetilde{\Delta}_0+D')$ is log algebraically hyperbolic.
\end{remark}

We now apply the previous results to the linear systems for $D'$ that produce Horikawa surfaces.

\begin{proposition}
{\em
Let $d>0$ and let $q:W\to \ff_d$ the blowup of $\ff_d$ at two distinct points $x$ and $y$ lying on the same fiber $\Gamma_0$, where $x\in \Delta_0$ and $y\notin \Delta_0.$ 
Let $D'\subset W$ be a very general curve in the linear system $$|5 q^*\Delta_0 + (n+4+3d) q^*\Gamma - 2 E_x - 3 E_y|$$ satisfying 
\begin{itemize}
    \item either $d\ge4$ and $n\ge 2d-2$, 
    \item or $d\le 3$ and $n\ge d+2$.
\end{itemize}
Let $$D=\widetilde{\Gamma}+\widetilde{\Delta}_0+D'.$$
For any map $f:C\to W$ from a smooth projective curve $C$ of genus $g$ that is birational onto its image, satisfies $f(C)\not\subset D\cup E_x\cup E_y$, and meets $D$ in exactly $i$ distinct points, we have:
\begin{equation*}
2g-2+i\ge \frac{1}{2}D\cdot C.    
\end{equation*}
Moreover, equality is only possible when $f(C)$ is of class $q^*(\Gamma).$
}    
\end{proposition}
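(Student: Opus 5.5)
The plan is to specialize the preceding proposition (the one for $D=\widetilde{\Gamma}+\widetilde{\Delta}_0+D'$ with $x\in\Delta_0$, $y\notin\Delta_0$) to $a=5$, $b=n+4+3d$, $c_x=2$, $c_y=3$. Then, in each of its two cases, I will compare the resulting lower bound for $2g-2+i$ with $\frac12 D\cdot C$.

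\textbf{Step 1: hypotheses.} First I check that the hypotheses of that proposition hold.
\begin{itemize}
\item $c_x=2\ge1$, $c_y=3\ge 0$ and $a=5\ge c_x+c_y$.
\item The condition $b\ge ad+c_x$ reads $n+4+3d\ge 5d+2$, i.e. $n\ge 2d-2$. This is assumed directly when $d\ge4$. When $d\le 3$ it follows from $n\ge d+2\ge 2d-2$.
\end{itemize}
In both regimes we also have $n\ge d+2$, since $2d-2\ge d+2$ for $d\ge 4$. This is what I will use below.

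\textbf{Step 2: the class of $\frac12 D$.} Using $\widetilde{\Gamma}=q^*\Gamma-E_x-E_y$ and $\widetilde{\Delta}_0=q^*\Delta_0-E_x$, we get $D\sim 6q^*\Delta_0+(n+5+3d)q^*\Gamma-4E_x-4E_y$. Since $n-d$ is odd, $n+5+3d$ is even, so $\frac12 D\sim 3q^*\Delta_0+\frac{n+5+3d}{2}q^*\Gamma-2E_x-2E_y$ is an honest integral class.

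Write the class of $f(C)$ as $\alpha q^*\Delta_0+\beta q^*\Gamma-\gamma_xE_x-\gamma_yE_y$, so that $q^*\Gamma\cdot C=\alpha$ and $E_\bullet\cdot C=\gamma_\bullet\ge0$. Since $f(C)$ is not contained in $D\cup E_x\cup E_y$, we have $\widetilde{\Gamma}\cdot C=\alpha-\gamma_x-\gamma_y\ge 0$ and $\widetilde{\Delta}_0\cdot C=\beta-\alpha d-\gamma_x\ge0$.

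\textbf{Step 3: Case 1.} Subtracting $\frac12 D$ from the bound in Case (1) gives
\[
2g-2+i-\tfrac12 D\cdot C\ \ge\ \Bigl(\tfrac{n+1-d}{2}q^*\Gamma-E_x-E_y\Bigr)\cdot C=\tfrac{n+1-d}{2}\alpha-\gamma_x-\gamma_y .
\]
By $\alpha\ge\gamma_x+\gamma_y$ and $\frac{n+1-d}{2}\ge\frac32>1$, the right side is nonnegative. It vanishes only when $\alpha=0$, which then forces $\gamma_x=\gamma_y=0$. So $f(C)\sim\beta q^*\Gamma$ is integral, hence a fiber of class $q^*\Gamma$. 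This is consistent with the equality classification of the preceding proposition.

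\textbf{Step 4: Case 2.} Here the difference is bounded below by
\[
\Bigl(q^*\Delta_0+\tfrac{n-1+d}{2}q^*\Gamma-E_x-E_y\Bigr)\cdot C=(\beta-\alpha d)+\tfrac{n-1+d}{2}\alpha-\gamma_x-\gamma_y .
\]
Using $\beta-\alpha d\ge\gamma_x$ and $\gamma_y\le\alpha$, this is at least $\bigl(\frac{n-1+d}{2}-1\bigr)\alpha>0$ whenever $\alpha>0$. If $\alpha=0$, then $\gamma_x=\gamma_y=0$ and the quantity equals $\beta>0$. So Case 2 is always strict.

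Combining Steps 3 and 4 gives the inequality $2g-2+i\ge\frac12 D\cdot C$, with equality only for class $q^*\Gamma$. The main care point is the bookkeeping in Step 2 and the parity of $n+5+3d$. Otherwise the argument is routine intersection arithmetic, using only the two nonnegativity constraints coming from $\widetilde{\Gamma}$ and $\widetilde{\Delta}_0$.
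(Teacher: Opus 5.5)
Your proof is correct and follows the paper's argument: specialize the preceding proposition to $(a,b,c_x,c_y)=(5,n+4+3d,2,3)$, subtract $\tfrac12 D$, and check that the two resulting $\mathbb{Q}$-classes are nonnegative (respectively positive) on curves not contained in $\widetilde{\Gamma}\cup\widetilde{\Delta}_0\cup E_x\cup E_y$. The paper works in the basis $\widetilde{\Delta}_0,\widetilde{\Gamma},E_x,E_y$ and simply asserts the signs, so your explicit use of $\widetilde{\Gamma}\cdot C\ge 0$ and $\widetilde{\Delta}_0\cdot C\ge 0$ supplies that justification; the parity remark in Step 2 is unnecessary, since $n-d$ odd is not a hypothesis here and the argument never needs $\tfrac12 D$ to be integral.
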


\begin{proof}
Let $C$ have class $\alpha \widetilde{\Delta}_0+ \beta \widetilde{\Gamma}+\gamma_x E_x+ \gamma_y E_y$.
Then, we have two cases:

\textbf{Case 1:}
$$2g-2+i-\frac{1}{2}D\cdot C \ge \left(\frac{n-d+1}{2} q^*\Gamma- E_x-E_y\right)\cdot C = \frac{n-d-1}{2}\alpha - 2\beta + \gamma_x + \gamma_y.$$
The right-hand side is always non-negative and it is equal to zero only when $f(C)$ has class $q^*\Gamma$.

\textbf{Case 2:} 
$$2g-2+i-\frac{1}{2}D\cdot C \ge \left(q^*\Delta_0+\frac{n+d-1}{2} q^*\Gamma- E_x-E_y\right)\cdot C = \frac{n-d-3}{2}\alpha-\beta + \gamma_x+ \gamma_y.$$
The right-hand side is always positive.

\end{proof}

\begin{corollary}\label{cor:hirzebruch-2pt-3comp-horikawa-cover}
{\em
Let $d>0$ and let $\rho\colon S' \to W$ be a double cover branched over $D=\widetilde{\Gamma}+\widetilde{\Delta}_0+D'$ where $D'$ is a very general curve in the linear system $$|5\cdot q^*\Delta_0 + (n+4+3d)\cdot q^*\Gamma- 2\cdot E_x - 3\cdot E_y|$$ satisfying 
\begin{itemize}
    \item either $d\ge4$ and $n\ge 2d-2$, 
    \item or $d\le 3$ and $n\ge d+2$.
\end{itemize}
Then, $\rho^{-1}(\widetilde{\Gamma})$ and $\rho^{-1}(\widetilde{\Delta}_0)$ are the only rational curves on $S'$.
Furthermore, if $C'\subset S'$ is an integral curve with $g(C')=1$ not mapping to $E_x\cup E_y$, then $C'=\rho^{-1}(C)$ for an integral curve $C$ of class $q^*\Gamma$ that is either tangent to $D'$ or intersects $D$ at a node.
In particular, $S'$ is pseudo-Lang algebraically hyperbolic.
}
\end{corollary}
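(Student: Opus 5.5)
The plan is to combine the immediately preceding proposition (the inequality $2g-2+i\ge \frac12 D\cdot C$ for $D=\widetilde{\Gamma}+\widetilde{\Delta}_0+D'$, with equality only for curves of class $q^*\Gamma$) with Lemma~\ref{lemma:log-hyp-to-cover-hyp} and Remark~\ref{rem:elliptic-curves-double cover}, exactly as in the proofs of Corollaries~\ref{cor:hirzebruch-horikawa-cover} and \ref{cor:hirzebruch-2pt-2comp-horikawa-cover}.

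\textbf{Step 1 (rational curves).} Let $C'\subset S'$ be an integral curve whose image $\rho(C')$ is not contained in $D\cup E_x\cup E_y$. The hypothesis of Lemma~\ref{lemma:log-hyp-to-cover-hyp} holds for curves avoiding $E_x\cup E_y$; its proof then gives $2g(C')-2\ge 0$. So rational curves lie over $D\cup E_x\cup E_y$.

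We then treat these components one at a time.
\begin{itemize}
\item $\widetilde{\Gamma}$ and $\widetilde{\Delta}_0$ are smooth rational components of the branch locus, so their preimages are rational.
\item For $E_x$, compute $D\cdot E_x$. Here $\widetilde{\Gamma}\cdot E_x=1$, $\widetilde{\Delta}_0\cdot E_x=1$ and $D'\cdot E_x=2$. For general $D'$ these intersections are transverse and distinct, so $E_x$ meets $D$ in $4$ points and $\rho^{-1}(E_x)$ is elliptic.
\item For $E_y$ we get $\widetilde{\Gamma}\cdot E_y=1$ and $D'\cdot E_y=3$, again $4$ points, so $\rho^{-1}(E_y)$ is elliptic.
\item $D'$ has large genus, so its preimage is neither rational nor elliptic.
\end{itemize}
The exceptional curves over the nodes $\widetilde{\Delta}_0\cap D'$ appear only on the resolution. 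The statement concerns $S'$ itself, so we interpret it as being about curves on $S'$; on $S'$, which has $A_1$ points, those nodes give no curves.

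\textbf{Step 2 (elliptic curves).} If $C'$ is elliptic and not over $E_x\cup E_y$, then it is not over $D$ (the components of $D$ lift to rational or high-genus curves). So equality must hold in the preceding proposition, which forces $f(C)$ to have class $q^*\Gamma$. In that case $C$ is rational and $\frac12 D\cdot C=\frac12(0+1+5)=3$. This requires $i=4$ points with the parity pattern of Remark~\ref{rem:elliptic-curves-double cover}(2).

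The curve $C$ always meets $\widetilde{\Delta}_0$ once transversally and meets $D'$ with total multiplicity $5$. Four odd-multiplicity points with the remaining points of multiplicity two therefore means one of two things:
\begin{itemize}
\item $C$ is simply tangent to $D'$ at one point, with all other intersections transverse;
\item $C$ passes through a node of $\widetilde{\Delta}_0\cap D'$, which contributes an even local multiplicity $t=2$.
\end{itemize}
Conversely, such fibers lift to elliptic curves by Riemann--Hurwitz. Finiteness follows because finitely many fibers are tangent to $D'$ (ramification of $D'\to\mathbb{P}^1$) and $D$ has finitely many nodes.

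\textbf{Main obstacle.} The delicate point is the bookkeeping in Remark~\ref{rem:elliptic-curves-double cover} when $f(C)$ passes through a node of $D$, where $t_i$ counts intersection with two branches. We must check that $D$ being only normal crossing (rather than smooth) does not break Lemma~\ref{lemma:log-hyp-to-cover-hyp}'s inequality $\sum(1-\frac1{m_i})\ge |f^{-1}(D)|-\frac12 D\cdot C$. At a node the local cover of $C$ may be unramified even though $t_i=2$ is split between branches, so we verify the inequality locally there.
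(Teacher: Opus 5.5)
Your proposal is correct and follows the same route the paper takes. It combines the preceding proposition with the proof of Lemma~\ref{lemma:log-hyp-to-cover-hyp} and Remark~\ref{rem:elliptic-curves-double cover}, as in Corollary~\ref{cor:hirzebruch-horikawa-cover}, and your worry about nodes is harmless because that proof only uses $m_it_i\ge 2$, which is automatic wherever $t_i\ge 2$. One slip: equality $-2+i=\frac12 D\cdot C=3$ forces $i=5$ (four transverse points with $t_i=1$ plus one point with $t_i=2$), not $i=4$, though this is exactly the pattern your case analysis then uses.
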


\begin{remark}
    In Corollary~\ref{cor:hirzebruch-2pt-3comp-horikawa-cover}, $\rho^{-1}(E_x)$ and $\rho^{-1}(E_y)$ are elliptic curves in $S'$ since $E_x$ and $E_y$ intersect $D$ transversally at 4 points.
\end{remark}

\subsection{Blowup of Hirzebruch surfaces at one point on $\Delta_0$}\label{sec:hirzebruch-1pt}
In this final subsection, we assume $d>0$ and denote by $q\colon W\to \ff_d$ the blowup of $\ff_d$ at a point $x\in \Delta_0$.
We denote by $\Gamma_0$ the curve of class $\Gamma$ through $x$, $\widetilde{\Delta}_0$ the proper transform of $\Delta_0$ in $X$, and $E$ the exceptional divisor of the blow-up. 
For $n=4$, the moduli space of Horikawa surfaces of the second kind consists of two components. 
A general point in one of these components corresponds to a surface of type $(1^*)$, which is birational to a double cover of $W$ branched over the disjoint union of $\widetilde{\Delta}_0$ and a general divisor.

We omit the proofs of the following results, as they are similar to those in \S\S\ref{sec:hirzebruch-2pt}-\ref{sec:hirzebruch-2pt-3comp}.

\begin{proposition}
{\em
Let $d>0$ and let $q\colon W\to \ff_d$ be the blowup of $\ff_d$ at a point $x\in \Delta_0.$
Assume that there is a non-constant map $f:C\to W$ from a smooth projective curve and a surjective morphism $$f^*T_W(-\log \widetilde{\Delta}_0)\to  L,$$ where $L$ is a line bundle with $\deg L<0$. Then $f(C)= E_x$.
}
\end{proposition}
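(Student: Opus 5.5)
We will follow the pattern of Propositions~\ref{prop:nefness-2points-d>0} and \ref{prop:nefness-2points-d>0-xinD}. Let $\Gamma_0$ be the fiber through $x$, $\widetilde{\Gamma}_0$ its proper transform (class $q^*\Gamma-E$, a $(-1)$-curve), and $E=E_x$. Since $x\in\Delta_0$, the curves $\widetilde{\Gamma}_0$ and $\widetilde{\Delta}_0$ are disjoint after the blowup. Both meet $E$ transversally at one point. So $D=\widetilde{\Delta}_0+E+\widetilde{\Gamma}_0$ is a simple normal crossing chain, and there is an exact sequence
$$0\to T_W(-\log D)\to T_W(-\log\widetilde{\Delta}_0)\to \oo_E(E)\oplus\oo_{\widetilde{\Gamma}_0}(\widetilde{\Gamma}_0)\to 0 .$$
If $f(C)\not\subset E\cup\widetilde{\Gamma}_0$, a negative quotient $L$ of $f^*T_W(-\log\widetilde{\Delta}_0)$ restricts to a nonzero map from $f^*T_W(-\log D)$. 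Its image $L'\subseteq L$ satisfies $\deg L\ge\deg L'$, so it suffices to show that $f^*T_W(-\log D)$ has no negative quotient.

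For this, let $p\colon W\to\pp^1$ be $q$ composed with the ruling. Every component of $p^{-1}(t)$, namely $E$ and $\widetilde{\Gamma}_0$, lies in $D$. Hence the differential gives $\varphi\colon T_W(-\log D)\to p^*T_{\pp^1}$ with image $p^*(T_{\pp^1}(-t))\simeq p^*\oo_{\pp^1}(1)$. The kernel $K$ is a line bundle, and the determinant computation $\det T_W(-\log D)=-K_W-D$, with $K_W=q^*(-2\Delta_0-(d+2)\Gamma)+E$ and $D=q^*\Delta_0+q^*\Gamma-E$ (using $\widetilde{\Delta}_0=q^*\Delta_0-E$), gives
$$K\simeq\oo_W\bigl(q^*(\Delta_0+d\Gamma)\bigr).$$
This bundle is pulled back from a nef class and is therefore nef. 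So $T_W(-\log D)$ is an extension of two nef line bundles, and every quotient of $f^*T_W(-\log D)$ has nonnegative degree. This settles all curves other than $E$ and $\widetilde{\Gamma}_0$; the curve $\widetilde{\Delta}_0$ is covered by this argument because it is not contained in $E\cup\widetilde{\Gamma}_0$.

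It remains to rule out $f(C)=\widetilde{\Gamma}_0$. Because $\widetilde{\Gamma}_0\cap\widetilde{\Delta}_0=\emptyset$, we have $T_W(-\log\widetilde{\Delta}_0)|_{\widetilde{\Gamma}_0}=T_W|_{\widetilde{\Gamma}_0}$, an extension of $N_{\widetilde{\Gamma}_0}=\oo(-1)$ by $T_{\widetilde{\Gamma}_0}=\oo(2)$. This extension splits, since $H^1(\oo(3))=0$.

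\textbf{Main obstacle.} By the splitting, $T_W|_{\widetilde{\Gamma}_0}\simeq\oo(2)\oplus\oo(-1)$ has the quotient $\oo(-1)$ of negative degree. So this step does not go through as in Proposition~\ref{prop:nefness-2points-d>0}, where the fiber $\widetilde{\Gamma}$ lay in the log boundary. I do not see how to exclude $\widetilde{\Gamma}_0$ under the stated hypotheses.

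To close the gap I would try two things. First, I would recheck whether the intended boundary includes $\widetilde{\Gamma}_0$. Second, I would check whether $\widetilde{\Gamma}_0$ is harmless for the application. It lies in $q^{-1}(\Gamma_0)$, which is already excluded from the section-dominance locus. It meets $D$ only in $E$-adjacent points, so it can be treated directly via Remark~\ref{rem:elliptic-curves-double cover}. If the statement is meant literally, I expect this step to be where it needs correcting.
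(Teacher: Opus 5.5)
Your argument is correct and is the intended one. The paper omits this proof, pointing to Propositions~\ref{prop:nefness-2points-d>0} and \ref{prop:nefness-2points-d>0-xinD}, and your steps follow that pattern exactly: enlarging the boundary to the snc chain $\widetilde{\Delta}_0+E+\widetilde{\Gamma}_0$, projecting onto $p^*\oo_{\pp^1}(1)$, and obtaining the nef kernel $K\simeq\oo_W(q^*(\Delta_0+d\Gamma))$.

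The step you flag is not a gap in your proof but an error in the statement, and you should say so outright rather than hedge. Take the inclusion $f\colon\widetilde{\Gamma}_0\hookrightarrow W$. Since $\widetilde{\Gamma}_0\cap\widetilde{\Delta}_0=\emptyset$, we have $f^*T_W(-\log\widetilde{\Delta}_0)=T_W|_{\widetilde{\Gamma}_0}$. This bundle surjects onto $N_{\widetilde{\Gamma}_0/W}\simeq\oo_{\pp^1}(-1)$, so you do not even need the splitting. Hence $\widetilde{\Gamma}_0$ is a genuine counterexample. The correct conclusion is $f(C)\in\{E_x,\widetilde{\Gamma}_0\}$, and your argument proves this corrected statement completely.

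The correction is consistent with how the paper uses the result, so nothing downstream is affected:
\begin{itemize}
\item Proposition~\ref{prop:hirzebruch-1pt} already assumes $f(C)\not\subset D\cup\widetilde{\Gamma}\cup E_x$.
\item The remark after Corollary~\ref{cor:hirzebruch-1pt-horikawa-cover} lists $\rho^{-1}(\widetilde{\Gamma})$ among the elliptic curves.
\end{itemize}
So only pseudo-nefness of $T_W(-\log\widetilde{\Delta}_0)$ outside $E_x\cup\widetilde{\Gamma}$ is ever needed.

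One small correction to your last paragraph. In the type $(1^*)$ application, the claim that $\widetilde{\Gamma}_0$ meets $D$ only in ``$E$-adjacent points'' is not right. The curve $\widetilde{\Gamma}_0$ is disjoint from $\widetilde{\Delta}_0$, but it meets $D'$ in $(q^*\Gamma-E)\cdot(7q^*\Delta_0+10q^*\Gamma-3E)=4$ points. That is exactly why $\rho^{-1}(\widetilde{\Gamma}_0)$ is elliptic, and why $\widetilde{\Gamma}_0$ must remain in the exceptional set.
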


\begin{lemma}
{\em
Let $d>0$ and let $q\colon W\to \ff_d$ be the blowup of $\ff_d$ at a point $x\in \Delta_0.$ 
Consider the line bundle $$L=\oo_W(a\cdot q^*\Delta_0 + b \cdot q^* \Gamma - c_x\cdot E_x)$$ with $a,b,c_x\ge 0$. If $a\ge c_x-1$ and $b\ge ad+c_x-1$, then $H^1(L)=0.$
}
\end{lemma}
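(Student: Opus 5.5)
We reduce to the vanishing of $H^1$ on a line bundle pulled back from $\ff_d$, peeling off the $E_x$ coefficient by restricting to $\widetilde{\Delta}_0$. This mirrors the first induction in Lemma~\ref{lemma:h1-vanishing-hirzebruch2-xindelta_0}, with no $E_y$ step needed.

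Recall the classes on $W$. Since $x\in\Delta_0$, we have $\widetilde{\Delta}_0 = q^*\Delta_0 - E_x$ with $\widetilde{\Delta}_0^2=-d-1$. Also $\widetilde{\Delta}_0\cdot q^*\Gamma = 1$ and $\widetilde{\Delta}_0\cdot E_x = 1$.

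For $0\le k\le c_x$, set
\[
L_k = \oo_W\big((a-k)\, q^*\Delta_0 + b\, q^*\Gamma - (c_x-k)E_x\big),
\]
so that $L_0=L$ and $L_k - \widetilde{\Delta}_0 = L_{k+1}$. Restricting to $\widetilde{\Delta}_0$ gives
\[
0\to L_{k+1}\to L_k\to L_k|_{\widetilde{\Delta}_0}\to 0 .
\]
On $\widetilde{\Delta}_0\simeq\pp^1$ the degree is $-(a-k)d + b - (c_x-k) = b-(a-k)d-c_x+k$. This is at least $b-ad-c_x \ge -1$ by the hypothesis $b\ge ad+c_x-1$. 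Hence $H^1(L_k|_{\widetilde{\Delta}_0})=0$, and $H^1(L_{k+1})\twoheadrightarrow H^1(L_k)$. By induction it suffices to show $H^1(L_{c_x})=0$, where $L_{c_x}=q^*\oo_{\ff_d}((a-c_x)\Delta_0+b\Gamma)$.

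By the Leray spectral sequence and $R^1q_*\oo_W=0$, $q_*\oo_W=\oo_{\ff_d}$, we get
\[
H^1(W,L_{c_x})\simeq H^1\big(\ff_d,\oo_{\ff_d}((a-c_x)\Delta_0+b\Gamma)\big).
\]
This is the place where some care is needed, since the coefficient $a-c_x$ may be negative, down to $-1$. The hypothesis $a\ge c_x-1$ gives $\alpha:=a-c_x\ge -1$, which is exactly the range of the vanishing quoted before Lemma~\ref{lemma:secdom-hirzebruch}. That vanishing requires $\beta\ge\alpha d-1$. Here $\beta=b\ge ad+c_x-1\ge (a-c_x)d-1$ whenever $c_x\ge0$ and $d>0$, so $H^1=0$.

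For $\alpha=-1$ one can double-check directly: $\oo_{\ff_d}(-\Delta_0+b\Gamma)$ has $\pi_*=0$ and $R^1\pi_*=0$ along the ruling, so all cohomology vanishes. This concludes the argument. No serious obstacle is expected; the only point to verify is that the degree bound on $\widetilde{\Delta}_0$ holds uniformly in $k$, which it does because it is increasing in $k$.
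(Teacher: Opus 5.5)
Your argument is correct and is essentially the proof the paper intends (it omits the proof as similar to \S\S\ref{sec:hirzebruch-2pt}--\ref{sec:hirzebruch-2pt-3comp}). You peel off $E_x$ by restricting to $\widetilde{\Delta}_0=q^*\Delta_0-E_x$, exactly as in the first induction of Lemma~\ref{lemma:h1-vanishing-hirzebruch2-xindelta_0}, and then push down to $\ff_d$, where your checks $b-(a-k)d-c_x+k\ge -1$ and $b\ge (a-c_x)d-1$ with $a-c_x\ge -1$ are right. Restricting instead to $\widetilde{\Gamma}=q^*\Gamma-E_x$, as in Lemma~\ref{lemma:h1-vanishing-hirzebruch2} with $c_y=0$, works equally well; it uses the two hypotheses in the opposite steps.
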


\begin{lemma}
{\em
Let $d>0$ and let $q\colon W\to \ff_d$ be the blowup of $\ff_d$ at a point $x\in \Delta_0.$ 
Let $$L=\oo_W(a\cdot q^*\Delta_0 + b \cdot q^* \Gamma - c_x\cdot E_x)$$ with $c_x\ge 0$, $a\ge c_x$ and $b\ge ad+c_x$. 
Similar to Lemma~\ref{lemma:secdom-hirzebruch2}, outside of $q^{-1}(\Gamma_0)$, there is a surjective map
\begin{equation*}
M_{q^*(\Delta_0+d\Gamma)}^{\oplus s}\oplus M_{q^*\Gamma}^{\oplus t}\to M_L.
\end{equation*}
}
\end{lemma}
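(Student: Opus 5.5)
The plan follows the proof of Lemma~\ref{lemma:secdom-hirzebruch2} almost verbatim. There is now only one exceptional divisor $E=E_x$, and the previous lemma supplies the needed vanishing. Fix a point $p\in W\setminus q^{-1}(\Gamma_0)$ and let $\widetilde{\Gamma}_p$ be the fiber of $W\to\pp^1$ through $p$. Since $p\notin q^{-1}(\Gamma_0)$, this fiber is the isomorphic preimage of a fiber of $\ff_d$ avoiding $x$, so $\widetilde{\Gamma}_p\simeq\pp^1$ and $E\cdot\widetilde{\Gamma}_p=0$. The sections of $L\otimes\mathcal{I}_p$ are then split into those vanishing identically on $\widetilde{\Gamma}_p$ and the rest, which are controlled by their restriction to $\widetilde{\Gamma}_p$.

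\textbf{Step 1 (the second summand).} Any section of $\oo_W(q^*\Gamma)$ vanishing at $p$ vanishes on all of $\widetilde{\Gamma}_p$, so it is the section defining $\widetilde{\Gamma}_p$. Hence the image of $H^0(q^*\Gamma\otimes\mathcal{I}_p)\otimes H^0(L(-q^*\Gamma))$ is exactly the subspace of $H^0(L)$ of sections vanishing along $\widetilde{\Gamma}_p$. It therefore suffices to show that the first summand surjects onto the image of $H^0(L\otimes\mathcal{I}_p)\to H^0(\widetilde{\Gamma}_p,\oo_{\pp^1}(a)(-p))\simeq H^0(\oo_{\pp^1}(a-1))$.

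\textbf{Step 2 (the first summand).} Since $x\in\Delta_0$, the total transform $q^*\Delta_0=\widetilde{\Delta}_0+E$. The system $|q^*(\Delta_0+d\Gamma)|$ is pulled back from the basepoint-free system $|\Delta_0+d\Gamma|$ on $\ff_d$, and it restricts to $\widetilde{\Gamma}_p$ with degree $1$. So there is a section $s_p$ whose restriction to $\widetilde{\Gamma}_p$ vanishes exactly at $p$. Next set $L'=\oo_W((a-1)q^*\Delta_0+(b-d)q^*\Gamma-c_xE)$ and use
\[0\to L'(-q^*\Gamma)\to L'\to \oo_{\pp^1}(a-1)\to 0.\]
The line bundle $L'(-q^*\Gamma)$ has coefficients $(a-1,\,b-d-1,\,c_x)$. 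The hypotheses $a\ge c_x$ and $b\ge ad+c_x$ give $a-1\ge c_x-1$ and $b-d-1\ge (a-1)d+c_x-1$, so the preceding $H^1$-vanishing lemma for the one-point blowup applies. Consequently $H^0(L')\to H^0(\oo_{\pp^1}(a-1))$ is surjective. Multiplying lifts $s_1,\dots,s_a$ by $s_p$ then produces every element of $H^0(\oo_{\pp^1}(a)(-p))$, which together with Step 1 gives the claimed surjection. The kernel-bundle reformulation $M_{q^*(\Delta_0+d\Gamma)}^{\oplus s}\oplus M_{q^*\Gamma}^{\oplus t}\to M_L$ follows exactly as in the Remark after Definition~\ref{def:sec-dom}. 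The one thing left to check is that $L(-q^*\Gamma)$ and $L'$ are globally generated where required for the definition. I expect this to follow from the same vanishing lemma, or at least to be needed only away from $q^{-1}(\Gamma_0)$.

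\textbf{Main obstacle.} The only real content beyond bookkeeping is the vanishing in Step 2. It is already provided by the previous lemma, whose proof is the analogue of Lemma~\ref{lemma:h1-vanishing-hirzebruch2-xindelta_0}, restricting successively to $\widetilde{\Delta}_0$ to peel off $E$. So the main point of care is verifying the numerical inequalities after the shift $(a,b)\mapsto(a-1,b-d-1)$. The one edge case is $c_x=0$, where $L$ is pulled back from $\ff_d$; there the statement follows directly from Lemma~\ref{lemma:secdom-hirzebruch}.
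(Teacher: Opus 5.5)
Your proof is correct and is exactly the adaptation of the proof of Lemma~\ref{lemma:secdom-hirzebruch2} that the paper has in mind; the paper omits the proof as being similar. As you check, the needed vanishing of $H^1(L'(-q^*\Gamma))$ comes from the one-point $H^1$-vanishing lemma applied with coefficients $(a-1,\,b-d-1,\,c_x)$.

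The loose end you flag is harmless, but not for the reason you first guess. $L'$ need not be globally generated: its degree on the proper transform of $\Gamma_0$ is $a-1-c_x$, which equals $-1$ when $a=c_x$. This does not matter, because multiplication by any $\sigma\in H^0(L\otimes L_i^\vee)$ already induces a map $M_{L_i}\to M_L$, and $L$ itself is globally generated. So the kernel-bundle surjection at $p\notin q^{-1}(\Gamma_0)$ only uses the fiberwise multiplication map you proved surjective.
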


\begin{proposition}\label{prop:hirzebruch-1pt}
{\em
Let $d>0$ and let $q\colon W\to \ff_d$ be the blowup of $\ff_d$ at a point $x\in \Delta_0.$ 
Let $D'\subset W$ be a very general curve in the linear system $$|a\cdot q^*\Delta_0 + b\cdot q^*\Gamma-c_x\cdot E_x|$$ where $c_x\ge0$, $a\ge c_x$ and $b\ge ad+c_x$, and let $D=\widetilde{\Delta}_0+D'.$
Let $f:C\to W$ be a map from a smooth projective curve $C$ of genus $g$ that is birational onto its image, satisfies $f(C)\not\subset D\cup \widetilde{\Gamma}\cup E_x$, and meets $D$ in exactly $i$ distinct points. Then 
\begin{enumerate}
    \item either $2g-2+i\ge ((a-2) q^*\Delta_0 + (b-2d-2) q^*\Gamma-c_x E_x)\cdot C$,
    \item or $2g-2+i\ge ((a-1) q^*\Delta_0 + (b-d-3) q^*\Gamma-c_xE_x)\cdot C.$
\end{enumerate}
In Case (1), equality is achieved only when $f(C)$ has the following classes:
\begin{itemize}
    \item When $d>1$, $q^*\Gamma$;
    \item When $d=1$, $q^*\Gamma$ or $q^*(\Delta_0+\Gamma)$.
\end{itemize}
}
\end{proposition}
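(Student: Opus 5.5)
We follow the variational argument of \S\ref{sec:variational-argument} exactly as in Propositions~\ref{prop:hirzebruch} and \ref{prop:hirzebruch2}, with fixed part $F=\widetilde{\Delta}_0$ and moving part $E'=a q^*\Delta_0+b q^*\Gamma-c_xE_x$.

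\textbf{Degree of the log normal sheaf.} Using $K_W=-2q^*\Delta_0-(d+2)q^*\Gamma+E_x$ and $\widetilde{\Delta}_0=q^*\Delta_0-E_x$, we get
\[K_W+D=(a-1)q^*\Delta_0+(b-d-2)q^*\Gamma-c_xE_x .\]
Then \eqref{eq:deg-log-normal} gives, at a general $b\in B$,
\[\deg N_{f_b/W}(\log D_{f,b})=2g-2+i-((a-1)q^*\Delta_0+(b-d-2)q^*\Gamma-c_xE_x)\cdot C.\]
If this degree is nonnegative, both (1) and (2) hold, since $q^*\Delta_0+d\Gamma$ and $q^*\Gamma$ are nef. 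So we may assume it is negative.

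\textbf{Reduction to a kernel bundle.} The preceding unnumbered proposition shows that $T_W(-\log\widetilde{\Delta}_0)$ has negative quotients only along $E_x$, so it is pseudo-nef outside $E_x$. The section-dominance lemma gives $M_{q^*(\Delta_0+d\Gamma)}^{\oplus s}\oplus M_{q^*\Gamma}^{\oplus t}\twoheadrightarrow M_{E'}$ outside $q^{-1}(\Gamma_0)=\widetilde{\Gamma}\cup E_x$. Since $f(C)\not\subset \widetilde{\Gamma}\cup E_x$, Proposition~\ref{prop:log-normal-degree-bound} and Remark~\ref{rem:log-normal-degree-bound} give a generically surjective map from either $f_b^*M_{q^*(\Delta_0+d\Gamma)}$ or $f_b^*M_{q^*\Gamma}$ to $N_{f_b/W}(\log D_{f,b})$.

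\textbf{The two cases.} Let $Q$ be the image of this map. In the first case $\deg Q\ge -q^*(\Delta_0+d\Gamma)\cdot C$, which gives (1). In the second case $\deg Q\ge -q^*\Gamma\cdot C$, which gives (2).

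\textbf{Equality in Case (1).} This step is the main work. Equality forces the kernel $K$ to be a degree-$0$ subsheaf of rank $d$ of $\oo^{\oplus(d+2)}$. By Lemma~\ref{lemma:trivial-subbundle}, $K\simeq\oo^{\oplus d}$, so $h^0(f_b^*M_{q^*(\Delta_0+d\Gamma)})\ge d$. Write the class of $f_b(C_b)$ as $\alpha\widetilde{\Delta}_0+\beta\widetilde{\Gamma}+\gamma E_x$ in the basis $\widetilde{\Delta}_0,\widetilde{\Gamma},E_x$; here $q^*\Gamma=\widetilde{\Gamma}+E_x$. Proposition~\ref{prop:sections-of-kernel-bundles} then gives
\[h^0\bigl(W,\oo_W((1-\alpha)\widetilde{\Delta}_0+(d-\beta)\widetilde{\Gamma}+(d+1-\gamma)E_x)\bigr)\ge d.\]
Intersecting the curve nonnegatively with $\widetilde{\Delta}_0$, $\widetilde{\Gamma}$ and $E_x$ gives constraints like $\beta\ge\alpha d$ and $\gamma\ge\beta$, possibly up to the $E_x$ correction. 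These force $\alpha\le1$.
\begin{itemize}
\item If $\alpha=0$, we get $\beta=\gamma=1$, i.e.\ the class $q^*\Gamma$, and the $h^0$ equals $h^0(q^*\Delta_0+(d-1)q^*\Gamma)=d$.
\item If $\alpha=1$, an irreducible curve must have $\beta\ge d$, and the residual $h^0$ is at least $d$ only when $d=1$. This gives $q^*(\Delta_0+\Gamma)$; the variant through $x$ is excluded or included depending on the precise $E_x$ coefficient.
\end{itemize}
The careful accounting of this $E_x$ coefficient is where I expect the only real bookkeeping difficulty.
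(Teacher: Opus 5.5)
Your route is the one the paper intends: it omits this proof as being the same as Propositions~\ref{prop:hirzebruch} and \ref{prop:hirzebruch2}. Everything up to inequalities (1) and (2) is correct, including $K_W+D=(a-1)q^*\Delta_0+(b-d-2)q^*\Gamma-c_xE_x$.

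The gap is in the equality clause. You leave it undecided, and the constraints you propose ($\beta\ge\alpha d$, $\gamma\ge\beta$) cannot decide it. For $d=1$, the class $\widetilde{\Delta}_0+\widetilde{\Gamma}+E_x=q^*(\Delta_0+\Gamma)-E_x$ (that is, $\alpha=\beta=\gamma=1$) satisfies both constraints. Its residual bundle is $\mathcal{O}_W(E_x)$, with $h^0=1=d$, so the section count does not rule it out. As written, your argument therefore yields a weaker list than the statement claims.

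The missing input is positivity against the fixed curve. Since $\widetilde{\Delta}_0^2=-d-1$, $\widetilde{\Delta}_0\cdot\widetilde{\Gamma}=0$ and $\widetilde{\Delta}_0\cdot E_x=1$, an integral $C\neq\widetilde{\Delta}_0$ must satisfy
\begin{itemize}
\item $C\cdot\widetilde{\Delta}_0=\gamma-(d+1)\alpha\ge0$,
\item $C\cdot\widetilde{\Gamma}=\gamma-\beta\ge0$,
\item $C\cdot E_x=\alpha+\beta-\gamma\ge0$.
\end{itemize}
The first inequality is the one you are missing. Geometrically, an irreducible curve of class $\Delta_0+\Gamma$ on $\ff_1$ has intersection $0$ with $\Delta_0$, so it cannot pass through $x$.

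With these three inequalities the bookkeeping closes quickly. Note $\alpha=q^*\Gamma\cdot C\ge 0$, and $R:=q^*(\Delta_0+d\Gamma)-C$ has $R\cdot q^*\Gamma=1-\alpha$. Since $q^*\Gamma$ is nef, $h^0(R)>0$ forces $\alpha\le1$.
\begin{itemize}
\item If $\alpha=0$, then $\beta=\gamma$, so $C$ lies in a fiber of $W\to\pp^1$. Since $C\neq\widetilde{\Gamma},E_x$, its class is $q^*\Gamma$, and $h^0(R)=h^0(\ff_d,\Delta_0+(d-1)\Gamma)=d$.
\item If $\alpha=1$, then $\gamma\ge d+1$ and $\beta\le\gamma\le\beta+1$, so $\beta\ge d$. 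Here $R=(d-\beta)q^*\Gamma+(\beta+1-\gamma)E_x$, so $h^0(R)=h^0(\pp^1,\mathcal{O}(d-\beta))$. This is $\ge d$ only when $\beta=d=1$, which then forces $\gamma=2$. That is exactly the class $q^*(\Delta_0+\Gamma)$.
\end{itemize}

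Two smaller points.
\begin{itemize}
\item When $\deg N_{f_b/W}(\log D_{f,b})\ge0$, equality in (1) is impossible, because $q^*(\Delta_0+d\Gamma)\cdot C=\beta>0$ for every admissible $C$. So it is harmless to assume the degree is negative for the equality clause too.
\item $T_W(-\log\widetilde{\Delta}_0)$ does not have negative quotients only along $E_x$. It also has the quotient $\mathcal{O}_{\pp^1}(-1)$ along the $(-1)$-curve $\widetilde{\Gamma}$, which is disjoint from $\widetilde{\Delta}_0$. What you actually use is pseudo-nefness outside $\widetilde{\Gamma}\cup E_x$, which suffices since both curves are excluded by hypothesis.
\end{itemize}
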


\begin{remark}
    The above proposition implies that when $D'\subset W$ is a very general curve in $|a\cdot q^*\Delta_0 + b\cdot q^*\Gamma - c_x\cdot E_x|$ with $c_x\ge 2$, $a\ge c_x+3$ and $b\ge ad+c_x+4$, then $(W,\widetilde{\Delta}_0+D')$ is log algebraically hyperbolic.
\end{remark}

Surfaces of type $(1^*)$ are double covers of $W$ with $d=1$ branched over the disjoint union of $\widetilde{\Delta}_0$ and a very general divisor in the linear system $|7 q^*\Delta_0 + 10 q^*\Gamma-3 E_x|$.

\begin{proposition}\label{prop:hirzebruch-1pt-horikawa}
{\em
Let $q\colon W\to \ff_1$ be the blowup of $\ff_1$ at a point $x\in \Delta_0.$ 
Let $D'\subset W$ be a very general curve in the linear system $|7 q^*\Delta_0 + 10 q^*\Gamma-3 E_x|$,
and let $D=\widetilde{\Delta}_0+D'.$ 
For any map $f:C\to W$ from a smooth projective curve $C$ of genus $g$ that is birational onto its image, satisfies $f(C)\not\subset D\cup \widetilde{\Gamma}\cup E_x$, and meets $D$ in exactly $i$ distinct points, we have
\begin{equation*}
2g-2+i>\frac{1}{2}D\cdot C.    
\end{equation*}

}    
\end{proposition}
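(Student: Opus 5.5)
The plan is to specialize Proposition~\ref{prop:hirzebruch-1pt} to $d=1$, $a=7$, $b=10$, $c_x=3$, and then compare both of its inequalities with $\frac12 D\cdot C$ by a direct intersection computation. The hypotheses of Proposition~\ref{prop:hirzebruch-1pt} hold: $c_x=3\ge 0$, $a=7\ge c_x$, and $b=10\ge ad+c_x=10$. Since $\widetilde{\Delta}_0\equiv q^*\Delta_0-E_x$, we have $D\equiv 8q^*\Delta_0+10q^*\Gamma-4E_x$, and hence $\frac12 D\equiv 4q^*\Delta_0+5q^*\Gamma-2E_x$.

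To handle intersections, I would write the class of the integral curve $f(C)$ as $\alpha q^*\Delta_0+\beta q^*\Gamma-\gamma E_x$, where $\gamma\ge 0$ is the multiplicity of $q(f(C))$ at $x$. Using $\Delta_0^2=-1$ on $\ff_1$, we get $q^*\Delta_0\cdot C=\beta-\alpha$, $q^*\Gamma\cdot C=\alpha$ and $E_x\cdot C=\gamma$. Because $f(C)\not\subset\widetilde{\Delta}_0\cup\widetilde{\Gamma}\cup E_x$, intersecting with $\widetilde{\Delta}_0=q^*\Delta_0-E_x$ and $\widetilde{\Gamma}=q^*\Gamma-E_x$ gives
\[
\beta\ge\alpha+\gamma \quad\text{and}\quad \alpha\ge\gamma.
\]
In particular $\beta>0$, since otherwise $\alpha=\gamma=0$ and $C$ would be contracted.

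\textbf{Case 1} of Proposition~\ref{prop:hirzebruch-1pt} gives $2g-2+i\ge(5q^*\Delta_0+6q^*\Gamma-3E_x)\cdot C$. Subtracting $\frac12 D\cdot C$ leaves
\[
(q^*\Delta_0+q^*\Gamma-E_x)\cdot C=\beta-\gamma\ge\alpha+(\beta-\alpha-\gamma)\ge 0 .
\]
This vanishes only if $\alpha=0$, which forces $\gamma=0$ and $\beta=\gamma$, hence $\beta=0$, which is impossible. So the inequality is strict. This covers in particular the equality classes $q^*\Gamma$ and $q^*(\Delta_0+\Gamma)$ singled out in Proposition~\ref{prop:hirzebruch-1pt}: both give strict inequality against $\frac12 D\cdot C$.

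\textbf{Case 2} gives $2g-2+i\ge(6q^*\Delta_0+6q^*\Gamma-3E_x)\cdot C$. Subtracting $\frac12 D\cdot C$ leaves
\[
(2q^*\Delta_0+q^*\Gamma-E_x)\cdot C=2\beta-\alpha-\gamma=\beta+(\beta-\alpha-\gamma)\ge\beta>0 .
\]

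Combining the two cases yields $2g-2+i>\frac12 D\cdot C$. The only step needing care is the bookkeeping of the nonnegativity constraints coming from the excluded boundary curves; once these are written down, both cases reduce to the elementary inequalities above. The underlying hard input, namely the section-dominance and pseudo-nefness results on $W$, is already contained in Proposition~\ref{prop:hirzebruch-1pt} and the lemmas preceding it.
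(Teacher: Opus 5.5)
Your proposal is correct and follows the paper's proof. You specialize Proposition~\ref{prop:hirzebruch-1pt} to $(d,a,b,c_x)=(1,7,10,3)$ and check that the residual classes $q^*\Delta_0+q^*\Gamma-E_x$ (Case 1) and $2q^*\Delta_0+q^*\Gamma-E_x$ (Case 2) are positive on $C$; the paper only asserts this positivity, whereas you verify it explicitly using the constraints $\beta\ge\alpha+\gamma$ and $\alpha\ge\gamma\ge 0$ that come from excluding $\widetilde{\Delta}_0$, $\widetilde{\Gamma}$ and $E_x$.
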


\begin{proof}
As before, we have two cases:

\textbf{Case 1:} We have $2g-2+i\ge(5 q^*\Delta_0 + 6 q^*\Gamma-3 E_x)\cdot C$, so
\[2g-2+i-\frac{1}{2}D\cdot C\ge( q^*\Delta_0 + q^*\Gamma- E_x)\cdot C.\]

\textbf{Case 2:} $2g-2+i\ge(6 q^*\Delta_0 + 6 q^*\Gamma-3 E_x)\cdot C$, so
\[2g-2+i-\frac{1}{2}D\cdot C\ge(2 q^*\Delta_0 + q^*\Gamma- E_x)\cdot C.\]
In both cases, the right-hand side is always positive.
\end{proof}

\begin{corollary}\label{cor:hirzebruch-1pt-horikawa-cover}
{\em
Let $d=1$ and let $\rho\colon S' \to W$ be a double cover branched over $D=\widetilde{\Delta}_0+D'$ where $D'$ is a very general curve in the linear system $|7 q^*\Delta_0 + 10 q^*\Gamma-3 E_x|$.
Then, the only rational curve in $S'$ is $\rho^{-1}(\widetilde{\Delta}_0)$. 
Furthermore, every integral curve $C'\subset S'$ with $g(C')=1$ maps to $\widetilde{\Gamma}\cup E_x$.
In particular, $S'$ is pseudo-Lang algebraically hyperbolic.
}
\end{corollary}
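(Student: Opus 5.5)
The plan mirrors the proofs of Corollaries~\ref{cor:hirzebruch-horikawa-cover} and \ref{cor:hirzebruch-2pt-2comp-horikawa-cover}. We combine Proposition~\ref{prop:hirzebruch-1pt-horikawa} with the proof of Lemma~\ref{lemma:log-hyp-to-cover-hyp} and Remark~\ref{rem:elliptic-curves-double cover}, and then analyze separately the finitely many curves excluded from Proposition~\ref{prop:hirzebruch-1pt-horikawa}. The preliminary observation is that $D=\widetilde{\Delta}_0\sqcup D'$ is a disjoint union of smooth curves. For a very general $D'$ it is in particular simple normal crossing, so the double cover $S'$ is smooth and Lemma~\ref{lemma:log-hyp-to-cover-hyp} applies.

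Let $C'\subset S'$ be an integral curve of geometric genus $0$ or $1$ that is not contained in the ramification divisor. Let $C$ be the normalization of $\rho(C')$, with induced map $f\colon C\to W$. Suppose first that $f(C)\not\subset \widetilde{\Gamma}\cup E_x$. Proposition~\ref{prop:hirzebruch-1pt-horikawa} gives the \emph{strict} inequality $2g-2+|f^{-1}(D)|>\frac{1}{2}D\cdot C$. The Riemann--Hurwitz chain in the proof of Lemma~\ref{lemma:log-hyp-to-cover-hyp} then yields $2g(C')-2\ge 2g(C)-2+\sum_i(1-\frac{1}{m_i})>0$. So $g(C')\ge 2$, and no rational or elliptic curve of $S'$ lies over such a $C$. (By Remark~\ref{rem:elliptic-curves-double cover}, equality would be needed for an elliptic curve, and strictness rules this out.) Consequently every rational or elliptic curve of $S'$ either lies in the ramification locus $\rho^{-1}(D)$ or maps onto $\widetilde{\Gamma}$ or $E_x$.

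For the ramification locus, note that $\rho^{-1}(D')\simeq D'$. This is a smooth curve of high genus: adjunction on $W$ with $K_W=-2q^*\Delta_0-3q^*\Gamma+E_x$ and $D'\in|7q^*\Delta_0+10q^*\Gamma-3E_x|$ gives genus well above $1$, and irreducibility holds for very general $D'$. On the other hand $\rho^{-1}(\widetilde{\Delta}_0)\simeq\widetilde{\Delta}_0\simeq\pp^1$ is rational. These are the only curves in the ramification divisor. Next consider $E_x$ and $\widetilde{\Gamma}$. Each is a single rational curve, so its preimage consists of at most two curves, and these have genus at most whatever Riemann--Hurwitz gives from their intersection with $D$. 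In particular the preimage contains no rational curves other than those already counted: $E_x$ and $\widetilde{\Gamma}$ meet $\widetilde{\Delta}_0$ and $D'$ in enough points that the preimages have genus $\geq1$. One checks $E_x\cdot D=1+3=4$ and $\widetilde{\Gamma}\cdot D=0+(10-7-3)+\ldots$, computing with $\widetilde{\Gamma}=q^*\Gamma-E_x$. This check shows the preimages are elliptic or of higher genus rather than rational. Even if some preimage were elliptic, there are only finitely many such curves, which gives the ``maps to $\widetilde{\Gamma}\cup E_x$'' clause and finiteness.

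The main obstacle is this last bookkeeping: verifying that neither $\rho^{-1}(E_x)$ nor $\rho^{-1}(\widetilde{\Gamma})$ contributes a rational curve. This requires an honest computation of $\widetilde{\Gamma}\cdot D'$ and $E_x\cdot D'$ (transversality holds for very general $D'$), followed by Riemann--Hurwitz. If $\widetilde{\Gamma}\cdot D$ turns out to be small, we would instead note that $\widetilde{\Gamma}$ is a $(-1)$-curve through which the cover splits, and check directly the genus of the components. We expect the count $4$ on $E_x$ to give an elliptic curve, and $\widetilde{\Gamma}$ to be handled similarly. The final conclusion is that $S'$ has only the rational curve $\rho^{-1}(\widetilde{\Delta}_0)$ and finitely many elliptic curves, all over $\widetilde{\Gamma}\cup E_x$; since $S'$ is of general type, it is pseudo-Lang algebraically hyperbolic.
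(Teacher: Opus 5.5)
Your strategy is the paper's. The strict inequality of Proposition~\ref{prop:hirzebruch-1pt-horikawa}, fed into the Riemann--Hurwitz chain of Lemma~\ref{lemma:log-hyp-to-cover-hyp}, rules out every rational or elliptic curve not lying over $D\cup\widetilde{\Gamma}\cup E_x$. What remains is to look at $\widetilde{\Delta}_0$, $D'$ (genus $30$ by adjunction), $E_x$ and $\widetilde{\Gamma}$.

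The gap is in the last of these. The first claim of the corollary, that $\rho^{-1}(\widetilde{\Delta}_0)$ is the \emph{only} rational curve, depends on $\rho^{-1}(\widetilde{\Gamma})$ not being rational, and you leave exactly this undecided. Your partial computation ``$0+(10-7-3)$'' is wrong. Your fallback would also refute the statement rather than rescue it: if $\widetilde{\Gamma}\cdot D$ were $0$, then since $\widetilde{\Gamma}\simeq\pp^1$ is simply connected, the restricted cover would be trivial. In that case $\rho^{-1}(\widetilde{\Gamma})$ would be two disjoint smooth rational curves. So this step is not deferrable bookkeeping; it is the content of the first claim.

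The computation closes the gap. Use $\widetilde{\Gamma}=q^*\Gamma-E_x$, $(q^*\Gamma)^2=0$, $q^*\Gamma\cdot q^*\Delta_0=1$, $E_x^2=-1$ and $E_x\cdot q^*(\cdot)=0$. Then $\widetilde{\Gamma}\cdot D'=7-3=4$, since the $10q^*\Gamma$ term contributes nothing. Also $\widetilde{\Gamma}\cdot\widetilde{\Delta}_0=(q^*\Gamma-E_x)\cdot(q^*\Delta_0-E_x)=1-1=0$. Hence $\widetilde{\Gamma}\cdot D=4$, just as $E_x\cdot D=1+3=4$.

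For very general $D'$ these intersections are transverse. The four points of $E_x\cap D$ are distinct because $D'\cap\widetilde{\Delta}_0=\emptyset$. So $\rho^{-1}(\widetilde{\Gamma})$ and $\rho^{-1}(E_x)$ are connected double covers of $\pp^1$ branched at four points, i.e.\ elliptic curves. This is the remark the paper places right after the corollary, and with it your argument is complete.
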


\begin{remark}
    In Corollary~\ref{cor:hirzebruch-1pt-horikawa-cover}, $\rho^{-1}(\widetilde{\Gamma})$ and $\rho^{-1}(E_x)$ are elliptic curves in $S'$ since they each intersect $D$ transversally at 4 points.
\end{remark}

\subsection{Summary for very general Horikawa surfaces of the second kind}\label{sec:summary-2nd}

We have shown that a very general Horikawa surface of the second kind with $n\ge 4$ is pseudo-Lang algebraically hyperbolic. 
Specifically, the distribution of rational or elliptic curves across the moduli space is as follows:
\begin{itemize}
\item \textbf{When $n=4$:} 
    The moduli space has two components. 
    \begin{itemize}
        \item For the first component, a very general surface (type $(1)$) contains no rational curves and $(10n+28)+2=10n+30=70$ elliptic curves ($E_x$, $E_y$, and curves in $|q^*\Gamma|$ tangent to $D'$) by Corollary~\ref{cor:hirzebruch-2pt-2comp-horikawa-cover}. 
        \item For the second component, a very general surface in the big stratum (type $(1^*)$) contains no rational curves and $2$ elliptic curves ($\widetilde{\Gamma}$ and $E_x$) by Corollary~\ref{cor:hirzebruch-1pt-horikawa-cover}. 
        Our result does not apply to the small stratum (type $(3')$) (see Corollary~\ref{cor:hirzebruch-2pt-3comp-horikawa-cover}).
    \end{itemize}

\item \textbf{When $n=5$:} 
The moduli space has two components. 
\begin{itemize}
    \item For the first component, a very general surface in the big stratum (type $(0)$) contains no rational curves and $(10n+28)+2=10n+30=80$  elliptic curves ($E_x$, $E_y$, and curves in $|q^*\Gamma|$ tangent to $D'$) by Corollary~\ref{cor:hirzebruch-f0-2pt-2comp-horikawa-cover}. 
    A very general surface in the small stratum (type $(2)$) contains no rational curves and $(10n+28)+3=10n+31=81$ elliptic curves ($E_x$, $E_y$, $\widetilde{\Delta}_0$ and curves in $|q^*\Gamma|$ tangent to $D'$) by Corollary~\ref{cor:hirzebruch-2pt-2comp-horikawa-cover}.  
    \item For the second component, a very general surface (type $(2^*)$) contains no rational or elliptic curves (Corollary~\ref{cor:hirzebruch-reducible-horikawa-cover}).
\end{itemize}

\item \textbf{When $4\mid n$ and $n\ge 8$:} The moduli space has two components. 
\begin{itemize}
    \item For the first component, a very general surface in the larger strata (types $(1)$ down to $(d)$ with $d\le \frac{n+4}{3}$) contains: 
    \begin{itemize}
        \item No rational curves and $(10n+28)+2=10n+30$ elliptic curves ($E_x$, $E_y$ and curves in $|q^*\Gamma|$ tangent to $D'$) when $d<\frac{n+1}{3}$. 
        \item No rational curves and $(10n+28)+3=10n+31$ elliptic curves ($E_x$, $E_y$, $\widetilde{\Delta}_0$ and curves in $|q^*\Gamma|$ tangent to $D'$) when $d=\frac{n+1}{3}$. 
        \item Our result does not apply to the stratum where $d=\frac{n+3}{3}$.
    \end{itemize}
    The above follows from Corollary~\ref{cor:hirzebruch-2pt-2comp-horikawa-cover}.
    In the smaller strata where $d>\frac{n+4}{3}$, a very general surface contains $n+3-2d$ rational curves (from $\widetilde{\Delta}_0$ plus $n+2-2d$ singular points on $D$, $\rho^{-1}(\widetilde{\Gamma})$ is contracted by $\pi$) and $(8n+4d+24)+2+(n+2-2d)=9n+2d+28$ elliptic curves ($E_x, E_y$, and curves in $|q^*\Gamma|$ tangent to $D$ or intersects $D$ at a node). 
    This follows from Corollary~\ref{cor:hirzebruch-2pt-3comp-horikawa-cover}.
    \item For the second component (type $\left(\frac{n+2}{2}\right)$), a very general surface contains 1 rational curve (from $\widetilde{\Delta}_0$, $\rho^{-1}(\widetilde{\Gamma})$ is contracted by $\pi$) and $(8n+4d+24)+2=10n+30$ elliptic curves ($E_x, E_y$, and curves in $|q^*\Gamma|$ tangent to $D'$) by Corollary~\ref{cor:hirzebruch-2pt-3comp-horikawa-cover}.
\end{itemize}

\item \textbf{When $4\nmid n$ and $n\ge 6$:} The moduli space consists of a single component, and the numbers are the same as the first component in the previous case. See Corollaries~\ref{cor:hirzebruch-2pt-2comp-horikawa-cover} and \ref{cor:hirzebruch-f0-2pt-2comp-horikawa-cover}.
\end{itemize}

This is summarized in Table~\ref{table:horikawa-count-2}.
In all cases described above, the geometrically elliptic curves arise either as $\rho^{-1}(\widetilde{\Delta}_0)$, $\rho^{-1}(E_x)$, $\rho^{-1}(E_y)$, or as pullbacks of rational curves in the fiber class $q^*\Gamma$ 
that are either simply tangent to the branch locus $D$ or intersects it at a node (and additionally in class $\widetilde{\Gamma}$ for type $(1^*)$). 
The rational curves arise strictly as $\rho^{-1}(\widetilde{\Delta}_0)$, or as the exceptional divisors of blown-up nodes on the branch locus $D$.

\begin{table}[htbp]
\centering
\renewcommand{\arraystretch}{1.3}
\begin{tabular}{clccl}
\toprule
\textbf{Component} & \textbf{Stratum} & \textbf{Condition} & \textbf{Rational} & \textbf{Elliptic} \\
\midrule
$n=4$ & Type $(1)$ & -- & $0$ & $70$ \\
\midrule
\multirow{2}{*}{$n=4$} 
& Type $(1^*)$ & -- & $0$ & $2$ \\
& Type $(3')$ & -- & $?$ & $?$ \\
\midrule
\multirow{2}{*}{$n=5$} 
& Type $(0)$  & -- & $0$ & $80$  \\
& Type $(2)$ & -- & $0$ & $81$  \\
\midrule
$n=5$ & Type $(2^*)$ & -- & $0$ & $0$ \\
\midrule
\multirow{2}{*}{\parbox{2cm}{\centering $n \ge 8$\\ $4 \mid n$}} 
& Type $(d)$  & $d< \frac{n+1}{3}$ & $0$ & $10n+30$  \\
& Type $(d)$  & $d= \frac{n+1}{3}$ & $0$ & $10n+31$  \\
& Type $(d)$  & $d= \frac{n+3}{3}$ & $?$ & $?$  \\
& Type $(d)$ & $\frac{n+4}{3}<d\le \frac{n-2}{2}$ & $n+3-2d$ & $9n+2d+28$ \\
\midrule
\parbox{2cm}{\centering $n \ge 8$\\ $4 \mid n$}& Type $\left(\frac{n+2}{2}\right)$ & -- & $1$ & $10n+30$  \\
\midrule
\multirow{2}{*}{\parbox{2cm}{\centering $n \ge 6$\\ $4 \nmid n$}} 
& Type $(d)$  & $d< \frac{n+1}{3}$ & $0$ & $10n+30$  \\
& Type $(d)$  & $d= \frac{n+1}{3}$ & $0$ & $10n+31$  \\
& Type $(d)$  & $d= \frac{n+3}{3}$ & $?$ & $?$  \\
& Type $(d)$ & $\frac{n+4}{3}<d$ & $n+3-2d$ & $9n+2d+28$ \\
\bottomrule
\end{tabular}
\vspace{.2in}
\caption{Number of rational and elliptic curves on very general Horikawa surfaces of the second kind and $n\ge4$ in each strata and component of moduli.
Strata are listed in decreasing order of dimension.}
\label{table:horikawa-count-2}
\end{table}

\bibliographystyle{alpha}
\bibliography{references}
\end{document}